\newcommand{\qsim}{/\negthickspace\sim}
\title{Ergodicity of the geodesic flow for groups with a contracting element}
\author{Rémi Coulon}
\date{\today}
\begin{document}

\maketitle

\begin{abstract}
	In this article we investigate the dynamical properties of the geodesic flow for a proper metric space whose isometry group contains a contracting element.
	We show that the existence of a contracting isometry is a sufficient evidence of negative curvature to carry in this context various results borrowed from hyperbolic geometry.
	In particular, we extend the Hopf-Tsuji-Sullivan dichotomy proving that the geodesic flow is either conservative and ergodic or dissipative.
\end{abstract}

\noindent
{\footnotesize 
\textbf{Keywords:} 
negatively curved groups, contracting element, geodesic flow, conservativity, ergodicity.\\
\textbf{Primary MSC Classification:}   
20F65, 
20F67 
37A10, 
37A15, 
37A25, 
37D40. 
}

\tableofcontents

\section{Introduction}

\subsection{Background}
Given a hyperbolic manifold $M = \mathbf H^n / G$, the geodesic flow $(\phi_t)$ on its unit tangent bundle $SM$ is a fundamental example of dynamical system with various remarkable applications to geometry, topology, group theory and number theory.
It comes with a flow-invariant measure $m_G$, called the Bowen-Margulis measure (it may have different names in the litterature).
If $m_G$ is finite, this measure is characterized as the unique invariant measure with maximal entropy.
The Hopf-Tsuji-Sullivan dichotomy states that two mutually exclusive cases can arise.
Either the geodesic flow is conservative (almost every orbit is recurrent) and ergodic or it is dissipative (almost every orbit is divergent).

Consider now a general proper geodesic metric space $X$ endowed with a proper action by isometries of a group $G$.
As $X$ does not necessarily carry a riemannian structure, a natural replacement for the unit tangent bundle of $M = X/G$ is the quotient $\mathcal GX / G$, where $\mathcal GX$ is the space of all bi-infinite, unit-speed parametrized geodesics on $X$.
It comes with a flow $(\phi_t)$ which consists in shifting the origin of the geodesics.
It turns out that a Hopf-Tsuji-Sulllivan dichotomy often holds as soon as $X$ carries a certain form of negative curvature.
This is for instance the case if 
\begin{itemize}
	\item $X$ is a tree, see Coornaert-Papadopoulos \cite{Coornaert:1994aa};
	\item $X$ is a Gromov hyperbolic space where every pair of distinct points in the Gromov boundary are joined by a unique bi-infinite geodesic, see Kaimanovich \cite{Kaimanovich:1994wz};
	\item $X$ is CAT(-1), see Burger-Mozes \cite{Burger:1996kc} and Roblin \cite{Roblin:2003vz};
	\item $X$ is a geodesically complete CAT(0) space and $G$ contains a rank-one isometry, see Ricks  \cite{Ricks:2017aa} for proper and co-compact actions,  Link-Picaud \cite{Link:2016dc} for Hadamard manifolds, and Link \cite{Link:2018ue} for the general case.
	\item $X$ is a properly convex domain in $\mathbf P(\R^{d+1})$ and $G \subset {\rm PGL}_{d+1}(\R)$ is a discrete subgroup preserving $X$, see Blayac \cite{Blayac:2025pa}.

\end{itemize}
In a similar spirit, we should also mention that for moduli spaces, the ergodicity of the Teichmüller flow has been proved independently by Veech \cite{Veech:1982aa} and Masur \cite{Masur:1982aa}.

\begin{rema*}
	The Hopf-Tsuji-Sullivan dichotomy also admits generalizations in a higher rank setting, see Burger-Landesberg-Lee-Oh \cite{Burger:2021aa}. 
	However for the purpose of this article we will remain in the context of ``rank-one'' spaces.
\end{rema*}

\subsection{Contracting elements}
In the past decades, there have been many attempts to investigate the marks of negative curvature in groups, beyond hyperbolicity.
Contracting elements is a useful notion in this direction that goes back at least to the work of Minsky \cite{Minsky:1996wx}.
It was later formalized by Algom-Kfir \cite{Algom-Kfir:2011st} and Bestvina-Fujiwara \cite{Bestvina:2009hh}.
Roughly speaking a subset $Y \subset X$ is contracting, if any ball disjoint from $Y$ has a nearest-point projection onto $Y$, whose diameter is uniformly bounded.
An element $g \in G$ is \emph{contracting} if the orbit map $\Z \to X$ sending $n$ to $g^nx$ is a quasi-isometric embedding with contracting image.
Contracting elements can be thought of as hyperbolic directions in the space $X$.
Here are a few examples.
\begin{itemize}
	\item If $X$ is a hyperbolic space (in the sense of Gromov) endowed with a proper action of $G$, then every loxodromic element in $G$ is contracting \cite{Gromov:1987tk}.
	\item Assume that $G$ is hyperbolic relative to $\{P_1, \dots, P_m\}$.
	Suppose that $G$ acts properly, co-compactly on $X$ (in particular, $X$ is not necessarily Gromov hyperbolic).
	Any infinite order element of $G$ which is not conjugated in some $P_i$ is contracting  \cite{Sisto:2012um,Gerasimov:2016uc}.
	\item If $X$ is a proper CAT(0) space endowed then any rank-one isometry is contracting \cite{Bestvina:2009hh}.
	\item Let $\Sigma$ be surface of finite type.
	Assume that $G$ is the mapping class group of $\Sigma$ and $X$ the Teichmüller space of $\Sigma$ endowed with the Teichmüller metric.
	Then any pseudo-Anosov element is contracting \cite{Minsky:1996wx}.
	\item Suppose that $\Omega \subset \mathbf P(\R^{d+1})$ is a properly convex domain endowed with the corresponding Hilbert metric.
	Let $G \subset {\rm PGL}_{d+1}(\R)$ be a discrete group preserving $\Omega$.
	Then any rank-one element of $G$, in the sense of Islam, is contracting. 
	See Islam \cite[Theorem~10.1]{Islam:2025ra}.
\end{itemize}
Groups with a contracting elements are known to be acylindrically hyperbolic, see Sisto \cite{Sisto:2018uc} (the converse direction is still open though).
However one of our motivation to develop this theory is to tackle counting problems.
Thus we do not want to change the metric we are working with.
There exists now a large literature on groups with a (strongly) contracting elements, in particular, extending counting results known to hold in hyperbolic groups (which are of particular interest for our study).
Let us mention for instance the works of Arzhantseva-Cashen-Tao \cite{Arzhantseva:2015cl} and Yang \cite{Yang:2014aa}  studying growth tightness, Legaspi \cite{Legaspi:2024gr} and Arzhantseva-Cashen \cite{Arzhantseva:2020aa} investigating the growth of quasi-convex subgroups and normal subgroups respectively, Yang \cite{Yang:2020ub} proving the genericity of contracting elements, Yang \cite{Yang:2019wa} exploring statistically convex co-compact actions.
Note that, at the time, no ``Patterson-Sullivan theory'' existed in this context.
These authors developed ad hoc methods for each problem.
The first articles laying the foundations of conformal densities for groups with a contracting elements are independently due to the author \cite{Coulon:2022tu} and Yang \cite{Yang:2022aa}.

\subsection{Main result}
The goal of this article is to extend the Hopf-Tsuji-Sullivan dichotomy, whenever $G$ admits a contracting element for its action on $X$, thus providing a common approach for the aforementioned examples.
In order to state precisely the main result, let us introduce first some important objects.
We assume here that $X$ is a proper geodesic metric space and $G$ a group acting properly, by isometries on $X$ with a contracting element.
In addition, we suppose that $G$ is not virtually cyclic.

\paragraph{The coarse unit tangent bundle.}
The \emph{horocompactification} of $X$, denoted by $\bar X$, is the smallest compactification of $X$ such that the map $X \times X \times X \to \R$ sending $(x,y,z)$ to $\dist xz - \dist yz$ extends continuously to a map $X \times X \times \bar X \to \R$.
The horoboundary $\partial X = \bar X \setminus X$ can be seen as a set of cocycles $c \colon X \times X \to \R$, generalizing the notion of Busemann cocycles.

\begin{exam*}
	If $X$ is CAT(0), then its horoboundary coincides with the visual boundary.
	If $X$ is Gromov hyperbolic, then $\partial X$ maps onto the Gromov boundary of $X$ (see below).
\end{exam*}

\noindent
We now consider the sets 
\begin{align*}
	\partial^2 X & = \set{(c,c') \in \partial X \times \partial X}{ \gro c{c'}o < \infty},\\
	SX & = \partial^2 X \times \R,
\end{align*}
(the notation $\gro c{c'}o$ stands for the Gromov product of $c$ and $c'$ at a point $o \in X$ -- see \autoref{sec: gromov product} -- in particular if $(c,c')$ belongs to $\partial^2X$, then $c$ and $c'$ are distinct).
Both spaces come with a natural action of $G$, induced by the one on $X$.
One also defines a flow $(\phi_t)$ on $SX$ by translating the coordinate in the $\R$ factor.
This flow commutes with the action of $G$.
In addition, there is a natural $G$-invariant, flow-invariant map $\mathcal GX \to SX$.

\begin{rema*}
	If $X$ is a geodesically complete, simply-connected manifold with negative sectional curvature, then $SX$ is homeomorphic to the unit tangent bundle of $X$.
	More generally, if $X$ is CAT(-1) then the map $\mathcal GX \to SX$ is a homeomorphism.
	This identification is known as the Hopf coordinates.
\end{rema*}

In our settings, the map $\mathcal GX \to SX$ need not be onto (not any two points in the boundary are joined by a geodesic) nor one-to-one (see \autoref{rem: not proper action}).
Note also that $X$ is not assumed to be geodesically complete.
Nevertheless it turns out that the flow $(\phi_t)$ on $SX/G$ is an efficient analogue of the geodesic flow on a hyperbolic manifold.
In practice, the definition of the quotient space $SX/G$ requires some caution. 
Indeed, in general the action of $G$ on $SX$ is not properly discontinuous (see \autoref{rem: not proper action} and the discussion below).

\paragraph{Bowen-Margulis measure.}
Now that we have described the space, let us explain what measure we will consider.
We follow a classical construction due to Patterson and Sullivan \cite{Patterson:1976hp,Sullivan:1979vs} which has been extended by Knieper in the CAT(0) setting \cite{Knieper:1998vj}.
Fix a base point $o \in X$.
In a previous work \cite{Coulon:2022tu} we built $G$-invariant, conformal densities $\nu = (\nu_x)_{x \in X}$ supported on $\partial X$ (see \autoref{sec: conformal densities}).
First we form a $G$-invariant current $\mu$ on $\partial^2X$ which is absolutely continuous with respect to the product measure $\nu_o \otimes \nu_o$.
Then we endow $SX$ with the measure $m = \mu \otimes ds$ where $ds$ is the Lebesgue measure on $\R$.
Finally we consider the sub-$\sigma$-algebra $\mathfrak L_G$ which consists of all $G$-invariant Borel subsets.
As we mentioned before, the action of $G$ on $SX$ is not necessarily properly discontinuous.
However its restriction to a sufficiently large ``squeezing'' subset $S_0X$ is (see \autoref{def: squeezing}).
We use this subspace to derive from $m$ a measure $m_G$ on $\mathfrak L_G$ which plays the role of the measure on the quotient space $SX/G$.
The latter is our Bowen-Margulis measure.

However, it turns out that the space $SX$ is too large for our purpose.
Assume for instance that the product $X = \H^2 \times \intval 01$ is endowed with the $L^1$-metric and $G$ is a group acting properly by isometries on $\H^2$ and trivially on $\intval 01$.
For every $u \in \intval 01$, the ``layer'' $\H^2/ G \times \{u\} \subset SX/G$ is flow invariant.
Thus if our measure gives positive mass to several layers, one cannot expect to prove that the geodesic flow is ergodic.
Already the horoboundary $\partial X$ is not suitable for the $G$-invariant, conformal densities to behave nicely -- see the discussion in \cite{Coulon:2022tu}.
Motivated by the analogy with Gromov hyperbolic spaces, one could be tempted to replace everywhere $\partial X$ by $\partial X\qsim$, where $\sim$ is the equivalence relation identifying two cocycles $c_1, c_2 \in \partial X$ whenever $\norm[\infty]{c_1 -c_2} < \infty$.
Indeed, in this situation $\partial X \qsim$ is homeomorphic to the Gromov boundary of $X$.
However, as a topological space, the quotient $\partial X\qsim$ can in general be rather wild (it does not have to be Hausdorff for instance).
For this reason, we adopt a \emph{measurable} point of view.
We denote by $\mathfrak B$ the Borel $\sigma$-algebra on $\bar X$ and by $\mathfrak B^+$ the sub-$\sigma$-algebra which consists of all Borel subsets which are saturated for the equivalence relation $\sim$.
We think of the \emph{reduced horoboundary} as the measurable space $(\partial X, \mathfrak B^+)$.
When restricted to $\mathfrak B^+$, $G$-invariant, conformal densities display many features which are typical from hyperbolic geometry, see Coulon  \cite{Coulon:2022tu}.
A similar phenomenon arises for the geodesic flow.
More precisely we endow $SX$ with a suitable $G$-invariant $\sigma$-algebra $\mathfrak L^+$ and then define $\mathfrak L_G^+$ as the collection of all $G$-invariant subsets in $\mathfrak L^+$.
The dynamical system we are really interested in is the geodesic flow $(\phi_t)$ acting on $(SX, \mathfrak L_G^+, m_G)$.

\paragraph{The Hopf-Tsuji-Sullivan dichotomy.}
We are now in a position to state our main result.
Recall that the \emph{Poincaré series} of the group $G$ is
\begin{equation*}
	\mathcal P_G(s) = \sum_{g \in G}e^{-s\dist o{go}}.
\end{equation*}
Its \emph{critical exponent} is denoted by $\omega_G$.
It has the property that $\mathcal P_G(s)$ converges (\resp diverges) whenever $s > \omega_G$ (\resp $s < \omega_G$).
We say that the action of $G$ on $X$ is \emph{divergent} if $\mathcal P_G(s)$ diverges at $s = \omega_G$.
The \emph{radial limit set} of $G$ roughly consists of all ``directions'' $c \in \partial X$ for which there is a sequence $(g_no)$ in the orbit of $G$ that radially converges to $c$, see \autoref{exa: radial limit sets}\ref{enu: radial limit sets - radial} for a precise definition.

\begin{theo}
	Let $X$ be a proper, geodesic, metric space.
	Fix a base point $o \in X$.
	Let $G$ be a group acting properly, by isometries on $X$.
	We assume that $G$ is not virtually cyclic and contains a contracting element.
	Let $\omega \in \R_+$.
	Let $\nu  = (\nu_x)$ be a $G$-invariant, $\omega$-conformal density supported on $\partial X$.
	Let $m_G$ be the associated Bowen-Margulis measure on $(SX, \mathfrak L_G)$.
	Then one of the following two cases holds.
	Moreover, for each one all the stated properties are equivalent.
	\begin{itemize}
		\item \emph{\bfseries Convergent case.}
		\begin{enumerate}[label=(c\arabic{*}), ref=(c\arabic{*})]
			\item \label{eqn: main - hopf-tsuji-sullivan - conv - def}
			The Poincaré series $\mathcal P_G(s)$ converges at $s = \omega$.
			\item \label{eqn: main - hopf-tsuji-sullivan - conv - radial}
			The measure $\nu_o$ gives zero measure to the radial limit set.
			\item \label{eqn: main - hopf-tsuji-sullivan - conv - bdy}
			The diagonal action of $G$ on $(\partial X \times \partial X, \mathfrak B \otimes \mathfrak B, \nu_o \otimes \nu_o)$ is not conservative.
			\item \label{eqn: main - hopf-tsuji-sullivan - conv - dissipative}
			The geodesic flow on $(SX, \mathfrak L_G, m_G)$ is dissipative.
		\end{enumerate}
		
		\item \emph{\bfseries Divergent case.}
		\begin{enumerate}[label=(d\arabic{*}), ref=(d\arabic{*})]
			\item \label{eqn: main - hopf-tsuji-sullivan - div - def}
			The Poincaré series $\mathcal P_G(s)$ diverges at $s = \omega$, hence $\omega = \omega_G$.
			\item \label{eqn: main - hopf-tsuji-sullivan - div - radial}
			The measure $\nu_o$ gives full measure to the radial limit set.
			\item \label{eqn: main - hopf-tsuji-sullivan - div - bdy}
			The diagonal action of $G$ on $(\partial X \times \partial X, \mathfrak B \otimes \mathfrak B, \nu_o \otimes \nu_o)$ is conservative.
			\item \label{eqn: main - hopf-tsuji-sullivan - div - conservative}
			The geodesic flow on $(SX, \mathfrak L_G, m_G)$ is conservative.
			\item \label{eqn: main - hopf-tsuji-sullivan - div - double ergo}
			The diagonal action of $G$ on $(\partial X \times \partial X, \mathfrak B^+ \otimes \mathfrak B^+, \nu_o \otimes \nu_o)$ is ergodic.
			\item \label{eqn: main - hopf-tsuji-sullivan - div - ergo flow}
			The geodesic flow on $(SX, \mathfrak L^+_G, m_G)$ is ergodic.
		\end{enumerate}
	\end{itemize}
\end{theo}

If $X$ is Gromov hyperbolic, the ergodicity of the geodesic flow was proved by Kaimanovich \cite{Kaimanovich:1994wz} under the assumption that every pair of distinct points in the Gromov boundary are joined by a unique bi-infinite geodesic.
This assumption has been removed by Bader-Furman \cite{Bader:2017te} when the action of $G$ on $X$ is proper and co-compact (actually their argument works if the Bowen-Margulis measure is finite).
The general case is covered in Coulon-Dougall-Schapira-Tapie \cite{Coulon:2018va}.
Nevertheless, to the best of our knowledge, even in this rather classical settings, the full dichotomy does not appear in the litterature.

If $X$ is a CAT(0) space, then the (regular) horoboundary and the reduced horoboundary coincide.
Hence we recover the work of Link \cite{Link:2018ue}.
Actually, in the divergent case, we even get a slightly more general result (see below).

Let $\Sigma$ be an orientable surface of finite type, whose mapping class group $G = \mcg \Sigma$  is not virtually cyclic.
Assume that $X$ is the Teichmüller space of $\Sigma$ endowed with the Teichmüller metric.
Recall that every pseudo-Anosov element in $G$ is contracting for its action on $X$.
Thus our work applies in this situation as well.
Arzhantseva, Cashen and Tao \cite[Section~10]{Arzhantseva:2015cl} also observed that the work of Eskin, Mirzakani, and Rafi \cite[Theorem~1.7]{Eskin:2019uw} implies that the action of $G$ on $X$ is strongly positively recurrent (a.k.a. statistically convex co-compact).
Consequently the Poincaré series $\mathcal P_G(s)$ diverges at the critical exponent $s = \omega_G$ -- see Yang \cite{Yang:2019wa} for a geometric proof or Coulon \cite{Coulon:2022tu} for a more dynamical proof inspired by Schapira-Tapie \cite{Schapira:2021ti}.
This provides an alternative point of view on the ergodicity of the Teichmüller flow.
See also Gekhtman \cite{Gekhtman:2013dy}.

We noticed previously that our framework applies for a discrete group acting on a properly convex domain (endowed with the corresponding Hilbert geometry) with a rank-one element.
We recover the ergodicity of the geodesic flow that was proved in this context by Blayac \cite{Blayac:2025pa}.
Actually, it is worth mentioning that Blayac makes an intensive use of the horocompactification to build his invariant conformal densities, just as we do.

\subsection{Strategy}

Let us now say a few words about our strategy.
Although the main scheme follows classical arguments, there are various (sometimes subtle) difficulties that needed be overcome.
In our previous work \cite{Coulon:2022tu} we proved the equivalences \ref{eqn: hopf-tsuji-sullivan - div - def} $\Leftrightarrow$ \ref{eqn: hopf-tsuji-sullivan - div - radial} and  \ref{eqn: hopf-tsuji-sullivan - conv - def} $\Leftrightarrow$ \ref{eqn: hopf-tsuji-sullivan - conv - radial}. 
This part has been obtained independently by Yang \cite{Yang:2022aa}.
All the other equivalences are new with this level of generality.
The relation with conservativity / dissipativity is rather standard --- there are some technicalities that need be addressed though, such as the study of atoms in $(\partial X, \mathfrak B^+, \nu_o)$.

\paragraph{The Hopf argument.}
Our main contribution is the ergodicity of the geodesic flow.
The proof follows the famous Hopf argument \cite{Hopf:1937kk}.
If $X = \mathbf H^n$ is the hyperbolic space, this approach can be summarized as follows.
Fix a positive summable function $\rho \in L^1(SX/G,m_G)$.
Let $f \in L^1(SX/G, m_G)$.
If the geodesic flow is conservative, then the Hopf ergodic Theorem states that for almost every vector $v \in SX/G$, the limit
\begin{equation*}
	\lim_{T \to \pm\infty} 
	\frac{\displaystyle\int_0^T f \circ \phi_t(v)dt}{\displaystyle \int_0^T \rho \circ \phi_t(v) dt} 
\end{equation*}
exists and defines a flow-invariant function $f_\infty$ on $SX/G$.
Geodesics in $\H^n$ satisfy a \emph{convergence property}: if $\gamma_1$ and $\gamma_2$ are two geodesic rays whose endpoints at infinity coincide, then there is $u \in \R$ such that
\begin{equation*}
	\lim_{t \to \infty} \dist {\gamma_1(t)}{\gamma_2(t+u)} = 0
\end{equation*}
(the convergence is even exponentially fast).
This feature can be used to prove that if $f$ is continuous with compact support, then $f_\infty(v)$ actually only depends on the ``future'' of $v$ and by symmetry only on its ``past''.
As $m_G$ is built from a product measure on $\partial^2X$, one deduces with a Fubini argument that $f_\infty$ is essentially constant.
Since continuous functions with compact support are dense in $L^1(SX/G,m_G)$ we can conclude that the geodesic flow is ergodic.

When $X$ is Gromov hyperbolic or CAT(0) this convergence property does not holds anymore.
This is one reason why Kaimanovich \cite[Theorem~2.6]{Kaimanovich:1994wz} and Link \cite[Theorem~A]{Link:2018ue} require an additional assumption.
Kaimanovich asks that any two points in the boundary are joined by a unique geodesic (with some convergence property), while Link assumes that the space $X$ contains a zero-width geodesic (i.e. a geodesic not bounding a flat strip).
Our statement does not make any such hypothesis.
To bypass this difficulty we extend the approach used by Bader-Furman \cite{Bader:2017te} for Gromov hyperbolic spaces.
See also Coulon-Dougall-Schapira-Tapie \cite{Coulon:2018va}.
The idea is to run the Hopf argument on a different dense subset of functions which are well behaved for another measurement of the convergence.
If $X$ is Gromov hyperbolic, the convergence property can roughly be stated as follows.
Consider two bi-infinite geodesics $\gamma_1$ and $\gamma_2$ starting at $\eta_1, \eta_2 \in \partial X$ and ending both at $\xi \in \partial X$.
As $t$ tends to infinity, the visual metric between $\eta_1$ and $\eta_2$ seen from $\gamma_i(t)$ decays exponentially.

In our context, there is probably no hope to build a similar global visual metric on $\partial X$.
(Recall that if $G$ is a non-trivial free product, then the action on its Cayley graph $X$ admits a contracting element.
The space $X$ can therefore be pretty uncontrolled.)
However we know that the measure $\nu_o$ on $\partial X$ gives full measure to the radial limit set.
An important part of the article consists in defining a ``visual metric'' on $\Lambda\qsim$, where $\Lambda$ is a suitable recurrent subset of the radial limit set (see \autoref{res: reduced bdy standard}).
This allows us to run the Hopf argument with $\mathcal E \otimes L^1(\R, ds)$, where $\mathcal E \subset L^1(\Lambda \times \Lambda, \mathfrak B^+ \otimes \mathfrak B^+, \mu)$ consists of functions which are, roughly speaking, locally constant for the visual metric (see \autoref{sec: geodesic flow}).

\begin{rema}
	Another strategy for this last step would have been to use the theory of projection complexes developed by Bestvian-Bromberg-Fujiwara \cite{Bestvina:2015tv} to build a coarsely Lipschitz map from $X$ to a (non locally finite) Gromov hyperbolic graph and then exploit the features of hyperbolic geometry.
	This is for instance the path chosen by Yang \cite{Yang:2022aa} to prove the equivalence \ref{eqn: hopf-tsuji-sullivan - div - def} $\Leftrightarrow$ \ref{eqn: hopf-tsuji-sullivan - div - radial}.
	Nevertheless, we preferred to use visual metrics.
	Indeed, although it is a bit technical, it is fairly simple and only relies on elementary geometric properties of contracting elements.
	Moreover working in a hyperbolic space will not be useful to bypass the subtle difference between the Gromov boundary and the horoboundary, which is central in our work.
\end{rema}

\paragraph{Acknowledgment.}
The author is grateful to the \emph{Centre Henri Lebesgue} ANR-11-LABX-0020-01 for creating an attractive mathematical environment.
He acknowledges support from the Agence Nationale de la Recherche under the grant GOFR (ANR-22-CE40-0004).
The IMB receives support from the EIPHI Graduate School (ANR-17-EURE-0002).
This work started during the trimester program \emph{Groups Acting on Fractals, Hyperbolicity and Self-Similarity} hosted at  the Institut Henri Poincaré (UAR 839 CNRS-Sorbonne Université, LabEx CARMIN, ANR-10-LABX-59-01) in Spring 2022 and was completed at the Centre de Recherche Mathématiques de Montréal (IRL3457) during the \emph{Geometric Group Theory} semester in Spring 2023.
The author thanks these two institutes for their hospitality and support.
He also thanks Ilya Gekhtman, Barbara Schapira and Samuel Tapie for enlightening discussions as well as the anonymous referee for their careful reading and useful comments.

%
\section{Groups with a contracting element}
\label{sec: contracting}
%

\paragraph{Notations and vocabulary.}
In this article $(X,d)$ is always a proper geodesic metric space.
The open ball of radius $r \in \R_+$ centered at $x \in X$ is denoted by $B(x,r)$.
Let $Y$ be a closed subset of $X$.
Given $x \in X$, a point $y \in Y$, is a \emph{projection} of $x$ on $Y$ if $\dist xy = d(x,Y)$.
The \emph{projection} of a subset $Z\subset X$ onto $Y$ is
\begin{equation*}
	\pi_Y(Z) = \set{y \in Y}{ y \ \text{is the projection of a point}\ z \in Z}.
\end{equation*}
Let $\gamma \colon I \to X$ be a continuous path intersecting $Y$.
The \emph{entry point} and \emph{exit point} of $\gamma$ in $Y$ are $\gamma(t)$ and $\gamma(t')$ where
\begin{equation*}
	t = \inf \set{s \in I}{\gamma(s) \in Y} 
	\quad \text{and} \quad
	t' = \sup \set{s \in I}{\gamma(s) \in Y} .
\end{equation*}
If $I$ is a compact interval, such points always exist (the subset $Y$ is closed).
Given $d\in \R_+$, we denote by $\mathcal N_d(Y)$ the \emph{$d$-neighborhood} of $Y$, that is the set of points $x \in X$ such that $d(x,Y) \leq d$.
The distance between two subsets $Y, Y'$ of $X$ is 
\begin{equation*}
	d(Y,Y') = \inf_{(y,y') \in Y \times Y'} \dist y{y'}.
\end{equation*}
We adopt the convention that the diameter of the empty set is zero.

\paragraph{Contracting set.}

\begin{defi}[Contracting set]
	Let $\alpha \in \R_+^*$ and $Y \subset X$ be a closed subset.
	We say that $Y$ is \emph{$\alpha$-contracting} if for any geodesic $\gamma$ with $d(\gamma, Y)\geq \alpha$, we have $\diam{ \pi_Y(\gamma)} \leq \alpha$.
	The set $Y$ is \emph{contracting}, if $Y$ is $\alpha$-contracting for some $\alpha \in \R^*_+$.	
\end{defi}

The next statements are direct consequences of the definition.
Their proofs are left to the reader.

\begin{lemm}
\label{res: qc contracting set}
	Let $Y$ be an $\alpha$-contracting subset.
	If $\gamma$ is a geodesic joining two points of $\mathcal N_\alpha(Y)$, then $\gamma$ lies in the $(5\alpha/2)$-neighborhood of $Y$.
\end{lemm}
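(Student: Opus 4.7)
The plan is to argue that any sub-arc of $\gamma$ on which the distance to $Y$ exceeds $\alpha$ must be short, and then estimate by the triangle inequality. Let $\gamma \colon \intval ab \to X$ be the geodesic, so that $d(\gamma(a),Y)\le\alpha$ and $d(\gamma(b),Y)\le\alpha$. Fix a parameter $u \in \intval ab$ and consider two cases.

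If $d(\gamma(u),Y) < \alpha$, there is nothing to prove, so assume $d(\gamma(u),Y) \geq \alpha$. Let $\intval st$ be the connected component of $\set{r \in \intval ab}{d(\gamma(r),Y)\geq \alpha}$ containing $u$. By continuity of $r \mapsto d(\gamma(r),Y)$ and since the endpoints of $\gamma$ satisfy $d(\gamma(a),Y),\, d(\gamma(b),Y) \le \alpha$, the boundary values force
\begin{equation*}
	d(\gamma(s),Y) = d(\gamma(t),Y) = \alpha.
\end{equation*}

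Now restrict $\gamma$ to $\intval st$: this is a sub-geodesic whose distance to $Y$ is at least $\alpha$, so the contracting hypothesis applies and gives $\diam \pi_Y(\gamma|_{\intval st}) \leq \alpha$. Pick projections $y_s \in \pi_Y(\gamma(s))$ and $y_t \in \pi_Y(\gamma(t))$. Since $\gamma$ is a geodesic, the triangle inequality gives
\begin{equation*}
	t - s = \dist{\gamma(s)}{\gamma(t)} \leq \dist{\gamma(s)}{y_s} + \dist{y_s}{y_t} + \dist{y_t}{\gamma(t)} \leq \alpha + \alpha + \alpha = 3\alpha.
\end{equation*}
Hence $\min\{u-s,\, t-u\} \leq 3\alpha/2$, so $\gamma(u)$ lies within $3\alpha/2$ of either $\gamma(s)$ or $\gamma(t)$, each of which lies within $\alpha$ of $Y$. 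The triangle inequality yields $d(\gamma(u),Y) \leq 3\alpha/2 + \alpha = 5\alpha/2$, as required.

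There is no serious obstacle here: the only subtle point is identifying the maximal sub-interval on which the contracting property can be applied, which is handled by the continuity argument above. The choice of constant $5\alpha/2$ is then dictated precisely by the diameter bound $3\alpha$ on the sub-geodesic together with the $\alpha$-thick collar coming from the projection at the endpoints.
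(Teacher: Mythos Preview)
Your proof is correct and is exactly the standard argument; the paper leaves this lemma as a direct consequence of the definition and supplies no proof, so there is nothing further to compare.
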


\begin{lemm}
\label{res: proj contracting set}
	Let $Y$ be an $\alpha$-contracting subset.
	Let $x,x' \in X$ and $\gamma$ be a geodesic from $x$ to $x'$.
	Let $p$ and $p'$ be respective projections of $x$ and $x'$ onto $Y$.
	If $d(x, Y) < \alpha$ or $\dist p{p'} > \alpha$, then the following holds:
	\begin{enumerate}
		\item $d(\gamma,Y) < \alpha$;
		\item the entry point (\resp exit point) of $\gamma$ in $\mathcal N_\alpha(Y)$ is $2\alpha$-closed to $p$ (\resp $p'$);
		\item $\dist x{x'} \geq \dist xp + \dist p{p'} +  \dist {p'}{x'} - 8\alpha$.
	\end{enumerate}
\end{lemm}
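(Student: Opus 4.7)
The plan is to prove the three conclusions in order: item (1) is the genuine application of the contracting hypothesis, while (2) and (3) follow from (1) by standard triangle-inequality bookkeeping.

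For (1), I would split along the disjunction in the hypothesis. If $d(x,Y) < \alpha$, then since $x$ lies on $\gamma$ we immediately get $d(\gamma,Y) \leq d(x,Y) < \alpha$. If instead $\dist p{p'} > \alpha$, I argue by contradiction: assume $d(\gamma,Y) \geq \alpha$. The $\alpha$-contracting hypothesis then forces $\diam \pi_Y(\gamma) \leq \alpha$. But $p$ is a projection of $x \in \gamma$ and $p'$ is a projection of $x' \in \gamma$, so $\{p,p'\} \subset \pi_Y(\gamma)$, contradicting $\dist p{p'} > \alpha$.

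For (2), let $q$ be the entry point of $\gamma$ in $\mathcal N_\alpha(Y)$. If $x$ already lies in $\mathcal N_\alpha(Y)$, then $q = x$ and $\dist qp = d(x,Y) \leq \alpha$, which is stronger than wanted. Otherwise $d(q,Y) = \alpha$ by continuity and minimality. Consider the subgeodesic $\gamma_1 \subset \gamma$ from $x$ to $q$; by choice of $q$, every interior point of $\gamma_1$ lies at distance $> \alpha$ from $Y$, and $d(q,Y) = \alpha$, hence $d(\gamma_1, Y) = \alpha$. The contracting hypothesis applies to $\gamma_1$, giving $\diam \pi_Y(\gamma_1) \leq \alpha$. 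Pick any projection $q_Y$ of $q$ onto $Y$; then $q_Y \in \pi_Y(\gamma_1)$ and $p \in \pi_Y(\gamma_1)$, so $\dist p{q_Y} \leq \alpha$, and combined with $\dist q{q_Y} = \alpha$ we get $\dist qp \leq 2\alpha$. The argument for the exit point is symmetric.

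For (3), let $q, q'$ denote the entry and exit points of $\gamma$ in $\mathcal N_\alpha(Y)$. Since these lie on the geodesic $\gamma$ in that order, $\dist x{x'} = \dist xq + \dist q{q'} + \dist{q'}{x'}$. Using (2) and the triangle inequality we get
\begin{equation*}
    \dist xq \geq \dist xp - 2\alpha, \quad \dist{q'}{x'} \geq \dist{p'}{x'} - 2\alpha, \quad \dist q{q'} \geq \dist p{p'} - 4\alpha,
\end{equation*}
and summing yields $\dist x{x'} \geq \dist xp + \dist p{p'} + \dist{p'}{x'} - 8\alpha$.

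Honestly, no step is a genuine obstacle once (1) is established. The only subtlety worth care is the edge case in (2): one must make sure that the subgeodesic $\gamma_1$ satisfies $d(\gamma_1,Y) \geq \alpha$ with the correct (non-strict) inequality required by the definition, which is why taking $q$ as the first point with $d(q,Y) = \alpha$ rather than a point slightly further inside the neighborhood is the right normalization.
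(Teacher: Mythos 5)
Your proof is correct, and since the paper explicitly leaves this lemma to the reader as a "direct consequence of the definition," your argument is exactly the intended one: apply the contracting property to the subgeodesic between $x$ (resp.\ $x'$) and the entry (resp.\ exit) point to deduce (2), and then derive (3) by splitting $\gamma$ at the entry and exit points and applying the triangle inequality. The careful normalization of the entry point so that $d(q,Y) = \alpha$ exactly (so that the contracting hypothesis applies with the required non-strict inequality) is the only genuine subtlety, and you handle it correctly.
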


\begin{rema}
\label{rem: Lipschitz proj}
	The last inequality can be improved if $x$ or $x'$ already belongs to $Y$.
	It follows from the above statement that the nearest-point projection onto $Y$ is large-scale $1$-Lipschitz.
	More precisely, if $Z \subset X$ is a subset then $\diam {\pi_Y(Z)} \leq \diam Z + 4\alpha$.
\end{rema}

\begin{lemm}
\label{res: intersection vs projection}
	Let $Y, Y' \subset X$ be two $\alpha$-contracting sets.
	For every $A \in \R_+$, the following inequalities hold:
	\begin{align*}
		\diam{\mathcal N_A(Y) \cap \mathcal N_A(Y')}
		& \leq \diam{\mathcal N_{5\alpha/2}(Y) \cap \mathcal N_{5\alpha/2}(Y') }+ 2A + 4\alpha, \\
		\diam { \mathcal N_{5\alpha/2}(Y) \cap \mathcal N_{5\alpha/2}(Y')}
		& \leq \diam{ \pi_Y(Y')} + 14\alpha.
	\end{align*}
\end{lemm}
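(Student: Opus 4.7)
My plan for the first inequality is to fix $x, x' \in \mathcal N_A(Y) \cap \mathcal N_A(Y')$, let $p, p' \in Y$ and $q, q' \in Y'$ be the respective projections of $x$ and $x'$, and split on the sizes of $\dist{p}{p'}$ and $\dist{q}{q'}$. If one of these is at most $\alpha$, the triangle inequality immediately yields $\dist x{x'} \leq 2A + \alpha$ and the claim is trivial. Otherwise both exceed $\alpha$, so I can apply \autoref{res: proj contracting set} to the geodesic $\gamma$ from $x$ to $x'$ with respect to each of $Y$ and $Y'$. Each application supplies entry and exit points of $\gamma$ in $\mathcal N_\alpha$ that are $2\alpha$-close to the relevant projection points; combined with $\dist xp, \dist{x'}{p'} \leq A$ (and the analogous bounds for $Y'$), this forces the entry to lie at arclength at most $A + 2\alpha$ from $x$ on $\gamma$, and symmetrically for the exit. \autoref{res: qc contracting set} then places the two intermediate sub-arcs inside $\mathcal N_{5\alpha/2}(Y)$ and $\mathcal N_{5\alpha/2}(Y')$ respectively.

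A short interval-overlap computation finishes the first inequality: as soon as $\dist x{x'} > 2A + 4\alpha$, the two sub-arcs must overlap on $\gamma$, and their intersection is a sub-arc of length at least $\dist x{x'} - 2A - 4\alpha$ contained in $\mathcal N_{5\alpha/2}(Y) \cap \mathcal N_{5\alpha/2}(Y')$; its two endpoints therefore witness the desired bound on the diameter.

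For the second inequality, I would fix $x, x' \in \mathcal N_{5\alpha/2}(Y) \cap \mathcal N_{5\alpha/2}(Y')$, set $q = \pi_{Y'}(x)$, $q' = \pi_{Y'}(x')$, and then $r = \pi_Y(q)$, $r' = \pi_Y(q')$, which lie in $\pi_Y(Y')$. A short triangle-inequality chain using $d(q, Y) \leq \dist qx + \dist x{\pi_Y(x)} \leq 5\alpha$ gives $\dist xr \leq \dist xq + \dist qr \leq 15\alpha/2$ and symmetrically $\dist{x'}{r'} \leq 15\alpha/2$, whence $\dist x{x'} \leq \diam \pi_Y(Y') + 15\alpha$. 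The sharper $14\alpha$ constant stated in the lemma should emerge from refining one of the triangle steps by exploiting that $r$ already sits in the $5\alpha/2$-neighborhood of $Y$ by construction, or by reorganizing the chain through $p = \pi_Y(x)$ when $\dist pr > \alpha$ and invoking the last clause of \autoref{res: proj contracting set}.

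The hardest part is the arclength bookkeeping in the first inequality: one must carefully track the additive constants coming from \autoref{res: proj contracting set} so that the two $\mathcal N_{5\alpha/2}$ sub-arcs are provably forced to overlap exactly once $\dist x{x'} > 2A + 4\alpha$. The second inequality, by contrast, is essentially a two-step projection followed by the triangle inequality, with the only subtlety being the tightening needed to reach the constant $14\alpha$ rather than $15\alpha$.
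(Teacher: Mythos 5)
The paper leaves this lemma without proof (``Their proofs are left to the reader''), so there is no official argument to compare against; the assessment below treats your proposal on its own terms.

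Your proof of the first inequality is complete and correct. The case $\min\{\dist p{p'}, \dist q{q'}\}\le\alpha$ gives $\dist x{x'}\le 2A+\alpha$ directly, and otherwise \autoref{res: proj contracting set} places the entry and exit of $\gamma$ in $\mathcal N_\alpha(Y)$ at arclength within $A+2\alpha$ of each endpoint (and likewise for $Y'$), while \autoref{res: qc contracting set} puts the corresponding sub-arcs inside the $(5\alpha/2)$-neighbourhoods; once $\dist x{x'}>2A+4\alpha$ the two parameter intervals $[t_1,t_2]$, $[s_1,s_2]$ overlap in a sub-arc of length at least $\dist x{x'}-2A-4\alpha$ whose endpoints witness the bound. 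The bookkeeping you flagged as the ``hardest part'' does close.

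For the second inequality your chain cleanly delivers $\diam\pi_Y(Y')+15\alpha$, and you leave the improvement to $14\alpha$ as a sketch. That sketch is correct and in fact over-delivers. Take $p=\pi_Y(x)$, $q=\pi_{Y'}(x)$, $r=\pi_Y(q)\in\pi_Y(Y')$ and set $d=d(q,Y)=\dist qr\le 5\alpha$. If $\dist pr\le\alpha$, then $\dist xr\le\dist xp+\dist pr\le 7\alpha/2$. If $\dist pr>\alpha$, apply the last clause of \autoref{res: proj contracting set} to a geodesic from $x$ to $q$ with respect to $Y$: it gives $\dist xq\ge\dist xp+\dist pr+\dist rq-8\alpha$, hence
\begin{equation*}
\dist xr\ \le\ \dist xp+\dist pr\ \le\ \dist xq-\dist rq+8\alpha\ \le\ \tfrac{21\alpha}2 - d,
\end{equation*}
while the plain triangle inequality gives $\dist xr\le\dist xq+\dist qr\le\tfrac{5\alpha}2+d$. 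Taking the smaller of the two bounds, the worst case is $d=4\alpha$, where both read $\dist xr\le 13\alpha/2$. Thus $\dist xr\le 13\alpha/2$ in every case, and symmetrically $\dist{x'}{r'}\le 13\alpha/2$, giving $\dist x{x'}\le\diam\pi_Y(Y')+13\alpha\le\diam\pi_Y(Y')+14\alpha$. In short: your outline is sound, and the refinement you gesture at does work — it is worth writing out, since the direct chain alone stops one $\alpha$ short of the stated constant.
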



\paragraph{Contracting element.}
Let $o \in X$ be a base point.
Consider a group $G$ acting properly by isometries on $X$.
For every $r \in \R_+$ we denote by $B_G(r)$ the ball of radius $r$ in $G$ for the pseudo-metric induced by $X$, that is 
\begin{equation}
\label{eqn: induced ball}
	B_G(r) = \set{g \in G}{\dist o{go} \leq r}.
\end{equation}
Similarly, given $r, a \in \R_+$, the induced sphere of radius $r$ in $G$ is 
\begin{equation}
\label{eqn: induced sphere}
	S_G(r, a) = \set{g \in G}{r - a \leq \dist o{go} < r + a}.
\end{equation}

\begin{defi}[Contracting element]
	An element $g \in G$ is \emph{contracting}, for its action on $X$, if the orbit map $\Z \to X$ sending $n$ to $g^nx$ is a quasi-isometric embedding with contracting image.
\end{defi}

Let $g \in G$ be a contracting element.
We call \emph{axis} of $g$ any closed subset $Y \subset X$ such that the Hausdorff distance between $Y$ and some (hence any) orbit of $\group g$ is finite.
By definition it is quasi-isometric to $\R$.   
Define $E(g)$ as the set of elements $u \in G$ such that the Hausdorff distance between $Y$ and $uY$ is finite.
It follows from the definition that $E(g)$ is a subgroup of $G$ that does not depend on the axis $Y$.
It is the maximal, virtually cyclic subgroup of $G$ containing $\group g$.
Moreover, $E(g)$ is almost-malnormal, that is $uE(g)u^{-1} \cap E(g)$ is finite, for every $u \in G \setminus E(g)$.
There is $C \in \R_+$ such that for every $u \in G$,
\begin{itemize}
	\item $\diam{\pi_Y(uY)} \leq C$, if $u \notin E(g)$;
	\item the Hausdorff distance between $Y$ and $uY$ is at most $C$, if $u \in E(g)$;
\end{itemize}
(of course the parameter $C$ does depend on the chosen axis $Y$).
See Yang \cite[Lemma~2.11]{Yang:2019wa}.

\begin{prop}
\label{res: suff condition contracting}
	Let $g \in G$ be a contracting element and $Y$ an axis of $g$.
	There is $D\in \R_+$ with the following property: if $H \subset G$ is a subgroup that does not contain a contracting element, then the projection on $Y$ of 
	\begin{equation*}
		\bigcup_{h \in H \setminus E(g)} hY 
	\end{equation*}
	has diameter at most $D$.
\end{prop}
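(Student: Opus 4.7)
The plan is to argue by contraposition: I will show that if the projection of $\bigcup_{h \in H \setminus E(g)} hY$ onto $Y$ has diameter larger than some constant $D$ to be calibrated, then $H$ must contain a contracting element. Assume such $D$; pick $h_1, h_2 \in H \setminus E(g)$ and $q_i \in h_i Y$ whose projections $p_i := \pi_Y(q_i) \in \pi_Y(h_i Y)$ satisfy $\dist{p_1}{p_2} \geq D$. The candidate contracting element in $H$ is $\varphi := h_2 h_1^{-1}$.

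A first reduction is that $h_1^{-1} h_2 \notin E(g)$: otherwise $h_1 Y$ and $h_2 Y$ would lie within Hausdorff distance $C$, so by the Lipschitz property of the projection (\autoref{rem: Lipschitz proj}) the sets $\pi_Y(h_1 Y)$ and $\pi_Y(h_2 Y)$ would sit within distance $C + 4\alpha$ of each other, contradicting $\dist{p_1}{p_2} \geq D$ as soon as $D > C + 4\alpha$. By the dichotomy recalled before the proposition, $\diam \pi_Y(h_1^{-1} h_2 Y) \leq C$, and symmetrically $\diam \pi_Y(h_2^{-1} h_1 Y) \leq C$.

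I then set up an iteration. For $n \in \Z$, let $Y_n := \varphi^n h_1 Y$, so that $Y_0 = h_1 Y$, $Y_1 = h_2 Y$, and $\varphi \cdot Y_n = Y_{n+1}$. Each $Y_n$ is a translate of $Y$, hence is $\alpha$-contracting, and translating the bound above gives $\diam \pi_{Y_n}(Y_{n \pm 1}) \leq C$ uniformly in $n$. Setting $\hat q_n := \varphi^n q_1 \in Y_n$ and applying \autoref{res: proj contracting set} to the geodesic $[q_1, q_2]$ with the contracting set $h_1 Y$ (using that $d(q_1, h_1 Y) = 0 < \alpha$), I get that $\pi_{h_1 Y}(q_2)$ lies within $8\alpha$ of $\hat q_0 = q_1$; symmetrically $\pi_{h_2 Y}(q_1)$ is within $8\alpha$ of $q_2$. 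A parallel argument applied to the geodesic $[\hat q_{-1}, \hat q_0]$ shows that $\pi_{h_1 Y}(\hat q_{-1})$ is within $8\alpha$ of $\hat q_0$. Transporting by powers of $\varphi$, I obtain $\pi_{Y_n}(Y_{n \pm 1}) \subset B(\hat q_n, 8\alpha + C)$. Meanwhile \autoref{res: proj contracting set} applied with $Y$ itself to $[q_1, \varphi q_1]$ (whose projections on $Y$ are $p_1$ and a point within $C$ of $p_2$) gives $\dist{\hat q_n}{\hat q_{n+1}} \geq D - C - 8\alpha$.

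These estimates are precisely those required by the admissible-path / extension-lemma machinery for contracting sets (following Yang~\cite{Yang:2019wa}): for $D$ taken sufficiently large in terms of $\alpha$ and $C$, the broken path $\bigcup_n [\hat q_n, \hat q_{n+1}]$ is a quasi-geodesic on which $\varphi$ acts by translation, and it remains uniformly close to $\bigcup_n Y_n$, hence is contracting. Therefore $\varphi \in H$ is a contracting element, contradicting the hypothesis on $H$. The step I anticipate as the main obstacle is the quasi-geodesicity of this broken path: one has to rule out macroscopic backtracking between consecutive segments by combining the projection bound $\diam \pi_{Y_n}(Y_{n \pm 1}) \leq C$ with the distance estimate of \autoref{res: proj contracting set}, and then upgrade the resulting quasi-geodesic to a contracting one by exploiting that its image remains uniformly close to the $\alpha$-contracting sets $Y_n$.
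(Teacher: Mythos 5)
Your overall scheme matches the paper's: argue by contraposition, form $\varphi=h_2h_1^{-1}$ (a conjugate of the paper's $h=h_2^{-1}h_1$), and exhibit a $\langle\varphi\rangle$-invariant quasi-geodesic fellow-traveling the translated axes to conclude $\varphi$ is contracting. The paper differs mainly in bookkeeping: it isolates a self-contained ``Claim'' — any $h\in G\setminus E(g)$ with $d\bigl(\pi_\gamma(h^{-1}\gamma),\pi_\gamma(h\gamma)\bigr)$ large is contracting — and then verifies this hypothesis for $h=h_2^{-1}h_1$ using the Claim \emph{also} for $h_1,h_2$ themselves.

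There is, however, a genuine gap, and it is precisely in the step you flag as the obstacle. First, the proximity estimates do not follow from the cited lemma: applying \autoref{res: proj contracting set} to $[q_1,q_2]$ with $h_1Y$ only yields $\dist{q_1}{q_2}\geq \dist{q_1}{s}+\dist{s}{q_2}-8\alpha$ where $s=\pi_{h_1Y}(q_2)$, i.e.\ that $s$ lies roughly \emph{along} $[q_1,q_2]$ — not that $s$ is within $8\alpha$ of the endpoint $q_1$. (With $q_1$ only constrained to project to $p_1$, it can sit arbitrarily far out on $h_1Y$, while $s$ stays near the foot of $h_1Y$ on $Y$.) Second, and more seriously, your package of estimates cannot rule out backtracking even after this is fixed. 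You place \emph{both} $\pi_{Y_0}(Y_1)$ and $\pi_{Y_0}(Y_{-1})$ in $B(\hat q_0,8\alpha+C)$, i.e.\ near the same point of $Y_0$: that is the signature of a chain that bounces off $Y_0$, not one that traverses it. Translating by $h_1^{-1}$, the non-backtracking condition is exactly that $\pi_Y(hY)$ and $\pi_Y(h^{-1}Y)$ are far apart, which is the Claim's hypothesis for $h=h_2^{-1}h_1$. This is \emph{not} a consequence of $\pi_Y(h_1Y)$ and $\pi_Y(h_2Y)$ being far apart; the paper deduces it by also invoking that $h_1$ and $h_2$ are \emph{individually} non-contracting (so $\pi_\gamma(h_i^{-1}\gamma)$ is within $L$ of $\pi_\gamma(h_i\gamma)$ by the Claim), and then running the projection estimates. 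Your proposal never uses that $h_1$ and $h_2$ themselves are non-contracting, and without that input the quasi-geodesicity of the broken path does not follow from the quantities you assembled.
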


\begin{proof}
	Since the statement is of qualitative nature, we only sketch the proof. 
	In particular, all our estimates will be very generous.
	We refer the reader to Arzhantseva-Cashen-Tao \cite[section~3]{Arzhantseva:2015cl} or Yang \cite[Section~2.3]{Yang:2019wa} where similar arguments are run with precise computations.
	Without loss of generality we can assume that $Y$ is a bi-infinite geodesic $\gamma \colon \R \to X$.
	We fix $\alpha \in \R_+^*$ such that $\gamma$ is $\alpha$-contracting and $\diam{\pi_\gamma(u\gamma)} \leq \alpha$, for every $u \in G \setminus E(g)$.
	We begin with the following claim.
	
	\begin{clai}
		There is $L\in\R_+$ such that for every $h\in G\setminus E(g)$, if
		\begin{equation*}
			d\left(\pi_\gamma(h^{-1}\gamma), \pi_\gamma(h\gamma)\right) > L,
		\end{equation*}
		then $h$ is contracting.
	\end{clai}
	
	Up to changing the parametrization of $\gamma$ we can assume that the projection on $\gamma$ of $h^{-1}\gamma$ and $h \gamma$ are respectively contained in $\gamma(I_-)$  and $\gamma(I_+)$ where $I_- = \intval {-\alpha}0$ and $I_+ =\intval \ell{\ell + \alpha}$ for some $\ell \in [L, \infty)$.
	Let $m = \gamma(\ell /2)$.
	By construction any projection of $h^{-1}m$ (\resp $hm$) on $\gamma$ belongs to $\gamma(I_-)$ (\resp $\gamma(I_+)$).
	Hence if $L$ is sufficiently large then $m$ is $5\alpha$-close to any geodesic from $h^{-1}m$ to $hm$.
	Denote by $\nu \colon \intval 0T \to X$ a geodesic from $h^{-1}m$ to $m$.
	Note that the concatenation of $\nu$ and $h\nu$ is ``almost'' a geodesic from $h^{-1}m$ to $hm$ (see \autoref{fig: path nu}).
	We extend $\nu$ to a bi-infinite $\group h$-invariant path, still denoted by $\nu \colon \R \to X$, which is characterized as follows: $\nu(t + kT) = h^k \nu(t)$ for every $t \in \R$ and $k \in \Z$.
	One proves that $\nu$ is a quasi-geodesic which  fellow-travels for a long time with $h^k\gamma$, for every $k \in \Z$.
	This suffices to show that $h$ is contracting and completes the proof of our claim.

	\begin{figure}[htbp]
		\begin{center}
			\labellist
			\small\hair 2pt
			 \pinlabel {$h^{-1}m$} [ ] at 175 125
			 \pinlabel {$m$} [ ] at 273 47
			 \pinlabel {$hm$} [ ] at 495 127
			 \pinlabel {$\pi_\gamma(h\gamma)$} [t] at 425 12
			 \pinlabel {$\pi_\gamma(h^{-1}\gamma)$} [t] at 136 10
			 \pinlabel {$\nu$} [ ] at 163 61
			 \pinlabel {$h\gamma$} [ ] at 365 152
			 \pinlabel {$h^{-1}\gamma$} [ ] at 37 99
			 \pinlabel {$\gamma$} [ ] at 489 27
			\endlabellist
			\includegraphics[width=\linewidth]{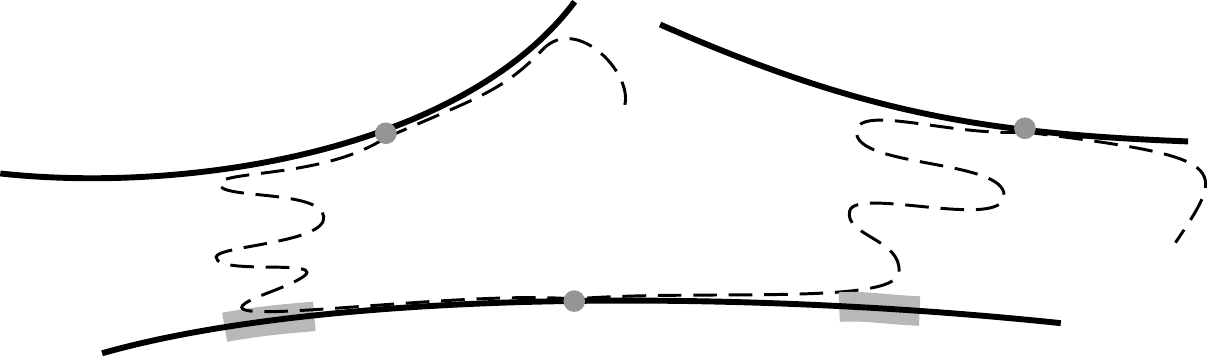}
			\vskip 5mm
			\caption{The dashed line represents the map $\nu$}
			\label{fig: path nu}
		\end{center}
	\end{figure}

	\medskip
	We now fix a set $S \subset H$ of representatives of $H / H \cap E(g)$.
	Recall that the Hausdorff distance between $\gamma$ and $u\gamma$ is uniformly bounded as $u$ runs over $E(g)$.
	Hence it suffices to bound the diameter of the projection on $\gamma$ of the set
	\begin{equation*}
		Z = \bigcup_{h \in S \setminus \{1\}} h\gamma
	\end{equation*}
	Let $h_1,h_2 \in S \setminus\{1\}$ be two distinct elements.
	Without loss of generality we can assume that
	\begin{equation*}
		d\left( \pi_\gamma(h_1\gamma), \pi_\gamma(h_2\gamma)\right)  \geq 100\alpha.
	\end{equation*}
	Set $h = h_2^{-1}h_1$.
	By construction,
	\begin{equation*}
		d\left(  \pi_{h_1^{-1}\gamma}\left(\gamma\right),  \pi_{h_1^{-1}\gamma}\left(h^{-1}\gamma\right)\right)
		\geq d\left(  \pi_{\gamma}\left(h_1\gamma\right),  \pi_{\gamma}\left(h_2\gamma\right)\right)
		 \geq 100\alpha
	\end{equation*}
	One shows using \autoref{res: proj contracting set} that the projection of $h^{-1}\gamma$ on $\gamma$ lies in the $10\alpha$-neihgbourhood of $\pi_\gamma(h_1^{-1}\gamma)$ (see \autoref{fig: axes}).
	\begin{figure}[htbp]
		\begin{center}
			\vskip 5mm
			\labellist
			\tiny\hair 2pt
			 \pinlabel {$\pi_\gamma(h_2\gamma)$} [ ] at 655 50
			 \pinlabel {$\gamma$} [ ] at 690 15
			 \pinlabel {$\pi_\gamma\left(h_2^{-1}\gamma\right)$} [ ] at 540 40
			 \pinlabel {$\pi_\gamma\left(h_1^{-1}\gamma\right)$} [ ] at 338 50
			 \pinlabel {$\pi_\gamma\left(h_1\gamma\right)$} [ ] at 220 60
			 \pinlabel {$h_2^{-1}\gamma$} [ ] at 510 210
			 \pinlabel {$h_1^{-1}\gamma$} [ ] at 420 205
			 \pinlabel {$\pi_{h_2^{-1}\gamma}\left(\gamma\right)$} [ ] at 620 180
			 \pinlabel {$\pi_{h_2^{-1}\gamma}\left(h\gamma\right)$} [ ] at 760 180
			 \pinlabel {$h\gamma$} [ ] at 890 128
			 \pinlabel {$\pi_{h_1^{-1}\gamma}\left(\gamma\right)$} [ ] at 310 185
			 \pinlabel {$\pi_{h_1^{-1}\gamma}\left(h^{-1}\gamma\right)$} [bl] at 140 200
			 \pinlabel {$h^{-1}\gamma$} [ ] at 27 138
			\endlabellist
			\vskip 5mm
			\includegraphics[width=\linewidth]{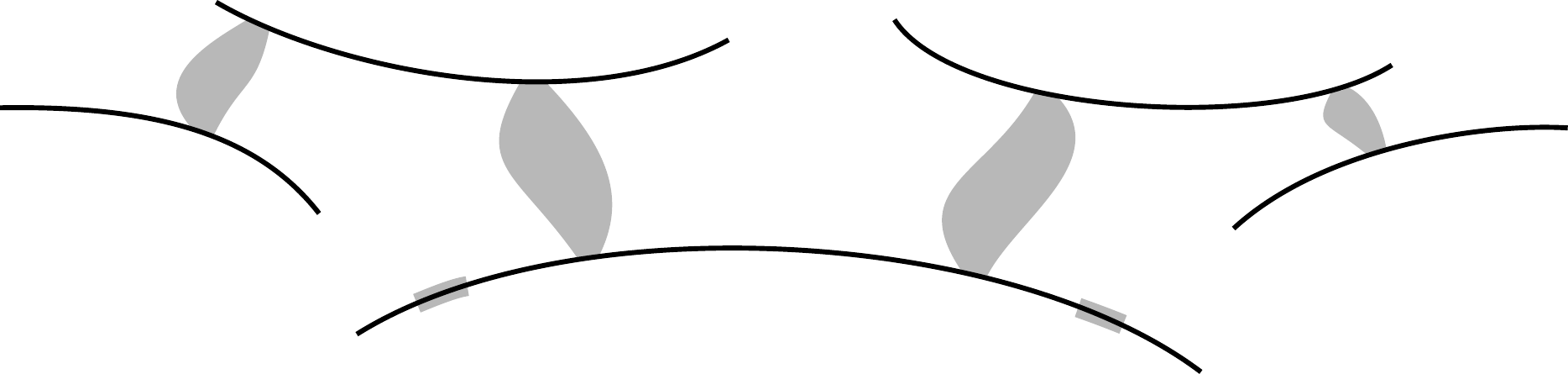}
			\caption{Projections of axes}
			\label{fig: axes}
		\end{center}
	\end{figure}
	Similarly one observes that 
	\begin{equation*}
		d\left(  \pi_{h_2^{-1}\gamma}\left(\gamma\right),  \pi_{h_2^{-1}\gamma}\left(h\gamma\right)\right)
		\geq d\left(  \pi_{\gamma}\left(h_2\gamma\right),  \pi_{\gamma}\left(h_1\gamma\right)\right)
		 \geq 100\alpha
	\end{equation*}
	and proves that the projection of $h\gamma$ on $\gamma$ lies in the $10\alpha$-neihgbourhood of $\pi_\gamma(h_2^{-1}\gamma)$.
	In view of our previous claim
	\begin{equation*}
		d\left(\pi_\gamma(h_i^{-1}\gamma), \pi_\gamma(h_i\gamma)\right) \leq L, 
		\quad \forall i \in \{1, 2\}.
	\end{equation*}
	Consequently
	\begin{align*}
		d\left(\pi_\gamma(h^{-1}\gamma), \pi_\gamma(h\gamma)\right)
		& \geq d\left(\pi_\gamma(h_1^{-1}\gamma), \pi_\gamma(h_2^{-1}\gamma)\right)  - 50\alpha \\
		& \geq d\left( \pi_\gamma(h_1\gamma), \pi_\gamma(h_2\gamma)\right)  - 2L - 100\alpha.
	\end{align*}
	Since $h_1$ and $h_2$ are distinct, $h$ does not belong to $E(g)$.
	It is neither contracting by assumption.
	It follows from our previous claim that the projection of $Z$ on $\gamma$ has diameter at most $3L + 200\alpha$.
\end{proof}

\section{Compactification of $X$}

\subsection{Horocompactification}
Let $C(X)$ be the set of all continuous functions on $X$ endowed with the topology of uniform convergence on every compact subset.
We denote by $C^*(X)$ the quotient of $C(X)$ by the subspace consisting of all constant functions.
It is endowed with the quotient topology.
Alternatively $C^*(X)$ is the set of continuous cocycles $c \colon X \times X \to \R$.
By \emph{cocycle} we mean that $c(x,z) = c(x,y) + c(y,z)$, for every $x,y,z \in X$.
For example, given $z \in X$, we define the cocycle $b_z \colon X \times X \to \R$ by 
\begin{equation*}
	b_z(x,y) = \dist xz - \dist yz, \quad \forall x,y \in X.
\end{equation*}
Since $X$ is geodesic, the map
\begin{equation*}
	\begin{array}{rccc}
		\iota \colon & X & \to & C^*(X) \\
			& z & \mapsto & b_z
	\end{array}
\end{equation*}
is a homeomorphism from $X$ onto its image.

\begin{defi}[Horoboundary]
	The \emph{horocompactification} $\bar X$ of $X$ it the closure of $\iota(X)$ in $C^*(X)$.
	The \emph{horoboundary} of $X$ is the set $\partial X = \bar X \setminus \iota(X)$.
\end{defi}

From now on, we often identify $X$ and its image in $C^*(X)$.

\begin{exam}
\label{exa: busemann cocycle}
	Any geodesic ray $\gamma \colon \R_+\to X$ defines a \emph{Busemann cocycle} $c'_\gamma  \in \partial X$ given by 
	\begin{equation*}
		c'_\gamma(x,y) = \lim_{t \to +\infty} \dist x{\gamma(t)} - \dist y{\gamma(t)},
		\quad \forall x,y \in X.
	\end{equation*}
	If $\gamma \colon \R \to X$ is a bi-infinite geodesic, we also denote by $c_\gamma$ the Busemann cocycle associated to the reverse path $\bar \gamma$, that is such that $\bar \gamma(t) = \gamma(-t)$, for all $t \in \R$.
	We think of $\gamma$ as a bi-infinite geodesic going from $c_\gamma$ to $c'_\gamma$.
\end{exam}

By construction, every cocycle $c \in \bar X$ is $1$-Lipschitz, or equivalently $\abs{c(x,x')} \leq \dist x{x'}$, for every $x,x' \in X$.
Recall that $X$ is proper.
It is a consequence of the Azerla-Ascoli theorem, that the horocompactification $\bar X$ is indeed a compact set.
We denote by $\mathfrak B$ the Borel $\sigma$-algebra on $\bar  X$.
Any isometry $g$ of $X$ extends to a homeomorphism of $\bar X$ as follows: for every $c \in \bar X$, the cocycle $gc$ is defined by 
\begin{equation*}
	gc(x,y) = c(g^{-1}x,g^{-1}y), \quad \forall x,y \in X.
\end{equation*}

\begin{defi}
	Let $c \in \bar X$.
	A \emph{gradient arc} for $c$ is a path $\nu \colon I \to X$ parametrized by arc length such that $c(\nu(s),\nu(t)) = t - s$, for every $s,t \in I$.
	If $I$ has the form $[a, \infty)$ or $I = \R$, we use the word \emph{gradient ray} and \emph{gradient line} respectively.
\end{defi}

Given a gradient arc $\nu \colon I \to X$ for $c$, we say that $\nu$ \emph{ends at $c$} if 
\begin{itemize}
	\setlength\itemsep{0em}
	\item either $c = \iota(z)$ for some $z \in X$ and $\nu$ ends at $z$,
	\item or $c \in \partial X$ and $I$ is not bounded from above.
\end{itemize}

\begin{defi}
\label{res: complete gradient arc}
	Let $c, c' \in \bar X$.
	A \emph{complete gradient arc} from $c$ to $c'$ is a gradient arc $\nu \colon I \to X$ for $c'$, ending at $c'$ such that the reverse path $\bar \nu$ given by $\bar \nu(t) = \nu(-t)$ is a gradient arc for $c$, ending at $c$.
\end{defi}

\noindent
If both $c$ and $c'$ belong to $X$, then any complete gradient arc from $c$ to $c'$ is just a geodesic from $c$ to $c'$.
If $c$ or $c'$ belongs to $X$, a complete gradient arc from $c$ to $c'$ always exists, see \cite[Lemma~3.4]{Coulon:2022tu}.
We will discuss later the existence of complete gradient arcs joining two points in the horoboundary, see \autoref{res: existence bi-infinite gradient line}.

Note that existence of gradient lines implies that $\iota(X)$ is open in $\bar X$.
Indeed, let $z\in X$ and set $b_z = \iota(z)$.
Fix $\epsilon\in \R_+^*$. 
Denote by $K$ the closed ball of radius $\epsilon$ centered at $z$.
The set $U$ of elements $c \in \bar X$ such that $\norm[K]{b_z -c} < \epsilon$ is an open neighborhood of $b_z$ in $\bar X$.
Consider now $c \in \partial X$ and choose a gradient ray $\nu \colon \R_+ \to X$ for $c$ starting at $z$.
By construction,
\begin{equation*}
	b_z(\nu(0), \nu(t)) = - t
	\quad \text{and} \quad 
	c(\nu(0), \nu(t)) = t, \quad \text{for all}\ t \in \R_+, 
\end{equation*}
so that $\norm[K]{b_z - c} \geq 2\epsilon$.
Consequently $U \cap \partial X = \emptyset$, hence $U$ is contained in $\iota(X)$ which completes the proof of our claim.

%
\subsection{Reduced horoboundary}
\label{sec: reduced horoboundary}
%

Given a cocycle $c \in C^*(X)$ we write $\norm[\infty]c$ for its uniform norm, i.e.
\begin{equation*}
	\norm[\infty]c = \sup_{x,x' \in X} \abs{c(x,x')}.
\end{equation*}
Similarly, if $K \subset X$ is compact, then $\norm[K]c$ is the uniform norm of $c$ restricted to $K \times K$.
Note that $\norm[\infty]c$ can be infinite.
We endow $\bar X$ with a binary relation: two cocycles $c,c' \in \bar X$ are \emph{equivalent}, and we write $c \sim c'$, if one of the following holds:
\begin{itemize}
	\item either $c,c'$ lie in the image of $\iota \colon X \to C^*(X)$ and $c = c'$,
	\item or $c,c' \in \partial X$ and $\norm[\infty] {c-c'} < \infty$.
\end{itemize}
If $c \in \bar X$, we write $[c]$ for its equivalence class.

Given a subset $B \subset \bar X$, the \emph{saturation} of $B$, denoted by $B^+$, is the union of all equivalence classes intersecting $B$.
We say that $B$ is \emph{saturated} if $B^+ = B$.
Note that the collection of saturated subsets is closed under complement and (uncountable) union and intersection.
The \emph{reduced algebra}, denoted by $\mathfrak B^+$, is the sub-$\sigma$-algebra  of $\mathfrak B$ which consists of all saturated Borel subsets.
The saturation of any closed set belongs to $\mathfrak B^+$, see \cite[Lemma~3.7]{Coulon:2022tu}.

\begin{rema}
	In this article we often make the following abuse of notations:
	If $(\Omega, \mathfrak A)$ is a measurable space, and $\Omega_0$ a subset of $\Omega$, we write $(\Omega_0, \mathfrak A)$ for the measurable space $\Omega_0$ endowed with the $\sigma$-algebra $\mathfrak A_0 = \set {A \cap \Omega_0}{A \in \mathfrak A}$.
\end{rema}

\begin{defi}
\label{def: reduced horoboundary}
	The measurable space $(\partial X, \mathfrak B^+)$ is called the \emph{reduced horoboundary} of the space  $X$.
 \end{defi}

%
\subsection{Gromov product.}
\label{sec: gromov product}
%

Given $c,c' \in \bar X$, and $x \in X$, we define the following Gromov product 
\begin{equation}
\label{eqn: extension Gromov product}
	\gro c{c'}x =  \frac 12 \sup_{z \in X} \left[c(x,z) + c'(x,z)\right],
\end{equation}
taking values in $\R_+ \cup \{ \infty\}$.
The action of $G$ on $\bar X$ preserves Gromov products.
Since cocycles are $1$-Lispchitz, if $c' = \iota(y)$ for some $y \in X$, then
\begin{equation*}
\label{eqn: gromov product}
	\gro cyx = \frac 12 \left[c(x,y) + \dist  xy\right].
\end{equation*}
Moreover $0 \leq \gro cyx \leq \dist xy$.
In particular, if both $c$ and $c'$ belong to $X$, we recover the usual definition of the Gromov product.
If $c,c' \in \partial X$ are equivalent, i.e. $\norm[\infty]{c-c'} < \infty$, then $\gro c{c'}x = \infty$.
The converse does not always hold true though.
Note that for every, $c,c' \in \bar X$, for every $x,x' \in X$, 
\begin{equation}
\label{eqn: gromov product - conf}
	\gro c{c'}x = \frac 12 \left[ c(x,x') + c'(x,x')\right] + \gro c{c'}{x'}.
\end{equation}
In addition, for all $c,c' \in \bar X$, for all $x,x',y,y' \in X$, we have,
\begin{equation}
\label{eqn: gromov product - lip}
	\abs{ \gro cyx - \gro {c'}{y'}{x'}} \leq  \frac 12 \norm[K]{c - c'}+  \dist x{x'} + \dist y{y'},
\end{equation}
where $K\subset X$ is any compact set containing $x$, $x'$, $y$,  and $y'$.

\begin{lemm}
\label{res: existence bi-infinite gradient line}
	Let $c,c' \in \partial X$.
	Assume that there is $z \in X$ such that $\gro c{c'}z = 0$.
	Then there exists a complete gradient line from $c$ to $c'$ going through $z$. 
	\end{lemm}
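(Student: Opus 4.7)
The plan is to build $\nu$ by gluing two gradient rays at $z$ and to show that the hypothesis $\gro c{c'}z = 0$ is exactly what is needed for the gluing to be compatible. By the existence result for complete gradient arcs joining a boundary point to a point of $X$ (already cited as \cite[Lemma~3.4]{Coulon:2022tu} in the discussion after \autoref{res: complete gradient arc}), I obtain two gradient rays $\nu^\pm \colon \intval 0{\infty} \to X$, both starting at $z$, where $\nu^+$ ends at $c'$ and $\nu^-$ ends at $c$. I then define $\nu \colon \R \to X$ by $\nu(t) = \nu^+(t)$ for $t \geq 0$ and $\nu(t) = \nu^-(-t)$ for $t \leq 0$. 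Since the domain is unbounded in both directions, $\nu$ automatically ends at $c'$ and $\bar \nu$ ends at $c$ in the sense of \autoref{res: complete gradient arc}.

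It remains to verify the two gradient arc identities $c'(\nu(s),\nu(t)) = t - s$ and $c(\nu(-s),\nu(-t)) = t - s$ for $s \leq t$. By the cocycle rule applied through $z = \nu(0)$, both identities reduce, after splitting into cases on the signs of the parameters, to the key compatibility statement
\begin{equation*}
	c'\bigl(z,\nu^-(r)\bigr) = -r \quad \text{and} \quad c\bigl(z,\nu^+(r)\bigr) = -r, \qquad \forall r \geq 0.
\end{equation*}
This is where the hypothesis enters: applying the defining supremum \eqref{eqn: extension Gromov product} at $w = \nu^-(r)$ yields $c(z,\nu^-(r)) + c'(z,\nu^-(r)) \leq 0$, and since $\nu^-$ is a gradient ray for $c$ we have $c(z,\nu^-(r)) = r$, hence $c'(z,\nu^-(r)) \leq -r$. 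Conversely, every element of $\bar X$ is $1$-Lipschitz; combined with $\dist z{\nu^-(r)} = r$ (gradient arcs are isometric embeddings, as follows from arc length parametrization and the cocycle identity), this gives $c'(z,\nu^-(r)) \geq -r$. The symmetric identity for $c$ and $\nu^+$ is obtained in the same way by swapping the roles of $c, \nu^-$ and $c', \nu^+$.

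Once the compatibility is secured, the remaining case analysis is routine. For instance, if $s \leq 0 \leq t$, then $c'(\nu(s),\nu(t)) = c'(\nu^-(-s),z) + c'(z,\nu^+(t)) = -s + t$, using the just-proved identity for the first summand and the fact that $\nu^+$ is a gradient ray for $c'$ for the second; the case $s \leq t \leq 0$ is handled identically with both summands coming from the compatibility identity, and the case $0 \leq s \leq t$ is immediate. The gradient arc identity for $c$ along $\bar\nu$ is symmetric.

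The main obstacle is really this compatibility step, but it is quite short: it is nothing more than the observation that the two bounds provided by the Gromov product hypothesis (one direction) and by the universal $1$-Lipschitz property of cocycles (reverse direction) must saturate simultaneously exactly when $\gro c{c'}z$ vanishes. Everything else is cocycle bookkeeping.
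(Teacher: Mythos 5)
Your proof is correct and takes essentially the same approach as the paper: both construct the line by concatenating two gradient rays from $z$ to $c$ and from $z$ to $c'$, and both isolate the same compatibility identity (the gradient ray for $c$ is also a gradient ray for $c'$ in the reverse direction), proved by combining the one-sided bound from the vanishing Gromov product with the reverse bound from $1$-Lipschitzness of cocycles. The only cosmetic difference is that you spell out the case analysis for the cocycle identity explicitly where the paper compresses it into a single computation and an appeal to symmetry.
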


\begin{rema}
\label{rem: gromov product achieved}
	In view of (\ref{eqn: gromov product - conf}) the assumption is equivalent to the following fact: 
	there is $z \in X$, such that for some (hence any) $x \in X$.
	\begin{equation*}
		\gro c{c'}x = \frac 12 \left[ c(x,z)+ c'(x,z)\right]. \qedhere
	\end{equation*}
\end{rema}

\begin{proof}
	Denote by $\gamma \colon \R_+ \to X$ (\resp $\gamma' \colon \R_+ \to X$) a complete gradient arc from $z$ to $c$ (\resp to $c'$).
	Such lines exist by \cite[Lemma~3.4]{Coulon:2022tu}.
	Build a path $\nu \colon \R \to X$ as follows 
	\begin{equation*}
		\begin{cases}
			\gamma'(t) & \text{if} \ t \geq 0, \\
			\gamma(-t) & \text{if} \ t \leq 0.
		\end{cases}
	\end{equation*}
	We claim that $c'(\gamma(t),z) = t$, for every $t \in \R_+$.
	Since $\gamma$ is a gradient line for $c$, we observe that
	\begin{equation*}
		t + c'(z, \gamma(t)) 
		\leq c(z, \gamma(t)) + c'(z, \gamma(t))
		\leq 2 \gro c{c'}z
		\leq 0.
	\end{equation*}
	Hence  $c'(\gamma(t),z) \geq t$.
	Recall that $c'$ is $1$-Lipchitz,  thus it forces $c'(\gamma(t),z) = t$, as we announced.
	Consequently $c'(\nu(t),z) = -t$, for every $t \in \R$.
	It follows from the cocycle property that $c'(\nu(s),\nu(t)) = t-s$, for every $s,t \in \R$.
	In other words, $\nu$ is a gradient line for $c'$.
	By symmetry we get that the reverse path $t \mapsto \nu(-t)$ is also a gradient line for $c$.
\end{proof}

\medskip
The map 
\begin{equation*}
	\begin{array}{ccc}
		\bar X \times X \times X & \to & \R_+ \\
		(c,y, x) & \mapsto & \gro cyx
	\end{array}
\end{equation*}
is continuous.
However its extension to $\bar X \times \bar X \times X \to \R_+ \cup\{\infty\}$ may not be continuous anymore.
We claim that it is lower semi-continuous though, hence Borel.
Indeed for every $N \in \R_+$, the set 
\begin{equation*}
	A_N = \set{(c,c',x) \in \bar X \times \bar X \times X}{\gro c{c'}x \leq N}.
\end{equation*}
can be written
\begin{equation*}
	A_N = \bigcap_{z \in X}\set{(c,c',x) \in \bar X \times \bar X \times X}{ c(x,z) + c'(x,z)\leq 2N}.
\end{equation*}
As an intersection of closed subsets, $A_N$ is closed as well, whence our claim.

\begin{prop}
\label{res: continuous extension gromov product}
	Let $(c,c') \in \bar X \times \bar X$.
	Assume that there exist a neighborhood $U \subset \bar X \times \bar X$ of $(c,c')$ and a compact subset $K \subset X$ with the following property: for every pair $(y,y') \in (X\times X) \cap U$, there is a geodesic from $y$ to $y'$ intersecting $K$.
	Then, there is a point $z_0 \in K$ such that the following holds.
	\begin{enumerate}
		\item \label{enu: continuous extension gromov product - gromov product}
		$\gro c{c'}{z_0} = 0$.
		\item \label{enu: continuous extension gromov product - gradient line}
		There is a complete gradient line from $c$ to $c'$ going through $z_0$.
		\item \label{enu: continuous extension gromov product - continuity}
		For every $x \in X$, the map $\bar X \times \bar X  \times X \to \R_+ \cup\{ \infty\}$ sending $(b,b',z)$ to $\gro b{b'}z$ is continuous at $(c,c',x)$.
	\end{enumerate}
\end{prop}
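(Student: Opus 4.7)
The strategy is to first produce a candidate point $z_0 \in K$ realizing the Gromov product, which gives \ref{enu: continuous extension gromov product - gromov product}; to deduce the gradient line \ref{enu: continuous extension gromov product - gradient line} directly from \autoref{res: existence bi-infinite gradient line}; and finally to bootstrap the same approximation argument to obtain the continuity statement \ref{enu: continuous extension gromov product - continuity}.

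For \ref{enu: continuous extension gromov product - gromov product}, I will use the density of $\iota(X)$ in $\bar X$ together with the openness of $U$ to pick $y_n, y_n' \in X$ with $y_n \to c$ and $y_n' \to c'$ and $(y_n, y_n') \in U$ eventually. The hypothesis provides, for each such $n$, a geodesic from $y_n$ to $y_n'$ intersecting $K$ at some $z_n$, and compactness of $K$ yields a subsequence with $z_n \to z_0 \in K$. Since $z_n$ lies on the geodesic, a direct computation gives $\gro{y_n}{y_n'}{z_n} = 0$; equivalently $b_{y_n}(z_n, z) + b_{y_n'}(z_n, z) \leq 0$ for every $z \in X$. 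Cocycles being $1$-Lipschitz, $\abs{b_{y_n}(z_n, z) - b_{y_n}(z_0, z)} \leq \dist{z_n}{z_0} \to 0$, while $b_{y_n}(z_0, z) \to c(z_0, z)$ by the very definition of the horocompactification (and similarly for the primed quantities). Passing to the limit gives $c(z_0, z) + c'(z_0, z) \leq 0$ for every $z \in X$; taking the supremum and recalling that the Gromov product is non-negative yields $\gro{c}{c'}{z_0} = 0$. Point \ref{enu: continuous extension gromov product - gradient line} is then immediate from \autoref{res: existence bi-infinite gradient line}.

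The main point is \ref{enu: continuous extension gromov product - continuity}. The key observation is that the previous argument applies verbatim with an arbitrary pair $(b, b') \in U$ in place of $(c, c')$, so \emph{for every} $(b, b') \in U$ there exists a point $\tilde z = \tilde z(b, b') \in K$ with $\gro{b}{b'}{\tilde z} = 0$. Given a sequence $(b_n, b_n', x_n) \to (c, c', x)$ in $\bar X \times \bar X \times X$, eventually $(b_n, b_n') \in U$, and so one selects $\tilde z_n \in K$ with $\gro{b_n}{b_n'}{\tilde z_n} = 0$. The conformality identity (\ref{eqn: gromov product - conf}) rewrites the Gromov product at $x_n$ as a finite cocycle evaluation:
\begin{equation*}
	\gro{b_n}{b_n'}{x_n} = \frac 12 \bigl[b_n(x_n, \tilde z_n) + b_n'(x_n, \tilde z_n)\bigr].
\end{equation*}
Extracting a subsequence with $\tilde z_n \to \tilde z_\infty \in K$ and invoking the continuity of cocycle evaluation on $\bar X \times X \times X$, the right-hand side converges to $\frac 12[c(x, \tilde z_\infty) + c'(x, \tilde z_\infty)]$. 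Passing to the limit in $b_n(\tilde z_n, z) + b_n'(\tilde z_n, z) \leq 0$ for each fixed $z \in X$ gives $c(\tilde z_\infty, z) + c'(\tilde z_\infty, z) \leq 0$, and the supremum yields $\gro{c}{c'}{\tilde z_\infty} = 0$. A second application of (\ref{eqn: gromov product - conf}) then identifies the limit with $\gro{c}{c'}{x}$. Since this reasoning can be run on any subsequence of $(b_n, b_n', x_n)$, the full sequence converges, establishing the continuity.

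The principal obstacle is the upper semicontinuity hidden in \ref{enu: continuous extension gromov product - continuity}: being a supremum of continuous functions, the Gromov product on $\bar X \times \bar X \times X$ is a priori only lower semicontinuous. The role of the hypothesis is precisely to exhibit, for every pair near $(c, c')$, a base point in the compact $K$ where the Gromov product vanishes; through the conformality identity this replaces the potentially unbounded supremum in the definition by a continuous cocycle evaluation, from which upper semicontinuity follows.
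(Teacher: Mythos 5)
Your proof is correct, but it takes a genuinely different route from the paper's. You argue by sequential compactness: for part~\ref{enu: continuous extension gromov product - gromov product} you extract from the geodesics through $K$ a convergent subsequence of intersection points $z_n \to z_0 \in K$, pass the inequality $b_{y_n}(z_n,z) + b_{y_n'}(z_n,z) \leq 0$ to the limit using the $1$-Lipschitz property, and conclude $\gro{c}{c'}{z_0}=0$; for part~\ref{enu: continuous extension gromov product - continuity} you observe that the same argument produces a base point $\tilde z(b,b') \in K$ for every $(b,b') \in U$, then use (\ref{eqn: gromov product - conf}) to rewrite the Gromov product at $x_n$ as the continuous cocycle evaluation $\frac12[b_n(x_n,\tilde z_n)+b_n'(x_n,\tilde z_n)]$ and extract further subsequences of $\tilde z_n$ in $K$. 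By contrast, the paper runs a direct $\epsilon$--$\delta$ estimate: it defines $\ell = \sup_{z\in K}\frac12[c(x,z)+c'(x,z)]$, finds a neighborhood $V$ on which cocycles approximate $c,c'$ to within $\epsilon$ on $\{x\}\times K$, and sandwiches $\gro{b}{b'}{x}$ between $\ell - 2\epsilon$ and $\ell + 2\epsilon$ via the intermediate inequality $\frac12[\tilde b(x,z)+\tilde b'(x,z)] \leq \gro{y}{y'}{x} \leq \frac12[\tilde b(x,p)+\tilde b'(x,p)]$ with $p \in K$ on a geodesic. The paper's version has the advantage of explicitly exhibiting, in one stroke, a neighborhood $V$ on which the Gromov product is uniformly bounded (a fact worth having when the proposition is invoked later); your version is conceptually cleaner but, being a compactness argument, leaves the quantitative content implicit. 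Both correctly identify the single key idea, which you state well in your closing paragraph: the geodesics-through-$K$ hypothesis lets one replace the a priori unbounded supremum defining the Gromov product by a cocycle evaluation at a point of $K$, which upgrades the default lower semicontinuity to full continuity.
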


\begin{proof}
	Fix $x \in X$.
	For simplicity we let 
	\begin{equation*}
		\ell = \sup_{z \in K}\ \frac 12 \left[c(x,z) +  c'(x,z) \right] .
	\end{equation*}
	The set $K$ is compact, while $c$ and $c'$ are continuous.
	Hence the upper bound $\ell$ is achieved at some point $z_0 \in K$.
	Let $\epsilon \in \R_+^*$.
	Recall that $\bar X$ is endowed with the topology of uniform convergence on every compact subset.
	There is an open set $V \subset U$ of $(c, c')$ such that for every $(b,b') \in V$, the cocycle $b$ (\resp $b'$) restricted to $\{x \} \times K$ differs from $c$ (\resp $c'$) by at most $\epsilon$.
	
	Let $z \in X$ and set $K_0 = K \cup \{z\}$.
	Let $(b,b') \in V$.
	Since $V$ is open, there are $(y,y') \in (X \times X) \cap V$ whose corresponding cocycles $\tilde b$ and $\tilde b'$, when restricted to $\{ x \} \times K_0$, differ from $b$ and $b'$ respectively by at most $\epsilon$.
	By construction $(y,y') \in U$.
	Thus there is a point $p \in K$ on a geodesic from $y$ to $y'$.
	Using the triangle inequality we observe that 
	\begin{equation*}
		 \frac 12 \left[\tilde b(x,z) + \tilde b'(x,z) \right] 
		 \leq \gro y{y'}x
		\leq  \frac 12 \left[\tilde b(x,p) + \tilde b'(x,p) \right].
	\end{equation*}
	According to our choices, $b$ and $\tilde b$ differs by at most $\epsilon$ on $(x,z)$ while $c$ and $\tilde b$ differs by at most $\epsilon$ on $(x,p)$.
	A similar statement holds for $b'$, $\tilde b'$ and $c'$.
	Consequenlty, the previous inequality yields
	\begin{equation*}
		 \frac 12 \left[ b(x,z) + b'(x,z) \right] - \epsilon
		\leq  \frac 12 \left[c(x,p) +  c'(x,p) \right]  + \epsilon
	\end{equation*}
	Consequently
	\begin{equation*}
		 \frac 12 \left[ b(x,z) + b'(x,z) \right] - 2 \epsilon
		 \leq \ell
		 \leq \gro c{c'}x.
	\end{equation*}
	This inequality holds for every $z \in X$ so that 
	\begin{equation*}
		 \gro b{b'}x - 2\epsilon
		 \leq \ell		 
		 \leq \gro c{c'}x.
	\end{equation*}
	Note that $\ell$ does not depend on the choice of $b$ and $b'$, thus $(b,b') \mapsto \gro b{b'}x$ is uniformly bounded on $V$.
	The previous inequality shows in particular that,
	\begin{equation*}
		\gro c{c'}x 
		= \ell
		= \frac 12 \left[ c(x,z_0) + c'(x,z_0)\right].
	\end{equation*}
	As we noticed in \autoref{rem: gromov product achieved}, it means in particular that $\gro c{c'}{z_0} = 0$,  which prove \ref{enu: continuous extension gromov product - gromov product}.
	Thus there is complete gradient line from $c$ to $c'$ (\autoref{res: existence bi-infinite gradient line}) whence \ref{enu: continuous extension gromov product - gradient line}.

	We are left to prove \ref{enu: continuous extension gromov product - continuity}.
	We use the same notations as above.
	Recall that $b$ and $c$ (\resp $b'$ and $c'$) differ  by at most $\epsilon$ on $\{x\} \times K$.
	Hence
	\begin{equation*}
		\gro c{c'}x \leq \frac 12 \left[c(x,z_0) + c'(x,z_0) \right] \leq \frac 12 \left[b(x,z_0) + b'(x,z_0) \right]  + \epsilon \leq \gro b{b'}x + \epsilon.
	\end{equation*}
	To summarize we have proved that for every $\epsilon \in \R_+^*$ there is a neighborhood $V$ of $(c,c')$, such that for every $(b,b') \in V$, we have
	\begin{equation*}
		\abs{ \gro b{b'}x - \gro c{c'}x} \leq 2\epsilon.
	\end{equation*}
	Recall that cocycles in $\bar X$ are $1$-Lipschitz.
	Using (\ref{eqn: gromov product - conf}) we can vary the base point.
	More precisely for every $y \in B(x,\epsilon)$, the previous inequality yields
	\begin{equation*}
		\abs{ \gro b{b'}y - \gro c{c'}x} \leq 3\epsilon,
	\end{equation*}
	whence the result.
\end{proof}

%
\subsection{Extended boundary of a contracting set}
%

\begin{defi}
	Let $Y$ be a closed subset of $X$.
	Let $c \in \bar X$.
	A \emph{projection} of $c$ on $Y$ is a point $q \in Y$ such that for every $y \in Y$, we have $c(q,y) \leq 0$.
\end{defi}

Given $z \in X$, the projection of $b = \iota(z)$ on $Y$ coincides with the definition of the nearest-point projection of $z$.
If $c$ belongs to $\partial X$, a projection of $c$ on $Y$ may exist or not.
This leads to the following definition.

\begin{defi}
	Let $Y$ be a closed subset of $X$.
	The \emph{extended boundary} of $Y$, denoted by $\partial ^+ Y$ is the set of all cocycles $c \in \partial X$ for which there is no projection of $c$ on $Y$.
\end{defi}

If $Y$ is contracting, then $\partial^+Y$ is saturated (as the notation suggests) \cite[Proposition~3.9]{Coulon:2022tu}.
The next statements extend \autoref{res: proj contracting set} for a gradient ray joining two points in $\bar X$.

\begin{lemm}[Coulon {\cite[Corollary~3.13]{Coulon:2022tu}}]
\label{res: proj cocycle contracting set}
	Let $\alpha \in \R_+^*$.
	Let $Y$ an $\alpha$-contracting set.
	Let $x \in X$ and $c \in \bar X \setminus \partial^+Y$.
	Let $p$ and $q$ be respective projections of $x$ and $c$ on $Y$.
	Let $\nu$ be a complete gradient arc from $x$ to $c$.
	If $d(x,Y) < \alpha$ or $\dist pq > 4\alpha$, then the following holds
	\begin{itemize}
		\item $d(\nu, Y)< \alpha$;
		\item  the entry point (\resp exit point) of $\nu$ in $\mathcal N_\alpha(Y)$ is $2\alpha$-closed (\resp $5\alpha$-closed) to $p$ (\resp $q$);
		\item $\dist xp + \dist pq - 14\alpha \leq c(x,q) \leq \dist xp + \dist pq$.
	\end{itemize}
\end{lemm}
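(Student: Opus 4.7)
The plan is to approximate the cocycle $c$ by the finite points $\nu(T)$ on the gradient arc for $T$ large, and then to apply the geodesic version \autoref{res: proj contracting set} to the truncation $\nu|_{[0,T]}$. First I would observe that $\nu$ is in fact a global isometric embedding: the bound $|t-s| = |c(\nu(s),\nu(t))| \leq \dist{\nu(s)}{\nu(t)}$, coming from $c$ being $1$-Lipschitz, combined with the arc-length parametrization, forces $\dist{\nu(s)}{\nu(t)} = |t-s|$. Hence each finite restriction $\nu|_{[0,T]}$ is a genuine geodesic from $x$ to $\nu(T)$, and the cocycle identity gives $c(x,\nu(T)) = T$. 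Moreover, $\nu(T)$ converges to $c$ in $\bar X$, so the Busemann cocycles $b_{\nu(T)}$ converge to $c$ uniformly on compact subsets.

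The crucial step, and main obstacle, is to show that for any projection $p_T$ of $\nu(T)$ onto $Y$, the distance $\dist{p_T}{q}$ stays uniformly bounded (say $\leq 3\alpha$) for $T$ sufficiently large. Intuitively, as $\nu(T)$ approaches $c$ in $\bar X$, its nearest-point projection on $Y$ should approach the projection $q$ of the limiting cocycle. Formally, the defining property of $q$ gives $c(q,y) \leq 0$ for every $y \in Y$, whereas $p_T$ being a projection of $\nu(T)$ forces $b_{\nu(T)}(q, p_T) \geq 0$. If one supposes $\dist{p_T}{q}$ is large and applies \autoref{res: proj contracting set} to a geodesic from $\nu(T)$ to $q$ (with projections $p_T$ and $q$), one gets $b_{\nu(T)}(q, p_T) = \dist{q}{\nu(T)} - \dist{p_T}{\nu(T)} \geq \dist{p_T}{q} - 8\alpha$. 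Passing to the limit via $b_{\nu(T)} \to c$ then contradicts $c(q,p_T) \leq 0$. The delicate point is that $p_T$ could a priori drift off along $Y$, so the compactness argument has to be supplemented by a further use of the contracting property to keep $p_T$ in a bounded region.

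Once this is achieved, the remaining work is routine. Apply \autoref{res: proj contracting set} to the geodesic $\nu|_{[0,T]}$ with projections $p$ and $p_T$: its hypothesis is met since either $d(x,Y) < \alpha$ directly, or $\dist{p}{q} > 4\alpha$ gives $\dist{p}{p_T} \geq \dist{p}{q} - \dist{p_T}{q} > \alpha$. One obtains $d(\nu|_{[0,T]}, Y) < \alpha$ (whence $d(\nu,Y) < \alpha$); that the entry and exit points of $\nu|_{[0,T]}$ in $\mathcal N_\alpha(Y)$ are $2\alpha$-close to $p$ and $p_T$ respectively, which, for $T$ sufficiently large, identifies them with the entry and exit points of $\nu$ itself, giving the entry within $2\alpha$ of $p$ and the exit within $2\alpha + 3\alpha = 5\alpha$ of $q$; and $T = \dist{x}{\nu(T)} \geq \dist{x}{p} + \dist{p}{p_T} + \dist{p_T}{\nu(T)} - 8\alpha$. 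The third bullet follows quickly: the upper bound $c(x,q) \leq \dist{x}{p} + \dist{p}{q}$ is immediate from $c$ being $1$-Lipschitz and the triangle inequality, while the cocycle identity $c(x,q) = T + c(\nu(T), q)$, combined with $c(\nu(T), q) = c(\nu(T), p_T) + c(p_T, q) \geq -\dist{\nu(T)}{p_T}$ (using $c(p_T,q) \geq 0$ since $p_T \in Y$ and $q$ projects $c$), yields $c(x,q) \geq T - \dist{\nu(T)}{p_T} \geq \dist{x}{p} + \dist{p}{p_T} - 8\alpha \geq \dist{x}{p} + \dist{p}{q} - 14\alpha$.
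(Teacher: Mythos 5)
There is a genuine gap at what you yourself call the crucial step: the claim that ``$\nu(T)$ converges to $c$ in $\bar X$, so the Busemann cocycles $b_{\nu(T)}$ converge to $c$ uniformly on compact subsets'' is false in general. A gradient arc for a horofunction $c \in \partial X$ need \emph{not} satisfy $\iota(\nu(T)) \to c$ in the horocompactification; the accumulation point can be a different cocycle. Concretely, take $X = \R^2$ with the $L^1$ metric and $c(z,z') = (z'_1 - z_1) + |z_2| - |z'_2|$ (the horofunction obtained from $(t,0)$, $t\to\infty$). For any fixed $a$, the ray $\nu(s) = (s,a)$ satisfies $c(\nu(s),\nu(t)) = t-s$, so it is a gradient arc for $c$; yet $b_{\nu(T)}$ converges to $c_a(z,z') = (z'_1-z_1) + |z_2-a| - |z'_2-a|$, which differs from $c$ whenever $a \neq 0$. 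So the passage ``passing to the limit via $b_{\nu(T)}\to c$ then contradicts $c(q,p_T)\le 0$'' does not go through: the limit $b^*$ need not equal $c$, and the inequality $b^*(q,p_*) > 0$ you would obtain does not contradict $c(q,p_*)\le 0$.

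To salvage the argument you would have to (i) show that the accumulation point $b^*$ of $\iota(\nu(T))$ lies outside $\partial^+Y$ and is \emph{equivalent} to $c$ (neither is automatic and you do not argue either), (ii) invoke \autoref{res: proj equivalent cocycles} to compare the projections of $c$ and $b^*$ (an extra $4\alpha$ loss you have not budgeted), and (iii) control the drift of $p_T$, which you explicitly flag but leave unresolved. Without these, the bound $\dist{p_T}{q}\le 3\alpha$ and everything downstream of it is unjustified. The general outline (truncate the gradient arc, apply the geodesic case \autoref{res: proj contracting set}, pass to a limit) is a sensible one, and the third bullet's derivation from the first two is fine, but the central step is missing, and the missing piece is exactly the one that makes this lemma nontrivial.
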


\begin{lemm}
\label{res: proj two cocycle contracting set}
	Let $\alpha \in \R_+^*$.
	Let $Y$ an $\alpha$-contracting set and $c,c' \in \bar X \setminus \partial^+Y$.
	Let $q$ and $q'$ be respective projections of $c$ and $c'$ on $Y$.
	If $\dist q{q'} > 7\alpha$, then the following holds.
	\begin{enumerate}
		\item \label{enu: proj two cocycle contracting set - existence}
		There is a complete gradient arc $\nu_0 \colon I \to X$ from $c$ to $c'$ such that for some (hence every) point $z$ on $\nu_0$, we have $\gro c{c'}z = 0$.
		\item \label{enu: proj two cocycle contracting set - entry/exit point}
		For any complete gradient arc $\nu \colon I \to X$ from $c$ to $c'$, we have $d(\nu, \gamma) < \alpha$.
		Moreover the entry point (\resp exit point) of $\nu$ in $\mathcal N_\alpha(\gamma)$ is $5\alpha$-close to $q$ (\resp $q'$).
	\end{enumerate}
\end{lemm}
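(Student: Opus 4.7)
The plan is to apply \autoref{res: continuous extension gromov product} in order to produce a point $z_0 \in X$ with $\gro c{c'}{z_0}=0$; the existence of a complete gradient line will then follow from \autoref{res: existence bi-infinite gradient line}. Once existence is in hand, the entry and exit control for any complete gradient arc will be obtained by splitting it at a point lying deep in each side and applying \autoref{res: proj cocycle contracting set} to the two resulting half-rays.

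For \ref{enu: proj two cocycle contracting set - existence}, I would first choose a witness compact set as follows. Let $\sigma$ be a geodesic from $q$ to $q'$; by \autoref{res: qc contracting set}, $\sigma \subset \mathcal N_{5\alpha/2}(Y)$. Pick $z_0 \in \sigma$ at distance at least $3\alpha$ from both endpoints (possible since $\dist q{q'}>7\alpha$), and set $K = \bar B(z_0, R)$ for some $R=R(\alpha)$ to be tuned. In order to apply \autoref{res: continuous extension gromov product}, I need to exhibit a neighborhood $U$ of $(c,c')$ in $\bar X \times \bar X$ such that every pair $(y,y')\in(X\times X)\cap U$ is joined by a geodesic meeting $K$. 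The main obstacle is a continuity statement for nearest-point projections: since $c,c'\notin\partial^+Y$ admit the genuine projections $q,q'$, one must show that for $y$ close enough to $c$ in $\bar X$ every projection $p$ of $y$ on $Y$ lies within $\alpha$ of $q$ (and symmetrically for $y'$ and $q'$). Nearest-point projection is not continuous in general, but applying \autoref{res: proj cocycle contracting set} to a complete gradient arc from $y$ to $c$ shows that, once $b_y$ is uniformly close to $c$ on a sufficiently large compact set around $q$, the projections of $y$ must cluster near $q$. Granting this, one has $\dist p{p'} > 5\alpha > \alpha$, so \autoref{res: proj contracting set} together with \autoref{res: qc contracting set} imply that any geodesic from $y$ to $y'$ enters $\mathcal N_\alpha(Y)$ within $2\alpha$ of $p$, exits within $2\alpha$ of $p'$, and its middle portion lies in $\mathcal N_{5\alpha/2}(Y)$. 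Since the projection on $Y$ is large-scale $1$-Lipschitz (\autoref{rem: Lipschitz proj}), this middle portion sweeps, on projection, essentially from $p$ to $p'$, hence passes within a bounded distance of $z_0$. Therefore the geodesic meets $K$ for $R$ large enough, and \autoref{res: continuous extension gromov product} yields $z_0' \in K$ with $\gro c{c'}{z_0'}=0$; \autoref{res: existence bi-infinite gradient line} then gives the desired complete gradient line.

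For \ref{enu: proj two cocycle contracting set - entry/exit point}, let $\nu\colon I \to X$ be any complete gradient arc from $c$ to $c'$. I would pick $t_-, t_+ \in I$ deep enough in each side so that, by the same projection-continuity argument applied to the cocycles $b_{\nu(t_\pm)}$, the projection of $\nu(t_-)$ (\resp $\nu(t_+)$) on $Y$ lies within $\alpha$ of $q$ (\resp $q'$), and in particular at distance $>4\alpha$ from $q'$ (\resp $q$). Applying \autoref{res: proj cocycle contracting set} to the gradient sub-arc from $\nu(t_-)$ to $c'$ locates the exit point of $\nu$ in $\mathcal N_\alpha(Y)$ within $5\alpha$ of $q'$; a symmetric application to the reverse sub-arc from $\nu(t_+)$ to $c$ handles the entry point, placing it within $5\alpha$ of $q$. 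The inequality $d(\nu, Y) < \alpha$ is a direct byproduct of \autoref{res: proj cocycle contracting set}, completing the proof.
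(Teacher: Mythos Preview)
Your overall strategy mirrors the paper's: feed a squeezing neighborhood into \autoref{res: continuous extension gromov product} for existence, then control entry and exit via projections. However, the step you flag as ``the main obstacle'' --- projection continuity --- is not actually delivered by the tool you cite, and this leaves a genuine gap in both parts.

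For \ref{enu: proj two cocycle contracting set - existence}, you claim that \autoref{res: proj cocycle contracting set}, applied to a gradient arc from $y$ to $c$, forces $\pi_Y(y)$ to lie near $q$ once $b_y$ is uniformly close to $c$ on a large compact set. But under the standing hypothesis $\dist pq > 4\alpha$, that lemma only yields $c(y,q) \geq \dist yp + \dist pq - 14\alpha$; combined with the $1$-Lipschitz bound $c(y,q) \leq \dist yq \leq \dist yp + \dist pq$, this merely sandwiches $c(y,q)$ and gives no upper bound on $\dist pq$. The convergence $b_y \to c$ is uniform only on compact subsets of $X$, so it says nothing about $c(y,q)$, which involves the escaping point $y$. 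The paper instead argues by contradiction along sequences $z_n \to c$, $z'_n \to c'$ and invokes \cite[Proposition~3.11]{Coulon:2022tu} (accumulation points of $\pi_Y(z_n)$ are projections of $c$) together with \cite[Corollary~3.12]{Coulon:2022tu} (any two projections of $c$ differ by at most $\alpha$). That external input is precisely what your argument is missing.

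For \ref{enu: proj two cocycle contracting set - entry/exit point} there is a second issue: a complete gradient arc $\nu$ from $c$ to $c'$ is only required to satisfy $c'(\nu(s),\nu(t)) = t-s$ and the analogous relation for $c$; nothing in the definition forces $\nu(t) \to c'$ in $\bar X$ as $t \to +\infty$, so your hypothesis that $b_{\nu(t_\pm)}$ approximates $c$ or $c'$ on compacta is simply unavailable. The paper avoids this entirely: it sets $T, T'$ to be the first and last times $\nu$ lies in $\mathcal N_\alpha(Y)$, lets $Q$ and $Q'$ be the $Y$-projections of the two outer pieces of $\nu$, and again uses \cite[Proposition~3.11]{Coulon:2022tu} to place some projection of $c$ (\resp $c'$) in the $\alpha$-neighborhood of $Q$ (\resp $Q'$) and to bound $\diam Q, \diam{Q'} \leq \alpha$. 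This gives $d(Q,Q') > \alpha$, hence $d(\nu,Y) < \alpha$ by the contracting property, and the $5\alpha$-estimates on the entry and exit points follow.
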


\begin{proof}
	We start with the existence of a complete gradient line.
	Denote by $K$ the closed ball of radius $4\alpha$ centered at $q$.
	In view of \autoref{res: continuous extension gromov product} it suffices to prove that there is an neighborhood $U$ of $(c,c')$ in $\bar X \times\bar X$ such that for every $(z,z') \in (X \times X) \cap U$, any geodesic from $z$ to $z'$ intersects $K$.
	Assume on the contrary that this fact does not hold.
	We can find two sequences $(z_n)$ and $(z'_n)$ of points in $X$ converging respectively to $c$ and $c'$ and a geodesic from $z_n$ to $z'_n$ that does not cross $K$.
	Denote by $q_n$ and $q'_n$ respective projections of $z_n$ and $z'_n$ on $Y$.
	Up to passing to a subsequence, we can assume that $(q_n)$ and $(q'_n)$ respectively converges to $q_*,q'_* \in Y$ which are also respective projections of $c$ and $c'$ on $Y$, see \cite[Proposition~3.11]{Coulon:2022tu}.
	According to \cite[Corollary~3.12]{Coulon:2022tu} we have  $\dist q{q^*} \leq \alpha$ and $\dist {q'}{q'_*} \leq \alpha$.
	Consequently $\dist {q_n}{q'_n} > \alpha$, provided $n$ is sufficiently large.
	It follows from \autoref{res: proj contracting set} that $q_n$ (and $q'_n$) are $2\alpha$-close to any geodesic from $z_n$ to $z'_n$.
	Since $\dist q{q_*} \leq \alpha$, this contradicts our assumption, provided $n$ is sufficiently large.
	
	We now prove \ref{enu: proj two cocycle contracting set - entry/exit point}.
	Let $\nu \colon I \to X$ be an arbitrary complete gradient arc from $c$ to $c'$.
	Let 
	\begin{equation*}
		T = \inf\set{t \in I}{d(\nu(t), Y) \leq \alpha}
		\quad \text{and} \quad 
		T' = \sup\set{t \in I}{d(\nu(t), Y) \leq \alpha}
	\end{equation*}
	with the convention that $T = \sup I$ and $T' = \inf I$, whenever $\nu$ does not intersect $\mathcal N_\alpha(\gamma)$.
	Set 
	\begin{equation*}
		Q = \pi_Y\left(\set{\nu(t)}{t \leq T}\right)
		\quad \text{and} \quad
		Q' = \pi_Y\left(\set{\nu(t)}{t \geq T'}\right)
	\end{equation*}
	According to \cite[Proposition~3.11]{Coulon:2022tu}, the point $q_*$ (\resp $q'_*$) lies in the $\alpha$-neighborhood of $Q$ (\resp $Q'$) while $Q$ and $Q'$ have diameter at most $\alpha$.
	It follows from the triangle inequality combined with our assumption that
	\begin{equation*}
		d(Q,Q') 
		\geq \dist {q_*}{q'_*} - 4\alpha
		\geq \dist q{q'} - 6\alpha
		> \alpha.
	\end{equation*}
	Since $\gamma$ is contracting, $d(\nu, Y) < \alpha$.
	In particular, $\nu(T)$ and $\nu(T')$ are the entry and exit point of $\nu$ in $Y$.
	Moreover $d(\nu(T),q) \leq 5\alpha$ and $d(\nu(T'),q') \leq 5\alpha$, which completes the proof of \ref{enu: proj two cocycle contracting set - entry/exit point}.
\end{proof}


\begin{lemm}
\label{res: proj equivalent cocycles}
	Let $\alpha \in \R^*_+$.
	Let $Y$ be an $\alpha$-contracting set.
	Let $c,c' \in \bar X \setminus \partial^+Y$.
	Let $q$ and $q'$ be respective projections of $c$ and $c'$ on $Y$.
	If  $\norm[\infty]{c - c'} < \infty$, then $\dist q{q'} \leq 4\alpha$.
\end{lemm}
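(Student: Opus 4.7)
The plan is to argue by contradiction, showing that too large a separation $\dist q{q'}$ would produce a bi-infinite gradient line from $c$ to $c'$, which is incompatible with $\norm[\infty]{c-c'}<\infty$.

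First I would dispose of the case where $c$ and $c'$ both lie in $\iota(X)$. Here the hypothesis $\norm[\infty]{c-c'}<\infty$ forces $c=c'$, so $q$ and $q'$ are two projections of a single point $z\in X$ on $Y$. When $d(z,Y)\geq \alpha$, the definition of $\alpha$-contracting set applied to the degenerate geodesic $\{z\}$ yields $\diam \pi_Y(\{z\})\leq \alpha$; when $d(z,Y)<\alpha$, the triangle inequality gives $\dist q{q'}\leq 2\alpha$. Either way the bound holds with room to spare.

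For the main case assume $c,c'\in\partial X$ and, seeking a contradiction, that $\dist q{q'}>4\alpha$. I would rerun the argument in the proof of \autoref{res: proj two cocycle contracting set}. The threshold $7\alpha$ stated there is not sharp for the existence part: what that proof actually uses is $d(p_n,p'_n)>\alpha$ for the projections of sequences $z_n\to c$ and $z'_n\to c'$, which follows already from $\dist q{q'}>3\alpha$ via the inequality $d(p_n,p'_n)\to d(q_*,q'_*)\geq \dist q{q'}-2\alpha$ combined with \cite[Corollary~3.12]{Coulon:2022tu}. Hence, via \autoref{res: continuous extension gromov product}, a complete gradient arc $\nu$ from $c$ to $c'$ exists, and because both endpoints lie in $\partial X$ its parameter interval is all of $\R$.

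The contradiction then drops out of the defining property of a complete gradient arc. For every $s,t\in\R$, since $\nu$ is a gradient arc for $c'$ one has $c'(\nu(s),\nu(t))=t-s$, and since the reverse path is a gradient arc for $c$ one has $c(\nu(s),\nu(t))=s-t$. Subtracting gives $(c-c')(\nu(s),\nu(t))=2(s-t)$, which is unbounded as $|s-t|\to\infty$, directly contradicting $\norm[\infty]{c-c'}<\infty$. The main obstacle I foresee is precisely the bookkeeping of this threshold: one has to check by hand that the \emph{existence} part of \autoref{res: proj two cocycle contracting set} is valid at the sharper threshold $4\alpha$ (the finer statement about entry and exit points in that lemma genuinely does require $7\alpha$, but is not needed here).
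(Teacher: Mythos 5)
Your main argument, for $c,c' \in \partial X$, is sound and takes a visibly different route from the paper. You first manufacture a complete bi-infinite gradient line $\nu$ from $c$ to $c'$ by re-running the existence half of \autoref{res: proj two cocycle contracting set}, and your observation that the $7\alpha$ threshold there is only needed for the entry/exit control and not for existence is correct (one only needs $\dist q{q'}>3\alpha$ to get $\dist{q_n}{q'_n}>\alpha$ eventually, after which the geodesic lands within $4\alpha$ of $q$); you then read off the contradiction from the clean identity $(c-c')(\nu(s),\nu(t))=2(s-t)$. The paper is more economical: it takes only a one-sided gradient ray $\nu\colon\R_+\to X$ for $c$ issuing from $q$, notes that $q$ stays a projection of $\nu(t)$ on $Y$ for all $t$, and applies \autoref{res: proj cocycle contracting set} to the pair $(\nu(t),c')$ to get $c'(\nu(t),q)\geq t-4\alpha$ while $c(\nu(t),q)=-t$; no bi-infinite line and no threshold bookkeeping are needed. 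Your version buys a more symmetric final display at the cost of the extra existence step.

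Your disposal of the $\iota(X)$ case is wrong, however. For $c=\iota(z)$, $c'=\iota(z')$ with $z\neq z'$ one has
\begin{equation*}
	(c-c')(x,y) = \left[\dist xz - \dist x{z'}\right] - \left[\dist yz - \dist y{z'}\right],
\end{equation*}
so $\norm[\infty]{c-c'}=2\dist z{z'}$, which is finite for every $z,z'$: the hypothesis $\norm[\infty]{c-c'}<\infty$ does \emph{not} force $c=c'$ on $\iota(X)$. You have conflated that hypothesis with the equivalence relation $\sim$, which is declared to be equality on $\iota(X)$ by fiat precisely to avoid this. In fact the conclusion of the lemma, read literally, fails for interior cocycles: take $z,z'$ far apart on $Y$ itself, so $q=z$, $q'=z'$, the hypothesis holds trivially, and $\dist q{q'}$ is arbitrarily large. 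The paper's proof (like your main argument) implicitly assumes $c,c'\in\partial X$, which is the only case in which a gradient ray or line of infinite length exists and the only case in which the lemma is actually invoked; so the $\iota(X)$ case needed to be excluded from the statement, not argued.
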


\begin{proof}
	Let $\nu  \colon \R_+ \to X$ be gradient line for $c$ starting at $q$.
	Note that $q$ is a projection of $\nu(t)$ on $Y$ for any $t \in \R_+$  \cite[Lemma~3.9]{Coulon:2022tu}.
	Assume now that contrary to our claim $\dist q{q'} > 4\alpha$.
	Let $t \in \R_+$.
	It follows from \autoref{res: proj cocycle contracting set} applied with $x = \nu(t)$ and $c'$ that 
	\begin{equation*}
		c'(\nu(t), q) \geq \dist q{\nu(t)} - 4\alpha \geq t - 4\alpha.
	\end{equation*}
	However since $\nu$ is a gradient line for $c$, we also have $c(\nu(t),q) = -t$.
	These two estimates hold for every $t \in \R_+$ and contradict the fact that $\norm[\infty]{c - c'} < \infty$.
\end{proof}

\begin{prop}
\label{res: proj orbit subgroup without contracting}
	Let $g \in G$ be a contracting element and $Y$ an axis of $g$.
	Let $c \in \bar X$.
	Let $H \subset G$ be a subgroup that does not contain a contracting element.
	Assume that the intersection $Hc \cap \partial^+Y$ is empty.
	Then the projection on $Y$ of $Hc$ is bounded.
\end{prop}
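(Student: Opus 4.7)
First, I would separate the contributions of $H \cap E(g)$ and $H \setminus E(g)$. Since $E(g)$ is virtually cyclic and contains the contracting element $g$, every infinite-order element of $E(g)$ is commensurable with $g$ and hence contracting. As $H$ contains no contracting element, $H \cap E(g)$ is a torsion subgroup of the virtually cyclic group $E(g)$, hence finite; its contribution to $\pi_Y(Hc)$ is a finite, hence bounded, set. It remains to bound $\{q_h := \pi_Y(hc) : h \in H \setminus E(g)\}$. By \autoref{res: suff condition contracting}, the set $R := \bigcup_{h \in H \setminus E(g)} \pi_Y(hY)$ has diameter at most some $D \in \R_+$, and since $h \in H \setminus E(g)$ implies $h^{-1} \in H \setminus E(g)$, this $R$ also contains every $\pi_Y(h^{-1}Y)$.

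For each such $h$, I set $p_h := h^{-1}q_h = \pi_{h^{-1}Y}(c)$, the projection of $c$ on the contracting axis $h^{-1}Y$; the two axes $Y$ and $h^{-1}Y$ are both $\alpha$-contracting and, since $h^{-1} \notin E(g)$, their mutual projections have diameter at most $C$. I would then apply \autoref{res: proj cocycle contracting set} twice to the cocycle $c$: once with starting point $x = q$ and contracting set $h^{-1}Y$, and once with $x = p_h$ and contracting set $Y$. Using the antisymmetry $c(q, p_h) = -c(p_h, q)$ of the cocycle, summing the two resulting inequalities yields
\begin{equation*}
\dist{q}{\pi_{h^{-1}Y}(q)} + \dist{\pi_{h^{-1}Y}(q)}{p_h} + \dist{p_h}{\pi_Y(p_h)} + \dist{\pi_Y(p_h)}{q} \leq 28\alpha,
\end{equation*}
which places $p_h$ in the $28\alpha$-neighborhood of $\pi_{h^{-1}Y}(Y)$ and hence $q_h = hp_h$ in the $28\alpha$-neighborhood of $\pi_Y(hY) \subset R$, giving the desired uniform bound.

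The main obstacle is handling the cases where the hypotheses of \autoref{res: proj cocycle contracting set} fail in one or both applications. Failure of the first amounts to $\dist{\pi_{h^{-1}Y}(q)}{p_h} \leq 4\alpha$, which already places $p_h$ close to $\pi_{h^{-1}Y}(Y)$ and concludes directly. Failure of the second with $\dist{p_h}{Y} < \alpha$ puts $p_h$ in $\mathcal N_\alpha(Y) \cap \mathcal N_\alpha(h^{-1}Y)$, a set of bounded diameter by \autoref{res: intersection vs projection}, again placing $q_h$ near $R$. The delicate remaining case is when the second hypothesis fails via $\dist{\pi_Y(p_h)}{q} \leq 4\alpha$ and $\dist{p_h}{Y} \geq \alpha$; here $c$ is forced to be asymptotic to $h^{-1}Y$ beyond its junction with $Y$, and I would argue this can happen for $h$ in at most a single coset of $H \cap E(g)$ in $H$: otherwise two distinct axes $h_1^{-1}Y$ and $h_2^{-1}Y$ with $h_1 h_2^{-1} \notin E(g)$ would both be asymptotic to $c$, contradicting the uniform bound $\diam(\pi_{h_1^{-1}Y}(h_2^{-1}Y)) \leq C$ coming from the almost-malnormality of $E(g)$. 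The finitely many remaining projections from this exceptional case are trivially bounded, completing the proof.
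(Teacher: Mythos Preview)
Your main idea—applying \autoref{res: proj cocycle contracting set} twice to $c$, once with contracting set $h^{-1}Y$ and once with $Y$, then exploiting cocycle antisymmetry—is sound and handles the generic case cleanly. Your treatment of cases 1 and 2 is also correct. The gap is in case 3.

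In case 3 you assert that ``$c$ is forced to be asymptotic to $h^{-1}Y$''. This is not so: by hypothesis $hc \notin \partial^+Y$, hence $c \notin \partial^+(h^{-1}Y)$, so the gradient ray for $c$ \emph{leaves} $\mathcal N_\alpha(h^{-1}Y)$ near $p_h$. The fellow-travel along $h^{-1}Y$ has finite length, roughly $\dist{\pi_{h^{-1}Y}(q)}{p_h}$, and this quantity is not bounded a priori. Your ``at most one coset'' claim therefore does not follow: nothing prevents the gradient ray from fellow-travelling several axes $h_i^{-1}Y$ in succession at increasing depths, each satisfying the case-3 conditions, while the corresponding $q_{h_i}$ wander arbitrarily far along $Y$.

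What case 3 \emph{does} give you points directly to the fix. Since $p_h \in h^{-1}Y$ and $h^{-1} \in H \setminus E(g)$, one has $\pi_Y(p_h) \in \pi_Y(h^{-1}Y) \subset R$; thus $\dist{\pi_Y(p_h)}{q} \leq 4\alpha$ forces $d(q, R) \leq 4\alpha$. So case 3 can only occur when the fixed projection $q = \pi_Y(c)$ is already close to $R$. The paper exploits exactly this: either every $\pi_Y(hc)$ lies within $4\alpha$ of $R$ (done), or some $h_0 c$ projects far from $R$, in which case replace $c$ by $h_0 c$. After this normalization $d(q,R) > 4\alpha$, so case 3 is vacuous, and in fact a \emph{single} application of \autoref{res: proj cocycle contracting set} (your second one, with $x = p_h$ and contracting set $Y$) suffices, since the normalization guarantees its hypothesis $\dist{\pi_Y(p_h)}{q} > 4\alpha$. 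Your double-application trick is then unnecessary, though it would also work once case 3 is ruled out.
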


\begin{proof}
	We fix $\alpha \in \R_+^*$ such that $Y$ is $\alpha$-contracting and $\diam {\pi_Y(uY)} \leq \alpha$, for every $u \in G \setminus E(g)$.
	Denote by $Z$ the set 
	\begin{equation*}
		Z = \bigcup_{h \in H \setminus E(g)} hY.
	\end{equation*}
	According to \autoref{res: suff condition contracting} the projection of $Z$ on $Y$ has bounded diameter.
	Without loss of generality, we can assume that there is $h_0 \in H$ such that $d(\pi_Y(h_0c), \pi_Y(Z)) > 4\alpha$.
	Up to replacing $c$ by $h_0c$, we can also assume that $h_0 = 1$.
	Let $h \in H \setminus E(g)$.
	According to our previous choices
	\begin{equation*}
		d(\pi_{hY}(hc), \pi_{hY}(Y)) 
		\geq d(\pi_Y(c), \pi_Y(h^{-1}Y)) 
		\geq d(\pi_Y(c), \pi_Y(Z))
		> 4\alpha.
	\end{equation*}	
	One proves using \autoref{res: proj cocycle contracting set} that any projection of $hc$ on $Y$ lies in the $15\alpha$-neighborhood of $\pi_Y(hY)$ hence of $\pi_Y(Z)$, see \autoref{fig: proj orbit subgroup without contracting}.
	\begin{figure}[htbp]
		\begin{center}
			\labellist
			\small\hair 2pt
			 \pinlabel {$c$} [ ] at -5 147
			 \pinlabel {$hc$} [ ] at 70 212
			 \pinlabel {$\pi_{h\gamma}\left(hc\right)$} [l] at 225 263
			 \pinlabel {$\pi_{\gamma}\left(c\right)$} [t] at 82 8
			 \pinlabel {$hY$} [ ] at 529 150
			 \pinlabel {$Y$} [ ] at 543 49
			 \pinlabel {$\pi_{hY}\left(Y\right)$} [b] at 420 130
			 \pinlabel {$\pi_{Y}\left(hY\right)$} [ ] at 377 35
			\endlabellist
			\includegraphics[width=0.8\linewidth]{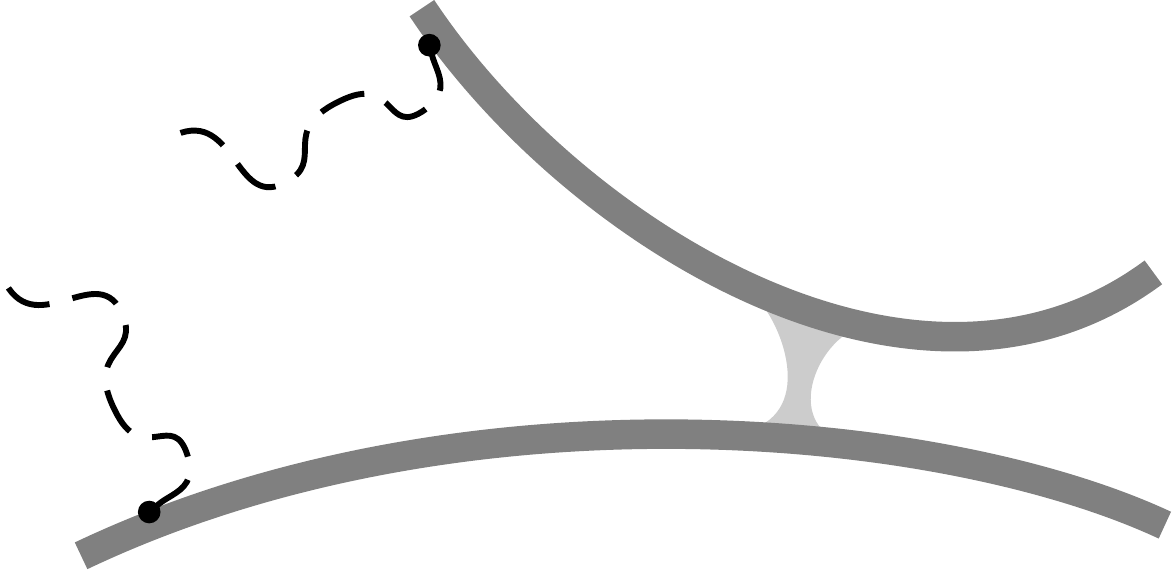}
			\vskip 5mm
			\caption{Projections of $c$ and $hc$ on $Y$}
			\label{fig: proj orbit subgroup without contracting}
		\end{center}
	\end{figure}
	Since the later set is bounded, we get that the projection on $Y$ of $(H \setminus E(g))c$ is bounded.
	Recall that $H$ does not contain any contracting element.
	Consequently $H \cap E(g)$ is finite.
	Therefore the projection of $Hc$ on $Y$ has bounded diameter at well.
\end{proof}

%
\subsection{Shadows}
\label{sec: shadows}
%

We now recall the definition of shadows which will be an essential tool to study conformal measures supported on the horocompactification.

\begin{defi}
	Let $x,y \in X$.
	Let $r \in \R_+$.
	The \emph{$r$-shadow} of $y$ seen from $x$, is the set 
	\begin{equation*}
		\mathcal O_x(y,r) = \set{ c \in \bar X}{ \gro xcy \leq r}.
	\end{equation*}
\end{defi}

By construction, $\mathcal O_x(y,r)$ is a closed subset of $\bar X$.
It follows from (\ref{eqn: gromov product - lip}) that for every $x,x',y,y' \in X$ and $r \in \R_+$,
\begin{equation}
\label{eqn: inclusion shadows triangle inequality}
	\mathcal O_x(y,r) \subset \mathcal O_{x'}(y',r'), \quad \text{where} \quad r' = r + \dist x{x'} + \dist y{y'}.
\end{equation}

\begin{defi}
\label{def: contracting tail}
	Let $x,y \in X$.
	Let $\alpha \in \R_+^*$ and $L \in \R_+$.
	We say that the pair $(x,y)$ has an \emph{$(\alpha, L)$-contracting tail} if there is an $\alpha$-contracting geodesic $\gamma$ ending at $y$ and a projection $p$ of $x$ on $\gamma$ satisfying $\dist py \geq L$.
	The path $\gamma$ is called a \emph{tail} of $(x,y)$ (the parameters $\alpha$ and $L$ should be clear from the context).
\end{defi}

\begin{lemm}
\label{res: wysiwyg shadow}
	Let $\alpha \in \R_+^*$ and  $r, L \in \R_+$ with $L > r + 13\alpha$.
	Let $x,y \in X$.
	Assume $(x,y)$ has an $(\alpha, L)$-contracting tail, say $\gamma$.
	Let $p$ be a projection of $x$ on $\gamma$.
	Let $c\in \mathcal O_x(y,r)$.
	Let $q$ be a projection of $c$ onto $\gamma$.	
	Then the following holds.
	\begin{enumerate}
		\item $\dist yq \leq r+7\alpha$.
		\item 
		For any complete gradient arc $\nu \colon I \to X$ from $x$ to $c$, we have $d(\nu, \gamma) < \alpha$.
		Moreover the entry point (\resp exit point) of $\nu$ in $\mathcal N_\alpha(\gamma)$ is $2\alpha$-close to $p$ (\resp $5\alpha$-close to $q$).
		\item \label{enu: wysiwyg shadow - double gromov product}  
		$\gro xcz \leq 4\alpha$, for every point $z$ on $\gamma$ with $r + 7\alpha \leq \dist zy \leq L$.
		\item \label{enu: wysiwyg shadow - simple gromov product}
		$\gro {z'}cz \leq 2 \alpha$, for every points $z,z'$ on $\gamma$ with $r + 7\alpha \leq \dist zy \leq \dist {z'}y$.
	\end{enumerate}
\end{lemm}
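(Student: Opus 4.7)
The plan is to establish the four items sequentially, leaning on the projection estimates of \autoref{res: proj cocycle contracting set} combined with the cocycle identity. A recurring theme is that the constants in those estimates sharpen when one of the points involved already lies on $\gamma$, since a gradient arc starting at a point of $\gamma$ enters $\mathcal N_\alpha(\gamma)$ at time $0$.

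For item (1), I would rewrite the shadow condition as $c(x,y) \geq \dist xy - 2r$ and bound $c(x,y)$ from above via the decomposition $c(x,y) = c(x,q) + c(q,y)$. The first term satisfies $c(x,q) \leq \dist xp + \dist pq$ (from \autoref{res: proj cocycle contracting set}), while $c(q,y) = -c(y,q)$ with $c(y,q) \geq \dist yq - 10\alpha$ via a gradient-arc argument exploiting $y \in \gamma$. On the other hand, since $y \in \gamma$, a geodesic from $x$ to $y$ enters $\mathcal N_\alpha(\gamma)$ within $2\alpha$ of $p$ and terminates on $\gamma$, yielding the improved bound $\dist xy \geq \dist xp + \dist py - 4\alpha$. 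Combining gives
\begin{equation*}
	\dist py + \dist yq - \dist pq \leq 2r + 14\alpha.
\end{equation*}
A case analysis on the position of $q$ along $\gamma$ then finishes: if $q$ lies between $p$ and $y$ the left-hand side equals $2\dist yq$ and we conclude; the alternative that $p$ lies between $q$ and $y$ would force $\dist py \leq r + 7\alpha$, contradicting $\dist py \geq L > r + 13\alpha$. Item (2) is then immediate: $\dist pq \geq L - (r+7\alpha) > 6\alpha > 4\alpha$, so the hypothesis $\dist pq > 4\alpha$ of \autoref{res: proj cocycle contracting set} holds and its conclusions give precisely the desired control on any complete gradient arc $\nu$ from $x$ to $c$.

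For items (3) and (4), I rely on the general inequality $\gro xcw \leq d(w, \nu)$, valid for any $w \in X$ and any gradient arc $\nu$ from $x$ to $c$; this follows from writing $c(w,x) = c(w,\nu(\tau)) - \tau$, using $\dist xw \leq \tau + \dist{\nu(\tau)}w$, and choosing $\tau$ to minimize $\dist{\nu(\tau)}w$. For (3), the conditions on $z$ place it in the sub-segment of $\gamma$ between $p$ and $q$, where the contracting property supplies a point of $\nu$ within $O(\alpha)$ of $z$. For (4), since $z' \in \gamma$ the gradient arc from $z'$ to $c$ starts on $\gamma$; writing $c(z,z') = c(z,q) - c(z',q)$ and applying the sharper form of \autoref{res: proj cocycle contracting set} with sources $z$ and $z'$ (both on $\gamma$) to each term gives $c(z,z') \leq -\dist{z'}z + 4\alpha$, whence $\gro{z'}cz \leq 2\alpha$.

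The main obstacle throughout is tracking the sharper constants available when points lie on $\gamma$: the generic $-8\alpha$ in \autoref{res: proj contracting set} and $-14\alpha$ in \autoref{res: proj cocycle contracting set} would produce $r + 11\alpha$ in (1), just short of the claimed $r + 7\alpha$, so the improved versions must be invoked carefully at every step.
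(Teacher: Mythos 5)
Your argument for items (1) and (2) is correct and self-contained; the paper simply cites a lemma from the companion paper for these two items, so your treatment is genuinely independent. The sharpened estimates $\dist xy \geq \dist xp + \dist py - 4\alpha$ and $c(y,q) \geq \dist yq - 10\alpha$ that you extract from $y \in \gamma$ both check out (the latter by noting that the exit point of a gradient arc from $y$ toward $c$ lies within $5\alpha$ of $q$), the shadow condition gives $c(x,y) \geq \dist xy - 2r$, and the resulting chain
\begin{equation*}
	\dist py + \dist yq - \dist pq \leq 2r + 14\alpha
\end{equation*}
together with your case analysis does yield $\dist yq \leq r+7\alpha$, after which item (2) is immediate.

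There is, however, a genuine gap in your treatment of items (3) and (4). You argue directly with a gradient arc $\nu$ from $x$ (resp.\ from $z'$) to the cocycle $c$ and invoke \autoref{res: proj cocycle contracting set}; but that lemma only places the exit point of $\nu$ within $5\alpha$ of $q$, whereas \autoref{res: proj contracting set} --- the version for ordinary points --- gives $2\alpha$ for both entry and exit. Feeding your (correct) inequality $\gro xcz \leq d(z,\nu)$ through these estimates, or computing $\gro xcz$ directly via the entry and exit points $\nu(a)$, $\nu(b)$, yields at best $\gro xcz \leq 7\alpha$ for item (3), not $4\alpha$. For item (4), the decomposition $c(z,z') = c(z,q) - c(z',q)$ combined with the very same $-10\alpha$ bound you used in item (1) gives only $c(z,z') \leq -\dist{z'}z + 10\alpha$, hence $\gro{z'}cz \leq 5\alpha$. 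Your claimed $c(z,z') \leq -\dist{z'}z + 4\alpha$ would require the exit point of the gradient arc from $z'$ to be within $2\alpha$ of $q$, which is not what \autoref{res: proj cocycle contracting set} provides and is inconsistent with the $-10\alpha$ you correctly invoked for item (1). The paper sidesteps this entirely by a limiting argument: it approximates $c$ by points $b_n = \iota(x_n)$, applies \autoref{res: proj contracting set} (both entry and exit within $2\alpha$) to obtain $\gro x{b_n}z \leq 4\alpha$ and, exploiting $z' \in \gamma$, $\gro{z'}{b_n}z \leq 2\alpha$, and then passes to the limit using the continuity of $(b,y,x) \mapsto \gro byx$ on $\bar X \times X \times X$. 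This passage from cocycles to points is the ingredient your proposal is missing; without it the gradient-arc route loses a factor of roughly $3\alpha$ at the exit point and cannot recover the stated constants.
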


\begin{proof}
	The first two points correspond to \cite[Lemma~4.14]{Coulon:2022tu}.
	By definition of contracting tail, there is a projection $p'$ of $x$ on $\gamma$ such that $\dist {p'}y \geq L$.
	Let $(b_n)$ be a sequence of cocycles in $\iota(X)$ converging to $c$.
	For every $n \in \N$, we write $q_n$ for a projection of $b_n$ on $\gamma$.
	Up to passing to a subsequence, we can assume that $(q_n)$ converges to a point $q^*$ which is a projection of $c$ on $\gamma$ \cite[Lemma~3.11]{Coulon:2022tu}.
	Hence $\dist y{q_*} \leq r + 7\alpha$.
	Consider now a point $z$ on $\gamma$ such that $r + 7\alpha < \dist zy \leq L$.
	For all but finitely many $n \in \N$, $z$ lies on $\gamma$ between $p'$ and $q_n$, while $\dist {p'}{q_n} > \alpha$. 
	Hence $\gro x{b_n}z \leq 4\alpha$ (\autoref{res: proj contracting set}).
	Passing to the limit, we get $\gro xcz \leq 4\alpha$.
	The map $X \to \R$ sending $z$ to $\gro xcz$ is continuous, hence \ref{enu: wysiwyg shadow - double gromov product} is proved.
	Point~\ref{enu: wysiwyg shadow - simple gromov product} is obtained in the same way.
\end{proof}

\begin{lemm}
\label{res: saturation shadow}
	Let $\alpha \in \R_+^*$ and  $r, L \in \R_+$ with $L > r + 15\alpha$.
	Let $x,y \in X$.
	Assume $(x,y)$ has an $(\alpha, L)$-contracting tail.
	Then the saturation $\mathcal O^+_x(y,r)$ of $\mathcal O_x(y,r)$ is a Borel set contained in $\mathcal O_x(y,r + 16\alpha)$.
\end{lemm}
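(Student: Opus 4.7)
The plan is to use the contracting tail $\gamma$ of $(x, y)$ to anchor the shadow by the nearest-point projection on $\gamma$, and then to transfer this projection-based description across equivalence classes. Borelness is essentially free: shadows are closed in $\bar X$ by construction (being sub-level sets of the lower semicontinuous Gromov product), so \cite[Lemma~3.7]{Coulon:2022tu} places their saturations in $\mathfrak B^+$, hence in $\mathfrak B$.

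For the inclusion $\mathcal O^+_x(y, r) \subseteq \mathcal O_x(y, r + 16\alpha)$, I would argue as follows. Let $c' \in \mathcal O^+_x(y, r)$. If $c' \in \iota(X)$ there is nothing to show since equivalence classes in $\iota(X)$ are singletons, so assume $c' \in \partial X$ and pick $c \in \mathcal O_x(y, r) \cap \partial X$ with $c \sim c'$. \autoref{res: wysiwyg shadow} furnishes a projection $q$ of $c$ on $\gamma$ with $\dist yq \leq r + 7\alpha$; in particular $c \notin \partial^+\gamma$, and since $\partial^+\gamma$ is saturated, $c' \notin \partial^+\gamma$ as well. Then \autoref{res: proj equivalent cocycles} produces a projection $q'$ of $c'$ on $\gamma$ with $\dist q{q'} \leq 4\alpha$, hence $\dist y{q'} \leq r + 11\alpha$. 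The hypothesis $L > r + 15\alpha$ translates into $\dist p{q'} > 4\alpha$, which triggers \autoref{res: proj cocycle contracting set} applied to the pair $(x, c')$: $c'(x, q') \geq \dist xp + \dist p{q'} - 14\alpha$.

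The rest is cocycle bookkeeping. Writing $2\gro x{c'}y = c'(y, q') - c'(x, q') + \dist yx$ and combining $c'(y, q') \leq \dist y{q'}$ (because $q'$ is a projection of $c'$ onto $\gamma \ni y$) with the fact that $q'$ lies between $y$ and $p$ on $\gamma$ (so $\dist yp = \dist y{q'} + \dist {q'}p$) and the triangle inequality $\dist yx \leq \dist yp + \dist px$, the $\dist xp$ and $\dist {q'}p$ contributions cancel, leaving $\gro x{c'}y \leq \dist y{q'} + 7\alpha$. Together with $\dist y{q'} \leq r + 11\alpha$ this places $c'$ into $\mathcal O_x(y, r+16\alpha)$, once the additive constants are tracked sharply enough inside \autoref{res: proj cocycle contracting set} and \autoref{res: proj equivalent cocycles}.

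The main obstacle I expect is the constant tracking: every invocation of the projection machinery contributes an additive $\alpha$-error, and one must verify they collectively fit within the stated $16\alpha$. The role of the hypothesis $L > r + 15\alpha$ (rather than the weaker $L > r$) is precisely to leave enough room on $\gamma$ beyond $q'$ so that \autoref{res: proj cocycle contracting set} applies; the cancellation of $\dist xp$ and $\dist {q'}p$ in the final inequality is then what keeps the bound on $\gro x{c'}y$ proportional to $r$ rather than growing with $L$.
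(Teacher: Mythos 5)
Your proof follows essentially the same route as the paper: Borelness via saturations of closed sets, anchoring to the contracting tail with nearest-point projections, transferring the projection across the equivalence class using \autoref{res: proj equivalent cocycles}, and then estimating the Gromov product at $y$. However, your explicit computation yields $\gro x{c'}y \leq \dist y{q'} + 7\alpha \leq r + 18\alpha$, which does not give the stated bound $r + 16\alpha$ — the arithmetic in your final sentence is inconsistent with the line preceding it. The $2\alpha$ loss comes from your reliance on the third bullet of \autoref{res: proj cocycle contracting set} (the displacement estimate $\dist xp + \dist p{q'} - 14\alpha \leq c'(x,q')$). To recover the constant $16\alpha$, use instead the Lipschitz estimate (\ref{eqn: gromov product - lip}) to write $\gro x{c'}y \leq \gro x{c'}{q'} + \dist{q'}y$, and bound $\gro x{c'}{q'} \leq 5\alpha$ via the second bullet of the same lemma: a complete gradient arc $\nu$ from $x$ to $c'$ has exit point $z$ in $\mathcal N_\alpha(\gamma)$ lying $5\alpha$-close to $q'$, and $\gro x{c'}z = 0$ whenever $z$ lies on such an arc. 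Then $\gro x{c'}y \leq 5\alpha + (r + 11\alpha) = r + 16\alpha$, which is the paper's argument.
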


\begin{proof}
	As the saturation of a closed subset, $\mathcal O^+_x(y,r)$ is a Borel set, see \cite[Lemma~3.7]{Coulon:2022tu}.
	Let $\gamma$ be a contracting tail of $(x,y)$.
	By assumption there is a projection $p$ of $x$ on $\gamma$ such that $\dist py \geq L$.
	Let $c,c'\in \partial X$ be two equivalent cocycles with $c \in \mathcal O_x(y,r)$.
	Denote by $q$ and $q'$ respective projections of $c$ and $c'$ on $\gamma$.
	According to \autoref{res: wysiwyg shadow}, $\dist yq \leq r + 7\alpha$.
	Combined with \autoref{res: proj equivalent cocycles} we get $\dist y{q'} \leq r + 11\alpha$.
	Consequently
	\begin{equation*}
		\dist p{q'} \geq \dist py - \dist {q'}y \geq L - (r+11\alpha) > 4\alpha.
	\end{equation*}
	Combining \autoref{res: proj cocycle contracting set} with the triangle inequality we see that 
	\begin{equation*}
		\gro x{c'}y \leq \gro x{c'}{q'} + \dist {q'}y
		\leq r + 16\alpha.
	\end{equation*}
	Hence $c' \in \mathcal O_x(y, r +16\alpha)$.
\end{proof}


\begin{defi}
\label{def: separation}
	Let $r \in \R_+$.
	Let $y,y' \in X$.
	Two subsets $Z,Z' \subset \bar X$ are \emph{$r$-separated by the pair $(y,y')$}, if 
	\begin{equation*}
		Z \subset \mathcal O_{y'}(y,r)
		\quad \text{and} \quad
		Z' \subset \mathcal O_y(y',r)
	\end{equation*}
	If $Z = \{z\}$ and $Z' = \{z'\}$ are reduced to a point, we simply that $z$ and $z'$ are $r$-separated by $(y,y')$.
\end{defi}

Recall that shadows are closed subsets of $\bar X$.
Hence being $r$-separated by $(y,y')$ is a closed condition on $\bar X \times \bar X$.

\begin{prop}
\label{res: bi-gradient line}
	Let $\alpha \in \R_+^*$ and $r \in \R$.
	Let $y,y' \in X$ such that $\dist y{y'} > 2r + 21\alpha$.
	Assume that there exists an $\alpha$-contracting geodesic $\gamma \colon \intval a{a'} \to X$ joining $y$ to $y'$.
	Let $c,c' \in \bar X$.
	Let $q$ and $q'$ be respective projections of $c$ and $c'$ on $\gamma$.
	If $c$ and $c'$ are $r$-separated by $(y,y')$ then the following holds.
	\begin{enumerate}
		\item \label{enu: bi-gradient line - proj}
		$\max\{ \dist qy, \dist{q'}{y'} \} \leq r + 7\alpha$.
		\item \label{enu: bi-gradient line - existence}
		There is a complete gradient arc $\nu_0 \colon I \to X$ from $c$ to $c'$ such that for some (hence every) point $z$ on $\nu_0$, we have $\gro c{c'}z = 0$.
		\item \label{enu: bi-gradient line - entry/exit point}
		For any complete gradient arc $\nu \colon I \to X$ from $c$ to $c'$, we have $d(\nu, \gamma) < \alpha$.
		Moreover the entry point (\resp exit point) of $\nu$ in $\mathcal N_\alpha(\gamma)$ is $5\alpha$-close to $q$ (\resp $q'$).
		\item \label{enu: bi-gradient line - simple gromov product}
		$\gro c{\gamma(s')}{\gamma(s)} \leq 2\alpha$, for every $s,s' \in \intval{a+ (r + 7\alpha)}{a'}$ with  $s' \geq s$.
		\item \label{enu: bi-gradient line - double gromov product}
		$\gro c{c'}{\gamma(s)} \leq 4\alpha$, for every $s \in \intval{a+ (r + 7\alpha)}{a'-(r +7\alpha)}$.
		\item \label{enu: bi-gradient line - neighborhood}
		 The path $\gamma$ restricted to $\intval{a + (r + 13\alpha)}{a'- (r + 13\alpha)}$ lies in the $5\alpha$-neighborhood of any complete gradient arc from $c$ to $c'$.
	\end{enumerate}
\end{prop}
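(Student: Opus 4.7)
The plan is to peel off the six items using the shadow and projection lemmas of \autoref{sec: shadows} together with \autoref{res: proj two cocycle contracting set}, after identifying the two symmetric contracting tails that the separation hypothesis provides: the pair $(y',y)$ equipped with the reversed geodesic $\bar\gamma$ (which ends at $y$) and the pair $(y,y')$ equipped with $\gamma$ (which ends at $y'$). Both are $(\alpha, L)$-contracting tails with $L = \dist y{y'}$, and the hypothesis $\dist y{y'} > 2r+21\alpha$ comfortably exceeds the threshold $r+13\alpha$ required by \autoref{res: wysiwyg shadow}. Applying that lemma to each tail with the cocycle on the appropriate side immediately yields (\ref{enu: bi-gradient line - proj}), while its item (\ref{enu: wysiwyg shadow - simple gromov product}) applied to $(y',y,\bar\gamma,c)$ yields (\ref{enu: bi-gradient line - simple gromov product}): the condition $r+7\alpha \leq \dist zy \leq \dist{z'}y$ translates verbatim to $s,s' \in \intval{a+(r+7\alpha)}{a'}$ with $s' \geq s$.

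From (\ref{enu: bi-gradient line - proj}) the triangle inequality along $\gamma$ gives
\[
\dist q{q'} \geq \dist y{y'} - \dist yq - \dist{y'}{q'} > (2r+21\alpha) - 2(r+7\alpha) = 7\alpha,
\]
which is exactly the hypothesis of \autoref{res: proj two cocycle contracting set}; a single application delivers both (\ref{enu: bi-gradient line - existence}) and (\ref{enu: bi-gradient line - entry/exit point}). For (\ref{enu: bi-gradient line - double gromov product}) and (\ref{enu: bi-gradient line - neighborhood}) I would run both from the complete gradient arc $\nu$ produced by (\ref{enu: bi-gradient line - existence}): any $\nu(t_0)$ lies on a bi-infinite gradient line from $c$ to $c'$, so $\gro c{c'}{\nu(t_0)} = 0$; combining this with (\ref{eqn: gromov product - conf}) and the $1$-Lipschitz bound on cocycles yields the key inequality $\gro c{c'}x \leq \dist x{\nu(t_0)}$ for every $x \in X$ and every $t_0$, reducing both statements to locating a point of $\nu$ close to $\gamma(s)$. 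Since gradient arcs are actual geodesics (the defining identity plus $1$-Lipschitz control forces $\dist{\nu(s)}{\nu(t)} = |t-s|$), the segment $\nu|_{\intval T{T'}}$ is a geodesic joining two points of $\mathcal N_\alpha(\gamma)$, hence lies in $\mathcal N_{5\alpha/2}(\gamma)$ by \autoref{res: qc contracting set}; an intermediate-value argument on the $\gamma$-coordinate of a nearest-point projection $\pi_\gamma \circ \nu|_{\intval T{T'}}$, whose jumps are controlled by the large-scale $1$-Lipschitz bound of \autoref{rem: Lipschitz proj}, produces the required nearby point on $\nu$ for each $\gamma(s)$ in the appropriate sub-interval.

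The main obstacle I expect is the tight constant bookkeeping in (\ref{enu: bi-gradient line - double gromov product}): a naive projection argument produces a distance of order $9\alpha/2$, which suffices for the $5\alpha$-neighborhood claim of (\ref{enu: bi-gradient line - neighborhood}) but sits just above the $4\alpha$ bound of (\ref{enu: bi-gradient line - double gromov product}). Closing this gap requires leveraging the sharp $5\alpha$ endpoint control from (\ref{enu: bi-gradient line - entry/exit point}) together with the fact that the range of $s$ in (\ref{enu: bi-gradient line - double gromov product}) already sits at distance at least $r+7\alpha$ from the endpoints of $\gamma$, so that the additive $4\alpha$ defect of the projection can be absorbed without calling on the wider $(r+13\alpha)$ buffer used for (\ref{enu: bi-gradient line - neighborhood}).
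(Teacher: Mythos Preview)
Your route through \ref{enu: bi-gradient line - proj}--\ref{enu: bi-gradient line - simple gromov product} and \ref{enu: bi-gradient line - neighborhood} matches the paper's (the paper outsources \ref{enu: bi-gradient line - neighborhood} to \cite[Lemma~5.1]{Coulon:2022tu}, which your intermediate-value sketch is effectively reproving).

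The gap is in \ref{enu: bi-gradient line - double gromov product}. The implication ``$\nu(t_0)$ lies on a complete gradient line from $c$ to $c'$, so $\gro c{c'}{\nu(t_0)} = 0$'' is false as a general principle: the gradient-line identities only force $\gro c{c'}{\nu(\cdot)}$ to be \emph{constant} along $\nu$ (since $c(\nu(s),\nu(t)) + c'(\nu(s),\nu(t)) = 0$, whence \eqref{eqn: gromov product - conf} equates the products at $\nu(s)$ and $\nu(t)$), not to vanish. A concrete counterexample is $X = \R \times [0,1]$ with the $\ell^1$-metric and $c,c'$ the Busemann cocycles of the two horizontal ends: every horizontal line $t \mapsto (t,b)$ is a complete gradient line from $c$ to $c'$, yet $\gro c{c'}{(0,b)} = b$. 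You could try to salvage the step by noting that the \emph{specific} gradient arc furnished by \autoref{res: proj two cocycle contracting set} is constructed, via \autoref{res: continuous extension gromov product}, through a point $z_0$ where the Gromov product genuinely vanishes --- but you do not invoke this, and even granting it, the required bound $d(\gamma(s),\nu) \leq 4\alpha$ on the full interval $\intval{a+(r+7\alpha)}{a'-(r+7\alpha)}$ is precisely the constant obstacle you already flag as unresolved. The paper bypasses the gradient line altogether for \ref{enu: bi-gradient line - double gromov product}: it runs the same limiting argument as in the proof of \autoref{res: wysiwyg shadow}\ref{enu: wysiwyg shadow - double gromov product}, approximating the boundary cocycles by points of $X$, reading off the $4\alpha$ bound from \autoref{res: proj contracting set} (since $\gamma(s)$ sits between the two projections on the $\alpha$-contracting geodesic), and passing to the limit via the lower semi-continuity of $(c,c') \mapsto \gro c{c'}x = \tfrac12 \sup_z[c(x,z)+c'(x,z)]$.
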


\begin{proof}
	Note that both $(y,y')$ and $(y',y)$ have an $(\alpha, L)$-contracting tail.
	Hence \ref{enu: bi-gradient line - proj} is just a consequence of \autoref{res: wysiwyg shadow}.
	Moreover 
	\begin{equation*}
		\dist q{q'} \geq \dist y{y'} - 2r - 14\alpha > 7\alpha.
	\end{equation*}
	Hence \ref{enu: bi-gradient line - existence} and \ref{enu: bi-gradient line - entry/exit point} follow from \autoref{res: proj two cocycle contracting set}.
	Point~\ref{enu: bi-gradient line - simple gromov product} is a particular case of \autoref{res: wysiwyg shadow} while \ref{enu: bi-gradient line - double gromov product} is proved exactly in the same way.
	We are left to prove \ref{enu: bi-gradient line - neighborhood}.
	For simplicity let $b = a + (r + 13\alpha)$ and $b' = a' - (r + 13\alpha)$.
	Let $\nu$ be a complete gradient arc from $c$ to $c'$.
	Denote by $p = \nu(t)$ and $p' = \nu(t')$ the respective entry and exit points of $\nu$ in $\mathcal N_\alpha(\gamma)$.
	In view of \ref{enu: bi-gradient line - proj} and \ref{enu: bi-gradient line - entry/exit point}, $p$ and $p'$ are respectively $\alpha$-close to $m = \gamma(s)$ and $m' = \gamma(s')$ for some $(s,s') \in \intval ab \times \intval {b'}{a'}$.
	Thus $\nu$ restricted to $\intval t{t'}$ lies in the $(5\alpha/2)$-neighborhood of $\gamma$.
	It follows then from \cite[Lemma~5.1]{Coulon:2022tu} that $\gamma$ restricted to $\intval s{s'}$ lies in the $5\alpha$-neighborhood of $\nu$, whence the result.
\end{proof}


%
\subsection{Conformal densities}
\label{sec: conformal densities}
%

\begin{defi}[Density]
\label{def: density}
	Let $G$ be a group acting properly by isometries on $X$.
	Fix a base point $o \in X$.
	Let $\omega \in \R_+$.
	A \emph{density} is a collection $\nu = (\nu_x)$ of positive finite measures on $(\bar X, \mathfrak B)$ indexed by $X$ such that  $\nu_x \ll \nu_y$, for every $x,y \in X,$ and normalized by $\norm{\nu_o} = 1$.
	Such a density is 
	\begin{enumerate}
		\item  \emph{$G$-invariant}, if $g_\ast \nu_x = \nu_{gx}$, for every $g \in G$ and $x \in X$;
		\item \emph{$\omega$-conformal}, if for every $x,y \in X$,
		\begin{equation*}
			\frac{d\nu_x}{d\nu_y} (c) = e^{-\omega c(x,y)}, \quad \nu_y-\text{a.s.}
		\end{equation*}
	\end{enumerate}
\end{defi}

Existence of $G$-invariant, $\omega_G$-conformal densities supported on $\partial X$ follows from the Patterson construction, see \cite[Proposition~4.3]{Coulon:2022tu}.
The Shadow Lemma stated below is a key tool in the study of invariant, conformal densities, see \cite[Corollary~4.10]{Coulon:2022tu}.

\begin{prop}[Shadow Lemma]
\label{res: shadow lemma}
	Assume that $G$ is not virtually cyclic and has a contracting element.
	Let $\omega \in \R_+$.
	There exist $\epsilon, r_0  \in \R_+^*$ such that for every $r \geq r_0$, for every $G$-invariant, $\omega$-conformal density $\nu = (\nu_x)$, for every $g \in G$, we have
	\begin{equation*}
		\epsilon e^{-\omega \dist o{go}}
		\leq \nu_o\left( \mathcal O_o(go,r)\right) 
		\leq e^{2\omega r}  e^{-\omega \dist o{go}}.
	\end{equation*}
\end{prop}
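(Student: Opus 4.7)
The plan is to derive both inequalities from the conformality relation $d\nu_o(c) = e^{-\omega c(o, go)}\, d\nu_{go}(c)$ together with the normalization $\norm{\nu_{go}} = \norm{g_\ast \nu_o} = 1$ coming from $G$-invariance.

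\textbf{Upper bound.} For any $c \in \mathcal O_o(go, r)$, unfolding the Gromov product at the point $go \in X$ gives $c(go, o) + \dist{o}{go} \leq 2r$, hence $c(o, go) \geq \dist o{go} - 2r$. Substituting this bound into the conformality relation and using $\nu_{go}(\mathcal O_o(go, r)) \leq \norm{\nu_{go}} = 1$ yields the upper bound at once.

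\textbf{Reducing the lower bound.} Since every cocycle in $\bar X$ is $1$-Lipschitz, one also has $c(o, go) \leq \dist o{go}$, and integrating the conformality relation in the opposite direction gives
\[
\nu_o(\mathcal O_o(go, r)) \geq e^{-\omega \dist o{go}}\, \nu_{go}(\mathcal O_o(go, r)) = e^{-\omega \dist o{go}}\, \nu_o(\mathcal O_{g^{-1}o}(o, r)),
\]
where the last equality uses $g_\ast \nu_o = \nu_{go}$ combined with $G$-equivariance of the Gromov product. The problem thus reduces to producing $\epsilon, r_0 \in \R_+^*$ such that $\nu_o(\mathcal O_x(o, r)) \geq \epsilon$ for every $r \geq r_0$ and every $x \in Go$.

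\textbf{Uniform lower bound.} This step is the crux and the expected main obstacle. I would exploit the hypotheses that $G$ is not virtually cyclic and contains a contracting element to produce a finite family $h_1, \dots, h_N \in G$ of test translates with the following property: for every $x \in X$, some index $i$ ensures that the shadow $\mathcal O_x(o, r)$ (for $r$ large enough) captures a fixed open region near the attractor of a contracting conjugate $h_i g_0 h_i^{-1}$. Fix a contracting element $g_0 \in G$ with axis $Y$; the non-virtual-cyclicity of $G$ combined with \autoref{res: suff condition contracting} provides such $h_i$'s whose axes $h_i Y$ sit in pairwise well-separated contracting directions relative to $Y$. The contracting geometry recorded in \autoref{res: wysiwyg shadow} and \autoref{res: bi-gradient line} then gives, for any $x \in X$, an index $i$ for which $o$ lies (up to an absolute error) on an $\alpha$-contracting geodesic joining $x$ to $h_i^n o$ uniformly in $n$ large, and passing to the limit $n \to \infty$ forces the attractor of $h_i g_0 h_i^{-1}$ into $\mathcal O_x(o, r)$ along with a fixed saturated neighborhood. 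Each such neighborhood carries strictly positive $\nu_o$-mass, because a bounded iterate of $h_i g_0 h_i^{-1}$ maps it onto a set of bounded total measure and the conformal relation turns the uniform geometric distortion into a uniform lower bound on the Radon-Nikodym derivative. Taking the minimum of the finitely many positive masses over $\{h_1,\dots,h_N\}$ produces the desired $\epsilon$.

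The delicacy is that $\bar X$ may have a wild topology and $\nu_o$ may carry atoms or even equivalence classes of positive mass, so one cannot rely on visual estimates. Everything is controlled exclusively by the single combinatorial parameter $\alpha$ of the contracting geometry, and it is precisely this uniformity that propagates positive $\nu_o$-mass from a fixed finite family $\{h_i o\}$ to every orbit point in $Go$.
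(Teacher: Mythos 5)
The paper does not prove this statement; it quotes it from \cite[Corollary~4.10]{Coulon:2022tu}, so there is no in-paper argument to compare against. Your upper bound is correct, and your reduction of the lower bound to showing that $\nu_o(\mathcal O_{g^{-1}o}(o,r)) \geq \epsilon$ uniformly over $g \in G$ and $\nu$ is also correct and is the standard first step.

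The genuine gap is in the uniform lower bound, and it is a circularity. You claim each attractor neighborhood $U_i$ has positive $\nu_o$-mass because ``a bounded iterate of $h_i g_0 h_i^{-1}$ maps it onto a set of bounded total measure and the conformal relation turns the uniform geometric distortion into a uniform lower bound on the Radon--Nikodym derivative.'' Unwinding this: with $f$ a high inverse power of $h_i g_0 h_i^{-1}$, conformality and $G$-invariance give $\nu_o(U_i) \geq e^{-\omega\dist o{fo}}\nu_o(fU_i)$, and $fU_i$ expands to fill $\partial X$ minus a shrinking neighborhood $V_{i,n}$ of the repeller, so you must know that $\nu_o(V_{i,n}) \leq 1 - \delta$ for some $n$. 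That is exactly the non-concentration statement you are trying to establish. Even granting it, the required $n$ depends a priori on $\nu$, while the prefactor $e^{-\omega\dist o{fo}}$ degrades exponentially with $n$; thus this route does not yield an $\epsilon$ uniform over all $\omega$-conformal densities $\nu$, which the statement demands. Also, the appeal to \autoref{res: suff condition contracting} is misplaced: that result concerns subgroups \emph{without} contracting elements and does not produce the ``pairwise well-separated contracting directions'' you invoke. The actual content of the Shadow Lemma in this setting, supplied in \cite{Coulon:2022tu}, is a uniform non-atomicity estimate for $G$-invariant conformal densities on the reduced horoboundary, driven by double-shadow geometry and the non-elementarity of $G$; your sketch does not replace it.
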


\paragraph{Atoms.}
Let $\nu = (\nu_x)$ be a $G$-invariant density on $\bar X$.
We collect here a few facts about the atoms of the measure space $(\bar X, \mathfrak B^+, \nu_o)$.
Recall that an element $A \in \mathfrak B^+$ is an \emph{atom} of $\nu_o$ if it has positive measure and for every $B \in \mathfrak B^+$ contained in $A$, either $\nu_o(B) = 0$ or $\nu_o(A \setminus B) = 0$.
For such an atom $A$, its \emph{stabilizer} is the set
\begin{equation*}
	\stab A = \set{h \in G}{\nu_o(A \cap hA) = \nu_o(A)}.
\end{equation*}
As the terminology suggests, it is a subgroup of $G$.
Indeed, if $h_1$ and $h_2$ belong to $\stab A$, then,
\begin{equation*}
	\nu_o(h_1 A \cap h_2A) \geq \nu_o\left((A \cap h_1A) \cap (A \cap h_2A)\right) \geq \nu_o(A) > 0.
\end{equation*}
As $\nu$ is $G$-invariant, its yields $\nu_o(A \cap h_1^{-1}h_2A) > 0$.
However $A$ is an atom, thus $h_1^{-1}h_2 \in \stab A$.

\begin{lemm}
\label{res: subatom with bded proj}
	Let $g \in G$ be a contracting element and $Y$ an axis of $g$.
	There is $D \in \R_+$ with the following property:
	for every atom $A$ of $(\bar X, \mathfrak B^+, \nu_o)$, there is an atom $B \in \mathfrak B^+$ contained in $A \setminus \partial^+Y$ such that the projection of $B$ on $Y$ has diameter at most $D$.
\end{lemm}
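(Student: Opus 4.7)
The strategy is to partition $A\setminus\partial^+Y$ into countably many saturated Borel subsets of uniformly bounded projection onto $Y$, and then use the atom property to extract a sub-atom of full measure. The key substantive input is that $\nu_o(A\setminus\partial^+Y) > 0$.

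Parametrize the axis as an $\alpha$-contracting bi-infinite geodesic $\gamma\colon \R\to X$. On $\partial X \setminus \partial^+Y$, every cocycle has a nonempty set of projections on $Y$, of diameter at most $\alpha$; by \autoref{res: proj equivalent cocycles}, equivalent cocycles $c\sim c'$ (both outside $\partial^+Y$) have projections within $4\alpha$ of each other. Consequently one may define a $\mathfrak B^+$-measurable ``center of projection'' map $\bar\phi\colon (\partial X\setminus\partial^+Y)/{\sim}\to Y$, depending only on equivalence classes (take for instance the midpoint of the bounded set of projections of a representative). Fix a scale $L > 10\alpha$, set $J_n = [nL,(n+1)L)$ and define
\[
C_n = \bigl\{c\in\partial X\setminus\partial^+Y : \bar\phi([c])\in\gamma(J_n)\bigr\} \in\mathfrak B^+.
\]
By construction the $(C_n)_{n\in\Z}$ form a countable saturated Borel partition of $\partial X\setminus\partial^+Y$, and each $C_n$ satisfies $\diam \pi_Y(C_n)\leq L + 10\alpha =: D$.

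Once $\nu_o(A\setminus\partial^+Y)>0$ is known, the conclusion is immediate: as $A$ is an atom of $\mathfrak B^+$ and each $A\cap C_n\in\mathfrak B^+$, each $\nu_o(A\cap C_n)$ belongs to $\{0,\nu_o(A)\}$; countable additivity and positivity force exactly one index $n_0$ with $\nu_o(A\cap C_{n_0})=\nu_o(A)$. Taking $B=A\cap C_{n_0}$ gives a sub-atom with $B\subset A\setminus\partial^+Y$, $\nu_o(B)=\nu_o(A)$, and $\diam\pi_Y(B)\leq D$, as required, with $D$ independent of $A$.

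The main obstacle is verifying $\nu_o(A\setminus\partial^+Y)>0$. I expect this to rest on the stronger fact $\nu_o(\partial^+Y)=0$: since $\partial^+Y\in\mathfrak B^+$ and $A$ is an atom, $\nu_o(A\cap\partial^+Y)\in\{0,\nu_o(A)\}$, and the latter is excluded once $\nu_o(\partial^+Y)=0$. For the vanishing of $\nu_o(\partial^+Y)$, the key geometric fact is that a cocycle in $\partial^+Y$ must ``diverge'' along one of the two ends of $Y$, so that its saturation class sits inside shadows $\mathcal O_o(g^n o, r)$ for $|n|$ arbitrarily large; the Shadow Lemma (\autoref{res: shadow lemma}) then gives $\nu_o(\mathcal O_o(g^no,r))\leq e^{2\omega r}e^{-\omega d(o,g^no)}\to 0$, yielding the vanishing. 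The delicate step is precisely relating cocycles in $\partial^+Y$ to the Busemann cocycles $c_Y^\pm$ of the endpoints, using the contracting geometry developed in the preceding sections.
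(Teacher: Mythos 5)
Your proof takes essentially the same route as the paper: show that the atom, up to measure zero, avoids $\partial^+Y$, then cover the remainder by countably many saturated Borel sets whose projections on $Y$ are uniformly bounded, and use the atom property to extract one of full measure. Two remarks on points you gloss over. First, the paper does not establish the global fact $\nu_o(\partial^+Y)=0$; it only needs the weaker comparison that, for suitable $r$ and $n$, the saturated shadows $\mathcal O_o^+(g^{\pm n}o,r)$ have $\nu_o$-measure strictly less than $\nu_o(A)$ (via the upper bound in the Shadow Lemma together with \autoref{res: saturation shadow}), which suffices to conclude $\nu_o(A\cap\partial^+Y)=0$; your stronger claim is plausible along the same lines but would require $\omega>0$ and a uniform containment $\partial^+Y\subset\mathcal O_o(g^{-n}o,r)\cup\mathcal O_o(g^{n}o,r)$ for fixed $r$ and all large $n$. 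Second, the measurability of your ``center of projection'' map $\bar\phi$ (and hence of the $C_n$) is asserted rather than proved, and is not immediate since $\pi_Y(c)$ is set-valued and one must quotient by $\sim$; the paper sidesteps this by working with the closed sets $A_0(k)=\{c\in A_0: g^kx \text{ is a projection of } c \text{ on } Y\}$, whose saturations are automatically in $\mathfrak B^+$ by \cite[Lemma~3.7]{Coulon:2022tu}, and which form a covering (rather than a partition), which is all the atom pigeonhole requires. Neither point is a conceptual gap, but the second does need to be addressed for the argument to be complete.
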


\begin{proof}
	Whithout loss of generality, we can take for $Y$ the $\group g$-orbit of some point $x$.
	There is $\alpha\in\R_+^*$ such that $Y$ is $\alpha$-contracting.
	Up to increasing the value of $\alpha$, we can assume that any geodesic joining two points of $Y$ is $\alpha$-contracting; moreover for every $p,q,n \in \Z$, with $p\leq n \leq q$, the point $g^nx$ lies in the $\alpha$-neighborhood of any geodesic from $g^px$ to $g^qx$.
	According to \autoref{res: saturation shadow}, for every $r \in \R_+$, there is $N(r) \in \N$ with the following property: for every $n \in \Z$, with $\abs n \geq N(r)$ we have
	\begin{equation*}
		\mathcal O_o(g^no, r) 
		\subset \mathcal O_o^+(g^no, r) \subset \mathcal O_o(g^no, r + 16\alpha).
	\end{equation*}
	Recall that $\mathcal O_o^+(g^no, r)$ stands for the saturation of $\mathcal O_o(g^no, r)$.
	In view of the Shadow Lemma (\autoref{res: shadow lemma}) there is $r \geq 100\alpha$ and $n \geq N(r)$ such that 
	\begin{equation*}
		\nu_o(\mathcal O_o(g^{-n}o,r+16\alpha)) < \nu_o(A)
		\quad \text{and} \quad
		\nu_o(\mathcal O_o(g^no,r+16\alpha)) < \nu_o(A)
	\end{equation*}
	Since $A$ is an atom, it forces the set
	\begin{equation*}
		A_0 = A \setminus \left(\mathcal O_o^+(g^{-n}o, r) \cup \mathcal O_o^+(g^no, r)\right)
	\end{equation*}
	to have full measure in $A$. 
	In particular, it is also an atom.
	For every $k \in \Z$, we write $A_0(k)$ for the set of all cocycles $c \in A_0$ such that $g^kx$ is a projection of $c$ on $Y$.
	Recall that $r \geq 100\alpha$.
	Since $g$ is contracting, $\partial^+Y$ is contained in $\mathcal O_o(g^{-n}o, r) \cup \mathcal O_o(g^no, r)$, therefore $A_0$ is covered by the union of all $A_0(k)$.
	By construction $A_0(k)$ is a closed subset, hence its saturation $A_0^+(k)$ is Borel, see  \cite[Lemma~3.7]{Coulon:2022tu}.
	As $A_0$ is an atom of $(\bar X, \mathfrak B^+, \nu_o)$, there is $k \in \Z$ such that $B = A^+_0(k)$ has full measure in $A_0$ (hence in $A$) and thus is an atom.
	According to \autoref{res: proj equivalent cocycles}, the projection $p$ of any cocycle in $B$ satisfies $\dist p{g^ko} \leq 4\alpha$.
	Hence $\pi_Y(B)$ has diameter at most $8\alpha$.
\end{proof}

\begin{lemm}
\label{res: no contracting stabilizing atom}
	Let $A$ be an atom of $(\bar X, \mathfrak B^+, \nu_o)$.
	Then $\stab A$ does not contain a contracting element.
\end{lemm}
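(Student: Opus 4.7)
I would argue by contradiction: suppose $\stab A$ contains a contracting element $g$, and fix an axis $Y$ of $g$. The intuition is that \autoref{res: subatom with bded proj} confines (essentially all of) $A$ to a bounded projection on $Y$, whereas $g \in \stab A$ should shift $A$ along $Y$ by arbitrarily large amounts, producing a contradiction.

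The setup step is to apply \autoref{res: subatom with bded proj} in order to obtain $D \in \R_+$ and a sub-atom $B \subset A \setminus \partial^+Y$ such that $\pi_Y(B)$ has diameter at most $D$; fix once and for all a point $p_0 \in \pi_Y(B)$, so that every projection on $Y$ of every cocycle in $B$ lies within $D$ of $p_0$. Since $B$ is an atom contained in the atom $A$ with $\nu_o(B) > 0$, one has $\nu_o(B) = \nu_o(A)$, so $B$ has full $\nu_o$-measure in $A$.

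The crucial step is to show that $B \cap g^n B \neq \emptyset$ for every $n \in \Z$. Because $\stab A$ is a subgroup (as recalled just before \autoref{res: subatom with bded proj}), $g^n \in \stab A$, and hence $\nu_o(A \mathbin{\triangle} g^n A) = 0$. Since $\nu$ is $G$-invariant and $\omega$-conformal, $g^n$ preserves $\nu_o$-null sets, so $g^n B$ is itself an atom of $(\bar X, \mathfrak B^+, \nu_o)$ of positive $\nu_o$-measure, with $g^n B \subset g^n A = A$ modulo null sets. The atomic property of $A$ therefore forces $\nu_o(g^n B \cap A) = \nu_o(A)$, and combined with the fact that $B$ is already full in $A$, the intersection $B \cap g^n B$ is full in $A$ as well --- in particular non-empty. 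This measure-theoretic bookkeeping is the only mildly delicate point of the argument.

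The geometric conclusion is then immediate: pick any $c \in B \cap g^n B$. Because $g^n$ preserves $Y$ setwise it also preserves the saturated set $\partial^+Y$, so $c \notin \partial^+Y$ and $c$ admits at least one projection $q \in Y$. The inclusion $c \in B$ gives $q \in \pi_Y(B)$, hence $\dist{p_0}q \leq D$; writing $c = g^n b$ for some $b \in B$ and using the $g^n$-equivariance of the nearest-point projection on the $g^n$-invariant set $Y$, we get $q \in g^n \pi_Y(B)$, hence $\dist{g^n p_0}q \leq D$. The triangle inequality yields $\dist{p_0}{g^n p_0} \leq 2D$ for every $n \in \Z$, which contradicts the fact that, $g$ being contracting, the orbit map $n \mapsto g^n p_0$ is a quasi-isometric embedding of $\Z$ into $X$.
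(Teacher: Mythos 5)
Your proof is correct and takes essentially the same approach as the paper, hinging on \autoref{res: subatom with bded proj} and the displacement of the bounded projection $\pi_Y(B)$ along the axis; the paper simply phrases it directly (choosing one $m$ so that $\pi_Y(B)$ and $\pi_Y(g^m B)$ become disjoint, forcing $\nu_o(A\cap g^mA)=0$) rather than by contradiction over all $n$. The only small detail worth adding is that the $g^n$-equivariance of $\pi_Y$ uses a choice of $\langle g\rangle$-invariant axis (e.g.\ $Y=\langle g\rangle x$), which is harmless since any two axes are at bounded Hausdorff distance.
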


\begin{proof}
	Let $g \in G$ be a contracting element and $Y$ an axis of $g$.
	According to \autoref{res: subatom with bded proj}, we can find an atom $B \subset A$ and $m \in \N$, such that the projections on $Y$ of $B$ and $g^m B$ are disjoint.
	Hence so are $B$ and $g^mB$.
	Recall that $\nu$ is $G$-invariant.
	Hence $B$ and $g^mB$ have full measure in $A$ and $g^mA$ respectively.
	This forces $\nu_o(A \cap g^m A) = 0$, i.e. $g^m$, and thus $g$, does not belong to $\stab A$.
\end{proof}


%
\section{Radial limit sets}
\label{sec: radial limit sets}
%

In this section, $G$ is still a group acting properly by isometries on a proper geodesic metric space $(X, \distV)$.
By analogy with hyperbolic geometry, we define the radial limit set for the action of $G$ on $X$.
We give here a unified approach for the various limit sets studied in \cite{Coulon:2022tu} and in the present article.

\subsection{Definition and examples}

Denote by $2^G$ the collection of all subsets of $G$.
Consider a non-increasing map $U \colon \R_+ \to 2^G$, that is such that $U(L') \subset U(L)$ whenever $L' \geq L$.
Let $r, L \in \R_+$.
The set $\Lambda_U(G,o,r,L)$ consists of all cocycles $c \in \partial X$ with the following property:
for every $T \in \R_+$, there is $g \in U(L)$ such that $\dist o{go} \geq T$ and $c \in \mathcal O_o(go,r)$.
We also let
\begin{equation*}
	\Lambda_U(G,o,r) 
	= \bigcap_{L \geq 0} \Lambda_U(G,o,r,L)
	= \bigcap_{L , T\geq 0}  \bigcup_{\substack{g \in U(L) \\ \dist {go}o \geq T}} \mathcal O_o(go,r).
\end{equation*}

\begin{defi}
\label{def: general limit set}
	The \emph{$U$-radial limit set} of $G$ is 
	\begin{equation*}
		\Lambda_U(G) 
		= \bigcup_{r \geq 0}G \Lambda_U(G,o,r). 
	\end{equation*}
\end{defi}

The set $\Lambda_U(G)$ is a $G$-invariant, saturated, Borel subset.
Invariance follows from the definition, while saturation is a consequence of (\ref{eqn: gromov product - lip}).
It follows also from (\ref{eqn: gromov product - lip}) that $\Lambda_U(G)$ does not depend on the choice of the base point $o$.

\begin{rema}
	Let $U_1, U_2 \colon \R_+ \to 2^G$ be two non-increasing maps.
	Denote by $U \colon \R_+ \to 2^G$ the map given by $U(L) = U_1(L) \cup U_2(L)$ for every $L \in \R_+$.
	One checks that 
	\begin{equation*}
		\Lambda_U(G) = \Lambda_{U_1}(G) \cup \Lambda_{U_2}(G).
	\end{equation*}
	In particular, if $U_1(L) \subset U_2(L)$ for every $L \in \R_+$, then $\Lambda_{U_1}(G) \subset \Lambda_{U_2}(G)$.
\end{rema}

\begin{exam}\
\label{exa: radial limit sets}
\begin{enumerate}
	\item \label{enu: radial limit sets - radial}
	If $U$ is the constant map such that $U(L) = G$, then we recover the \emph{radial limit set}, which we denote by $\Lambda_{\rm rad}(G)$, see \cite[Definition~4.20]{Coulon:2022tu}.
	According to the above remark, it contains all other $U$-radial limit sets.
	\item \label{enu: radial limit sets - contracting}
	Let $\alpha \in \R^*_+$.
	Denote by $\mathcal T_\alpha(L)$ the set of elements $g \in G$ such that $(o,go)$ has an $(\alpha, L)$-contracting tail (see \autoref{def: contracting tail}).
	Then the union 
	\begin{equation*}
		\Lambda_{\rm ctg}(G) = \bigcup_{\alpha \geq 0} \Lambda_{\mathcal T_\alpha}(G)
	\end{equation*}
	is the \emph{contracting limit set} studied in \cite[Section~5.2]{Coulon:2022tu}.
\end{enumerate}
Another example is detailed in \autoref{res: radial limit set contracting elt}.
\end{exam}

\begin{prop}
\label{res: convergence criterion - prelim}
	Let $c \in \Lambda_{\rm  ctg}(G)$.
	Let $(x_n)$ be a sequence of points in $X$.
	Assume that there is $M \in \R$, such that for every $n \in \N$, we have $c(o,x_n) \geq M$.
	Then any accumulation point of $(x_n)$ in $\partial X$ is equivalent to $c$.
\end{prop}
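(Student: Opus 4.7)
The plan is to show that any accumulation point $c'\in\partial X$ of $(x_n)$ is equivalent to $c$, i.e.\ $\norm[\infty]{c-c'}<\infty$. After extracting a subsequence we may assume $x_n\to c'$ in $\bar X$; since $\iota(X)$ is open in $\bar X$ and $c'\in\partial X$, this forces $d(o,x_n)\to\infty$. Because $\Lambda_{\rm ctg}(G)$ is $G$-invariant and saturated, while the conclusion is $G$-equivariant and equivalence-invariant, I may assume $c\in\Lambda_{\mathcal T_\alpha}(G,o,r)$ for some fixed $\alpha\in\R_+^*$ and $r\in\R_+$.

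By definition, for every $L\in\R_+$ (to be taken large) one picks $g\in\mathcal T_\alpha(L)$ with $d(o,go)\gg L$ and $c\in\mathcal O_o(go,r)$; let $\gamma$ be a corresponding $(\alpha,L)$-contracting tail ending at $go$, with projection $p=\pi_\gamma(o)$ satisfying $d(p,go)\geq L$. By \autoref{res: wysiwyg shadow} applied to $c$, the gradient ray from $o$ to $c$ tracks $\gamma$: in particular $c(o,go)\geq d(o,go)-2r$ and $c(o,y)\geq d(o,y)-8\alpha$ for every $y$ on $\gamma$ with $r+7\alpha\leq d(y,go)\leq L$. The core step is to show that for $n$ large (depending on $g$), $x_n$ itself belongs to the shadow $\mathcal O_o(go,r')$ for some $r'$ depending only on $\alpha,r,M$. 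Combining $c(o,x_n)\geq M$ with the alignment of $c$ with $\gamma$ one gets $c(go,x_n)\geq M - d(o,go) + 2r$. I would then study the projection $q_n$ of $x_n$ on $\gamma$ and apply \autoref{res: proj cocycle contracting set} to the Busemann cocycle $b_{x_n}$: if $q_n$ were far from $go$ while $d(o,x_n)$ is large, the contracting structure would force $c(o,x_n)$ to be well below $M$, a contradiction. Hence $q_n$ is close to $go$ and $\gro o{x_n}{go}$ is bounded by some $r'$ independent of $L$. Passing to the limit via lower semi-continuity of the Gromov product gives $c'\in\mathcal O_o(go,r')$.

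Finally, for $L$ large enough, \autoref{res: saturation shadow} places the whole equivalence class of $c'$ inside a slightly enlarged shadow of $go$, and the same holds for $c$. Letting $L$ (hence $d(o,go)$) tend to infinity along the sequence of contracting tails produced by the definition of $\Lambda_{\mathcal T_\alpha}$, \autoref{res: proj equivalent cocycles} (equivalent cocycles have close projections on a contracting set) combined with a bi-gradient argument in the spirit of \autoref{res: bi-gradient line} forces the projections of $c$ and $c'$ on these arbitrarily long contracting tails to coincide up to $O(\alpha)$, which in turn yields $\norm[\infty]{c-c'}<\infty$, i.e.\ $c\sim c'$. The main obstacle is the central geometric step: translating the weak ``linear'' hypothesis $c(o,x_n)\geq M$, via the contracting geometry of $\gamma$ alone (rather than full hyperbolicity), into a uniform bound on $\gro o{x_n}{go}$ independent of the depth $L$ of the tail.
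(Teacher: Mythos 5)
Your overall strategy coincides with the paper's: extract a subsequence $x_n\to c'$, use that $c$ lies in some $\Lambda_{\mathcal T_\alpha}(G,o,r)$ (after translating, as you note) to find arbitrarily far elements $u\in\mathcal T_\alpha(L)$ with $c\in\mathcal O_o(uo,r)$ and a long $\alpha$-contracting tail $\tau$, and then use the hypothesis $c(o,x_n)\geq M$ together with \autoref{res: proj cocycle contracting set} to force the projection $q_n$ of $x_n$ onto $\tau$ to land near $uo$ for $n$ large — otherwise $c(o,x_n)$ would be pinned down to roughly $-\dist{x_n}{q}+O(\alpha)$, which tends to $-\infty$ and contradicts $c(o,x_n)\geq M$. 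From this one obtains $x_n\in\mathcal O_o(uo,r+O(\alpha))$ for $n$ large, and passing to the limit shows $c'$ lies in the same shadow. This is exactly the paper's argument (with $r'=r+15\alpha$, a constant depending only on $r$ and $\alpha$, not on $M$ as you write — only the exceptional set of indices depends on $M$).

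The genuine gap is in your concluding step. Having placed both $c$ and $c'$ in $\mathcal O_o(uo,r')$ for $u\in\mathcal T_\alpha(L)$ with $L$ and $\dist o{uo}$ arbitrarily large, you need a statement saying that two cocycles lying in a common shadow of a faraway point with a long contracting tail agree to within $20\alpha$ on a large compact set, hence are equivalent once the lengths are unbounded. This is precisely \cite[Lemma~5.7]{Coulon:2022tu}, which the paper invokes at that point. You instead invoke \autoref{res: proj equivalent cocycles}, but that lemma runs in the \emph{opposite} direction: its hypothesis is $\norm[\infty]{c-c'}<\infty$ and its conclusion is that the projections on a contracting set are $4\alpha$-close; it cannot be used to establish equivalence. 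The ``bi-gradient argument in the spirit of \autoref{res: bi-gradient line}'' that you gesture at would, if carried out, amount to reproving Lemma~5.7, but as written the step is not justified. (The digression through \autoref{res: saturation shadow} is also unnecessary: what matters is that $c$ and $c'$ lie in the same unsaturated shadow of uniform width, not that the shadow itself be saturated.)
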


\begin{proof}
	By definition, there are $\alpha, r \in \R_+^*$, and  $g \in G$, such that $g^{-1}c \in \Lambda_{\mathcal T_\alpha}(G, o, r)$.
	For simplicity we write $b_n = \iota(x_n)$ for the cocycle associated to $x_n$.
	Up to passing to a subsequence we assume that $(b_n)$ converges to a point $b^* \in \partial X$.
	In particular, $(x_n)$ leaves every bounded subset.
		
	We fix now $R \in \R_+$ and denote by $K$ the closed ball of radius $R$ centered at $o$.
	Let $L > r + 28\alpha$.
	There is $u \in \mathcal T_\alpha(L)$ such that $\dist o{uo} >  R + r + 28\alpha$ and $g^{-1}c \in \mathcal O_o(uo, r)$.
	By definition of $\mathcal T_\alpha$, there is an $\alpha$-contracting geodesic $\tau$ ending at $uo$ and a projection $p$ of $o$ on $\tau$ satisfying $\dist p{uo} \geq L$.
	Denote by $q$ a projection of $g^{-1}c$ on $\tau$ and $q_n$ a projection of $g^{-1}x_n$ on $\tau$.
	Observe that $\dist {q_n}q \leq 4\alpha$, for all but finitely many $n \in \N$.
	Indeed, if this is not the case, then it follows from \autoref{res: proj cocycle contracting set} that 
	\begin{align*}
		M
		\leq c(o,x_n) 
		\leq c(o,gq) + c(gq, x_n)
		& \leq c(o,gq) + [g^{-1}c]\left(q, g^{-1}x_n\right) \\
		& \leq c(o,gq) - \dist {x_n}{gq} + 14\alpha, 
	\end{align*}
	which contradicts the fact that $(x_n)$ leaves every bounded subset.
	By \autoref{res: wysiwyg shadow} $\dist {uo}q \leq r + 7\alpha$, hence $\dist {uo}{q_n} \leq r + 11\alpha$.
	Recall that $p$ is a projection of $o$ on $\tau$.
	Moreover 
	\begin{equation*}
		\dist p{q_n} \geq \dist p{uo} - \dist {uo}{q_n} \geq L - (r + 11\alpha) > \alpha
	\end{equation*}
	It follows them from \autoref{res: proj contracting set} that
	\begin{equation*}
		\gro o{g^{-1}x_n}{uo}
		\leq \gro o{g^{-1}x_n}{q_n} + \dist {q_n}{uo}
		\leq r + 15\alpha.
	\end{equation*}
	Thus $g^{-1}x_n$ belongs to $\mathcal O_o(uo, r+ 15\alpha)$ as $g^{-1}c$ does.
	We assumed that $\dist o{uo} > R + r + 28\alpha$.
	Hence by \cite[Lemma~5.7]{Coulon:2022tu} we have $\norm[gK]{c - b_n} \leq 20\alpha$, for all but finitely many $n \in \N$.
	In particular, since $b^*$ is an accumulation point of $(b_n)$, we have $\norm[gK]{c - b^*} \leq 20\alpha$.
	The above estimate holds for every closed ball $K$ centered at $o$, thus $c$ and $b^*$ are equivalent.
\end{proof}


\begin{coro}
\label{res: convergence criterion - cocycle}
	Let $c \in \Lambda_{\rm  ctg}(G)$.
	Let $(x_n)$ be a sequence of points in $X$ such that $c(o,x_n)$ diverges to infinity.
	Then any accumulation point of $(x_n)$ in $\bar X$ is equivalent to $c$.
\end{coro}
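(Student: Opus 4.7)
The plan is to reduce the corollary directly to \autoref{res: convergence criterion - prelim}. The proposition handles the case where $c(o,x_n)$ is merely bounded below and where the accumulation point is already known to lie in $\partial X$. The corollary is strictly stronger in its hypothesis (divergence to infinity), but weaker in its conclusion in that it allows accumulation points in the whole horocompactification $\bar X$. So the only real content is to rule out accumulation points inside $X$ itself.

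First I would use the fact that every cocycle $c \in \bar X$ is $1$-Lipschitz, hence $\dist o{x_n} \geq c(o,x_n)$. Since the right-hand side tends to infinity by hypothesis, the sequence $(x_n)$ eventually leaves every bounded subset of $X$. Equivalently, the associated cocycles $b_n = \iota(x_n)$ cannot accumulate on any point of $\iota(X)$, because $\iota(X)$ is open in $\bar X$ (as recalled after \autoref{res: complete gradient arc}) and any accumulation point in $\iota(X)$ would be in $\iota(K)$ for some compact $K \subset X$, contradicting properness together with $\dist o{x_n} \to \infty$. Therefore every accumulation point of $(x_n)$ in $\bar X$ actually lies in the horoboundary $\partial X$.

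Next, since $c(o,x_n)$ diverges to $+\infty$, in particular the hypothesis of \autoref{res: convergence criterion - prelim} is satisfied (with, say, $M = 0$, after discarding finitely many terms, which does not affect the set of accumulation points). The proposition then gives that any accumulation point of $(x_n)$ in $\partial X$ is equivalent to $c$, and by the first step this covers all accumulation points of $(x_n)$ in $\bar X$.

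There is no substantial obstacle here — the only subtle point is the observation that divergence of $c(o,x_n)$ forces the sequence to leave every bounded set, which is immediate from the $1$-Lipschitz property of cocycles. All the geometric work (contracting tails, projections, shadow estimates) has already been carried out in the proof of \autoref{res: convergence criterion - prelim}.
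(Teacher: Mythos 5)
Your proof is correct and follows essentially the same route as the paper: use the $1$-Lipschitz property of cocycles to conclude that $(x_n)$ is unbounded (hence accumulates only in $\partial X$), then invoke \autoref{res: convergence criterion - prelim}. The paper's version is just a terser rendering of the same argument.
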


\begin{proof}	
	Since cocycles in $\bar X$ are $1$-Lipschitz, we observe that no subsequence of $(x_n)$ is bounded. 
	Hence, any accumulation point of $(x_n)$ belongs to $\partial X$.
	The result now follows from \autoref{res: convergence criterion - prelim}
\end{proof}


%
\subsection{Measure of radial limit sets}
%

Assume now that $G$ contains a contracting element for its action on $X$.
If the action of $G$ on $X$ is divergent (i.e. the Poincaré series of $G$ diverges at the critical exponent $s = \omega_G$), it is a ``well-known'' fact that any $G$-invariant, $\omega_G$-conformal density is supported on the radial limit set.
More precisely we proved the following statement, which can be  understood as a partial form of the Hopf-Tsuji-Sullivan dichotomy. See Corollaries~4.25 and~5.19 as well as Proposition~5.22 in \cite{Coulon:2022tu}.

\begin{prop}
\label{res: quasi-conf + ergo}
	Let $G$ be a group acting properly by isometries on a proper geodesic space $X$.
	Assume that $G$ is not virtually cyclic and contains a contracting element.
	Let $\omega \in \R_+$ and $\nu = (\nu_x)$ be a $G$-invariant, $\omega$-conformal density.
	The followings are equivalent.
	
	\begin{enumerate}
		\item The Poincaré series $\mathcal P_G(s)$ diverges at $s = \omega$ (hence $\omega = \omega_G$).
		\item \label{enu: quasi-conf + ergo - support}
		$\nu_o$ gives positive measure to the radial limit set
		\item \label{enu: quasi-conf + ergo - support}
		 $\nu_o$ is supported on the contracting limit set $\Lambda_{\rm ctg}(G)$, hence on the radial limit set $\Lambda_{\rm rad}(G)$.
	\end{enumerate}
	In this situation, the measure $\nu_o$ restricted to $(\bar X, \mathfrak B^+)$ is non-atomic.
\end{prop}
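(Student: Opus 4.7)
My proof plan has two parts. The equivalences among items (1), (2), and (3) are established in \cite{Coulon:2022tu}, and I would import them as a black box. Their argument uses the Shadow Lemma (\autoref{res: shadow lemma}) to translate the Poincaré series into the measure of a $\limsup$ of shadows, which coincides with the contracting (hence radial) limit set; the convergent direction then follows from Borel--Cantelli, while the divergent direction exploits the contracting element to show that the limit set has full measure.

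For the non-atomicity assertion I would argue by contradiction. Suppose $A$ is an atom of $(\bar X, \mathfrak B^+, \nu_o)$ with $\nu_o(A) > 0$. By item (3), $\nu_o$-almost every $c \in A$ lies in $\Lambda_{\rm ctg}(G) = \bigcup_{\alpha \in \mathbb N} \Lambda_{\mathcal T_\alpha}(G)$. Using~(\ref{eqn: gromov product - lip}) to absorb $\norm[\infty]{c - c'}$ into an enlargement of the shadow radius, each $\Lambda_{\mathcal T_\alpha}(G, o) := \bigcup_{r \in \mathbb N} \Lambda_{\mathcal T_\alpha}(G, o, r)$ is saturated Borel, and $\Lambda_{\mathcal T_\alpha}(G) = \bigcup_{g_0 \in G} g_0 \cdot \Lambda_{\mathcal T_\alpha}(G, o)$ is a countable union of saturated Borel sets. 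Applying the atom dichotomy and translating $A$ by a suitable $g_0 \in G$ (still an atom, since $\nu_o$ and $\nu_{g_0^{-1}o}$ share the same null sets), I may assume $A$ is essentially contained in $\Lambda_{\mathcal T_{\alpha_0}}(G, o)$ for some fixed $\alpha_0$. Concretely, $\nu_o$-a.e.\ $c \in A$ admits $r = r(c) \in \mathbb N$ such that for every $L \geq 0$ there are elements $v \in \mathcal T_{\alpha_0}(L)$ with $\dist o{vo}$ arbitrarily large and $c \in \mathcal O_o(vo, r)$.

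The decisive step is the measurability upgrade from \autoref{res: saturation shadow}: for each $r \in \mathbb N$ and each $v \in \mathcal T_{\alpha_0}(L_0)$ with $L_0 > r + 15\alpha_0$, the saturated shadow $\mathcal O^+_o(vo, r)$ is a Borel subset of $\mathcal O_o(vo, r + 16\alpha_0)$, hence lies in $\mathfrak B^+$. The atom dichotomy yields $\nu_o(A \cap \mathcal O^+_o(vo, r)) \in \{0, \nu_o(A)\}$; letting $\mathcal V_r$ denote the countable set of $v$'s realising the second alternative, the union $N = \bigcup_{r \in \mathbb N} \bigcup_{v \notin \mathcal V_r}(A \cap \mathcal O^+_o(vo, r))$ is $\nu_o$-null. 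For $c \in A \setminus N$, any $v$ with $c \in \mathcal O_o(vo, r(c))$ is forced to lie in $\mathcal V_{r(c)}$, so $\mathcal V_{r(c)}$ must contain elements with $\dist o{vo}$ arbitrarily large. The Shadow Lemma then bounds
\begin{equation*}
    \nu_o(A) \leq \nu_o\bigl(\mathcal O^+_o(vo, r(c))\bigr) \leq e^{2\omega(r(c) + 16\alpha_0)} \, e^{-\omega \dist o{vo}},
\end{equation*}
and letting $\dist o{vo} \to \infty$ along $\mathcal V_{r(c)}$ forces $\nu_o(A) = 0$, a contradiction. The main obstacle I anticipate is precisely this measurability gymnastics: raw shadows are not $\mathfrak B^+$-measurable, and the upgrade to saturated shadows via \autoref{res: saturation shadow} requires a contracting-tail condition --- which is exactly why one must restrict from the full radial limit set to the contracting limit set from the outset.
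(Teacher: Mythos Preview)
Your proposal is correct. The paper does not prove this proposition at all: it imports the entire statement---equivalences and non-atomicity alike---from \cite{Coulon:2022tu} (Corollaries~4.25, 5.19 and Proposition~5.22). Your handling of the equivalences matches this. For the non-atomicity clause you go further and supply a direct argument, which is valid: the reduction to a fixed $\alpha_0$ and the passage to saturated shadows $\mathcal O^+_o(vo,r)\in\mathfrak B^+$ via \autoref{res: saturation shadow} is exactly what makes the atom dichotomy applicable, and the Shadow Lemma upper bound then forces $\nu_o(A)=0$. One small remark: when you invoke (\ref{eqn: gromov product - lip}) to show that $\Lambda_{\mathcal T_{\alpha_0}}(G,o)$ is saturated, note that the resulting enlarged radius $r'$ depends on $\norm[\infty]{c-c'}$ and hence on the particular pair; this is harmless because you are taking the union over $r\in\mathbb N$, but it is worth saying explicitly (alternatively, \autoref{res: saturation shadow} gives a uniform $r+16\alpha_0$ once the contracting-tail condition holds).
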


The same line of arguments can be used to prove that several variations of radial limit sets have full measure, see \autoref{res: gal limit set full measure}.
We extract below the main ingredients used in \cite{Coulon:2022tu}.
Recall that $S_G(r, a)$ stands for the ``sphere'' in $G$ of radius $r$ centered at $a$, see (\ref{eqn: induced sphere}).

\begin{defi}
	A non-decreasing map $U \colon  \R_+ \to 2^G$ has the \emph{extension property}, if there exists $r \in \R_+$ such that for every $L \in \R_+$ and $g \in G$, there is $u \in S_G(L, r)$ with the following properties:  $gu \in U(L)$ while $go$ is $r$-close to some geodesic from $o$ to $guo$.
\end{defi}

\begin{defi}
	A non-decreasing map  $U \colon  \R_+ \to 2^G$ is \emph{almost $G$-invariant}, if there exists $r_1 \in \R_+$ such that  for every $r, L \in \R_+$ with $L > r + r_1$, for every $u \in G$, there is $T \in \R_+$ such that for every $g \in U(L+r_1)$ with $\dist o{go} \geq T$, the following holds
	\begin{enumerate}
		\item $ug \in U(L)$,
		\item $u\mathcal O_o(go,r) \subset \mathcal O_o(ugo,r+r_1)$.
	\end{enumerate}
\end{defi}

The last definition is designed to control the $G$-invariance of the limit set.
More precisely, if  $U \colon  \R_+ \to 2^G$ is an almost $G$-invariant, non-decreasing map, then for every $r, L \in \R_+$ with $L > r + r_1$,
\begin{equation*}
	G\Lambda_U(G,o,r,L + r_1) \subset \Lambda_U(G, o, r+r_1, L),
\end{equation*}
(it suffices to unwrap all definitions) and thus
\begin{equation*}
	G\Lambda_U(G,o,r) \subset \Lambda_U(G, o, r+r_1).
\end{equation*}

\begin{prop}
\label{res: gal limit set full measure}
	Let $\nu = (\nu_x)$ be a $G$-invariant $\omega_G$ conformal density.
	Let $U \colon \R_+ \to 2^G$ be an almost $G$-invariant, non-decreasing map with the extension property.
	Assume that there is $\alpha \in \R^*_+$ such that for all $L \in \R_+$, we have $U(L) \subset \mathcal T_\alpha(L)$.
	If $G$ is not virtually cyclic and the action of $G$ on $X$ is divergent then there exists $r \in \R_+$ such that $\nu_o(\Lambda_U(G, o,r)) = 1$.
	In particular, $\nu_o$ is supported on $\Lambda_U(G)$.
\end{prop}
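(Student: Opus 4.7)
The argument follows the Patterson--Sullivan strategy of \cite[Corollaries~4.25 and~5.19]{Coulon:2022tu}. The extension property is used to transfer divergence of $\mathcal P_G(\omega_G)$ to the restricted series over $U(L)$, while the hypothesis $U(L) \subset \mathcal T_\alpha(L)$ provides the contracting-tail geometry needed to apply the Shadow Lemma and to control the saturation of shadows via \autoref{res: saturation shadow}.

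First I would show that, for every $L \in \R_+$,
\begin{equation*}
\sum_{h \in U(L)} e^{-\omega_G \dist o{ho}} = +\infty.
\end{equation*}
Let $r$ be the extension parameter. For each $g \in G$, choose $u_g \in S_G(L, r)$ so that $h_g := g u_g \in U(L)$ and $go$ is $r$-close to some geodesic from $o$ to $h_g o$. The triangle inequality gives $\dist o{h_g o} \leq \dist o{go} + L + r$, and the fibers of the assignment $g \mapsto h_g$ have cardinality at most $\abs{S_G(L, r)}$, which is finite by properness. Summing over $g$ yields
\begin{equation*}
\sum_{h \in U(L)} e^{-\omega_G \dist o{ho}} \geq \frac{e^{-\omega_G(L+r)}}{\abs{S_G(L, r)}} \, \mathcal P_G(\omega_G) = +\infty.
\end{equation*}
Combining this with the lower bound of the Shadow Lemma (\autoref{res: shadow lemma}), which applies as soon as $r \geq r_0$, I obtain
\begin{equation*}
\sum_{h \in U(L)} \nu_o\bigl(\mathcal O_o(ho, r)\bigr) = +\infty \quad \text{for every } L \in \R_+.
\end{equation*}

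The next, and most technical, step is to extract from this divergence the measure bound $\nu_o(\Lambda_U(G, o, r, L)) \geq \delta$, uniformly in $L$, for $r$ chosen sufficiently large. Since $\Lambda_U(G, o, r, L)$ consists precisely of the cocycles belonging to infinitely many shadows $\mathcal O_o(ho, r)$ with $h \in U(L)$, this is a Borel--Cantelli-type problem with only quasi-independence available. I would adapt the Vitali-type covering argument of \cite[Corollary~5.19]{Coulon:2022tu}: using the upper half of the Shadow Lemma together with the bounded annular multiplicity of shadows whose base points have contracting tails (via \autoref{res: wysiwyg shadow}), one bounds the $\nu_o$-measure of $\bigcup_{h \in U(L), \dist o{ho} \geq T} \mathcal O_o(ho, r)$ from below by a constant independent of $T$ and $L$; passing to the limit $T \to \infty$ and intersecting the decreasing family over $L$ yields $\nu_o(\Lambda_U(G, o, r)) \geq \delta > 0$.

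Finally, I would upgrade positive measure to full measure by a restriction argument. The set $\Lambda_U(G)$ is a $G$-invariant saturated Borel subset of $\partial X$, as recorded in the excerpt just after \autoref{def: general limit set}; its complement $B = \partial X \setminus \Lambda_U(G)$ is therefore also $G$-invariant and saturated. If $\nu_o(B) > 0$, the renormalized restriction $\nu|_B$ defines a $G$-invariant, $\omega_G$-conformal density supported on $B$; applying the previous paragraphs to this new density produces an $r$ with $\nu|_B(\Lambda_U(G, o, r)) \geq \delta > 0$, contradicting $\Lambda_U(G, o, r) \subset \Lambda_U(G) \subset B^c$. Hence $\nu_o(\Lambda_U(G)) = 1$. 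To extract a single $r$ with $\nu_o(\Lambda_U(G, o, r)) = 1$, one combines $\Lambda_U(G) = \bigcup_r \Lambda_U(G, o, r)$ (via the inclusion $G\Lambda_U(G, o, r) \subset \Lambda_U(G, o, r + r_1)$ recalled in the excerpt) with a refinement of the Vitali-type argument, using the almost $G$-invariance of $U$, to strengthen the bound in the third paragraph from $\geq \delta$ to $= 1$ once $r$ is large enough. The principal obstacle is precisely this refinement of the covering argument, where the contracting structure of $U(L) \subset \mathcal T_\alpha(L)$ must be exploited carefully to overcome the absence of true independence of the shadows.
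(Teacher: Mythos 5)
Your outline reconstructs, in the right order, the Patterson--Sullivan scheme that the paper simply delegates to \cite[Section~5]{Coulon:2022tu}: the extension property transfers divergence of $\mathcal P_G$ to the restricted series over $U(L)$, the Shadow Lemma turns this into divergence of $\sum_{h \in U(L)} \nu_o(\mathcal O_o(ho,r))$, a quasi-independence/covering argument produces a uniform lower bound $\nu_o(\Lambda_U(G,o,r)) \geq \delta > 0$ for a suitable $r$, and a restriction-to-invariant-complement argument bootstraps to full measure. That matches what the reference is doing, and your first three paragraphs are fine modulo the technical covering step that both you and the paper defer to the earlier source.

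The final paragraph, however, has the logic out of focus. You first deduce $\nu_o(\Lambda_U(G)) = 1$ and then claim that a ``refinement of the Vitali-type argument'' will push the bound $\geq \delta$ up to $= 1$ for a single $r$. No amount of sharpening the covering estimate will do this --- that estimate is intrinsically a positive lower bound, not a measure-one statement. What does the job is the restriction argument you already used, applied to the right $G$-invariant set. Because the constants $r$ and $\delta$ coming out of the covering argument depend only on the Shadow Lemma parameters and on the geometry of $U$, not on the particular density, one restricts $\nu$ to $B' := \partial X \setminus G\Lambda_U(G,o,r)$, which \emph{is} $G$-invariant. If $\nu_o(B') > 0$, the normalized restriction is again a $G$-invariant $\omega_G$-conformal density, and the covering argument gives it positive mass on $\Lambda_U(G,o,r)$, which is disjoint from $B'$ --- a contradiction. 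Hence $\nu_o(G\Lambda_U(G,o,r)) = 1$, and the almost $G$-invariance inclusion $G\Lambda_U(G,o,r) \subset \Lambda_U(G,o,r+r_1)$ then yields $\nu_o(\Lambda_U(G,o,r+r_1)) = 1$. So your strategy is sound, but the source of the final upgrade is invariance, not the covering lemma, and the single radius comes out as $r + r_1$.
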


\begin{proof}
	The proof goes verbatim as in \cite[Section~5]{Coulon:2022tu}.
\end{proof}

%
\subsection{Radial limit set associated to a contracting element}
\label{res: radial limit set contracting elt}
%

We provide here another useful variation of the contracting limit set defined previously.
Fix a contracting element $h \in G$.
Choose a bi-infinite geodesic $\gamma \colon \R_+ \to X$ which is an axis of $h$ and set $o = \gamma(0)$.
The path $\gamma$ yields two Busemann cocycles $c_\gamma, c'_\gamma \in \partial X$ (see \autoref{exa: busemann cocycle}).
We assume that $\gamma$ is oriented so that $c'_\gamma$ is equivalent to the some (hence any) accumulation point of $(h^no)_{n \in \N}$ in $\partial X$.
There is $\alpha \in \R_+^*$ such that any subpath of $\gamma$ is $\alpha$-contracting.

We define a map $U_\gamma \colon \R_+ \to 2^G$ as follows: given $L \in \R_+$, the subset $U_\gamma(L) \subset G$ consists of all elements $g \in G$ for which there is $t \in [L, \infty)$ such that $g\gamma(-t)$ is a projection of $o$ onto $g\gamma$.
Note that $U_\gamma(L) \subset \mathcal T_\alpha(L)$, for every $L \in \R_+$.
Hence the corresponding limit set $\Lambda_{U_\gamma}(G)$ is contained in the contracting limit set.

\begin{lemm}
\label{res: projection between fellow-traveling contracting geo}
	Let $\gamma_1, \gamma_2 \colon \R \to X$ be two bi-infinite geodesic.
	Assume that there exists $\epsilon \in \{\pm 1\}$, $T \in \R$ and $D \in \R_+$ such that 
	\begin{equation*}
		\dist{\gamma_1(t)}{\gamma_2(\epsilon t + T)} \leq D, \quad \forall t \in \R.
	\end{equation*}
	Let $I_1 \subset \R$ be a closed interval and $I_2 = \set{\epsilon t +T}{t \in I_1}$.
	Denote by $\gamma'_i$, the path $\gamma_i$ restricted to $I_i$.
	Let $x \in X$.
	Let $p_i = \gamma_i(t_i)$ be a projection of $x$ on $\gamma'_i$.
	If $\gamma'_2$ is $\alpha$-contracting, then $\abs{ t_2 - (\epsilon t_1 + T)} \leq 2D + 4\alpha$.
\end{lemm}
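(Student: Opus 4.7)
The plan is to use the fellow-traveling hypothesis to produce ``mirror'' points on each geodesic and then combine the optimality of the projections with the $\alpha$-contracting property of $\gamma'_2$. Set $q_2 = \gamma_2(\epsilon t_1 + T) \in \gamma'_2$ and $q_1 = \gamma_1(\epsilon(t_2 - T)) \in \gamma'_1$; note that $\epsilon(t_2 - T) \in I_1$ precisely because $t_2 \in I_2$. The fellow-traveling hypothesis yields immediately $\dist{p_1}{q_2} \leq D$ and $\dist{p_2}{q_1} \leq D$. Since $p_2$ and $q_2$ both lie on the geodesic $\gamma_2$, we have $\dist{p_2}{q_2} = |t_2 - (\epsilon t_1 + T)|$, so this is the quantity to bound.

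Using that $p_i$ is a nearest-point projection of $x$ on $\gamma'_i$ and applying the triangle inequality, I obtain the chain
\[
\dist{x}{q_2} \leq \dist{x}{p_1} + D \leq \dist{x}{q_1} + D \leq \dist{x}{p_2} + 2D,
\]
where the first and third inequalities come from the fellow-traveling bounds, while the middle one uses that $p_1$ is closer to $x$ than any other point of $\gamma'_1$, in particular $q_1$. A symmetric chain with the roles of the indices exchanged gives $|\dist{x}{p_1} - \dist{x}{p_2}| \leq D$, but only the inequality displayed above is needed for the conclusion.

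Finally, I would invoke the $\alpha$-contracting property of $\gamma'_2$. If $\dist{p_2}{q_2} \leq \alpha$ there is nothing to prove. Otherwise the conditional hypothesis of \autoref{res: proj contracting set} applies to the pair $(x, q_2)$ projected onto $\gamma'_2$; since $q_2$ is its own projection, statement (3) of that lemma specializes to
\[
\dist{x}{q_2} \geq \dist{x}{p_2} + \dist{p_2}{q_2} - 4\alpha,
\]
where the constant improves from $8\alpha$ to $4\alpha$ because no approximation slack is incurred on the side where the endpoint already lies on the contracting set, consistently with the improvement mentioned in \autoref{rem: Lipschitz proj}. Combining this with the previous chain yields $\dist{p_2}{q_2} \leq 2D + 4\alpha$, as claimed. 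The only delicate point is tracking this improved constant; the geometric content is otherwise reduced to a short chain of triangle inequalities together with this one-sided use of the contracting estimate.
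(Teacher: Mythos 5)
Your proof is correct and follows essentially the same route as the paper's: the paper also upper-bounds $\dist{x}{q_2}$ via the fellow-traveling hypothesis (writing the chain through $d(x,\gamma'_1)$ and $d(x,\gamma'_2)$ rather than your explicit intermediate point $q_1$, but these are the same quantities) and combines it with the one-sided instance of \autoref{res: proj contracting set}, with the same improved $4\alpha$ error term when one endpoint already lies on the contracting set. The only cosmetic difference is that you explicitly dispose of the case $\dist{p_2}{q_2}\leq\alpha$ before invoking the lemma, a hypothesis check the paper leaves implicit.
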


\begin{proof}
	On the one hand, \autoref{res: proj contracting set} gives
	\begin{align*}
		\dist x{\gamma_2(\epsilon t_1 + T)}
		& \geq \dist x{p_2} + \dist {p_2}{\gamma_2(\epsilon t_1 + T)} - 4\alpha \\
		& \geq d(x,\gamma'_2)+ \abs{t_2 - (\epsilon t_1 + T)} - 4\alpha.
	\end{align*}
	On the other hand, it follows from our assumption that
	\begin{equation*}
		\dist x{\gamma_2(\epsilon t_1 + T)}
		\leq \dist x{p_1} + D
		\leq \dist x{\gamma'_1} + D
		\leq \dist x{\gamma'_2} + 2D.
	\end{equation*}
	The result follows from the combination of these two observations.
\end{proof}


Combining \autoref{res: projection between fellow-traveling contracting geo} with (\ref{eqn: inclusion shadows triangle inequality}) one checks that the limit set $\Lambda_{U_\gamma}(G)$ only depends on $h$ and not on the choice of $\gamma$.
For simplicity, we denote this set by $\Lambda_h$.

\begin{rema}
	Note that the orientation of $\gamma$ matters in the definition of $U_\gamma$.
	Therefore $\Lambda_h$ is a priori distinct from $\Lambda_{h^{-1}}$, which would be defined using the map $U_{\bar \gamma} \colon \R_+ \to 2^G$ associated to the reverse path $\bar \gamma$, that is the path given by $\bar \gamma(t) = \gamma(-t)$, for every $t \in \R$.
\end{rema}

The set $\Lambda_h$ is a $G$-invariant, saturated, Borel subset of $\partial X$.
We now claim that any $G$-invariant, $\omega_G$-conformal density is supported on $\Lambda_h$.
To that end it suffices to check the assumptions of \autoref{res: gal limit set full measure}.
The first one is a variation on \cite[Lemma~4.16]{Coulon:2022tu}.

\begin{lemm}
	If $G$ is not virtually cyclic, then the map $U_\gamma \colon \R_+ \to 2^G$ satisfies the extension property.
\end{lemm}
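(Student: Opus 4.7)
The plan is to leverage the hypothesis that $G$ is not virtually cyclic to pick an auxiliary element $f \in G$ with $f \notin E(h)$, and then, for each pair $(g, L)$, to construct $u$ by combining powers of $h$ with $f$. Denote by $\tau_h$ the translation length of $h$ along $\gamma$.

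For ``generic'' $g$ (those whose projection of $g^{-1}o$ onto $\gamma$ lies on the $c_\gamma$ side), the naive candidate $u = h^n$ with $n$ chosen so that $n\tau_h$ is closest to $L$ already works. Indeed, $gu\gamma = g\gamma$ as a set, with parametrization shifted by $n\tau_h$, so the projection of $o$ onto $gu\gamma$ (which is determined by the projection of $g^{-1}o$ onto $\gamma$ via $G$-equivariance) falls at parameter $\leq -L$ in $gu\gamma$. This gives $gu \in U_\gamma(L)$, and the triangle estimates of \autoref{res: proj contracting set} and \autoref{res: wysiwyg shadow} applied to the $\alpha$-contracting geodesic $g\gamma$ provide both $\dist o{uo} \in [L-r, L+r)$ and the fact that $go$ lies within a bounded distance of some geodesic from $o$ to $guo$.

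For the remaining $g$ (projection of $g^{-1}o$ on the wrong side of $\gamma$), I would perturb by $f$: take $u$ of the form $f h^{n'}$ with $n'$ adjusted so that $\dist o{uo}$ still falls in the required window. The bound $\diam \pi_\gamma(f\gamma) \leq C$ (a standard consequence of $f \notin E(h)$, recalled just after the definition of contracting element) shows that the translated axis $gu\gamma = gf\gamma$ is placed so that the projection of $o$ on it is now at parameter $\leq -L$, providing the contracting tail required by $U_\gamma(L)$. All discrepancies introduced by $f$ are absorbed into a uniform constant $r$ depending only on $\alpha$, $\tau_h$, and $f$, not on $g$ or $L$.

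The main obstacle lies in the second case: one must verify that inserting $f$ simultaneously reorients $gu\gamma$ (so that the projection of $o$ falls in its $-$ tail at distance $\geq L$ from $guo$), keeps $\dist o{uo}$ inside the uniform window, and preserves the alignment of $go$ with a geodesic $[o, guo]$. The key inputs are the uniformly bounded projections between $\gamma$ and its translates by elements outside $E(h)$ together with \autoref{res: proj contracting set}, which let one track the geodesic triangle $(o, go, guo)$ up to error $O(\alpha)$ and conclude that the required constant $r$ is uniform.
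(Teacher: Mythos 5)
Your proposal follows the same high-level scaffolding as the paper's proof: pick an auxiliary element $f\in G\setminus E(h)$, split into two cases according to where $g^{-1}o$ projects, and set $u$ to be a power of $h$ in one case and $f$ times a power of $h$ in the other. The first case ($t_0\le 0$, where $\gamma(t_0)$ denotes the projection of $g^{-1}o$ on $\gamma$) is handled correctly: translating by $h^n$ with $n\tau_h\approx L$ shifts the projection parameter past $-L$, keeps $\dist o{uo}$ in the window $L\pm r$, and \autoref{res: proj contracting set} places $o$ near the segment $[g^{-1}o,uo]$.

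The gap is in the second case. You take $u=fh^{n'}$ and assert that $\diam\pi_\gamma(f\gamma)\le C$ together with \autoref{res: proj contracting set} forces the projection of $o$ onto $gu\gamma$ to fall at parameter $\le -L$ with $\dist o{uo}$ in the required window. This does not follow. For the construction to work, you need the projection of $g^{-1}o$ onto $f\gamma$ to lie at a \emph{uniformly bounded} parameter, so that a shift of size roughly $L$ suffices and $\dist o{uo}\approx L$. But the hypothesis $t_0>0$ does not imply this. Concretely: if $g^{-1}o$ lies near $f\gamma$ far out along its $+\infty$ end, its projection onto $f\gamma$ is at an arbitrarily large parameter $\sigma$, while its projection onto $\gamma$ falls in the bounded overlap region between $\gamma$ and $f\gamma$; this overlap may well contain small positive parameters, so $t_0$ can be a small positive number. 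For such $g$, any $u=fh^{n'}$ with $gu\in U_\gamma(L)$ forces $n'\tau_h\ge L+\sigma$ and hence $\dist o{uo}\approx L+\sigma$, which leaves the window $L\pm r$ for any uniform $r$. So your dichotomy at $t_0=0$ is not the right one, and the argument for case 2 is not justified.

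The ingredient you are missing is exactly the paper's preliminary claim: comparing the projections of $g^{-1}o$ onto \emph{both} $\gamma$ and the translate $s\gamma$ ($s\notin E(h)$), and observing that, because these two $\alpha$-contracting axes have a bounded overlap, at least one of those projection parameters is bounded by a uniform constant $D_1$. One then uses whichever axis has the bounded projection to build $u$ (either $u\in\group h$ or $u\in s\group h$). The correct analogue in your framing would be a threshold $t_0\le D_1$ versus $t_0>D_1$ for $D_1$ large enough, together with a proof (not just a citation of the bounded overlap) that when $t_0>D_1$ the projection of $g^{-1}o$ onto $f\gamma$ lands at a uniformly bounded parameter; that proof uses \autoref{res: proj contracting set} applied to the geodesic from $g^{-1}o$ to a far point on $\gamma$, which must cross the overlap region. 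You acknowledge the main obstacle explicitly in your last paragraph but do not resolve it, so as written the proposal does not constitute a proof.
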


\begin{proof}
	Since $G$ is not virtually cyclic, there is $s \in G\setminus E(h)$.
	In particular, $\pi_\gamma(s\gamma)$ has diameter at most $D_0$ for some $D_0 \in \R_+$.
	Let $L \in \R_+$ and $g \in G$.
	In order to simplify the exposition, in the remainder of this proof we denote by $D_i$ parameters which do not depend on $L$ and $g$.
	Their value could be estimated in terms of $\alpha$, $\dist o{ho}$, $\dist o{so}$, etc, but these computations will not improve the qualitative nature of the statement.
	
	Since $\gamma$ and $s\gamma$ are both $\alpha$-contracting, with a small ``overlap'', one checks that if $\gamma(t_0)$ and $\gamma'(t'_0)$ are projections of $g^{-1}o$ on $\gamma$ and $s\gamma$ respectively, then
	\begin{equation*}
		\max \{t_0, t'_0\} \leq D_1.
	\end{equation*}
	Assume that $t'_0 \leq D_1$ (the other case works in the same way).
	Since $h$ is contracting, there is $D_2$ and $u \in s \group h$ such that 
	\begin{equation*}
		\dist{u\gamma(t)}{s\gamma(t+L)} \leq D_2, \quad \forall t \in \R.
	\end{equation*}
	In particular,
	\begin{equation*}
		\abs{\dist o{uo} - L}
		\leq \dist o{so} + \dist{s\gamma(L)}{u\gamma(0)} 
		\leq D_3,
	\end{equation*}
	Hence $u \in S_G(L, D_3)$.
	In addition the projection $u\gamma(t'_1)$ of $g^{-1}o$ on $u\gamma $ is such that $\abs{t'_1 - (t'_0 - L)} \leq D_4$ (\autoref{res: projection between fellow-traveling contracting geo}).
	Hence $gu \in U_\gamma(L - D_5)$.
	Since $\gamma$ is contracting, $u\gamma(t'_1)$ is $D_6$-close to any geodesic from $g^{-1}o$ to $uo$.
	The point $u\gamma(t'_1)$ is $D_7$-close to $s\gamma(t'_0)$ while $t'_0 \leq D_1$.
	We deduce that $o$ is $D_8$-close to any geodesic from $g^{-1}o$ to $uo$.
	Hence  $go$ is $D_8$-close to any geodesic from $o$ to $guo$.
	The argument works for any $g \in G$ and $L \in \R_+$, whence the result.
\end{proof}


\begin{lemm}
\label{res: preferred map almost invariant}
	The map $U_\gamma \colon \R_+ \to 2^G$ is almost $G$-invariant.
\end{lemm}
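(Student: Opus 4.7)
The plan is to verify both conditions in the definition of almost $G$-invariance by combining the large-scale $1$-Lipschitzness of the nearest-point projection onto the $\alpha$-contracting geodesic $g\gamma$ (\autoref{rem: Lipschitz proj}) with the $G$-equivariance of Gromov products. Given $u \in G$, I would take $r_1 = \dist o{uo} + 4\alpha$ and threshold $T = 0$, so the task reduces to checking both conditions for every $r, L$ with $L > r + r_1$ and every $g \in U_\gamma(L + r_1)$.

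For condition $(1)$, suppose $g \in U_\gamma(L + r_1)$, witnessed by some $t \geq L + r_1$ with $g\gamma(-t)$ a projection of $o$ onto $g\gamma$. Since $u$ is an isometry of $X$, the projection of $o$ onto $ug\gamma$ is exactly $u$ applied to a projection of $u^{-1}o$ onto $g\gamma$; call the latter $g\gamma(-t')$. Applying \autoref{rem: Lipschitz proj} to the $\alpha$-contracting set $g\gamma$ and the two-point subset $\{o, u^{-1}o\}$ gives
\begin{equation*}
	\abs{t - t'} \leq \dist o{u^{-1}o} + 4\alpha = \dist o{uo} + 4\alpha.
\end{equation*}
Consequently $t' \geq L + r_1 - \dist o{uo} - 4\alpha \geq L$, and the point $ug\gamma(-t')$ is a projection of $o$ onto $ug\gamma$ with parameter at least $L$, so $ug \in U_\gamma(L)$.

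For condition $(2)$, the $G$-invariance of the Gromov product immediately yields $u\mathcal O_o(go, r) = \mathcal O_{uo}(ugo, r)$, since $c \in \mathcal O_o(go, r)$ precisely when $\gro o c {go} \leq r$, equivalently $\gro{uo}{uc}{ugo} \leq r$. Changing base point from $uo$ back to $o$ via the inclusion (\ref{eqn: inclusion shadows triangle inequality}) then gives
\begin{equation*}
	\mathcal O_{uo}(ugo, r) \subset \mathcal O_o(ugo, r + \dist o{uo}) \subset \mathcal O_o(ugo, r + r_1),
\end{equation*}
which is the required inclusion. No real obstacle appears: both conditions amount to a careful accounting of the additive constants, the essential geometric input being the large-scale $1$-Lipschitzness of projection onto $g\gamma$. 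The only subtle point is that $r_1$ must be allowed to depend on $u$, which is consistent with the way almost $G$-invariance is subsequently invoked in \autoref{res: gal limit set full measure}.
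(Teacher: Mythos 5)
Your proof fails on a quantifier mismatch in the definition of ``almost $G$-invariant'': the constant $r_1$ must be chosen first, uniformly in $u \in G$; only the threshold $T$ is permitted to depend on $u$ (and on $r$, $L$). You set $r_1 = \dist o{uo} + 4\alpha$, which depends on $u$, and then take $T = 0$, thereby discarding the one parameter that \emph{is} allowed to absorb dependence on $u$. This is not a cosmetic slip: the remark immediately following the definition deduces $G\Lambda_U(G,o,r) \subset \Lambda_U(G,o,r+r_1)$ from almost $G$-invariance, and that inclusion becomes vacuous if $r_1$ is allowed to depend on $u$ (it would have to dominate $\sup_{u \in G}\dist o{uo} = \infty$). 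The claim in your last sentence that a $u$-dependent $r_1$ is ``consistent with the way almost $G$-invariance is subsequently invoked'' is therefore incorrect.

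The deeper problem is that your argument cannot be repaired just by reinstating $T$: the large-scale Lipschitz bound $\abs{t - t'} \leq \dist o{uo} + 4\alpha$ from \autoref{rem: Lipschitz proj} is independent of $\dist o{go}$, so restricting to $\dist o{go} \geq T$ buys you nothing when you try to conclude $t' \geq L$. The paper fixes $r_1 = 14\alpha$ and $T > \dist o{uo} + L + 6\alpha$, then argues by contradiction: if the projection $p'$ of $u^{-1}o$ on $g\gamma_-$ were within distance $L$ of $go$, then (since the projection $p$ of $o$ satisfies $\dist p{go} \geq L + r_1$, forcing $\dist p{p'} > \alpha$) the contracting estimate of \autoref{res: proj contracting set} gives $\dist o{u^{-1}o} \geq \dist o{go} - L - 6\alpha \geq T - L - 6\alpha > \dist o{uo}$, absurd. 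Your treatment of condition $(2)$ has the same defect: the base-point change via (\ref{eqn: inclusion shadows triangle inequality}) incurs an additive error of $\dist o{uo}$, which no universal $r_1$ can absorb. The paper avoids that change of base point entirely: it uses the fact just established (that $p'$ is far from $go$) together with \autoref{res: wysiwyg shadow} and \autoref{res: proj cocycle contracting set} to bound $\gro{u^{-1}o}c{go}$ by $r + 12\alpha$ directly, exploiting the contracting geometry of $g\gamma$ rather than a triangle-inequality shift of the base point.
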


\begin{proof}
	In this proof, we denote by $\gamma_-$ the path $\gamma$ restricted to $(-\infty, 0]$.
	We let $r_1 = 14\alpha$.
	Let $r, L \in \R_+$ with $L > r + r_1$ and $u \in G$.
	We fix $T > \dist o{uo} + L +  6\alpha$. 
	Let $g \in U_\gamma(L + r_1)$ such that $\dist o{go} \geq T$.
	By definition there a projection $p$ of $o$ on $g\gamma_-$ such that $\dist p{go} \geq L + r_1$.
	Let $p'$ be a projection of $u^{-1}o$ on $g\gamma_-$.
	We claim that $\dist {p'}{go} \geq L$.
	Assume on the contrary that it is not the case.
	In particular, $\dist p{p'} > \alpha$.
	Since $g\gamma_-$ is $\alpha$-contracting we get from \autoref{res: proj contracting set}
	\begin{align*}
		\dist o{u^{-1}o}
		\geq \dist op + \dist p{p'}  - 6\alpha
		& \geq \dist op + \left[\dist p{go} - \dist{p'}{go}\right] - 6\alpha \\
		& \geq \dist o{go} - L - 6\alpha.
	\end{align*}
	This contradicts the fact that $\dist o{go} \geq T$, and complete the proof of our claim.
	In particular, $ug$ belongs to $U_\gamma(L)$.
	
	Let $c \in \mathcal O_o(go,r)$.
	Denote by $q$ a projection of $c$ on $g\gamma_-$.
	Since $L + r_1 > r + 13\alpha$, we know by \autoref{res: wysiwyg shadow} that $\dist q{go} \leq r + 7\alpha$.
	In particular, $\dist {p'}q > 4\alpha$.
	Recall that $p'$ and $q$ are respective projections of $u^{-1}o$ and $c$ on $g\gamma_-$.
	It follows from \autoref{res: proj cocycle contracting set} that $\gro {u^{-1}o}cq \leq 5\alpha$.
	The triangle inequality now yields $\gro {u^{-1}o}c{go} \leq r + 12\alpha$, i.e. $\gro o{uc}{ugo} \leq r + 12\alpha$
	We just proved that 	$u\mathcal O_o(go,r) \subset \mathcal O_o(ugo,r + r_1)$, whence the result.
\end{proof}


The next statement is now just a particular case of \autoref{res: gal limit set full measure}.

\begin{prop}
\label{res: preferred limit set full measure}
	Assume that $G$ is not virtually cyclic and the action of $G$ on $X$ is divergent.
	Then for every $G$-invariant, $\omega_G$-conformal density $\nu = (\nu_x)$ we have $\nu_o(\Lambda_h) = 1$.
\end{prop}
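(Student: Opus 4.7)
The plan is to simply invoke \autoref{res: gal limit set full measure} with the map $U = U_\gamma$, since all the hypotheses have been verified in the preceding lemmas. More precisely, I would first recall that by the discussion preceding \autoref{res: projection between fellow-traveling contracting geo}, for every $L \in \R_+$, we have $U_\gamma(L) \subset \mathcal T_\alpha(L)$, where $\alpha \in \R_+^*$ is such that every sub-path of $\gamma$ is $\alpha$-contracting. This gives the uniform contracting-tail condition required by \autoref{res: gal limit set full measure}.

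Next, I would check that the remaining hypotheses are in place: the extension property of $U_\gamma$ has been established in the previous lemma (using the existence of an element $s \in G \setminus E(h)$, which is where the assumption that $G$ is not virtually cyclic enters), and the almost $G$-invariance of $U_\gamma$ is exactly \autoref{res: preferred map almost invariant}. Since $G$ is assumed to be not virtually cyclic and the action of $G$ on $X$ is divergent, \autoref{res: gal limit set full measure} then produces some $r \in \R_+$ with $\nu_o(\Lambda_{U_\gamma}(G,o,r)) = 1$, and in particular $\nu_o$ is supported on $\Lambda_{U_\gamma}(G) = \Lambda_h$.

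Since there is essentially no genuine obstacle here (the real content lies in the two preparatory lemmas and in \autoref{res: gal limit set full measure} itself), the only thing worth double-checking while writing the proof is that the parameter conventions between the definition of $U_\gamma$ (based on the ``backward'' ray $g\gamma_-$ along which the projection of $o$ is far from $go$) and those of \autoref{res: gal limit set full measure} (based on the existence of a contracting tail for the pair $(o,go)$) match up as expected, so that the inclusion $U_\gamma(L) \subset \mathcal T_\alpha(L)$ is indeed valid with the \emph{same} parameter $L$. This is the only fiddly point, but it is straightforward from \autoref{def: contracting tail}. The proof then reduces to a one-line citation.
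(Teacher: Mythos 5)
Your proposal is correct and takes exactly the paper's route: the paper simply states that the proposition is a particular case of \autoref{res: gal limit set full measure}, relying on the inclusion $U_\gamma(L)\subset\mathcal T_\alpha(L)$ noted immediately after the definition of $U_\gamma$, the extension property lemma, and \autoref{res: preferred map almost invariant}. The only small slip is one of attribution: the containment $U_\gamma(L)\subset\mathcal T_\alpha(L)$ is recorded directly after the definition of $U_\gamma$, not in the discussion preceding \autoref{res: projection between fellow-traveling contracting geo}, but this does not affect the argument.
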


We complete this section with additional properties of $\Lambda_h$.

\begin{prop}
\label{res: preferred limit set - fixed shadow width}
	There is $r_0 \in \R_+$ such that the set $\Lambda_h$ coincides with $\Lambda_{U_\gamma}(G, o, r_0)$.
\end{prop}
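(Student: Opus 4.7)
The plan is to prove the two inclusions separately. The inclusion $\Lambda_{U_\gamma}(G,o,r_0) \subset \Lambda_h$ holds for any $r_0$, since by definition $\Lambda_h = \bigcup_{r \geq 0} G\Lambda_{U_\gamma}(G,o,r)$ already contains $\Lambda_{U_\gamma}(G,o,r_0)$ (take $g = 1$). The substance of the proposition is the reverse inclusion $\Lambda_h \subset \Lambda_{U_\gamma}(G,o,r_0)$ for a suitable fixed $r_0$.

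First I would use the almost $G$-invariance of $U_\gamma$ (\autoref{res: preferred map almost invariant}) to eliminate the $G$-action from the definition of $\Lambda_h$. Given $c \in \Lambda_h$, there exist $g \in G$ and $r \geq 0$ with $g^{-1}c \in \Lambda_{U_\gamma}(G,o,r)$. For each sufficiently large $L$, this furnishes an element $g_L \in U_\gamma(L + r_1)$ with $\dist o{g_L o}$ arbitrarily large and $g^{-1}c \in \mathcal O_o(g_L o, r)$; almost $G$-invariance yields $g g_L \in U_\gamma(L)$ and $c \in \mathcal O_o(g g_L o, r + r_1)$, so $c \in \Lambda_{U_\gamma}(G, o, r + r_1)$. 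It therefore suffices to prove that for every $r' \in \R_+$, $\Lambda_{U_\gamma}(G,o,r') \subset \Lambda_{U_\gamma}(G,o,r_0)$, with $r_0$ depending only on $\gamma$, $h$, and $\alpha$.

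The core step is to improve the shadow parameter by shifting group elements along the axis of $h$. Given a witness $g \in U_\gamma(L)$ with $L \gg r'$ and $c \in \mathcal O_o(go, r')$, \autoref{res: wysiwyg shadow} supplies a projection $q$ of $c$ on the tail $g\gamma$ with $\dist q{go} \leq r' + 7\alpha$ and a complete gradient arc from $o$ to $c$ whose exit point $z^*$ from the $\alpha$-neighborhood of $g\gamma$ satisfies $\dist{z^*}q \leq 5\alpha$. Since $\gamma$ lies at bounded Hausdorff distance from an orbit of $\group h$, one can choose an integer $k$ with $\abs k \dist o{ho}$ approximately equal to $\dist q{go}$ such that $g' := g h^{-k}$ satisfies $\dist{g'o}q \leq D_1$, where $D_1$ depends only on $\gamma$ and $h$. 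Since $z^*$ lies on a complete gradient arc from $o$ to $c$, one has $\gro oc{z^*} = 0$, and the Lipschitz estimate (\ref{eqn: gromov product - lip}) yields $\gro oc{g'o} \leq \dist{g'o}{z^*} \leq D_1 + 5\alpha =: r_0$; that is, $c \in \mathcal O_o(g'o, r_0)$. Running this along a sequence of witnesses with $L \to \infty$ produces $g'_L \in U_\gamma(L')$ with $L' \to \infty$, $\dist o{g'_L o} \to \infty$ and $c \in \mathcal O_o(g'_L o, r_0)$, showing $c \in \Lambda_{U_\gamma}(G,o,r_0)$. The main obstacle I anticipate is the verification that $g' = gh^{-k}$ still belongs to $U_\gamma(L')$ with $L'$ tending to infinity, which amounts to transporting the projection of $o$ from $g\gamma$ to the fellow-traveling contracting geodesic $gh^{-k}\gamma$; this is precisely what \autoref{res: projection between fellow-traveling contracting geo} delivers.
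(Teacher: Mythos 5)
Your proof is correct and follows essentially the same strategy as the paper's: use almost $G$-invariance to discard the group translate, then shift the witness $g$ along $\langle h\rangle$ so the new base point lands near the projection $q$ of $c$, and invoke \autoref{res: projection between fellow-traveling contracting geo} to retain membership in $U_\gamma$. The only cosmetic difference is in how the new shadow bound is verified — you pass through the vanishing of $\gro oc{z^*}$ on a gradient arc plus the Lipschitz estimate (\ref{eqn: gromov product - lip}), whereas the paper invokes point~\ref{enu: wysiwyg shadow - double gromov product} of \autoref{res: wysiwyg shadow} directly and then the triangle inequality; both are minor variants of the same idea.
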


\begin{proof}
	By construction $\Lambda_{U_\gamma}(G, o, r)$ is contained in $\Lambda_h$ for every $r \in \R_+$.
	Hence it suffices to prove the converse inclusion for a suitable value of $r$.
	Since $h$ is contracting, there is $D \in \R_+$ such that for every $T \in \R_+$, there is $u \in \group h$, with the following property:
	\begin{equation*}
		\dist {u \gamma(t)}{\gamma(t -T)} \leq D, \quad \forall t \in \R.
	\end{equation*}
	Set $r_0 = D + 4\alpha$.
	Since $U_\gamma$ is almost $G$-invariant (\autoref{res: preferred map almost invariant}), we know that $\Lambda_h$ is contained in 
	\begin{equation*}
		\bigcup_{r \geq 0} \Lambda_{U_\gamma}(G, o, r) = \bigcup_{r \geq 0}\bigcap_{L \geq 0} \Lambda_{U_\gamma}(G, o, r, L).
	\end{equation*}
	Let $c \in \Lambda_h$.
	There is $r \in \R_+$ such that $c \in  \Lambda_{U_\gamma}(G, o, r)$.
	We fix $T = r + 7\alpha$.
	According to our choice of $D$, there is $u \in \group h$ such that $\dist {u \gamma(t)}{\gamma(t -T)} \leq D$, for every $t \in \R_+$.
	Let $L \in \R_+$ and $L' > \max\{L + T + 2D + 4\alpha, r + 13\alpha\}$.
	Since $c$ belongs to $ \Lambda_{U_\gamma}(G, o, r, L')$, there is a sequence $g_n \in {U_\gamma}(L')$ such that $\dist o{g_no}$ diverges to infinity and $c \in \mathcal O_o(g_no, r)$, for every $n \in \N$.
	According to \autoref{res: wysiwyg shadow}, $c \in \mathcal O_o(g_n\gamma(-T), 4\alpha)$.
	It follows from the triangle inequality that $c \in \mathcal O_o(g_nuo, r_0)$.
	In addition, $g_nu \in U_\gamma(L)$ by \autoref{res: projection between fellow-traveling contracting geo}.
	Moreover, $\dist o{g_nuo}$ diverges to infinity.
	Hence $c$ belongs to $ \Lambda_U(G, o, r_0, L)$.
	This fact holds for every $L \in \R_+$, whence the result.
\end{proof}


\begin{prop}
\label{res: separation triangle cocycles}
	Let $c' \in \Lambda_h$ and $\Lambda_0\subset \bar X$ be a closed subset such that $c'$ is not equivalent to any cocycle in $\Lambda_0$.
	For all $\ell, M \in \R_+$, there is $u \in G$ such that 
	\begin{enumerate}
		\item $c'(o,uo) \geq M$,
		\item $\Lambda_0$ and $c'$ are $2\alpha$-separated by $(uy, uy')$, where $y = \gamma(-\ell)$ and $y' = \gamma(\ell)$.
	\end{enumerate}
\end{prop}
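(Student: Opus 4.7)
The plan is to construct $u$ of the form $u = g h^{-N}$, where $g \in U_\gamma(L)$ approximates $c'$ (via \autoref{res: preferred limit set - fixed shadow width}) and $N$ is a moderate integer shift chosen so that $uy$ and $uy'$ sit inside the fellow-travelling region of the gradient arc $\nu$ from $o$ to $c'$. Concretely, since $c' \in \Lambda_h = \Lambda_{U_\gamma}(G, o, r_0)$, for any $L, T \in \R_+$ I pick $g \in U_\gamma(L)$ with $\dist o{go} \geq T$ and $c' \in \mathcal O_o(go, r_0)$. Writing $\tau$ for the translation length of $h$ along $\gamma$, set $N = \lceil (\ell + r_0 + 15\alpha)/\tau \rceil$ and $u = g h^{-N}$. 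Since $u\gamma(t)$ agrees with $g\gamma(t - N\tau)$ up to bounded error, \autoref{res: wysiwyg shadow} applied to the tail $g\gamma|_{[-t,0]}$ places $uo, uy, uy'$ within $O(\alpha)$ of $\nu$ (for $L$ large compared to $M + \ell + N\tau$) and positions $uy'$ strictly before the projection of $c'$ on $u\gamma$ by at least $8\alpha$.

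The first two conditions then follow from estimates along $\nu$. Because $uo$ is $O(\alpha)$-close to $\nu$ and $c'$ grows at unit speed along $\nu$, $c'(o, uo) \geq \dist o{uo} - O(\alpha) \geq M$ for $L$ large. For the $c'$-side of the separation, I apply \autoref{res: wysiwyg shadow} (\ref{enu: wysiwyg shadow - simple gromov product}) to the pair $(o, p)$, where $p$ is a point close to the projection of $c'$ on $u\gamma$: the tail $u\gamma|_{[-t - N\tau, p]}$ contains $(z, z') = (uy', uy)$ with the required distance inequality, yielding $\gro{uy}{c'}{uy'} \leq 2\alpha$, i.e.\ $c' \in \mathcal O_{uy}(uy', 2\alpha)$.

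The main difficulty is the $\Lambda_0$-side: $\gro{uy'}{c}{uy} \leq 2\alpha$ for every $c \in \Lambda_0$. A symmetrical application of \autoref{res: wysiwyg shadow} (\ref{enu: wysiwyg shadow - simple gromov product}) reduces this to the projection of $c$ on $u\gamma$ lying at $u\gamma$-parameter $\leq -\ell - 14\alpha$. I argue by contradiction: if no such $u$ exists, applying the construction with $L_n, T_n \to \infty$ produces $u_n = g_n h^{-N}$ and $c_n \in \Lambda_0$ whose projection $q_n$ on $u_n\gamma$ violates the bound. By compactness of $\bar X$ and closedness of $\Lambda_0$, extract $c_n \to c_\infty \in \Lambda_0$. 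If the $u_n\gamma$-parameter of $q_n$ stays bounded above along a subsequence, then \autoref{res: proj cocycle contracting set} applied to $c_n$ and the $\alpha$-contracting geodesic $g_n\gamma$ gives $c_n(o, u_n y) \geq \dist o{u_n y} - O(\alpha)$. A diagonal extraction using uniform convergence of $c_n$ to $c_\infty$ on compact sets then produces $y_k = u_{n_k} y$ with $\dist o{y_k} \to \infty$ and $c_\infty(o, y_k) \geq \dist o{y_k} - O(\alpha)$; combined with the fact that $u_n \in \mathcal T_\alpha(L_n)$, this places $c_\infty$ in $\Lambda_{\rm ctg}(G)$. Since $y_k$ also lies within $O(\alpha)$ of $\nu$, one has $c'(o, y_k) \to \infty$, and \autoref{res: convergence criterion - cocycle} applied to both $c'$ and $c_\infty$ forces any accumulation point of $(y_k)$ in $\bar X$ to be equivalent to both, whence $c_\infty \sim c'$, contradicting the hypothesis. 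In the remaining case, where the parameter of $q_n$ tends to $+\infty$, $c_n$ becomes close to the forward Busemann cocycle of $u_n\gamma$, which itself accumulates in $[c']$ as $g_n o \to [c']$; one concludes $c_\infty \sim c'$ along the same lines, again a contradiction.
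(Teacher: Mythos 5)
Your overall scheme (set $u = g h^{-N}$ with $g \in U_\gamma(L)$ approximating $c'$, verify item (1) and the $c'$-side of the separation via \autoref{res: wysiwyg shadow}, and handle the $\Lambda_0$-side by a compactness/contradiction argument) runs parallel to the paper's proof, and the first two parts are essentially correct. The genuine gap is on the $\Lambda_0$-side, at the step you call ``diagonal extraction.'' You obtain the estimate $c_n(o, u_n y) \geq \dist o{u_n y} - O(1)$ (and the constant actually depends on the bound $K$ on the projection parameters, not only on $\alpha$). This is a \emph{diagonal} estimate: the index $n$ sits both on the cocycle $c_n$ and on the point $u_n y$, and $\dist o{u_n y} \to \infty$. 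The convergence $c_n \to c_\infty$ in $\bar X$ is only uniform on compact subsets of $X$, which gives no control of $c_n(o, u_n y) - c_\infty(o, u_n y)$: the point $u_n y$ has always escaped the region where the convergence is effective by the time $n$ is large. Hence the conclusion $c_\infty(o, y_k) \geq \dist o{y_k} - O(\alpha)$ along a subsequence, and with it the membership $c_\infty \in \Lambda_{\rm ctg}(G)$, is not established, and the subsequent appeal to \autoref{res: convergence criterion - cocycle} cannot be made. The same issue recurs in your Case~2.

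The non-diagonal ingredient you need is \cite[Lemma~5.7]{Coulon:2022tu}, which is what the paper's proof uses. Rather than transfer a ``diagonal'' bound to $c_\infty$, it compares each offending cocycle $b_k$ (your $c_n$) directly with the point cocycle $\iota(g_k o)$: since both project at bounded distance from $g_k o$ along the contracting tail $g_k\gamma|_{[-L,0]}$, that lemma produces $\norm[K_k]{b_k - \iota(g_k o)} \leq 20\alpha$ on an exhaustion $(K_k)$ of $X$ whose radius is governed by $\dist o{g_k o}$ alone, uniformly in $b_k$. Passing to the limit on each fixed ball $B(o,R)$ then gives $\norm[\infty]{c_\infty - b^*} \leq 20\alpha$ with $b^*$ an accumulation point of $(g_k o)$ (hence $\sim c'$), which is the contradiction. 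Two further structural choices in the paper's argument are worth noting, as they build in slack your direct estimates lack: the paper works at the level of neighborhoods $V_c$ of points $c \in \Lambda_0$ and covers $\Lambda_0$ by finitely many such (so ``for all but finitely many $k$'' can be taken uniformly), and it first establishes the weaker $r_1$-separation at the farther pair $(u_k z, u_k z')$ before tightening to $2\alpha$-separation at $(u_k y, u_k y')$ via \autoref{res: bi-gradient line}.
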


\begin{rema}
\label{rem: separation triangle cocycles}
	Since $o$ lies on $\gamma$ between $y$ and $y'$, we observe that 
	\begin{equation*}
		\gro c{uy'}{uy} = \gro c{uo}{uy} + \gro c{uy'}{uo} - \gro{uy}{uy'}{uo} = \gro c{uo}{uy} + \gro c{uy'}{uo}, \quad \forall c \in \partial X.
	\end{equation*}
	In particular, $\gro c{uo}{uy} \leq \gro c{uy'}{uy}$, hence $\Lambda_0 \subset \mathcal O_{uo}(uy, 2\alpha)$.
	Similarly $c'$ also belongs to $\mathcal O_{uo}(uy', 2\alpha)$.
\end{rema}

\begin{proof}
	Since $h$ is contracting, there is $D \in \R_+$, such that for every $T \in \R$, there is $n \in \Z$, such that 
	\begin{equation}
	\label{eqn: separation pair cocycles}
		\dist {h^n\gamma(t)}{ \gamma(t-T)} \leq D, \quad \forall t \in \R.
	\end{equation}
	In addition, we write $r_0$ for the parameter given by \autoref{res: preferred limit set - fixed shadow width} and let $r_1 = 2D + \max\{r_0 + 12\alpha, 23\alpha\}$.
	Let $\ell, M \in \R_+$.
	Set $T = \ell + r_1 + 7\alpha$.
	As explained above, there is $n \in \Z$ such that (\ref{eqn: separation pair cocycles}) holds.
	Fix $L \in \R_+$ such that
	\begin{equation*}
		L > \max \{ 2r_1 + 21 \alpha, \ell + T + r_1 + 7\alpha\}.
	\end{equation*}
	According to \autoref{res: preferred limit set - fixed shadow width}, there is a sequence $(g_k)$ of elements in $U_\gamma(L)$ such that $\dist o{g_ko}$ diverges to infinity, while $c' \in \mathcal O_o(g_ko, r_0)$, for every $k \in \N$.
	In particular, 
	\begin{equation*}
		c'(o, g_ko) \geq \dist o{g_ko} - 2\gro o{c'}{g_ko} 
		\geq \dist o{g_ko} - 2r_0.
	\end{equation*}
	Consequently $c'(o, g_ko)$ diverges to infinity, hence any accumulation point of $(g_ko)$ is equivalent to $c'$ (\autoref{res: convergence criterion - cocycle}).
	
	We let $u_k = g_kh^n$ for every $k \in \N$, and claim that if $k$ is large enough, then $u_k$ satisfies the conclusion of the statement.
	Since $g_k$ belongs to $U_\gamma(L)$, we get from \autoref{res: wysiwyg shadow} that 
	\begin{equation*}
		\gro {g_k\gamma(-L)}{c'}{g_ko} \leq r_0 + 12\alpha.
	\end{equation*}
	Set $z = \gamma(-L+T)$ and $z' = \gamma(T)$.
	Combined with (\ref{eqn: separation pair cocycles}) and the triangle inequality, the previous estimates yield $c' \in \mathcal O_{u_kz}(u_kz',r_1)$.
	Note that $\dist {u_ko}{g_ko}$ equals $\dist o{h^no}$ and thus does not depends on $k$.
	Since $c'(o, g_ko)$ diverges to infinity, so does $c'(o, u_ko)$.
	In particular, $c'(o, u_ko) \geq M$ provided $k$ is sufficiently large.

	We are now going to prove that $\Lambda_0 \subset \mathcal O_{u_kz'}(u_kz, r_1)$ for all but finitely many $k \in \N$.
	Since $c'$ belongs to $\Lambda_{\mathcal T_\alpha}(G)$, its equivalence class is 
	\begin{equation*}
		[c'] = \set{b' \in \partial X}{\norm[\infty]{b' - c'} \leq 20\alpha},
	\end{equation*}
	see \cite[Proposition~5.10]{Coulon:2022tu}.
	In particular, it is a closed subset of $\bar X$.
	Thus for any $c$ in $\partial X \setminus [c']$, there is a neighborhood $V_c \subset \bar X$ of $c$ whose closure does not intersect $[c']$.
	We claim that for all but finitely many $k \in \N$, for all $b \in V_c$, any projection $q$ of $b$ onto $g_k\gamma$ restricted to $\intval{-L}0$, satisfies $\dist {g_ko}q \geq L - 18\alpha$.
	Suppose on the contrary that it is not the case.
	Up to passing to a subsequence, we can assume that for every $k \in \N$, there is $b_k \in V_c$ a projection $q_k$ is a projection of $b_k$ onto $g_k\gamma$ restricted to $\intval{-L}0$ such that $\dist {g_ko}{q_k} < L - 18\alpha$.
	Moreover we can suppose that $(b_k)$ converges to a cocycle $b$ in the closure of $V_c$.
	Recall that $g_k$ belongs to $U_\gamma(L)$.
	Hence there is $t_k \geq L$ such that $p_k = g_k \gamma(-t_k)$ is a projection of $o$ on $g_k\gamma$.
	In particular, $\dist{p_k}{q_k} > 4\alpha$.
	Using \autoref{res: proj cocycle contracting set} we observe that $\gro o{b_k}{q_k} \leq 5\alpha$.
	Hence $\gro o{b_k}{g_ko} \leq r_k$,  where $r_k = \dist {g_ko}{q_k} + 5\alpha$.
	In particular, both $g_ko$ and $b_k$ belong to $\mathcal O_o(g_ko,r_k)$.
	Note that $L > r_k + 13\alpha$.
	In view of \cite[Lemma~5.7]{Coulon:2022tu} there is a exhaustion of $X$ by a non-decreasing collection of compact subsets $K_k \subset X$ such that for every $k \in \N$, we have $\norm[K_k]{b_k - \iota(g_ko)} \leq 20\alpha$.
	Recall that any accumulation point $b'$ of $(g_ko)$ is equivalent to $c'$.
	Passing to the limit we see that $\norm[\infty]{b - b'} \leq 20\alpha$, hence $b$ is equivalent to $c'$ as well.
	This contradicts the fact that the closure of $V_c$ does not intersect $[c']$ and completes the proof of our claim.

	It follows from \autoref{res: proj cocycle contracting set}, that for all but finitely many $k \in \N$, for all $b \in V_c$, we have 
	\begin{equation*}
		\gro b{g_ko}{g_k\gamma(-L)} \leq \gro b{g_ko}{q_k} + \left[L- \dist {q_k}{g_ko}\right] \leq 23\alpha,
	\end{equation*}
	where $q_k$ is a projection of $b$ on $g_k \gamma$ restricted to $\intval{-L}0$.
	Combined with (\ref{eqn: separation pair cocycles}) and the triangle inequality, it yields  $V_c \subset \mathcal O_{u_kz'}(u_kz,r_1)$.
	Since $\Lambda_0$ is compact it can be covered by finitely many open subsets of the form $V_c$ where $c \in \partial X \setminus[c']$.
	Hence $\Lambda_0 \subset \mathcal O_{u_kz'}(u_kz,r_1)$ for all but finitely many $k \in \N$.
	In other words $\Lambda_0$ and $c'$ are $r_1$-separated by $(u_kz,u_kz')$.
	We conclude with \autoref{res: bi-gradient line} that  $\Lambda_0$ and $c'$ are $2\alpha$-separated by $(u_ky,u_ky')$.
\end{proof}


%
\section{The reduced horoboundary as a standard measure space}
\label{res: reduced bdy standard}
%

The goal of this section is to prove (among other things) the following statement.

\begin{prop}
\label{res: standard measure space}
	Let $G$ be a group acting properly, by isometries on a proper, geodesic space $X$, with a contracting element.
	There is a $G$-invariant, saturated Borel subset $\Lambda \subset \Lambda_{\rm ctg}(G)$ satisfying the following properties.
	\begin{enumerate}
		\item For any $G$-invariant, $\omega_G$-conformal density $\nu = (\nu_x)$ the subset $\Lambda_{\rm ctg}(G) \setminus \Lambda$ has zero $\nu_o$-measure.
		\item There is a distance $\distV[\infty]$ on $\Lambda \qsim$ such that the metric topology on $\Lambda\qsim$ coincides with the quotient topology induced from $\partial X$.
		\item The quotient $(\Lambda\qsim, \distV[\infty])$ is a separable complete metric space.
	\end{enumerate}
\end{prop}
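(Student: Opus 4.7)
The plan is to take $\Lambda = \Lambda_h$ for some fixed contracting element $h \in G$, as constructed in Section~\ref{res: radial limit set contracting elt}. This set is a $G$-invariant, saturated Borel subset of $\Lambda_{\rm ctg}(G)$ by definition, and the full-measure statement in (1) is exactly \autoref{res: preferred limit set full measure}.

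To define the distance, I would start from the natural candidate $\rho(c,c') = e^{-\gro c{c'}o}$, which extends to a function on $\Lambda \times \Lambda$ vanishing exactly on pairs with $c \sim c'$: since $c,c' \in \Lambda_{\rm ctg}$, \cite[Proposition~5.10]{Coulon:2022tu} (invoked in the proof of \autoref{res: separation triangle cocycles}) tells us that $c \sim c'$ is equivalent to $\norm[\infty]{c-c'}<\infty$ with a uniform quantitative bound. That same bound, combined with (\ref{eqn: gromov product - lip}), shows that $\rho(c_1,c'_1)$ and $\rho(c_2,c'_2)$ differ by a bounded multiplicative factor whenever $c_1 \sim c_2$ and $c'_1 \sim c'_2$, so $\rho$ descends to a well-defined symmetric function on $\Lambda\qsim$ separating points. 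I would then apply the standard Frink chain construction with a small visual parameter $\epsilon > 0$ to produce an honest metric $\distV[\infty]$ on $\Lambda\qsim$ bi-Lipschitz equivalent to $\rho^\epsilon$ on the quotient.

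Identifying the metric topology with the quotient topology is then a comparison between metric balls and saturated shadows. A sub-basis of neighborhoods of $[c]$ for the quotient topology is given by saturations of shadows $\mathcal O_o(go,r)$ meeting $[c]$; by \autoref{res: wysiwyg shadow}, \autoref{res: saturation shadow} and \autoref{res: convergence criterion - cocycle}, together with the contracting-tail structure inherited from \autoref{res: preferred limit set - fixed shadow width}, such saturated shadows interleave with the balls for $\rho^\epsilon$. Separability of $\Lambda\qsim$ follows because $\bar X$ is compact metrizable (by Arzel\`a--Ascoli), hence second countable, and the image of any countable dense subset of $\Lambda$ is dense in the quotient.

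The main obstacle will be \emph{completeness}, since $\Lambda_h$ is in general neither closed in $\partial X$ nor obviously a $G_\delta$. The plan is: given a Cauchy sequence $([c_n])$ for $\distV[\infty]$, choose representatives $c_n \in \Lambda_h$, observe that $\gro{c_n}{c_m}o \to \infty$ as $n,m \to \infty$, and extract from $\bar X$ a subsequential limit $c^* \in \partial X$ (using compactness, and excluding $c^* \in X$ via the divergence of Gromov products together with the fact that cocycles in $\bar X$ are $1$-Lipschitz). The delicate point is showing $c^* \in \Lambda_h$: I would use the characterization $\Lambda_h = \Lambda_{U_\gamma}(G,o,r_0)$ from \autoref{res: preferred limit set - fixed shadow width} and, by a diagonal extraction among the shadowing elements in $U_\gamma$ that witness $c_n \in \Lambda_h$, produce shadowing elements witnessing $c^* \in \Lambda_h$; \autoref{res: convergence criterion - cocycle} will then yield $[c_n] \to [c^*]$ in $\distV[\infty]$.
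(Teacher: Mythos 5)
Your approach diverges from the paper's in a way that introduces a genuine gap: the Frink chain construction applied to $\rho^\epsilon$ with $\rho(c,c') = e^{-\gro c{c'}o}$ only produces a non-degenerate metric when $\rho$ satisfies a Frink-type condition, which in the hyperbolic case comes from the quasi-ultrametric inequality $\gro{c_1}{c_3}o \geq \min\{\gro{c_1}{c_2}o, \gro{c_2}{c_3}o\} - \delta$. Nothing in the paper establishes such an inequality on $\Lambda_h$, and there is no reason it should hold: the contracting tails control the geometry near each cocycle individually, but the shape of a ``triangle'' whose three vertices lie on distinct contracting tails is a priori uncontrolled, because $X$ is not assumed hyperbolic. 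Without this inequality, the chain metric could collapse to zero on distinct classes, so step two of your plan is not justified. This is precisely why the paper does \emph{not} build the distance from the Gromov product but instead introduces admissible sequences and cylinders: the quantity $r_L(c_1,c_2)$ measures agreement of the cylinders $C_L(c_i)$ on balls of $G$, and this is an ultra-metric \emph{by construction}, with no triangle inequality to verify. Your $\rho$ and the paper's $\distV[L]$ are related (\autoref{res: finite Gromov product from visual metric} and \autoref{res: separation for upper bound on distance} give implications in each direction), but there is no claim they are bi-Lipschitz, and the cylinder construction carries strictly more information.

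Two smaller issues. First, you take $\Lambda = \Lambda_h$, whereas the paper takes the symmetrization $\Lambda = \Lambda_h \cup \Lambda_{h^{-1}}$, and explicitly flags (just before \ref{enu: alpha - contracting path}) that the symmetrization is needed for \autoref{res: small distance with three cocycles} and \autoref{res: complete metric space - prelim}; these lemmas choose an orientation $\epsilon = \pm 1$ depending on which half of $\Lambda$ a given cocycle lies in, and that freedom is used in the Hopf argument after applying the flip. Second, your completeness argument proposes diagonal extraction among the shadowing witnesses of $c_n \in \Lambda_h$, but for different $n$ these witnesses are a priori unrelated; there is nothing forcing a convergent diagonal. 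In the paper's proof of \autoref{res: complete metric space - prelim}, the Cauchy condition for $\distV[M]$ produces a single element $g$ lying in $C_M(c_n) \cap C_M(c_m)$ for all $m \geq n$, and it is exactly this shared cylinder membership that yields a coherent witnessing sequence for the limit. That device is again supplied by the cylinder machinery you have bypassed.
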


\begin{rema}
\label{rem: standard measure space}
	Assume that $G$ is not virtually cyclic and the action of $G$ on $X$ is divergent.
	Write $\mathfrak C$ for the Borel $\sigma$-algebra on $(\Lambda \qsim, \distV[\infty])$.
	Let $\nu$ be a $G$-invariant, $\omega_G$-conformal density.
	Denote by $\mu_o$ the measure obtained by first restricting $\nu_o$ to $\Lambda$ and then pushing this restriction to $\Lambda \qsim$.
	Then $(\Lambda\qsim, \mathfrak C, \mu_o)$ is a standard probability space.
	Moreover, $L^p(\partial X, \mathfrak B^+, \nu_o)$ and $L^p(\Lambda\qsim, \mathfrak C, \mu_o)$ are isomorphic, for every $p \in [1, \infty]$.
	In this sense $(\partial X, \mathfrak B^+, \nu_o)$ can be treated as a standard measure space.
\end{rema}

%
\subsection{Initial data}
\label{sec: initial data}
%

We assume that $G$ contains a contracting element, say $h \in G$.
We choose a bi-infinite geodesic $\gamma \colon \R \to X$ which is an axis of $h$.
We suppose that the base point of $X$ is $o = \gamma(0)$.
The path $\gamma$ yields two Busemann cocycles $c_\gamma, c'_\gamma \in \partial X$, as in \autoref{exa: busemann cocycle}.
We assume that $\gamma$ is oriented so that $c'_\gamma$ is equivalent to the some (hence any) accumulation point of $(h^no)_{n \in \N}$ in $\partial X$.

Denote by $\bar \gamma \colon \R \to X$ the reverse path given by $\bar \gamma(t) = \gamma(-t)$ for every $t \in \R$.
For simplicity we write $U_-, U_+ \colon \R_+ \to 2^G$ for the map $U_\gamma$ and $U_{\bar \gamma}$ used in \autoref{res: radial limit set contracting elt}.
We define a map $U \colon \R_+ \to 2^G$ by 
\begin{equation*}
	U(L) = U_-(L) \cup U_+(L), \quad \forall L \in \R_+.
\end{equation*}
In addition, we let
\begin{equation*}
	\Lambda_- = \Lambda_{h^{-1}}, \quad
	\Lambda_+ = \Lambda_h, 
	\quad \text{and} \quad
	\Lambda = \Lambda_- \cup \Lambda_+.
\end{equation*}
(the need of ``symmetrizing'' the set $\Lambda$ as well as the visual distance defined below will appear later, see for instance Lemmas~\ref{res: small distance with three cocycles} and~\ref{res: complete metric space - prelim}).
There is $\alpha \in \R_+^*$ such that the following holds.
\begin{enumerate}[label=(H\arabic{*}), ref=(H\arabic{*})]
	\item \label{enu: alpha - contracting path}
	Any subpath of $\gamma$ is $\alpha$-contracting.
	\item \label{enu: alpha - disp h}
	$\dist o{ho} \leq \alpha$.
	\item \label{enu: alpha - diam proj}
	$\diam { \pi_\gamma(g\gamma) } \leq \alpha$, for every $g \in G \setminus E(h)$;

	\item \label{enu: alpha - fellow travel}
	For every $g \in E(h)$, there is $\epsilon \in \{\pm 1\}$ and $T \in \R$ such that  for every $t \in \R_+$, we have 
	\begin{equation*}
		\dist{g \gamma(t)}{\gamma(\epsilon t + T)}\leq \alpha.
	\end{equation*}
	In particular, the Hausdorff distance between $\gamma$ and $g\gamma$ if at most $\alpha$.
	\item \label{enu: alpha - translation}
	Conversely, for every $T \in \R$, there is $g \in \group h$ such that  for every $t \in \R_+$, we have $\dist{g \gamma(t)}{\gamma(t + T)}\leq \alpha$.
\end{enumerate}
These properties follow from the fact that $h$ is contracting.
The proofs work as if $h$ was a loxodromic element in a hyperbolic space and are left to the reader.
Consider a $G$-invariant, $\omega$-conformal density $\nu = (\nu_x)$.
If $G$ is not virtually cyclic and the action of $G$ on $X$ is divergent, then $\nu$ gives full measure to $\Lambda_-$, $\Lambda_+$, and thus $\Lambda$.
If the action of $G$ is convergent then $\Lambda_{\rm ctg}(G)$ has zero $\nu_o$-measure.
If $G$ is virtually cyclic, then $\Lambda = \partial X$.
In all cases $\Lambda_{\rm ctg}(G) \setminus \Lambda$ has zero $\nu_o$-measure.

\paragraph{Separation.}

\begin{voca}
	In order to shorten the statements, we adopt the following terminology.
	Let $Z$ and $Z'$ be two subsets of $\bar X$.
	Let $r, L \in \R_+$ and $\epsilon \in \{\pm 1\}$.
	Set $x = \gamma(-\epsilon L)$ and $x' = \gamma(\epsilon L)$.
	We say that $Z$ and $Z'$ are \emph{$(L,r,\epsilon)$-separated at an element $u \in G$}, if $Z$ and $Z'$ are $r$-separated by $(ux,ux')$, that is
	\begin{equation*}
		Z \subset \mathcal O_{ux'}(ux,r) 
		\quad \text{and} \quad
		Z' \subset \mathcal O_{ux}(ux',r).
	\end{equation*}

\end{voca}

\begin{rema}
\label{rem: separation at an element}
	Let $r,M \in \R_+$ with $M > r + 11\alpha$.
	Let $\epsilon \in \{\pm 1\}$.
	According to \autoref{res: bi-gradient line}, if $Z$ and $Z'$ are $(M, r, \epsilon)$-separated at some $u \in G$, then they are also $(L, 2\alpha, \epsilon)$-separated at $u$, for any $L \leq M - (r + 7\alpha)$.
\end{rema}

\begin{lemm}
\label{res: simultaneous separation}
	Let $r_1, r_2, L_1, L_2 \in \R_+$ with $L_i > r_i + 32\alpha$.
	Let $\epsilon_1, \epsilon_2 \in \{\pm 1\}$.
	Let $g_1, g_2 \in G$.
	Let $c,c' \in \bar X$.
	Assume that $c$ and $c'$ are $(L_i, r_i, \epsilon_i)$-separated at $g_i$.
	Let $p$ be a projection of $g_1o$ on $g_2\gamma$.
	If $\dist p{g_2o} < L_2 -(r_2 + 20\alpha)$, then $g_1^{-1}g_2 \in E(h)$.
\end{lemm}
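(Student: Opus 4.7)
The plan is to prove the contrapositive. Assuming $g_1^{-1} g_2 \notin E(h)$, property \ref{enu: alpha - diam proj} (applied with the element $g_1^{-1} g_2$) gives $\diam \pi_{g_2\gamma}(g_1\gamma) \leq \alpha$; I will derive a contradiction with the hypothesis.

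First I would extract a common complete gradient arc. Since $L_i > r_i + 32\alpha > 11\alpha + r_i$, the separation at $g_i$ satisfies the hypothesis of \autoref{res: bi-gradient line} applied to the contracting geodesic $g_i\gamma$ restricted between $g_i x_i$ and $g_i x'_i$. This yields a complete gradient arc $\nu$ from $c$ to $c'$ (the same arc in both applications). For each $i$, the entry and exit points of $\nu$ in $\mathcal N_\alpha(g_i\gamma)$, call them $\nu(t_i)$ and $\nu(t'_i)$, are $(r_i + 12\alpha)$-close to $g_i x_i$ and $g_i x'_i$ respectively. Moreover, by \autoref{res: bi-gradient line}\ref{enu: bi-gradient line - neighborhood}, a large central portion of $g_i\gamma$ lies in $\mathcal N_{5\alpha}(\nu)$; since $L_1 > r_1 + 13\alpha$, the point $g_1 o = g_1\gamma(0)$ belongs to this portion, so $g_1 o$ is $5\alpha$-close to some $\nu(t_*)$. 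Comparing arc lengths (recall $\nu$ is a geodesic) with $\dist{g_1 x_1}{g_1 o} = \dist{g_1 o}{g_1 x'_1} = L_1$, one sees that $t_*$ is roughly the midpoint of $[t_1,t'_1]$, with $t_* - t_1 \geq L_1 - r_1 - 17\alpha$ and $t'_1 - t_* \geq L_1 - r_1 - 17\alpha$.

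Next I would use the hypothesis to control the position of $t_*$ relative to $[t_2,t'_2]$. If $t_* < t_2$, then since $\nu$ is a geodesic entering $\mathcal N_\alpha(g_2\gamma)$ only at $t_2$, one has
\begin{equation*}
    d(\nu(t_*), g_2\gamma) \geq d(\nu(t_*), \nu(t_2)) - d(\nu(t_2), g_2\gamma) \geq (t_2 - t_*) - \alpha,
\end{equation*}
whence $d(g_1 o, g_2\gamma) \geq (t_2 - t_*) - 6\alpha$. The hypothesis $\dist{g_1 o}{p} = d(g_1 o, g_2\gamma) < L_2 - (r_2 + 22\alpha)$ then forces $t_2 - t_* < L_2 - r_2 - 16\alpha$; the symmetric bound holds if $t_* > t'_2$. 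Thus $t_*$ is either inside $[t_2, t'_2]$, or lies within a controlled distance of that interval.

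Finally I would convert this into a fellow-traveling statement between $g_1\gamma$ and $g_2\gamma$. The preceding estimates show that the intervals $[t_1,t'_1]$ and $[t_2,t'_2]$ overlap on a subinterval of length at least roughly $L_1 - r_1 - 17\alpha > 15\alpha$. On this overlap, the respective restricted portions of $g_1\gamma$ and $g_2\gamma$ both lie in $\mathcal N_{5\alpha}(\nu)$, and by the geodesic structure of $\nu$ they cover two subsegments of $g_1\gamma$ and $g_2\gamma$ of comparable length which lie within bounded distance of each other. Consequently $\mathcal N_{5\alpha/2}(g_1\gamma) \cap \mathcal N_{5\alpha/2}(g_2\gamma)$ has diameter exceeding $15\alpha$. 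Then \autoref{res: intersection vs projection} gives $\diam \pi_{g_2\gamma}(g_1\gamma) > \alpha$, contradicting the standing assumption. Hence $g_1^{-1} g_2 \in E(h)$.

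The main obstacle is bookkeeping the constants in step two so that the overlap of $[t_1,t'_1]$ with $[t_2,t'_2]$ is genuinely large enough to defeat the bound $\diam \pi_{g_2\gamma}(g_1\gamma) \leq \alpha$ via \autoref{res: intersection vs projection}; the specific constant $22\alpha$ in the hypothesis is precisely calibrated so that, when combined with the $5\alpha$ slippage of $g_1 o$ to $\nu$, the $\alpha$ slippage of $\nu(t_2)$ into $\mathcal N_\alpha(g_2\gamma)$, and the $(r_i + 12\alpha)$ distance from the entry/exit points of $\nu$ to $g_i x_i, g_i x'_i$, one still has strict overlap exceeding the $15\alpha$ threshold coming from \autoref{res: intersection vs projection}.
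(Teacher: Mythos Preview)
Your global strategy is exactly the paper's: share a complete gradient arc $\nu$, show the entry/exit intervals $[t_i,t'_i]$ of $\nu$ in $\mathcal N_\alpha(g_i\gamma)$ overlap by more than $15\alpha$, and conclude via \autoref{res: intersection vs projection} and \ref{enu: alpha - diam proj}. But there are two concrete problems in how you force the overlap.

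The printed hypothesis is a typo: the paper's own proof finishes with $L_2 \leq \dist p{g_2o} + r_2 + 22\alpha$ and calls this a contradiction, and the lemma's sole application (\autoref{res: cylinder coincide}, Case~2) extracts from it a lower bound on $\dist{p_j}{go}$. The intended condition is $\dist p{g_2o} < L_2 - (r_2+22\alpha)$; you read it literally as $\dist p{g_1o} = d(g_1o, g_2\gamma)$ and built step~4 around that. Independently, your displayed inequality in step~4 is not a valid triangle inequality for distance-to-a-set: $d(a,C) \geq d(a,b) - d(b,C)$ would say $d(a,C)+d(b,C) \geq d(a,b)$, which fails whenever $a,b$ are both close to $C$ yet far apart. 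One can repair it using that $g_2\gamma$ is $\alpha$-contracting (the projections of $\nu(t_*)$ and $\nu(t_2)$ differ by $O(\alpha)$, so triangle inequality through the \emph{specific} point $\pi_{g_2\gamma}(\nu(t_*))$ does give $d(\nu(t_*),g_2\gamma) \geq (t_2-t_*)-O(\alpha)$), but even after that your reading of the hypothesis only bounds $t_2 - t_*$ by roughly $L_2 - r_2$; combined with $t'_1 - t_* \geq L_1 - r_1 - 17\alpha$ this gives an overlap of roughly $(L_1-r_1)-(L_2-r_2)$, which need not exceed $15\alpha$. The paper instead works directly with projections onto $\gamma_2$: assuming the overlap is $\leq 15\alpha$, it places $g_1o$ near some $\nu(b_1)$ with $b_1 < a_2$ and uses contracting to force the projection of $g_1o$ on $\gamma_2$ within $13\alpha$ of $q_2$, hence within $r_2+20\alpha$ of the endpoint $x_2$; since $\dist{x_2}{g_2o} = L_2$ this yields $\dist p{g_2o} \geq L_2 - (r_2+22\alpha)$ --- which is the actual origin of the constant, not the balance you describe in your final paragraph.
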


\begin{proof}
	Let $x_i = g_i\gamma(-\epsilon_i L_i)$ and $x_i' = g_i \gamma(\epsilon_i L_i)$.
	Denote by $\nu \colon I \to X$ a complete gradient arc from $c$ to $c'$ (such an arc exists by \autoref{res: bi-gradient line}).
	For simplicity we write $\gamma_i$ for the path $g_i\gamma$ restricted to $\intval{-L_i}{L_i}$, so that $\gamma_i$ joins $x_i$ to $x'_i$.
	Let $q_i$ and $q'_i$ be respective projections of $c$ and $c'$ on $\gamma_i$.
	It follows from  \autoref{res: bi-gradient line}~\ref{enu: bi-gradient line - entry/exit point} that $d(\nu, \gamma_i) < \alpha$.
	Moreover if $\nu(a_i)$ and $\nu(a'_i)$ stands for the entry / exit point of $\nu$ in $\mathcal N_\alpha(\gamma_i)$ then 
	\begin{equation}
	\label{eqn: simultaneous separation}
		\dist{\nu(a_i)}{q_i} \leq 5\alpha
		\quad \text{and} \quad
		\dist{\nu(a'_i)}{q'_i} \leq 5\alpha.
	\end{equation}
	In addition, $\nu$ restricted to $\intval{a_i}{a'_i}$ lies in the $(5\alpha/2)$-neighborhood of $\gamma_i$, hence of $g_i \gamma$.
	Recall that $\diam {\pi_\gamma (g\gamma)} \leq \alpha$ whenever $g \notin E(h)$, see \ref{enu: alpha - diam proj}.
	In view of \autoref{res: intersection vs projection} is suffices to show that the diameter of $\intval {a_1}{a'_1} \cap \intval {a_2}{a'_2}$ is larger than $15\alpha$.
	
	We first claim that $a'_i - a_i > 15\alpha$.
	Indeed, it follows from \autoref{res: bi-gradient line}~\ref{enu: bi-gradient line - proj} that $\dist {x_i}{q_i} \leq r_i + 7\alpha$ and $\dist {x'_i}{q'_i} \leq r_i + 7\alpha$.
	Thus the triangle inequality combined with (\ref{eqn: simultaneous separation}) yields 
	\begin{equation*}
		a'_i - a_i 
		\geq \dist {q_i}{q'_i} - 10\alpha 
		\geq \dist {x_i}{x'_i} - 2r_i - 24\alpha
		> 15\alpha.
	\end{equation*}
	Suppose now that the diameter of $\intval {a_1}{a'_1} \cap \intval {a_2}{a'_2}$ is at most $15\alpha$.
	Either $a'_1 \leq a_2 + 15\alpha$ or $a'_2 \leq a_1 + 15\alpha$.
	In the remainder of the proof, we assume that $a'_1 \leq a_2 + 15\alpha$ (the other case works in the same way).
	As we already observed, $\nu$ restricted to $\intval{a_1}{a'_1}$ lies in the $(5\alpha/2)$-neighborhood of $\gamma_1$, which contains $g_1o$.
	It follows from \cite[Lemma~5.1]{Coulon:2022tu} that $g_1o$ is $5\alpha$-close to the point $\nu(b_1)$ for some $b_1 \in \intval{a_1}{a'_1}$.
	However the triangle inequality gives
	\begin{equation*}
		a'_1 - b_1 
		\geq \dist {q'_1}{g_1o} - 10\alpha
		\geq \dist {x'_1}{g_1o} - (r_1 +17\alpha)
		\geq L_1 - (r_1 + 17\alpha)
		> 15\alpha
	\end{equation*}
	Consequently
	\begin{equation*}
		b_1 < a'_1 - 15\alpha \leq a_2.
	\end{equation*}
	Recall that the distance between $\nu$ restricted to $\intval {b_1}{a_2}$ and $\gamma_2$ is at least $\alpha$.
	It follows from \autoref{res: proj cocycle contracting set} that the projection of $\nu(b_1)$ on $\gamma_2$ is $4\alpha$-close to $q_2$.
	Recall that $p$ is a projection of $g_1o$ on $g_2\gamma$ (and not just on $\gamma_2$).
	However since $\dist p{g_2o} \leq L_2$, it belongs to $\gamma_2$.
	Hence $p$ is also a projection of $g_1o$ on $\gamma_2$.
	Since projection on $\gamma_2$ is large-scale Lipschitz (\autoref{rem: Lipschitz proj}) the point $p$ is $13\alpha$-close to $q_2$.
	It follows now from the triangle inequality that 
	\begin{align*}
		L_2 
		\leq \dist {x_2}{g_2o} 
		& \leq \dist {x_2}{q_2} + \dist{q_2}p + \dist p{g_2o} \\
		& \leq  \dist p{g_2o}  +  r_2 + 20\alpha.
	\end{align*}
	This contradicts our assumption and the proof is complete.	
\end{proof}


%
\subsection{Admissible sequences}
%

In the previous section, we defined a preferred radial limit set $\Lambda$ (with full measure provided  $G$ is not virtually cyclic and the action of $G$ on $X$ is divergent).
We now begin our work to build a complete metric $\distV[\infty]$ on $\Lambda \qsim$.
Inspired by the case of hyperbolic groups, we start by defining a collection of ultra-metrics $(\distV[L])$ on $\Lambda \qsim$, which can be thought of as a ``visual metrics on $\Lambda$ seen from $o$''.

\begin{voca}
	We say that a subset $I \subset \Z$ is \emph{connected} if for all integers $i,j \in I$ with $i \leq j$, the set $\intvald ij$ is contained in $I$.
\end{voca}

\begin{defi}
\label{def: admissible sequence}
	Let $L \in \R_+$.
	A sequence $(u_i)$ of elements of $G$ indexed by a connected set $I \subset \Z$ is \emph{$L$-admissible} if for every $i \in \Z$  for which $i-1, i, i+1 \in I$, there are $\epsilon_i \in \{\pm 1\}$ as well as $z \in B(u_{i-1}o, 10\alpha)$ and $z' \in B(u_{i+1}o, 10\alpha)$ which are $(L,2\alpha, \epsilon_i)$-separated at $u_i$.
	The number $\epsilon_i$ is called the \emph{orientation} of $u_i$ in the sequence.
	
	If $I$ is finite (\resp $I = \N$) we say that the sequence $(u_i)$ is \emph{finite} (\resp \emph{infinite}).
\end{defi}

\begin{rema*}
	Just as working with hyperbolic spaces involves many estimates in terms of the hyperbolicity constant $\delta$, our approach requires numerous computations dealing with errors in terms of $\alpha$.
	In all the results below, we do not claim to have given the optimal estimates. 
	We sometimes rounded our calculations to get statements easier to digest.
	What matters the most are the orders of magnitude, i.e. which quantities are large / small compare to $\alpha$.
\end{rema*}

Being admissible is a $G$-invariant property: if $(u_i)$ is an $L$-admissible sequence, then so is $(gu_i)$ for every $g \in G$.
The definition is local (it only involves the indices $i-1$, $i$ and $i+1$).
The next statement can be thought of as a local-to-global phenomenon.

\begin{lemm}
\label{res: admissible sequence - global projection}
	Let $L >200\alpha$.
	Assume that $(u_0, \dots, u_n)$ is a finite $L$-admissible sequence. 
	Let $i \in \intvald 1{n-1}$.
	Let $\epsilon_i$ be the orientation of $u_i$ in the sequence.
	The points $u_0o$ and $u_no$ are $(L,16\alpha, \epsilon_i)$-separated at $u_i$.
	In particular, $\gro {u_0o}{u_no}{u_io} \leq 4\alpha$, for every $i \in \intvald 0n$.
	Moreover if $i \in \intvald 1{n-1}$, then $u_i$ belongs to $U(L - 18\alpha)$.
\end{lemm}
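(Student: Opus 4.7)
The plan is a local-to-global chain-propagation argument, exploiting the contracting nature of each segment $\gamma_j := u_j\gamma$ restricted to $[-L, L]$ (oriented by $\epsilon_j$), so that $\gamma_j$ runs from $u_j x_j$ to $u_j x'_j$, where $x_j = \gamma(-\epsilon_j L)$ and $x'_j = \gamma(\epsilon_j L)$; by hypothesis \ref{enu: alpha - contracting path} each $\gamma_j$ is $\alpha$-contracting. Admissibility at an interior index $j$ furnishes points at distance $\leq 10\alpha$ from $u_{j-1}o$ and $u_{j+1}o$ that are $(L, 2\alpha, \epsilon_j)$-separated at $u_j$, so the shadow-triangle inclusion (\ref{eqn: inclusion shadows triangle inequality}) upgrades the data to
\begin{equation*}
    u_{j-1}o \in \mathcal O_{u_j x'_j}(u_j x_j, 12\alpha), \qquad u_{j+1}o \in \mathcal O_{u_j x_j}(u_j x'_j, 12\alpha).
\end{equation*}

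Fix $i \in \intvald 1{n-1}$. I would prove by induction on $k \in \intvald 1{n-i}$ that $u_{i+k}o \in \mathcal O_{u_i x_i}(u_i x'_i, 16\alpha)$; the symmetric claim for $u_{i-k}o$ follows by reversing the chain. The base case $k=1$ is immediate. For the inductive step, \autoref{res: wysiwyg shadow} applied to the hypothesis bounds the projection of $u_{i+k}o$ onto the bi-infinite geodesic $u_i\gamma$ within $23\alpha$ of $u_i x'_i$. Admissibility at $u_{i+k}$ combined with \autoref{res: bi-gradient line} forces any geodesic from $u_{i+k-1}o$ to $u_{i+k+1}o$ to enter $\mathcal N_\alpha(\gamma_{i+k})$ near $u_{i+k}x_{i+k}$ and exit near $u_{i+k}x'_{i+k}$. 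I would then apply \autoref{res: simultaneous separation} to the separations at $u_i$ and $u_{i+k}$ to obtain a dichotomy: either $u_i^{-1}u_{i+k} \in E(h)$, in which case \ref{enu: alpha - fellow travel} makes $u_i\gamma$ and $u_{i+k}\gamma$ fellow-travel at distance $\leq \alpha$ (with compatible orientation forced by the inductive hypothesis), or the projection of $u_i o$ onto $u_{i+k}\gamma$ lies far from $u_{i+k}o$. In either branch, \autoref{res: projection between fellow-traveling contracting geo} together with a projection/triangle-inequality estimate places the projection of $u_{i+k+1}o$ onto $u_i\gamma$ even further past $u_i x'_i$ than that of $u_{i+k}o$, yielding $u_{i+k+1}o \in \mathcal O_{u_i x_i}(u_i x'_i, 16\alpha)$; the $4\alpha$-slack between the $12\alpha$ from admissibility and the $16\alpha$ target absorbs the step.

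Taking $k = n-i$ and the mirror at $k = i$ yields the $(L, 16\alpha, \epsilon_i)$-separation of $u_0 o$ and $u_n o$ at $u_i$. The Gromov product estimate $\gro{u_0 o}{u_n o}{u_i o} \leq 4\alpha$ for interior $i$ is then immediate from \autoref{res: bi-gradient line}~\ref{enu: bi-gradient line - double gromov product}, since $L > 200\alpha$ comfortably exceeds $2(16\alpha) + 21\alpha$ and the midpoint $u_i o$ of $\gamma_i$ lies in the required range; the cases $i \in \{0, n\}$ are trivial as the Gromov product of a point with itself vanishes. For $u_i \in U(L-18\alpha)$, the separation combined with the sharper width inherited from the direct admissibility estimate (tracking the original $2\alpha$-shadow, since the base point $o$ is anchored to the chain) lets \autoref{res: wysiwyg shadow}(1) bound the projection of $o$ onto $u_i\gamma$ within $18\alpha$ of $u_i x_i$ on the $-\epsilon_i$ side, placing $u_i$ in $U_+(L-18\alpha)$ or $U_-(L-18\alpha)$ according to the sign of $\epsilon_i$.

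The main obstacle is the inductive step: \emph{a priori} the chain could backtrack along $u_i\gamma$, and ruling this out is precisely the role of \autoref{res: simultaneous separation}, whose dichotomy forces either a clean overlap (via $E(h)$) or a definite geometric separation between the axes $u_i\gamma$ and $u_{i+k}\gamma$. A secondary technicality is preventing the propagated shadow constant from drifting upwards at each iteration; this relies on the sharp $r + 7\alpha$ projection bound in \autoref{res: wysiwyg shadow} and crucially exploits the $4\alpha$-gap between the admissibility width $12\alpha$ and the target $16\alpha$.
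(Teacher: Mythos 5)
The high-level strategy — local-to-global propagation driven by the dichotomy ``$u_i^{-1}u_j \in E(h)$ or the two axes have a small mutual projection'' — is the right one, and the observation that the critical difficulty is ruling out backtracking along $u_i\gamma$ is also correct. However, the proposed inductive step has two genuine gaps.

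First, the appeal to \autoref{res: simultaneous separation} is circular. That lemma requires a single pair $(c,c')$ that is separated at \emph{both} $u_i$ and $u_{i+k}$. The only candidates available from the inductive hypothesis are pairs like $(u_{i-1}o, u_{i+k+1}o)$, but showing that $u_{i+k+1}o$ lies in the shadow $\mathcal O_{u_ix_i}(u_ix'_i,\cdot)$ — i.e., the separation at $u_i$ — is exactly the statement being proved. The pair $(u_{i+k-1}o, u_{i+k+1}o)$ is separated at $u_{i+k}$ but not at $u_i$ (both points are on the ``ahead'' side of $\gamma_i$), and $u_io$ or $u_{i+k}o$ sit at the \emph{centers} of their respective axes, so they are in neither shadow. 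The dichotomy you want is in fact furnished directly by hypothesis~\ref{enu: alpha - diam proj} applied to $u_1^{-1}u_2$ (or $u_{i+k-1}^{-1}u_{i+k}$), which is what the paper uses; \autoref{res: simultaneous separation} plays no role in this lemma.

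Second, the bookkeeping of the constant does not close. Your inductive hypothesis is a $16\alpha$-shadow bound, and you claim that a ``$4\alpha$-slack between the $12\alpha$ from admissibility and the $16\alpha$ target absorbs the step.'' But once the induction has already weakened to $16\alpha$, a naive propagation adds to $16\alpha$ rather than to $12\alpha$, so the bound would drift with $k$. The paper avoids this by inducting on $n$ and, at the crossing between $\gamma_1$ and $\gamma_2$, invoking the identity
\begin{equation*}
\gro{x_1}{y_{n+1}}{x'_1} \;\leq\; \gro{x_1}{y_2}{x'_1} \;+\; \gro{x_1}{y_{n+1}}{y_2},
\end{equation*}
in which the first term is $\leq 12\alpha$ from local admissibility at $u_1$ and the second is $\leq 4\alpha$ because $x_1$ and $y_{n+1}$ turn out to be separated \emph{across} $\gamma_2$. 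The IH is used only qualitatively here, to place the projection of $y_{n+1}$ far out on $\gamma_2$ (which delivers the universal $4\alpha$ via \autoref{res: bi-gradient line}); it never enters additively, so the $16\alpha$ reproduces itself exactly. Without this identity, or an equivalent device, your scheme does not establish a $k$-independent constant. Incorporating that identity, and replacing the citation of \autoref{res: simultaneous separation} by a direct application of~\ref{enu: alpha - diam proj} and~\ref{enu: alpha - fellow travel} at the adjacent interface, would repair the argument and would make it essentially the same as the paper's.
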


\begin{proof}
	To make it easier to follow the computations we let $r = 16\alpha$.
	The first part of the statement is proved by induction on $n$.
	If $n \leq 2$, it is a consequence of (\ref{eqn: gromov product - lip}).
	Let $n \geq 2$ for which the statement holds.
	Let $(u_0, \dots, u_{n+1})$ be finite $L$-admissible sequence.
	For simplicity, for every $i \in \intvald 0{n+1}$, we let $y_i = u_io$.
	In addition, if $i \in \intvald 1n$, we write $\gamma_i \colon \R \to X$ for the path given by $\gamma_i(t) = u_i \gamma(\epsilon_i t)$ and let $x_i = \gamma_i(-L)$ and $x'_i = \gamma_i(L)$.
	According to the induction hypothesis applied with $(u_0, \dots, u_n)$ and $(u_1, \dots, u_{n+1})$ we know that 
	\begin{enumerate}
		\item $y_0 \in \mathcal O_{x'_i}(x_i, r)$ for every $i \in \intvald 1{n-1}$, and
		\item $y_{n+1}\in \mathcal O_{x_i}(x'_i, r)$ for every $i \in \intvald 2n$.
	\end{enumerate}
	We are left to prove that $y_0 \in \mathcal O_{x'_n}(x_n, r)$ and $y_{n+1}\in \mathcal O_{x_1}(x'_1, r)$.
	We focus on the latter property. 
	The other one follows by symmetry.
	Denote by $p_2 = \gamma_2(t_2)$, $q_2 = \gamma_2(s_2)$, and $q'_2 = \gamma_2(s'_2)$ respective projections of $x_1$, $y_1$, and $y_{n+1}$ on $\gamma_2$.
	Since $y_{n+1} \in \mathcal O_{x_2}(x'_2, r)$, we observe using \autoref{res: proj contracting set} that $s'_2 \geq L - (r + 2\alpha)$.
	Similarly, $s_2 \leq -L +  (r + 2\alpha)$.
	We now claim that $t_2 \leq - L + (r + 6\alpha)$.
	Suppose on the contrary that it is not the case.
	In particular, $t_2 - s_2 > 4\alpha$, hence $\dist {p_2}{q_2} > 4\alpha$.
	Note that $p_2$ and $q_2$ are both projections onto $\gamma_2$ of points from $\gamma_1$.
	According to \ref{enu: alpha - diam proj}, $u_1^{-1}u_2 \in E(h)$.
	By \ref{enu: alpha - fellow travel}, there is $\eta \in \{\pm 1\}$ and $T \in \R$ such that
	\begin{equation*}
		\dist {\gamma_1(t)}{\gamma_2(\eta t + T)} \leq \alpha, \quad \forall t \in \R.
	\end{equation*}
	In particular, $\dist {p_2}{x_1} \leq \alpha$ and $\dist {q_2}{y_1} \leq \alpha$, hence $\abs{t_2 - (T - \eta L)} \leq 2 \alpha$ and $\abs{s_2 - T} \leq 2\alpha$.
	We observed previously that $t_2 - s_2 > 4\alpha$, which forces $\eta = -1$.
	Consequently $x'_1$ is $\alpha$-close to the point $p'_2 = \gamma_2(t'_2)$ where $t'_2 = - L + T$.
	In particular, $t'_2 \leq s_2 \leq t_2$, while $s_2 \leq 0$.
	It follows that 
	\begin{equation*}
		\gro {x_1}{y_2}{x'_1}
		\geq \gro {p_2}{y_2}{p'_2} - 2\alpha
		\geq \dist {p'_2}{q_2} - 2\alpha
		\geq \dist {x'_1}{y_1} - 4\alpha
		\geq L - 4\alpha
		> r .
	\end{equation*}
	This contradicts the fact that $y_2$ also belongs to $\mathcal O_{x_1}(x'_1,r)$ and completes the proof of our claim.
	Observe now, that $x_1$ and $y_{n+1}$ are $2\alpha$-separated by $(p_2,q'_2)$.
	According to \autoref{res: bi-gradient line}~\ref{enu: bi-gradient line - double gromov product}, $\gro{x_1}{y_{n+1}}{y_2} \leq 4\alpha$.
	Thus
	\begin{equation*}
		\gro {x_1}{y_{n+1}}{x'_1}
		\leq \gro{x_1}{y_2}{x'_1} + \gro {x_1}{y_{n+1}}{y_2}
		\leq 16\alpha
		\leq r,
	\end{equation*}
	(the middle inequality above follows from the definition of admissible chain).
	This completes the proof of the induction step for $n + 1$, and the first part of our statement.
	The second part regarding Gromov products follows from \autoref{res: bi-gradient line}~\ref{enu: bi-gradient line - double gromov product}.
	The conclusion on $U$ is an application of \autoref{res: proj contracting set}.
\end{proof}


The next two statements will help us defining the endpoints of an infinite admissible sequence.

\begin{lemm}
\label{res: cylinder small gromov product - prelim}
	Let $L > 200\alpha$.
	Let $n \in \N \setminus\{0, 1\}$.
	Assume that $(u_0, \dots, u_n)$ is a finite $L$-admissible sequence. 
	Then 
	\begin{equation*}
		\dist {u_no}{u_0o} 
		\geq \sum_{i = 0}^{n-1} \dist {u_io}{u_{i+1}o} - 8(n-1)\alpha
		\geq n(L - 24\alpha).
	\end{equation*}
	\end{lemm}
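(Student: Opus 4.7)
I would establish the two inequalities separately.

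For the first inequality $\dist{u_no}{u_0o} \geq \sum_{i=0}^{n-1} \dist{u_io}{u_{i+1}o} - 8(n-1)\alpha$, I would argue by induction on $n \geq 2$. The base case $n=2$ is immediate from \autoref{res: admissible sequence - global projection} applied with $i=1$: the bound $\gro{u_0o}{u_2o}{u_1o} \leq 4\alpha$ unpacks as $\dist{u_0o}{u_2o} \geq \dist{u_0o}{u_1o} + \dist{u_1o}{u_2o} - 8\alpha$. For the induction step, applying the same result to $(u_0, \ldots, u_{n+1})$ with $i = n$ gives $\dist{u_0o}{u_{n+1}o} \geq \dist{u_0o}{u_no} + \dist{u_no}{u_{n+1}o} - 8\alpha$, and combining with the induction hypothesis applied to the subsequence $(u_0, \ldots, u_n)$ -- still $L$-admissible since admissibility is a local condition on consecutive triples -- yields the desired bound.

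For the second inequality, it suffices to show every edge of the chain satisfies $\dist{u_io}{u_{i+1}o} \geq L - 14\alpha$. Summing this over the $n$ edges and subtracting $8(n-1)\alpha$ gives $nL - 22n\alpha + 8\alpha$, which is at least $n(L - 24\alpha)$. To establish the edge bound, fix $i \in \{0, \ldots, n-1\}$ and pick an index $j \in \{i, i+1\} \cap \{1, \ldots, n-1\}$, which is non-empty because $n \geq 2$. Admissibility at $j$ produces a point $w$ -- either the $z$ near $u_{j-1}o$ or the $z'$ near $u_{j+1}o$ -- within $10\alpha$ of whichever endpoint of the edge is distinct from $u_jo$, and lying in a shadow of the form $\mathcal{O}_{u_jy'}(u_jy, 2\alpha)$ where $y,y' \in \{\gamma(-\epsilon_j L), \gamma(\epsilon_j L)\}$ are chosen so that $u_jo$ sits on the geodesic from $u_jy$ to $u_jy'$ at distance $L$ from each endpoint. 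Unfolding $\gro{u_jy'}{w}{u_jy} \leq 2\alpha$ gives $\dist{u_jy'}{w} \geq 2L + \dist{u_jy}{w} - 4\alpha$, and combining with the triangle inequality $\dist{u_jy'}{w} \leq L + \dist{u_jo}{w}$ yields $\dist{u_jo}{w} \geq L - 4\alpha$. The edge bound then follows from $\dist{u_io}{u_{i+1}o} \geq \dist{u_jo}{w} - 10\alpha$.

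The main bookkeeping lies in the second step: for each edge one must choose $j$ and the orientation of the shadow so that the $10\alpha$-neighborhood hits the right endpoint, but no deeper geometry than the two-line triangle-plus-shadow computation is actually needed. Everything else reduces to the routine induction above and the arithmetic inequality $n(L-22\alpha) + 8\alpha \geq n(L - 24\alpha)$.
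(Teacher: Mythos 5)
Your argument is correct and matches the paper's strategy: telescope the bound $\gro{u_0o}{u_{i+1}o}{u_io}\leq 4\alpha$ from \autoref{res: admissible sequence - global projection} for the first inequality, then bound each edge length below by roughly $L$ via a shadow estimate. Your per-edge bound $\dist{u_io}{u_{i+1}o}\geq L-14\alpha$ is obtained directly from the witness points $z,z'$ of \autoref{def: admissible sequence}, while the paper re-invokes \autoref{res: admissible sequence - global projection} together with \autoref{rem: separation triangle cocycles} to get the slightly weaker $\geq L-16\alpha$; either constant yields the stated $n(L-24\alpha)$.
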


\begin{proof}
	It follows from \autoref{res: admissible sequence - global projection} that $\gro{u_{i+1}o}{u_0o}{u_io} \leq 4\alpha$, for every $i \in \intvald 1{n-1}$, that is
	\begin{equation*}
		\dist {u_{i+1}o}{u_0o} \geq \dist {u_{i+1}o}{u_io} +  \dist {u_io}{u_0o}  - 8\alpha.
	\end{equation*}
	Hence the first inequality.
	We are left to prove that $\dist {u_{i+1}o}{u_io}  \geq L -16\alpha$, for every $i \in \intvald 0{n-1}$.
	Assume first that $i \in \intvald 1{n-1}$.
	Denote by $\epsilon_i$ the orientation of $u_i$ in the sequence.
	Let $x_i = u_i\gamma(-\epsilon_i L)$ and $x'_i = u_i \gamma(-\epsilon_i L)$.
	Combining \autoref{res: admissible sequence - global projection} and \autoref{rem: separation triangle cocycles} we observe that 
	\begin{equation*}
		L - \dist{u_io}{u_{i+1}o}
		\leq \gro {u_io}{u_{i+1}o}{x'_i}
		\leq \gro {x_i}{u_{i+1}o}{x'_i} 
		\leq 16\alpha,
	\end{equation*}
	whence our claim.
	If $i = 0$, the argument works in the same way switching the roles of $u_i$ and $u_{i+1}$.
\end{proof}


\begin{lemm}
\label{res: equivalent accumulation points}
	Let $L > 200\alpha$.
	Let $(u_n)$ be an infinite $L$-admissible sequence.
	If $c_1$ and $c_2$ are two accumulation points of $(u_no)$ then $\norm[\infty]{c_1 - c_2} \leq 20\alpha$.
	Moreover any cocycle $c \in \partial X$ equivalent to $c_i$ satisfies  $\norm[\infty]{c - c_i} \leq 20\alpha$.
\end{lemm}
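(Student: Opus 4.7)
The plan is to reduce to the convergence criterion of \autoref{res: convergence criterion - cocycle}. Specifically, I will verify that $c_1$ lies in the contracting limit set $\Lambda_{\rm ctg}(G)$ and that $c_1$ takes arbitrarily large values along a sequence in $X$ converging to $c_2$; tracing back through the proof of \autoref{res: convergence criterion - prelim} (which invokes \cite[Lemma~5.7]{Coulon:2022tu}) then delivers the sharp $20\alpha$ bound that the lemma asserts. Some preliminaries first: \autoref{res: cylinder small gromov product - prelim} implies $(u_n o)$ escapes every bounded set, so every accumulation point in $\bar X$ sits in $\partial X$. Extract subsequences $u_{n_k} o \to c_1$ and $u_{m_k} o \to c_2$; after interlacing I may assume $n_k < m_k < n_{k+1}$ for all $k$.

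The first main step is to show $c_1 \in \Lambda_{\rm ctg}(G)$. For $k \leq k'$ the finite truncation $(u_{n_k}, \dots, u_{n_{k'}})$ is still $L$-admissible, so \autoref{res: admissible sequence - global projection} supplies both $u_{n_k} \in U(L - 18\alpha) \subset \mathcal T_\alpha(L - 18\alpha)$ and a uniform bound $\gro{u_0 o}{u_{n_{k'}} o}{u_{n_k} o} \leq 4\alpha$. Switching the first base point from $u_0 o$ to $o$ costs at most $\dist o{u_0 o}$ by (\ref{eqn: gromov product - lip}); letting $k' \to \infty$ and invoking the continuity of the Gromov product on $\bar X \times X \times X$ yields $\gro{o}{c_1}{u_{n_k} o} \leq r := 4\alpha + \dist o{u_0 o}$. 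Since $\dist o{u_{n_k} o} \to \infty$, this exhibits $c_1 \in \Lambda_{\mathcal T_\alpha}(G, o, r) \subset \Lambda_{\rm ctg}(G)$; the argument for $c_2$ is symmetric.

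The second main step is to show $c_1(o, u_{m_k} o) \to +\infty$. Applying \autoref{res: admissible sequence - global projection} to the $L$-admissible sequence $(u_0, \dots, u_{n_{k'}})$ at the intermediate index $m_k$ gives $\gro{u_0 o}{u_{n_{k'}} o}{u_{m_k} o} \leq 4\alpha$; since all three points sit in $X$, this rewrites as
\begin{equation*}
    \dist{u_0 o}{u_{n_{k'}} o} - \dist{u_{m_k} o}{u_{n_{k'}} o} \geq \dist{u_0 o}{u_{m_k} o} - 8\alpha.
\end{equation*}
Letting $k' \to \infty$, the left-hand side converges to $c_1(u_0 o, u_{m_k} o)$ by definition of the horoboundary topology. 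The cocycle identity combined with the $1$-Lipschitz property of $c_1$ then yields $c_1(o, u_{m_k} o) \geq \dist{u_0 o}{u_{m_k} o} - 8\alpha - \dist o{u_0 o}$, and the right-hand side tends to infinity by \autoref{res: cylinder small gromov product - prelim}.

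With these ingredients I apply \autoref{res: convergence criterion - cocycle} to $c = c_1$ and the sequence $x_k = u_{m_k} o$: any accumulation point of $(u_{m_k} o)$ in $\bar X$ is equivalent to $c_1$, and its proof produces the explicit bound $\norm[\infty]{c_1 - c_2} \leq 20\alpha$ (the bound $\norm[gK]{c_1 - c_2} \leq 20\alpha$ for every compact $K \subset X$ yields the global estimate). The ``moreover'' clause then follows from $c_i \in \Lambda_{\rm ctg}(G)$ together with \cite[Proposition~5.10]{Coulon:2022tu}, which identifies the equivalence class $[c_i]$ with $\{ c \in \partial X : \norm[\infty]{c - c_i} \leq 20\alpha \}$. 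The main bookkeeping obstacle is keeping track of base-point shifts when converting admissibility-based Gromov products centered at $u_0 o$ into Busemann evaluations centered at $o$, but (\ref{eqn: gromov product - lip}) and the $1$-Lipschitz property of cocycles absorb those errors uniformly.
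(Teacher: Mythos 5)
Your proof contains a genuine gap in the first main step, and it is exactly the issue that the paper flags in the Remark immediately following the lemma statement. You claim that the argument "exhibits $c_1 \in \Lambda_{\mathcal T_\alpha}(G,o,r) \subset \Lambda_{\rm ctg}(G)$." But unwinding \autoref{def: general limit set}, membership in $\Lambda_{\mathcal T_\alpha}(G,o,r) = \bigcap_{L'\ge 0}\Lambda_{\mathcal T_\alpha}(G,o,r,L')$ requires, \emph{for every} $L'$, a sequence $g_k \in \mathcal T_\alpha(L')$ with $\dist o{g_ko}\to\infty$ and $c_1 \in \mathcal O_o(g_ko,r)$. What you actually produce is the sequence $(u_{n_k})$ with $u_{n_k} \in U(L-18\alpha) \subset \mathcal T_\alpha(L-18\alpha)$; this only yields $c_1 \in \Lambda_{\mathcal T_\alpha}(G,o,r,L')$ for $L' \le L-18\alpha$. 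Nothing in \autoref{res: admissible sequence - global projection} gives a contracting tail longer than $L-18\alpha$, so the $L'\to\infty$ requirement is not met. The paper's Remark after the lemma says this explicitly: $c_i$ certainly lies in $\Lambda_U(o,4\alpha,L-18\alpha)$, but ``may fail to be in $\Lambda_U(o,4\alpha,L')$ for every $L'\in\R_+$'' — i.e., the accumulation point need not belong to any limit set, precisely because the tail length is capped. Since \autoref{res: convergence criterion - cocycle} is stated under the hypothesis $c \in \Lambda_{\rm ctg}(G)$, your appeal to it is not justified.

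The detour is also structurally unnecessary. The paper's own proof observes that for every $k$, both $c_1$ and $c_2$ lie in $\mathcal O_o(u_ko,4\alpha)$ (after reducing to $u_0 = 1$ by $G$-invariance — note that your floating error $r = 4\alpha + \dist o{u_0o}$ is unbounded and would obstruct the $L - 18\alpha > r + 28\alpha$ condition in the convergence-criterion proof), that $(o,u_ko)$ has an $(\alpha, L-18\alpha)$-contracting tail, and that $\dist o{u_ko}\to\infty$, and then applies \cite[Lemma~5.7]{Coulon:2022tu} \emph{directly}. That lemma uses only the shadows, a contracting tail of \emph{fixed} length, and the diverging distances; it never needs limit-set membership. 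Your own plan ultimately relies on that same Lemma~5.7 (via the proof of \autoref{res: convergence criterion - prelim}), so routing through the convergence criterion adds the unprovable membership claim without gaining anything. A salvageable version of your argument would normalize $u_0=1$, state only $c_1 \in \Lambda_{\mathcal T_\alpha}(G,o,4\alpha,L-18\alpha)$, and then invoke \cite[Lemma~5.7]{Coulon:2022tu} directly — which is the paper's proof.
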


\begin{rema}
	Note that we are not claiming that $c_i$ belongs to $\Lambda$.
	Indeed, it certainly belongs to $\Lambda_U(o,4\alpha, L-18\alpha)$ (see the proof below) but may fail to be in $\Lambda_U(o, 4\alpha, L')$ for every $L' \in \R_+$.
\end{rema}

\begin{proof}
	Let $k \in \N$.
	According to \autoref{res: admissible sequence - global projection}, for every integer $n \geq k$, we have $\gro o{u_no}{u_ko} \leq 4 \alpha$.
	Thus any two accumulation points $c_1$ and $c_2$ of $(u_no)$, belong to $\mathcal O_o(u_ko,4\alpha)$.
	\autoref{res: admissible sequence - global projection} also tells us that $(o,u_ko)$ has an $(\alpha, L-18\alpha)$-contracting tail.
	Recall also that $\dist o{u_ko}$ diverges to infinity (\autoref{res: cylinder small gromov product - prelim}).
	It follows now from \cite[Lemma~5.7]{Coulon:2022tu} that $\norm[\infty]{c_1-c_2} \leq 20\alpha$.
	The above argument also proves that $c_i$ belongs to $\Lambda_U(o,4\alpha, L-18\alpha)$.
	The final conclusion is therefore a consequence of  \cite[Lemma~5.10]{Coulon:2022tu}.
\end{proof}


\begin{voca}
	In view of the above statement, there exists $c \in \partial X$, such that any accumulation point of $(u_no)$ is equivalent to $c$.
	This allows us to define the endpoints of an infinite $L$-admissible sequence $(u_n)$, provided $L > 200\alpha$.
	Its \emph{initial point} is $u_0$, while its \emph{terminal point} is the equivalence class $[c]$ of $c$.
	We sometimes make an abuse of vocabulary and say that $c$ is the endpoint of $(u_n)$.
	The initial and terminal points of a finite admissible sequence $(u_0, \dots, u_n)$ are simply $u_0$ and $u_n$.
	Note that the endpoints of an admissible sequence are either elements of $G$ or (equivalence classes of) cocycles in $\partial X$.
\end{voca}

The next lemma generalizes \autoref{res: admissible sequence - global projection}.

\begin{lemm}
\label{res: cylinder small gromov product}
	Let $L \in \R_+$ with $L > 200\alpha$.
	Let $(u_i)_{i \in \N}$ be an infinite $L$-admissible sequence starting at $1$.
	Let $c \in \partial X$ be an endpoint of $(u_io)$.
	Let $i \in \N \setminus\{0\}$.
	Let $\epsilon_i$ be the orientation of $u_i$ in the sequence.
	Then $o$ and $c$ are $(L, 36\alpha, \epsilon_i)$-separated at $u_i$.
	In particular, $\gro oc{u_io} \leq 4 \alpha$.
\end{lemm}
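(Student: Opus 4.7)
The plan is to apply the finite-sequence statement \autoref{res: admissible sequence - global projection} to every truncation $(u_0, \dots, u_n)$ of the infinite sequence and then pass to the limit. The main obstacle will be that the endpoint $c$ is not literally the limit of $(u_no)$ but only an equivalent cocycle, which forces us to enlarge the shadow width from $16\alpha$ to $36\alpha$.

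Fix $i \geq 1$ and set $x = u_i\gamma(-\epsilon_i L)$ and $x' = u_i\gamma(\epsilon_i L)$. First I would notice that for every integer $n \geq i+1$, the truncation $(u_0, \dots, u_n)$ is a finite $L$-admissible sequence (admissibility is a local condition on consecutive triples, so the orientations are inherited). Applying \autoref{res: admissible sequence - global projection} at the index $i$ then yields
\begin{equation*}
    o \in \mathcal O_{x'}(x, 16\alpha)
    \quad \text{and} \quad
    u_n o \in \mathcal O_x(x', 16\alpha).
\end{equation*}
The first inclusion is already stronger than what we need concerning $o$; the work is to pass from the second inclusion to a statement about $c$.

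By \autoref{res: cylinder small gromov product - prelim}, $\dist o{u_no} \geq n(L - 24\alpha)$ diverges to infinity, so any accumulation point of $(u_no)$ in the compact space $\bar X$ lies in $\partial X$; extract a subsequence converging to some $c^* \in \partial X$. Shadows being closed subsets of $\bar X$, the second inclusion survives in the limit as $c^* \in \mathcal O_x(x', 16\alpha)$. The step where we lose constants is the transition from $c^*$ to the genuine endpoint $c$: by definition of the endpoint and by \autoref{res: equivalent accumulation points}, $\norm[\infty]{c - c^*} \leq 20\alpha$, and applying $\gro b{x'}{x} = \frac 12[b(x,x') + \dist x{x'}]$ to both $b=c$ and $b=c^*$ one obtains
\begin{equation*}
    \abs{\gro{c}{x'}{x} - \gro{c^*}{x'}{x}} = \frac 12 \abs{c(x,x') - c^*(x,x')} \leq 10\alpha,
\end{equation*}
so $\gro{c}{x'}{x} \leq 26\alpha \leq 36\alpha$, that is $c \in \mathcal O_x(x', 36\alpha)$. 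Together with the inclusion for $o$, this proves that $o$ and $c$ are $(L, 36\alpha, \epsilon_i)$-separated at $u_i$.

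For the ``in particular'' assertion, I would feed this separation into \autoref{res: bi-gradient line}~\ref{enu: bi-gradient line - double gromov product}, applied to the $\alpha$-contracting subpath of $u_i\gamma$ joining $x$ and $x'$. Its length is $2L$, which exceeds $2 \cdot 36\alpha + 21\alpha$ since $L > 200\alpha$, and $u_io$ sits at its midpoint, well inside the required range $\intval{a + 43\alpha}{a' - 43\alpha}$; the conclusion $\gro{o}{c}{u_io} \leq 4\alpha$ then follows directly.
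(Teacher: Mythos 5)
Your proposal is correct and follows essentially the same route as the paper: apply \autoref{res: admissible sequence - global projection} to finite truncations, pass to a limit point $c^*$ using that shadows are closed, invoke \autoref{res: equivalent accumulation points} to get $\norm[\infty]{c-c^*}\leq 20\alpha$, enlarge the shadow radius accordingly, and finish with \autoref{res: bi-gradient line}. The only cosmetic difference is that you compute the $10\alpha$ correction via the explicit formula $\gro b{x'}x = \frac 12[b(x,x') + \dist x{x'}]$, while the paper cites the Lipschitz estimate (\ref{eqn: gromov product - lip}); both give the same bound.
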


\begin{proof}
	Assume first that $c^* \in \partial X$ is an accumulation point of $(u_io)$.
	According to \autoref{res: admissible sequence - global projection}, $o$ and $u_jo$ are $(L, 16\alpha, \epsilon_i)$-separated at $u_i$, for every $j > i$.
	Passing to the limit, we get that $o$ and $c^*$ are still $(L, 16\alpha, \epsilon_i)$-separated at $u_i$.
	Let us now focus on $c$, which is by definition equivalent to $c^*$.
	Thus $\norm[\infty]{c-c^*} \leq 20\alpha$.
	It follows from (\ref{eqn: gromov product - lip}) that $o$ and $c$ are $(L, 36\alpha, \epsilon_i)$-separated at $u_i$.
	The final conclusion is now a consequence of \autoref{res: bi-gradient line}.
\end{proof}


We now prove the existence of infinite admissible sequences pointing to any point in $\Lambda$.
Our statement is actually slightly more general and provides a tool to extend admissible sequences.

\begin{prop}
\label{res: existence infinite admissible sequence - prelim}
	Let $r, L, M \in \R_+$ with $L > 200\alpha$ and $M > L + r + 100\alpha$.
	Let $\epsilon \in \{\pm 1\}$.
	Let $(u_0, \dots, u_n)$ be a finite $L$-admissible sequence.
	Let $c \in \Lambda$.
	If $u_{n-1}o$ and $c$ are $(M,r, \epsilon)$-separated at $u_n$ then the sequence $(u_0, \dots, u_n)$ extends to an infinite $L$-admissible sequence from $1$ to $[c]$.
\end{prop}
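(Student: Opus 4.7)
The plan is to extend $(u_0, \dots, u_n)$ one element at a time by induction on the following invariant: once $(u_0,\dots,u_k)$ is built, the pair $u_{k-1}o, c$ is $(M,r,\epsilon_k)$-separated at $u_k$ for some orientation $\epsilon_k \in \{\pm 1\}$. The base case $k = n$ is the hypothesis. Translating by $u_k^{-1}$, the cocycle $c_* = u_k^{-1}c$ lies in $\Lambda = \Lambda_+ \cup \Lambda_-$ by $G$-invariance; we split on which factor. When $c_* \in \Lambda_+ = \Lambda_h$, apply \autoref{res: separation triangle cocycles} to $c' = c_*$ with $\Lambda_0 = \{\iota(o)\}$, $\ell = M$, and a well-chosen large parameter $M_0 \in \R_+$; this produces $v \in G$ with $c_*(o,vo) \geq M_0$ and such that $o$ and $c_*$ are $2\alpha$-separated by $(v\gamma(-M),v\gamma(M))$. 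The case $c_* \in \Lambda_-$ is handled by the analogous statement with $\bar\gamma$ in place of $\gamma$. Set $u_{k+1} = u_k v$ and $\epsilon_{k+1} = \pm 1$ according to which $\Lambda_\pm$ contains $c_*$. Translating back yields $c(u_ko,u_{k+1}o) \geq M_0$ and that $u_ko, c$ are $(M,2\alpha,\epsilon_{k+1})$-separated at $u_{k+1}$, which is the inductive invariant at level $k+1$ (we may assume $r \geq 2\alpha$ without loss of generality).

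It remains to verify that $(u_0,\dots,u_{k+1})$ is $L$-admissible, and only the new interior index $k$ requires checking. Set $\epsilon_k = \epsilon$ and $z = u_{k-1}o$; this works via the inductive hypothesis together with \autoref{rem: separation at an element}, since $M > L + r + 11\alpha$. Finding $z' \in B(u_{k+1}o, 10\alpha)$ lying in the shadow $\mathcal O_{u_k\gamma(-\epsilon L)}(u_k\gamma(\epsilon L), 2\alpha)$ is the delicate step. One first shows $c \in \mathcal O_{u_ko}(u_k\gamma(\epsilon M), r)$ by a direct cocycle computation from the hypothesis. Then \autoref{res: wysiwyg shadow} applied to the $(\alpha, M)$-contracting tail $u_k\gamma$ restricted to $[0, \epsilon M]$ shows that any complete gradient arc $\nu$ from $u_ko$ to $c$ fellow-travels $u_k\gamma$ up to near $u_k\gamma(\epsilon M)$. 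In parallel, \autoref{res: bi-gradient line} applied at $u_{k+1}$ places $u_{k+1}o$ within $5\alpha$ of $\nu$ at a parameter $t_v$ with $t_v \approx \dist{u_ko}{u_{k+1}o}$. Tuning $M_0$ and using that the element $v$ produced by \autoref{res: separation triangle cocycles} has the form $g\cdot h^m$ with $g$ aligned along $\gamma$, one bounds $\dist{o}{vo}$ from above, forcing $t_v$ to lie in the fellow-travel range; the point $z' := u_k\gamma(\epsilon t_v)$ then lies within $10\alpha$ of $u_{k+1}o$ and its Gromov product $\gro{u_k\gamma(-\epsilon L)}{z'}{u_k\gamma(\epsilon L)}$ equals zero.

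Iterating, one produces an infinite $L$-admissible sequence $(u_i)_{i \in \N}$ starting at $1 = u_0$. Since $c(u_io, u_{i+1}o) \geq M_0 > 0$ for all $i \geq n$, telescoping gives $c(o, u_io) \to \infty$, and \autoref{res: convergence criterion - cocycle} then implies that every accumulation point of $(u_io)$ in $\bar X$ is equivalent to $c$; hence the terminal point of the sequence is $[c]$ as required. The main obstacle I anticipate is the admissibility check at $u_k$: \autoref{res: separation triangle cocycles} in its stated form gives only a lower bound on $c_*(o,vo)$ and no a priori upper bound on $\dist{o}{vo}$, so one has to revisit the explicit construction of $v$ in the proof of that lemma in order to extract the fellow-travel condition $t_v \leq M - O(\alpha)$ needed to produce the witness $z'$ with the correct $2\alpha$ tolerance.
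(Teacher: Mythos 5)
Your skeleton matches the paper's: extend inductively via \autoref{res: separation triangle cocycles}, with orientations $\epsilon_i$ chosen according to whether $c \in \Lambda_+$ or $\Lambda_-$, then identify the terminal point; your telescoping of $c(u_io,u_{i+1}o)$ combined with \autoref{res: convergence criterion - cocycle} is a valid alternative to the paper's endgame. But the admissibility check at the new interior index, which you rightly flag as the delicate step, has a real gap. You track $u_{k+1}o$ along a complete gradient arc $\nu$ from $u_ko$ to $c$ and ask that it land in the bounded stretch of $\nu$ fellow-travelling $u_k\gamma$ near $[0,\epsilon_k M]$; this requires an upper bound on $\dist{u_ko}{u_{k+1}o}=\dist{o}{vo}$. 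Neither the statement nor the proof of \autoref{res: separation triangle cocycles} supplies one: the $v$ it produces has the form $g_kh^m$ with $\dist{o}{g_ko}$ diverging, and there is no control over which $k$ first achieves $c'(o,g_ko) \geq M_0$, nor over the corresponding distance to $o$. Your suggestion to ``revisit the explicit construction'' does not close the gap as stated.

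The paper sidesteps this with a direct projection argument. Let $p = \gamma_i(s)$ and $q = \gamma_i(t)$ be projections of $y_{i+1} := u_{i+1}o$ and $c$ onto $\gamma_i := u_i\gamma(\epsilon_i\,\cdot)$ restricted to $[0,L']$, where $L' = L + 13\alpha$; since the restriction is bounded, the parameters $s,t$ are automatically bounded and there is nothing to control a priori. By \autoref{res: wysiwyg shadow}, $\dist{y_i}{q} \geq L'-9\alpha$. If $\dist pq > 4\alpha$, \autoref{res: proj cocycle contracting set} gives $\gro{y_i}{c}{y_{i+1}} \geq \dist{y_{i+1}}{p} - 4\alpha$, while the $(L',2\alpha,\epsilon_{i+1})$-separation of $y_i$ and $c$ at $u_{i+1}$ forces $\gro{y_i}{c}{y_{i+1}} \leq 4\alpha$ by \autoref{res: bi-gradient line}; combining gives $\dist{y_{i+1}}{p}\leq 8\alpha$, then $\dist{y_i}{p} \geq \dist{y_i}{y_{i+1}} - 8\alpha \geq L$. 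Thus $x_i, y_i, x'_i, p$ are aligned in this order on $u_i\gamma$ and \autoref{res: proj contracting set} gives $\gro{x_i}{y_{i+1}}{x'_i} \leq 2\alpha$, with $z' = y_{i+1}$ itself serving as the witness (the $10\alpha$ slack in the definition is not needed here). To repair your argument, replace the gradient-arc fellow-travelling by this direct projection estimate rather than trying to bound $\dist{o}{vo}$.
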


\begin{proof}
	If $c$ belongs to  $\Lambda_-$ (\resp $\Lambda_+$) we let $\epsilon_i = -1$ (\resp $\epsilon_i = +1$) for every integer $i \geq n+1$.
	For simplicity we let $L' = L + 13\alpha$.
	Using \autoref{res: separation triangle cocycles}, we build by induction a sequence $u_{n+1}, u_{n+2}, \dots$ with the following property: for every $i \geq n+1$, the points $u_{i-1}o$ and $c$ are $(L', 2\alpha, \epsilon_i)$-separated at $u_i$.
	Combining our assumption with \autoref{rem: separation at an element} we know that $u_{n-1}o$ and $c$ are also $(L', 2\alpha, \epsilon_n)$-separated at $u_n$, where $\epsilon_n = \epsilon$.
	We claim that $(u_i)$ is $L$-admissible.
	
	For simplicity, we now write $y_i = u_io$, for every $i \in \N$.
	If $i \geq n$ we also let $x_i = u_i \gamma(-\epsilon_i L)$ and $x'_i = u_i \gamma(\epsilon_i L)$.
	It follows from \autoref{res: bi-gradient line} that
	\begin{equation*}
		\gro{y_{i-1}}{x'_i}{x_i}  \leq 2 \alpha, \quad \forall i \geq n.
	\end{equation*}
	Since $(u_0, \dots, u_n)$ is already $L$-admissible, it suffices to prove that 
	\begin{equation*}
		\gro {y_{i+1}}{x_i}{x'_i} \leq 2\alpha, \quad \forall i \geq n.
	\end{equation*}
	Fix $i \geq n$.
	Denote by $\gamma_i \colon \R \to X$ the path given by $\gamma_i(t) = u_i \gamma(\epsilon_i t)$.
	Let $p = \gamma_i(s)$ and $q = \gamma_i(t)$ be respective projections of $y_{i+1}$ and $c$ onto $\gamma_i$ restricted to $\intval 0{L'}$.
	According to \autoref{res: wysiwyg shadow}, $\dist {y_i}q \geq L' - 9\alpha$.
	We claim that $\dist {y_i}p \geq L' - 13\alpha$.
	Without loss of generality, we can assume that $\dist pq > 4\alpha$.
	It follows from \autoref{res: proj cocycle contracting set} that $\gro {y_i}c{y_{i+1}} \geq \dist {y_{i+1}}p - 4\alpha$.
	Recall that $y_i$ and $c$ are $(L', 2\alpha, \epsilon_{i+1})$-separated at $u_{i+1}$.
	Thus
	\begin{equation*}
		\dist {y_i}{y_{i+1}} \geq \dist {y_i}{x_i'} - \gro {y_i}{y_{i+1}}{x'_i} \geq L' -2\alpha
	\end{equation*}
	and $\gro {y_i}c{y_{i+1}} \leq 4 \alpha$ (\autoref{res: bi-gradient line}).
	Consequently $ \dist {y_{i+1}}p \leq 8\alpha$.
	The triangle inequality now yields
	\begin{equation*}
		\dist {y_i}p \geq \dist {y_i}{y_{i+1}} - \dist {y_{i+1}}p \geq L'- 10\alpha.
	\end{equation*}
	which complete the proof of our claim.
	Recall that $L' - 13\alpha \geq L$.
	It follows from the previous discussion that $x_i$, $y_i$, $x'_i$ and $p$ are aligned in this way on $u_i\gamma$.
	Hence $\gro {x_i}{y_{i+1}}{x'_i} \leq 2\alpha$ (\autoref{res: proj contracting set}).
	
	We have shown that $(u_i)$ is an $L$-admissible sequence. 
	We are left to prove that its endpoint is indeed $[c]$.
	Let $j \geq n+1$.
	According to our construction, $c \in \mathcal O_{x_j}(x'_j,2\alpha)$ (see \autoref{rem: separation at an element}).
	Since $(u_i)$ is a $L$-admissible sequence, $(o,u_jo)$ has an $(\alpha, L-18\alpha)$-contracting tail and $o \in \mathcal O_{x'_j}(x_j, 16\alpha)$ (\autoref{res: admissible sequence - global projection}) 
	In other words, $o$ and $c$ are $(L, 16\alpha, \epsilon_j)$-separated at $u_j$. 
	In particular, $c \in \mathcal O_o(y_j, 4\alpha)$.
	As we noticed in the proof of \autoref{res: equivalent accumulation points}, any accumulation point $b \in \partial X$ of $(u_io)$ also belongs to $\mathcal O_o(y_j, 4\alpha)$.
	It follows from \cite[Lemma~5.7]{Coulon:2022tu} that $\norm[\infty]{c-b} \leq 20\alpha$, whence the result.
\end{proof}


\begin{coro}
	Let $L > 200\alpha$.
	For every $c \in \Lambda$, there is a an infinite $L$-admissible sequence from $1$ to $[c]$.
\end{coro}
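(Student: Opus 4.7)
The plan is to reduce everything to one invocation of \autoref{res: existence infinite admissible sequence - prelim}. Observe that a sequence of length two, namely $(u_0,u_1) = (1, u_1)$, is automatically $L$-admissible, since \autoref{def: admissible sequence} only constrains indices $i$ with $i-1,i,i+1$ all in the index set. So the only real work is to produce $u_1 \in G$ such that $u_0 o = o$ and $c$ are $(M, 2\alpha, \epsilon)$-separated at $u_1$ for some $M > L + 102\alpha$ and some $\epsilon \in \{\pm 1\}$; after that, \autoref{res: existence infinite admissible sequence - prelim} (applied with $n=1$, $r = 2\alpha$) extends $(1, u_1)$ to an infinite $L$-admissible sequence from $1$ to $[c]$.

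To produce $u_1$, I would split on whether $c \in \Lambda_+$ or $c \in \Lambda_-$. In the first case, $c \in \Lambda_h$ and I apply \autoref{res: separation triangle cocycles} directly with $\Lambda_0 = \{o\}$, $c' = c$, and $\ell = M$ chosen large enough. The non-equivalence hypothesis is automatic: $o = \iota(o)$ lies in $\iota(X)$ while $c \in \partial X$, and the relation $\sim$ only identifies two cocycles if they are either both in $\iota(X)$ and equal, or both in $\partial X$ with finite $\|\cdot\|_\infty$-difference. The proposition then yields $u \in G$ such that $\{o\}$ and $c$ are $2\alpha$-separated by $(u\gamma(-M), u\gamma(M))$, which is exactly the $(M, 2\alpha, +1)$-separation at $u$ we wanted. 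Taking $u_1 = u$ and $\epsilon = +1$ concludes this case.

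For the case $c \in \Lambda_- = \Lambda_{h^{-1}}$, I would use the symmetric version of \autoref{res: separation triangle cocycles} obtained by running the entire argument with $h^{-1}$, $\bar\gamma$, and $U_-$ in place of $h$, $\gamma$, and $U_+$. This is precisely why $\Lambda$ was defined as the symmetric union $\Lambda_- \cup \Lambda_+$ and why the separation notion in \autoref{def: admissible sequence} carries an orientation parameter $\epsilon$: with $\bar\gamma$, the roles of $y = \gamma(-\ell)$ and $y' = \gamma(\ell)$ get swapped, which in our vocabulary is exactly $(M,2\alpha,-1)$-separation at $u_1$. Setting $\epsilon = -1$, the same reduction applies.

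With this $u_1$ in hand, the hypothesis of \autoref{res: existence infinite admissible sequence - prelim} is met (choose $M > L + 102\alpha$ so that $M > L + r + 100\alpha$ with $r = 2\alpha$), and the proposition produces the required infinite $L$-admissible sequence from $1$ to $[c]$. There is no substantial obstacle here: the corollary is essentially a one-line consequence of the preceding proposition, once one notices that length-two admissibility is vacuous and that $\{o\}$ trivially satisfies the non-equivalence hypothesis of the separation lemma.
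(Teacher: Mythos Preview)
Your proof is correct and follows essentially the same approach as the paper: both start from the vacuously admissible pair $(1,u_1)$, invoke \autoref{res: separation triangle cocycles} (splitting on $c \in \Lambda_\pm$ and using the $h^{-1}$ version for $\Lambda_-$) to produce the required separation, and then apply \autoref{res: existence infinite admissible sequence - prelim}. Your explicit verification that $o$ is never equivalent to $c \in \partial X$ and your tighter threshold $M > L + 102\alpha$ are fine; the paper simply states these steps more tersely and chooses a slightly more generous $M > L + 202\alpha$.
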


\begin{proof}
	Fix $M > L + 202\alpha$.
	Let $u_0 = 1$.
	Recall that $c$ belongs to $\Lambda_+ \cup \Lambda_-$.
	According to \autoref{res: separation triangle cocycles} there is $u_1 \in G$ and $\epsilon \in \{\pm 1\}$ such that $o$ and $c$ are $(M,2\alpha, \epsilon)$-separated at $u_1$.
	Note the $(u_0, u_1)$ is an $L$-admissible sequence (the condition to check is void).
	Hence we can extend it to an infinite $L$-admissible sequence ending at $[c]$ (\autoref{res: existence infinite admissible sequence - prelim}).
\end{proof}


%
\subsection{Cylinders}
%

\begin{defi}
	Let $L > 200\alpha$.
	Let $c \in \Lambda$.
	The \emph{$L$-cylinder} of $c$ denoted by $C_L(c)$ is the set of all elements $u \in G$ which belong to an $L$-admissible sequence from $1$ to $[c]$.
\end{defi}

\begin{rema}
	Note that we have chosen $L$ so that all the above statements hold.
	In particular, the endpoints of any $L$-admissible sequence are well-defined.
\end{rema}

Now that we have defined our main tools, namely admissible chains and cylinders, let us comment a little on these definitions.
Recall that $B_G(R)$ stands for the ball of radius $R$ centered at $1$ in $G$ for the metric induced by $X$, see (\ref{eqn: induced ball}).
A crucial fact in our construction is that cylinders behave ``smoothly'' with respect to the topology of $\Lambda \qsim$.
More precisely if $c_1$ and $c_2$ are ``sufficiently close'' (typically if $o$ and $\{c_1,c_2\}$ are separated by a very long contracting segment) we want $C_L(c_1)$ and $C_L(c_2)$ to coincide on $B_G(R)$ for some large $R$ (see \autoref{res: cylinder coincide} below).
This motivates the local condition we used for admissible chains.

A first naive attempt could have been indeed to define the $L$-cylinder of $c_1$ as the set of all $u \in G$ such that $o$ and $c_1$ are  $(L,2\alpha, \epsilon)$-separated at $u$.
Nevertheless, with such a definition, even if $c_2$ is arbitrarily close to $c_1$, there will always be some minor side effects and the best we can hope for is that $o$ anc $c_2$ are $(L- C, 2\alpha, \epsilon)$-separated at $u$ (for some number $C$ depending only on $\alpha$).
Hence $u$ will not belong to the cylinder of $c_2$ for the same parameter $L$.

On the contrary, our definition of an admissible chain $(u_1,\dots, u_n)$ relies on a local condition: for every index $i$, we only require that (points closed to) $u_{i-1}o$ and $u_{i+1}o$ are $(L,2\alpha, \epsilon)$-separated at $u_i$.
This makes the following rerouting argument possible.
Suppose that $(u_n)$ is an $L$-admissible chain between $1$ and $[c_1]$. 
If $c_2$ is sufficiently close to $c_1$ then we can redirect the initial segment of $(u_n)$ to a admissible chain between $1$ and $[c_2]$, i.e. there exists $N \in \N$ and an $L$-admissible sequence $(u'_n)$ from $1$ to $[c_2]$ such that $u_i = u'_i$ for every $i \leq N$.
In particular, $u_1, \dots, u_N$ also belong to the $L$-cylinder of $c_2$.
The next statement formalizes this idea. 

\begin{prop}
\label{res: cylinder coincide}
	Let $r,L, M \in \R_+$ with $L > 200\alpha$ and $M > 2L + r + 500\alpha$.
	Let $\epsilon \in \{\pm 1\}$.
	Let $c_1,c_2 \in \Lambda$.
	Let $g \in G$.
	Suppose that $o$ and $\{c_1,c_2\}$ are $(M,r, \epsilon)$-separated at $g$.
	Then $C_L(c_1)$ and $C_L(c_2)$ coincide on $B_G(R)$ where $R = \dist o{go}$.
\end{prop}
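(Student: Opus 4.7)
By symmetry, it is enough to show $C_L(c_1) \cap B_G(R) \subseteq C_L(c_2)$. Fix $u \in C_L(c_1) \cap B_G(R)$, and realize it as the $n$-th element of some infinite $L$-admissible sequence $(u_0 = 1, u_1, \ldots, u_n = u, u_{n+1}, \ldots)$ with terminal point $[c_1]$, whose orientations we denote $(\epsilon_i)$. The plan is to keep the initial segment $(u_0, \ldots, u_n)$ intact and reroute the tail towards $[c_2]$, using \autoref{res: existence infinite admissible sequence - prelim}. Concretely, picking parameters $M_0, r_0$ with $M_0 > L + r_0 + 100\alpha$, it suffices to establish that $u_{n-1}o$ and $c_2$ are $(M_0, r_0, \epsilon_n)$-separated at $u_n$.

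The inclusion $u_{n-1}o \in \mathcal{O}_{u_n\gamma(\epsilon_n M_0)}(u_n\gamma(-\epsilon_n M_0), r_0)$ will follow from the admissibility at index $n$ of the original sequence (which puts $u_{n-1}o$ within $12\alpha$ of a cocycle in $\mathcal{O}_{u_n\gamma(\epsilon_n L)}(u_n\gamma(-\epsilon_n L), 2\alpha)$) together with a routine use of (\ref{eqn: inclusion shadows triangle inequality}) to extend $L$ to $M_0$. So the real task reduces to showing
\[
c_2 \in \mathcal{O}_{u_n\gamma(-\epsilon_n M_0)}(u_n\gamma(\epsilon_n M_0), r_0).
\]

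To prove this inclusion, I would compare $c_2$ with $c_1$. On the one hand, \autoref{res: cylinder small gromov product} gives that $o$ and $c_1$ are $(L, 36\alpha, \epsilon_n)$-separated at $u_n$; on the other hand, $o$ and $c_1$ are $(M, r, \epsilon)$-separated at $g$ by hypothesis. \autoref{res: simultaneous separation} applied to this double separation of $(o, c_1)$ then forces a dichotomy on the projection $p$ of $u_no$ onto $g\gamma$: either $u_n^{-1}g \in E(h)$, or $\dist{p}{u_no} \geq M - (r + 22\alpha)$. In the first case, hypothesis \ref{enu: alpha - fellow travel} produces $\eta \in \{\pm 1\}$ and $T \in \R$ with $\dist{u_n\gamma(t)}{g\gamma(\eta t + T)} \leq \alpha$ for all $t$; orientation matching (both $u_n\gamma$ and $g\gamma$ point to the $\epsilon$-shadow containing $c_1$, forcing $\eta\epsilon_n = \epsilon$) transports the separation of $c_2$ from $o$ at $g$ into the desired shadow at $u_n$, with $M_0 = M - |T| - O(\alpha)$ and $r_0 = r + O(\alpha)$; the condition $M > 2L + r + 500\alpha$ combined with the bound $|T| \lesssim R$ coming from $u \in B_G(R)$ leaves enough room for $M_0 > L + r_0 + 100\alpha$. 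The second alternative must be excluded: using that both $\dist{o}{u_no} \leq R = \dist{o}{go}$ and that $u_n$ is reached by an admissible sequence pointing into the shadow of $g\gamma$ at $y' = g\gamma(\epsilon M)$, a triangle-inequality argument involving the projection $p$ and \autoref{res: wysiwyg shadow} will contradict the lower bound $\dist{p}{u_no} \geq M - (r + 22\alpha)$.

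Once the separation of $u_{n-1}o$ and $c_2$ at $u_n$ is in hand, \autoref{res: existence infinite admissible sequence - prelim} produces an infinite $L$-admissible sequence $(u_0, \ldots, u_n, v_{n+1}, v_{n+2}, \ldots)$ with endpoint $[c_2]$, so $u = u_n \in C_L(c_2)$. The main obstacle is the analysis of the alternative in \autoref{res: simultaneous separation}: the orientation-matching and ruling out the ``far'' branch via the constraint $u \in B_G(R)$ is where all the numerical slack in the hypothesis $M > 2L + r + 500\alpha$ gets absorbed.
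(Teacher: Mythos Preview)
Your overall plan --- keep $(u_0,\dots,u_n)$ and invoke \autoref{res: existence infinite admissible sequence - prelim} to reroute toward $c_2$ --- is natural, but the execution breaks at the case analysis. The claim that the second branch of the dichotomy from \autoref{res: simultaneous separation} ``must be excluded'' is false. Take for instance $u=u_1$ with $u_1^{-1}g\notin E(h)$: there is no contradiction whatsoever with $\dist o{u_1o}\leq R$; the projection $p$ of $u_1o$ onto $g\gamma$ simply sits near the $o$-end of $g\gamma$ (this is exactly what the paper proves in its Case~2, obtaining $t_j\leq -M+(r+22\alpha)$). In that situation $u_n\gamma$ and $g\gamma$ are transverse contracting geodesics with small mutual projection, and there is no reason for $c_2$ to lie in the $\epsilon_n$-forward shadow at $u_n$: the shadow $\mathcal O_{u_n\gamma(-\epsilon_n M_0)}(u_n\gamma(\epsilon_n M_0),r_0)$ is governed by $u_n\gamma$, not by $g\gamma$, and the only link between $c_2$ and $u_n$ is through $g$.

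Your Case~1 argument also has a gap: the bound $|T|\lesssim R$ is true but useless, since $R=\dist o{go}$ is independent of $M$ and can be arbitrarily large, so $M_0=M-|T|-O(\alpha)$ may well be negative. What you would actually need is an \emph{upper} bound on the position of $u_no$ along $g\gamma$ in the $\epsilon$-direction, and that requires a quantitative comparison between $\dist o{u_io}$ and the projection parameter $t_i$ --- precisely the content of the Claim in the paper's proof.

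The paper's fix is not to reroute at $u_n$ at all. It first proves the Claim relating $\dist o{u_io}$ to $t_i$, then passes to the \emph{largest} index $j$ with $t_j\leq 40\alpha$ (so automatically $j\geq n$ and $u=u_i$ for some $i\leq j$), and finally appends an element $g'\in g\langle h\rangle$ so that $(u_0,\dots,u_j,g')$ is $L$-admissible with $u_jo$ and $c_2$ separated \emph{at $g'$} (not at $u_j$). Only then does \autoref{res: existence infinite admissible sequence - prelim} apply. The rerouting goes \emph{through $g$}, which is the one place where we control $c_2$.
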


This is probably the most technical proof of this section.
Hence let us give first an overview of the main ideas.
The cylinder $C_L(c)$ of a point $[c] \in \Lambda\qsim$ essentially records all the possible fellow-travels of a ray $\nu$ from $o$ to $c$ with the translates $u\gamma$ of $\gamma$.
If $u_1, u_2 \in C_L(c)$ are such that $u_1^{-1}u_2$ does not belong to $E(h)$, then $u_1\gamma$ and $u_2 \gamma$ cannot overlap much.
Thus if we only focus on maximal fellow-travels between $\nu$ and the translates of $\gamma$, we are inclined to think of those as a totally ordered set.
Consider now the situation handled in the statement.
Since $o$ and $\{c_1, c_2\}$ are separated at $g$ one expects that $g$ belongs to both $C_L(c_1)$ and $C_L(c_2)$, see \autoref{res: existence infinite admissible sequence - prelim}.
Consider now an $L$-admissible sequence  $(u_n)$ between $1$ and $[c_1]$.
Let $j$ be the largest index such that $\dist o{u_jo} \leq \dist o{go}$ (maybe up to some error in terms of $\alpha$).
We would like to say that $u_j$ comes ``before'' $g$.
However we need to distinguish two cases depending whether the Hausdorff distance between $u_j\gamma$ and $g \gamma$ is finite or not, i.e. depending whether $u_j^{-1}g$ belongs to $E(h)$ or not.
\begin{itemize}
	\item When this Hausdorff distance if infinite, then $u_j \gamma$ and $g \gamma$ cannot overlap too much so that $u_j$ truly comes ``before'' $g$ in $C_L(c_1)$.
It allows us to show that $(u_0, \dots, u_j, g)$ is an $L$-admissible sequence, and then extend it with \autoref{res: existence infinite admissible sequence - prelim} to an $L$-admissible sequence from $1$ to $[c_2]$ (see \autoref{fig: rerouting - infinite H dist}).
	We would like to bring the reader's attention to the fact that the definition of admissible sequences with its auxiliary points $z$ and $z'$ was precisely designed so that in this situation the value of the relevant Gromov products does not increase.

	\begin{figure}[htbp]
		\centering
		\labellist
			\small\hair 2pt
			 \pinlabel {$o$} [] at 63 232
			 \pinlabel {[...]} [] at 50 187
			 \pinlabel {$u_{j-1}o$} [l] at 82 144
			 \pinlabel {$u_jo$} [b] at 227 92
			 \pinlabel {$go$} [lb] at 365 35
			 \pinlabel {$c_1$} [] at 542 204
			 \pinlabel {$c_2$} [tl] at 569 210
			 \pinlabel {$g\gamma(\epsilon t_j)$} [t] at 270 17
			 \pinlabel {$g\gamma(\epsilon s_1)$} [tr] at 487 17
			 \pinlabel {$g\gamma(\epsilon s_2)$} [tl] at 492 15
			  \pinlabel {$u_{j-1}\gamma$} [t] at 9 146
			   \pinlabel {$u_j\gamma$} [b] at 303 65
			    \pinlabel {$g\gamma$} [b] at 552 21
		\endlabellist
		\includegraphics[width=0.98\linewidth]{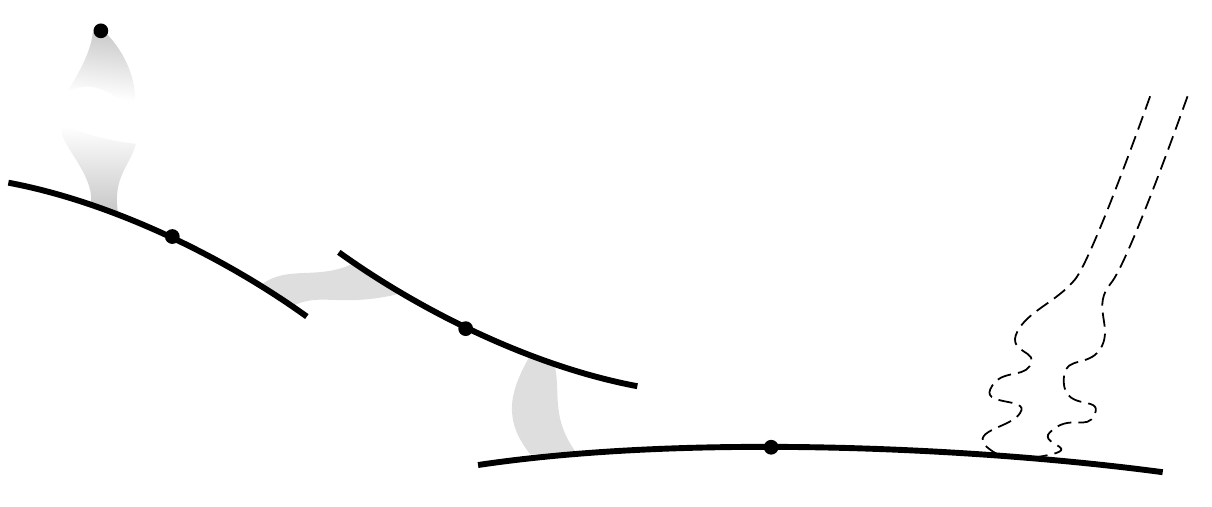}
		\caption{Rerouting an admissible sequence. The case where $u_j^{-1}g$ does not belong to $E(h)$. The thick lines are (portions of) the translates of $\gamma$.}
		\label{fig: rerouting - infinite H dist}
	\end{figure}
	
	\item If the Hausdorff distance between $u_j\gamma$ and $g \gamma$ is finite (hence at most $\alpha$) then it could happen that $u_jo$ and $go$ are too close so that $(u_0, \dots, u_j, g)$ cannot be an admissible $L$-sequence. 
	However our separation assumption is very generous.
	Therefore, we can find $g' \in g \group h$ such that $g'o$ is further away from $u_jo$ along $g \gamma$ and $(u_0, \dots, u_j, g')$ is now an $L$-admissible sequence. 
	We extend it then again to an admissible sequence from $1$ to $[c_2]$ using \autoref{res: existence infinite admissible sequence - prelim} (see \autoref{fig: rerouting - finite H dist}).
	
	\begin{figure}[htb!]
		\centering
		\labellist
			\small\hair 2pt
			 \pinlabel {$o$} [] at 73 242
			 \pinlabel {[...]} [] at 60 195
			 \pinlabel {$u_{j-1}o$} [l] at 82 154
			 \pinlabel {$u_jo$} [b] at 360 47
			 \pinlabel {$go$} [lb] at 378 45
			 \pinlabel {$g'o$} [lt] at 430 20			 
			 \pinlabel {$c_1$} [] at 552 214
			 \pinlabel {$c_2$} [tl] at 574 220
			 \pinlabel {$u_{j-1}\gamma$} [t] at 19 151
			 \pinlabel {$u_j\gamma$} [] at 587 22
			 \pinlabel {$g\gamma$} [b] at 567 36
			  \pinlabel {$g'\gamma$} [t] at 567 16
		\endlabellist
		\includegraphics[width=0.98\linewidth]{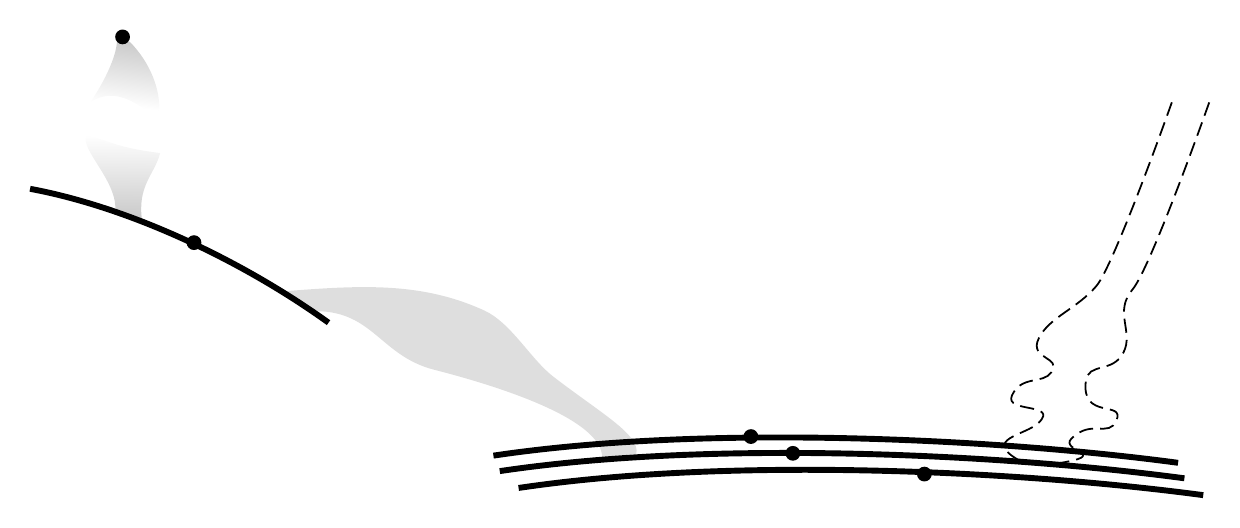}
		\caption{Rerouting an admissible sequence. The case where $u_j^{-1}g$ belongs to $E(h)$. The thick lines are (portions of) the translates of $\gamma$.}
		\label{fig: rerouting - finite H dist}
	\end{figure}
\end{itemize}

\begin{proof}
	We are going to show that $C_L(c_1) \cap B_G(R)$ is contained in $C_L(c_2)$.
	The other inclusion will follow by symmetry.
	Let $u \in C_L(c_1)$ such that $\dist o{uo} \leq \dist o{go}$.
	By definition, $u$ belongs to an infinite $L$-admissible sequence $(u_i)$ from $1$ to $[c_1]$.

	For every $i \in \N$, we write $p_i = g\gamma(\epsilon t_i)$ for a projection of $u_io$ on $g\gamma$.
	Since $o$ and $c$ are $(M, r, \epsilon)$-separated at $g$, we know that $t_0 \leq -M + (r + 2\alpha)$ (\autoref{res: proj contracting set}).
	For $k \in \{1, 2\}$, let $q_k = \gamma(\epsilon s_k)$ be a projection of $c_k$ on $\gamma$ restricted to $\intval{-M}M$.
	It follows from \autoref{res: bi-gradient line} that $s_k \geq M - (r + 7\alpha)$.
	
	\begin{clai}
		Let $i \in \N$.
		\begin{itemize}
			\item If $t_i > t_0 + \alpha$, then $\dist o{u_io} \geq \dist o{go} + t_i - 6\alpha$.
			\item If $t_i < s_1 - 4\alpha$, then $\dist o{u_io} \leq \dist o{go} + t_i + 34\alpha$.
		\end{itemize}
	\end{clai}

	Suppose first that $t_i > t_0 +  \alpha$.
	In particular, $\dist {p_0}{p_i} > \alpha$.
	It follows then from \autoref{res: proj contracting set} that 
	\begin{align*}
		\dist o{u_io} 
		\geq \dist o{p_0} + \dist {p_0}{p_i} - 6\alpha
		& \geq \dist o{p_0} + (t_i - t_0) - 6\alpha \\
		& \geq \dist o{p_0} + \dist {p_0}{go} + t_i - 6\alpha \\
		& \geq \dist o{go} + t_i - 6\alpha.
	\end{align*}
	Suppose now that $t_i < s_1 - 4\alpha$.
	In particular, $\dist {p_i}{q_1} > 4\alpha$.
	Note that 
	\begin{equation*}
		\gro o{p_i}{u_io} + \gro {c_1}{p_0}{p_i}  + \gro{c_1}o{p_0}= \gro o{c_1}{u_io} + \gro{u_io}{c_1}{p_i} + \gro o{p_i}{p_0},
	\end{equation*}	
	(It suffices to unwrap the Gromov products).
	However $\gro{c_1}o{p_0} \geq 0$ while $\gro{u_io}{c_1}{p_i} \leq 2\alpha$ and $ \gro o{p_i}{p_0}\leq 2 \alpha$ (\autoref{res: proj cocycle contracting set}).
	Note also that \autoref{res: wysiwyg shadow} yields
	\begin{equation*}
		 \gro {c_1}{p_0}{p_i} \geq  \max\{0, t_0 - t_i\} - 2\alpha
	\end{equation*}
	(which lower bound is optimal depends on whether $t_i \geq t_0$ or not).
	Consequently, 
	\begin{equation*}
		 \gro o{p_i}{u_io} +  \max\{0, t_0 - t_i\} \leq \gro o{c_1}{u_io} +  6\alpha.
	\end{equation*}
	Since $(u_i)$ is an $L$-admissible sequence ending at $[c_1]$,  the Gromov product on the right hand side is at most $4 \alpha$ (\autoref{res: admissible sequence - global projection}).
	Consequently $t_0 \leq t_i + 10\alpha$.
	Moreover, it follows from the triangle inequality that
	\begin{equation}
	\label{cla: lower bound ui}
		\dist o{u_io} - \dist o{p_i}  \leq \gro o{p_i}{u_io} \leq  10\alpha.
	\end{equation}
	Applying \autoref{res: proj contracting set}, we know that $\gro o{go}{p_0} \leq 2\alpha$.
	Consequently
	\begin{equation*}
		\dist o{p_i} 
		\leq \dist o{p_0} + \dist {p_0}{p_i}
		\leq \dist o{go} - \dist {p_0}{go} + \dist {p_0}{p_i} + 4\alpha,
	\end{equation*}
	Since $t_0 \leq t_i + 10\alpha$, it yields
	\begin{equation*}
		\dist o{p_i} 
		\leq \dist o{go} + t_i + 24\alpha.
	\end{equation*}
	Combined with (\ref{cla: lower bound ui}), it completes the proof of our claim.

	\medskip
	We denote by $j$ the largest integer such that $t_j \leq 40\alpha$.
	Since $(u_io)$ diverges to infinity (\autoref{res: cylinder small gromov product - prelim}) the existence of such an integer $j$ is ensured by our previous claim.
	It also implies that $u = u_i$ for some $i \leq j$.
	We are going to build an element $g' \in g \group h$ such that $u_jo$ and $c_2$ are ``sufficiently separated'' at $g'$ while $(u_0, \dots, u_j, g')$ is still an $L$-admissible sequence.
	Denote by $\epsilon_j$ the orientation of $u_j$ in $(u_i)$.
	For simplicity, we write $\gamma_j \colon \R \to X$ for the path defined by $\gamma_j(t) = u_j \gamma(\epsilon_j t)$.
	Set $x_j = \gamma_j(-L)$ and $x'_j = \gamma_j(L)$.
	We now distinguish two cases.
	
	\subparagraph{Case 1.}
	\emph{Suppose that $u_j^{-1}g$ belongs to $E(h)$.}
	By \ref{enu: alpha - fellow travel} there is $\eta \in \{\pm 1\}$ and $T_j \in \R_+$ such that 
	\begin{equation}
	\label{eqn: cylinder locally coincide - prelim 1-1 - fellow travel 1}
		\dist {\gamma_j(\eta t)}{g\gamma(\epsilon t + \epsilon T_j)} \leq \alpha, \quad \forall t \in \R.
	\end{equation}
	In particular, $\dist{u_jo}{p_j} \leq \alpha$ and thus $\abs{t_j - T_j} \leq 2\alpha$.
	Recall that $q_1 = \gamma(\epsilon s_1)$, which is a projection of $c_1$ on $\gamma$, satisfies $s_1 \geq M - (r +7\alpha)$ while $c_1 \in \mathcal O_{x_j}(x'_j, 36\alpha)$, see \autoref{res: cylinder small gromov product}.
	This forces $\eta = 1$.
	Set $T = t_j + L + 150\alpha$.
	In view of \ref{enu: alpha - translation}, there is $g' \in g\group h$ such that 
	\begin{equation}
	\label{eqn: cylinder locally coincide - prelim 1-1 - fellow travel 2}
		\dist{g'\gamma(t - \epsilon T)}{g\gamma(t)} \leq \alpha, \quad \forall t \in \R.
	\end{equation}
	Let 
	\begin{equation*}
		y = g'\gamma(\epsilon t_j - \epsilon T), 
		\quad \text{and} \quad 
		y' = g'\gamma(\epsilon s_2 - \epsilon T)
	\end{equation*}
	so that $\dist y{p_j} \leq \alpha$ and $\dist {y'}{q_2} \leq \alpha$.
	Recall that $q_2 = g\gamma(\epsilon s_2)$ is a projection of $c_2$ on $g\gamma$.	
	Combining \autoref{res: proj cocycle contracting set} with (\ref{eqn: cylinder locally coincide - prelim 1-1 - fellow travel 2}) we get
	\begin{equation*}
		\gro y{c_2}{y'} \leq
		\gro {p_j}{c_2}{q_2} + 2 \alpha
		\leq 7\alpha.
	\end{equation*}
	Similarly, the triangle inequality yields
	\begin{equation*}
		\gro {u_jo}{y'}y \leq \dist y{u_jo} \leq \dist y{p_j} + \dist{p_j}{u_jo} \leq 2\alpha.
	\end{equation*}
	Thus $u_jo$ and $c_2$ are $7\alpha$-separated by $(y,y')$.
	Note that
	\begin{equation*}
		s_2 - T \geq  M - (L +r +197\alpha)
		\quad \text{and} \quad
		T - t_j \geq L + 150\alpha
	\end{equation*}
	Hence $u_jo$ and $c_2$ are $(L + 120\alpha, 2\alpha, \epsilon)$-separated at $g'$ (\autoref{res: bi-gradient line}).
	Combining (\ref{eqn: cylinder locally coincide - prelim 1-1 - fellow travel 1}) and (\ref{eqn: cylinder locally coincide - prelim 1-1 - fellow travel 2}) we see that 
	\begin{equation*}
		\dist  {\gamma_j(t + T - T_j)}{g'\gamma(\epsilon t)} \leq 2\alpha, \quad \forall t \in \R.
	\end{equation*}
	Note that $T - T_j \geq L$.
	It follows directly that $g'o$ belongs to $\mathcal O_{x_j}(x'_j,2\alpha)$.
	Consequently $(u_0, \dots, u_j, g')$ is an $L$-admissible sequence.	
	
	\subparagraph{Case 2.}
	\emph{Suppose now that $u_j^{-1}g$ does not belong to $E(h)$.}
	According to \autoref{res: cylinder small gromov product}, $o$ and $c_1$ are $(L, 36\alpha, \epsilon_j)$-separated at $u_j$.
	It follows from \autoref{res: simultaneous separation} that $t_j \leq -M + (r + 22\alpha)$.
	Let $m = g\gamma(\epsilon \theta)$ be a projection of $x'_j$ on $g \gamma$.
	Since $u_j\gamma$ has a small projection on $g\gamma$ -- see \ref{enu: alpha - diam proj} -- we get 
	\begin{equation*}
		\theta \leq t_j + \alpha \leq - M + (r + 23\alpha)
	\end{equation*}
	By definition of admissible sequences, there is a point $z' \in B(u_{j+1}o, 10\alpha)$ such that $\gro {x_j}{z'}{x'_j} \leq 2 \alpha$.
	The projection on a contracting set being large scale Lipschitz (\autoref{rem: Lipschitz proj}), the projection $m' = g\gamma(\epsilon \theta')$ of $z'$ on $g\gamma$ satisfies
	\begin{equation*}
		\theta' \geq t_{j+1} - 14\alpha \geq 26\alpha.
	\end{equation*}
	In particular, $x'_j$ and $z'$ are $2\alpha$-separated by $(m,m')$.
	It follows from \autoref{res: bi-gradient line}~\ref{enu: bi-gradient line - neighborhood} that $go$ is $5\alpha$-close to a point $z$ on a geodesic $\nu$ joining $x'_j$ to $z'$.
	Hence the triangle inequality yields
	\begin{equation*}
		\gro {x_j}z{x'_j} \leq \gro {x_j}{z'}{x'_j}  + \gro{x'_j}{z'}z \leq 2 \alpha.
	\end{equation*}
	In other words $z$ belongs to $B(go, 10\alpha) \cap \mathcal O_{x_j}(x'_j, 2\alpha)$.
	Consequently the sequence $(u_0, \dots, u_j, g)$ is $L$-admissible.
	Let us prove now that $u_jo$ and $c_2$ are separated at $g$.
	Recall that $p_j = g\gamma(\epsilon t_j)$ and $q_2 = g\gamma(\epsilon s_2)$ are respective projections of $u_jo$ and $c_2$ on $g\gamma$.
	It follows \autoref{res: proj cocycle contracting set} that $u_jo$ and $c_2$ are $5\alpha$-separated by $(p_j,q_2)$.
	We already noticed that $t_j \leq -M + (r + 22\alpha)$ while $s_2 \geq M - (r + 7\alpha)$.
	Consequently $u_jo$ and $c_2$ are $(L+ 120\alpha, 2\alpha, \epsilon)$-separated at $g$ (\autoref{res: bi-gradient line}).

	\paragraph{Conclusion.}
	In both cases we have produced an element $g' \in g\group h$ such that $(u_0,\dots, u_j,g')$ is an $L$-admissible sequence while $u_jo$ and $c_2$ are $(L + 120\alpha, 2\alpha, \epsilon)$-separated at $g'$.
	According to \autoref{res: existence infinite admissible sequence - prelim}, we can extend $(u_0,\dots, u_j,g')$ to an infinite $L$-admissible sequence from $1$ to $[c_2]$.
	Thus $u$ belongs to $C_L(c_2)$.
\end{proof}


%
\subsection{Visual metric}
%

Let $L > 200\alpha$.
Given  $c_1,c_2 \in \Lambda$, we define an auxiliary quantity $r_L(c_1,c_2)$.
It is the largest number $r \in \R_+$ with the following properties:
\begin{enumerate}
	\item there exists $g \in C_L(c_1) \cap C_L(c_2)$ such that $\dist o{go} = r$, and
	\item $C_L(c_1) \cap B_G(r) = C_L(c_2) \cap B_G(r)$.
\end{enumerate}
If $C_L(c_1) = C_L(c_2)$ we adopt the convention that $r_L(c_1,c_2) = \infty$.
Otherwise, the existence of $r_L(c_1,c_2)$ follows from the fact that the action of $G$ on $X$ is proper.
Roughly speaking $r_L(c_1,c_2)$ is the radius of the largest ball on which $C_L(c_1)$ and $C_L(c_2)$ coincide.


\begin{defi}
\label{def: L-visual pseudo metric}
	Let $L > 200\alpha$.
	The \emph{$L$-visual pseudo-metric} between $c_1,c_2 \in \Lambda$ (seen from $o$) is 
	\begin{equation*}
		\dist[L] {c_1}{c_2} = e^{-r_L(c_1,c_2)}.
	\end{equation*}
\end{defi}

\begin{rema*}
	Observe that $\distV[L]$ satisfies an ultra-metric inequality, namely
	\begin{equation*}
		\dist[L]{c_1}{c_3} \leq \max\left\{ \dist[L]{c_1}{c_2}, \dist[L]{c_2}{c_3}\right\}, \quad \forall c_1,c_2,c_3 \in \Lambda. \qedhere
	\end{equation*}
\end{rema*}

\begin{lemm}
\label{res: metric quotient}
	Let $L > 200\alpha$.
	The visual pseudo-metric $\distV[L]$ on $\Lambda$ induces an ultra-metric distance on $\Lambda\qsim$.
\end{lemm}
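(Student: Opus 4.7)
The plan is to check first that $\distV[L]$ is a symmetric pseudo-metric on $\Lambda$ satisfying the ultra-metric inequality, and second that its zero set coincides exactly with the graph of $\sim$. The quotient then automatically inherits a genuine ultra-metric.

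Symmetry of $r_L$ is immediate from its definition. For the ultra-metric inequality, given $c_1,c_2,c_3\in\Lambda$, set $r_i=r_L(c_1,c_2)$ and $r_j=r_L(c_2,c_3)$ with, say, $r_i\leq r_j$: the cylinders $C_L(c_1)$ and $C_L(c_3)$ agree on $B_G(r_i)$ because both coincide there with $C_L(c_2)$, and any element of $C_L(c_1)\cap C_L(c_2)$ realizing distance exactly $r_i$ from $o$ automatically lies in $C_L(c_3)$ by the coincidence on $B_G(r_j)\supset B_G(r_i)$. Hence $r_L(c_1,c_3)\geq\min\{r_i,r_j\}$, and taking exponentials yields $\dist[L]{c_1}{c_3}\leq\max\{\dist[L]{c_1}{c_2},\dist[L]{c_2}{c_3}\}$.

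The actual content is the equivalence $\dist[L]{c_1}{c_2}=0\Leftrightarrow c_1\sim c_2$. The easy direction holds because the cylinder $C_L(c)$ depends only on the equivalence class $[c]$ (admissible sequences are defined up to equivalence at their terminal end), so $c_1\sim c_2$ forces $C_L(c_1)=C_L(c_2)$ and hence $r_L(c_1,c_2)=\infty$ by convention. For the converse, assume $C_L(c_1)=C_L(c_2)$. By the corollary following \autoref{res: existence infinite admissible sequence - prelim}, pick an infinite $L$-admissible sequence $(u_n)_{n\in\N}$ from $1$ to $[c_1]$. For each $n$, the element $u_n$ also lies in $C_L(c_2)$, so it appears in some infinite $L$-admissible sequence ending at $[c_2]$. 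Applying \autoref{res: cylinder small gromov product} to that sequence with $c_2$ taken as a representative of its endpoint class gives $\gro o{c_2}{u_no}\leq 4\alpha$ for every $n\geq 1$; combined with \autoref{res: cylinder small gromov product - prelim} this yields $c_2(o,u_no)\geq\dist o{u_no}-8\alpha\to\infty$. Since $c_2\in\Lambda\subset\Lambda_{\rm ctg}(G)$, \autoref{res: convergence criterion - cocycle} then forces every accumulation point of $(u_no)$ in $\bar X$ to be equivalent to $c_2$. But such an accumulation point is also equivalent to $c_1$ by \autoref{res: equivalent accumulation points} (being an accumulation point of a sequence ending at $[c_1]$), so $c_1\sim c_2$ by transitivity.

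The main obstacle is this final implication, which is precisely where the combinatorial coincidence of cylinders must be converted into an honest equivalence of boundary cocycles. The argument succeeds only because the set $\Lambda$ was chosen inside $\Lambda_{\rm ctg}(G)$ in \autoref{sec: initial data}, allowing us to combine the uniform Gromov product bound along admissible sequences with the convergence criterion for contracting cocycles; together these express the intuition that an admissible sequence converges ``radially'' to its endpoint class, so that cylinders determine that class.
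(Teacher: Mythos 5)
Your proof is correct and, for the key step that $C_L(c_1) = C_L(c_2)$ implies $c_1 \sim c_2$, takes a genuinely different route from the paper. The paper picks a single $g \in C_L(c_1)$ with $\dist o{go} > R + 17\alpha$, uses Lemmas~\ref{res: admissible sequence - global projection} and~\ref{res: cylinder small gromov product} to place both $c_1$ and $c_2$ in the shadow $\mathcal O_o(go, 4\alpha)$ while $(o,go)$ has an $(\alpha, L-18\alpha)$-contracting tail, and then applies the external estimate \cite[Lemma~5.7]{Coulon:2022tu} to bound $\norm[K]{c_1-c_2} \leq 20\alpha$ on the ball $K$ of radius $R$; letting $R$ grow gives $c_1 \sim c_2$. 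You instead follow an entire admissible sequence $(u_n)$ ending at $[c_1]$, derive $c_2(o,u_no) \to \infty$ by combining the Gromov product bound of \autoref{res: cylinder small gromov product} with the divergence from \autoref{res: cylinder small gromov product - prelim}, and conclude via \autoref{res: convergence criterion - cocycle} and \autoref{res: equivalent accumulation points} that any accumulation point of $(u_no)$ in $\bar X$ is equivalent to both $c_1$ and $c_2$, forcing $c_1 \sim c_2$ by transitivity. Both arguments exploit the same geometric mechanism (the divergent points $u_no$ trap both boundary cocycles in increasingly small shadows), and \autoref{res: convergence criterion - cocycle} itself rests on the very lemma the paper cites, so the dependency graph is ultimately the same; what your version buys is a proof phrased entirely through internal lemmas rather than an external norm estimate. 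You also spell out the ultra-metric inequality via cylinder coincidence, which the paper only records as a remark after \autoref{def: L-visual pseudo metric}; that argument is correct as well.
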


\begin{proof}
	By construction the visual pseudo-distance between two cocycles in $\Lambda$ only depends on their respective equivalence classes.
	Hence $\distV[L]$ induces a pseudo-distance on $\Lambda\qsim$.
	The only point left to check is that $\distV[L]$ is positive.
	Let $c_1, c_2 \in \Lambda$ such that $\dist[L] {c_1}{c_2} = 0$.
	By definition this means that $C_L(c_1) = C_L(c_2)$.
	Let $R \in \R_+$ and $g \in C_L(c_1)$ such that $\dist o{go} > R + 17\alpha$.
	Lemmas~\ref{res: admissible sequence - global projection} and~\ref{res: cylinder small gromov product} tell us that $g$ belongs to $\mathcal T_\alpha(L- 18\alpha)$ while both $c_1$ and $c_2$ lie in $\mathcal O_o(go,4\alpha)$.
	It follows from \cite[Lemma~5.7]{Coulon:2022tu} that $\norm[K] {c_1-c_2} \leq 20\alpha$, where $K$ is the closed ball of radius $R$ centered at $o$.
	This fact holds for every $R \in \R_+$, thus $c_1 \sim c_2$.
\end{proof}


\begin{prop}
\label{res: separation for upper bound on distance}
	Let $r,L,M \in \R_+$,  with $L > 200\alpha$ and $M > 2L + r + 500\alpha$.
	Let $\epsilon \in \{\pm 1\}$.
	Let $c_1,c_2\in \Lambda$.
	Let $g \in G$.
	If $o$ and $\{c_1,c_2\}$ are $(M,r, \epsilon)$-separated at $g$, then
	\begin{equation*}
		\dist[L]{c_1}{c_2} \leq e^{-\dist o{go}}.
	\end{equation*}
\end{prop}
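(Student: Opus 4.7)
The plan is to exhibit $g$ itself as a witness certifying that $r_L(c_1,c_2) \geq \dist o{go}$. Set $R = \dist o{go}$. By the definition of $r_L$, it suffices to check that $r = R$ satisfies the two defining conditions: namely, that $g \in C_L(c_1) \cap C_L(c_2)$ (condition~1 with this specific element) and that the two cylinders agree on the ball $B_G(R)$ (condition~2). Once both hold, one immediately concludes $r_L(c_1,c_2) \geq R$, hence $\dist[L]{c_1}{c_2} = e^{-r_L(c_1,c_2)} \leq e^{-R}$, which is the desired inequality.

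For the first condition, I would invoke \autoref{res: existence infinite admissible sequence - prelim} twice — once for $c_1$ and once for $c_2$. The trivial two-element sequence $(u_0, u_1) = (1, g)$ is vacuously $L$-admissible, since the admissibility condition only constrains indices $i$ for which $i-1, i, i+1$ all lie in the index set. Our hypothesis is precisely that $u_0 o = o$ and $c_k$ are $(M, r, \epsilon)$-separated at $u_1 = g$, and the assumption $M > 2L + r + 500\alpha$ certainly implies $M > L + r + 100\alpha$, so \autoref{res: existence infinite admissible sequence - prelim} applies. It extends $(1, g)$ to an infinite $L$-admissible sequence terminating at $[c_k]$, for each $k \in \{1,2\}$. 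Thus $g \in C_L(c_1) \cap C_L(c_2)$ and $\dist o{go} = R$.

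For the second condition, I would apply \autoref{res: cylinder coincide} directly: its hypotheses are exactly what we have assumed (the quantitative bound $M > 2L + r + 500\alpha$ is the one required there), and its conclusion gives $C_L(c_1) \cap B_G(R) = C_L(c_2) \cap B_G(R)$.

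There is no serious obstacle here: the statement is essentially a packaging of \autoref{res: cylinder coincide} (cylinder coincidence on the ball) together with \autoref{res: existence infinite admissible sequence - prelim} (extendability of separated pairs into admissible sequences). All of the geometric work — the delicate projection and fellow-traveling arguments needed to reroute admissible sequences — was already carried out in the proof of \autoref{res: cylinder coincide}.
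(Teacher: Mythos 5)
Your proposal is correct and matches the paper's proof step for step: both show $g \in C_L(c_1) \cap C_L(c_2)$ by extending the vacuously $L$-admissible sequence $(1,g)$ via \autoref{res: existence infinite admissible sequence - prelim}, and both then invoke \autoref{res: cylinder coincide} to get agreement of the cylinders on $B_G(R)$ with $R = \dist o{go}$. The only cosmetic issue is reusing the letter $r$ for the candidate value of $r_L(c_1,c_2)$ when $r$ already names the separation radius in the hypotheses, but the meaning is clear from context.
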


\begin{proof}
	Let $R = \dist o{go}$.
	Note that $(1,g)$ is an $L$-admissible sequence (the condition to check is void).
	Using \autoref{res: existence infinite admissible sequence - prelim}, we can extend $(1,g)$ to an infinity $L$-admissible sequence from $1$ to $[c_1]$ (\resp $[c_2]$).
	Thus $g$ belongs to both $C_L(c_1)$ and $C_L(c_2)$.
	
	Applying \autoref{res: cylinder coincide}, we see that $C_L(c_1)$ and $C_L(c_2)$ coincide on $B_G(R)$.
	Moreover this ball contains $g$.
	By definition of the  $L$-visual pseudo-metric we get
	\begin{equation*}
		\dist[L]{c_1}{c_2} \leq  e^{-\dist o{go}}. \qedhere
	\end{equation*}
\end{proof}


\begin{lemm}
\label{res: dense orbit geodesic}
	Let $L > 200\alpha$.
	Let $c \in \Lambda$ and $c' \in \Lambda_+$, which are not equivalent.
	For every $M \in \R$ and $\eta \in \R_+^*$, there is $g \in G$ such that $c'(o,g^{-1}o) \geq M$ and 
	\begin{equation*}
		\max \left\{ \dist[L]{gc}{c_\gamma}, \dist[L]{gc'}{c'_\gamma}\right\} < \eta.
	\end{equation*}
\end{lemm}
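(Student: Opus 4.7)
The plan is to first produce $g = u^{-1}$ via Proposition~\ref{res: separation triangle cocycles}, which places $gc$ and $gc'$ into canonical shadows around $\gamma(-\ell)$ and $\gamma(\ell)$, and then to bound the two visual distances separately by applying Proposition~\ref{res: separation for upper bound on distance} at two elements of $\group h$ that translate $\gamma$ by $\pm N$. The main technical obstacle is setting up a hierarchy of constants $\alpha \ll L \ll M_0 \ll N \ll \ell$ that absorbs the error terms coming from fellow-travelling along axes and from shadow inclusions.

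Explicitly, I would fix $M_0 := 2L + 600\alpha$ and $r_1 := 10\alpha$ so that the hypothesis $M_0 > 2L + r_1 + 500\alpha$ of Proposition~\ref{res: separation for upper bound on distance} is satisfied; then $N \geq -\log \eta + 10\alpha$; and finally $\ell > M_0 + N + 100\alpha$. Since $c$ is not equivalent to $c'$, the singleton $\{c\}$ is a closed subset of $\bar X$ avoiding the equivalence class $[c']$; applying Proposition~\ref{res: separation triangle cocycles} to $c' \in \Lambda_+ = \Lambda_h$ with $\Lambda_0 := \{c\}$ and the chosen $\ell$ and $M$ produces $u \in G$ with $c'(o,uo) \geq M$ such that $c$ and $c'$ are $2\alpha$-separated by $(u\gamma(-\ell), u\gamma(\ell))$. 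Setting $g := u^{-1}$ yields $c'(o, g^{-1}o) \geq M$ together with
\[
	gc \in \mathcal O_{\gamma(\ell)}(\gamma(-\ell), 2\alpha), \qquad gc' \in \mathcal O_{\gamma(-\ell)}(\gamma(\ell), 2\alpha).
\]

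The heart of the argument is to bound $\dist[L]{gc}{c_\gamma}$; the treatment of $\dist[L]{gc'}{c'_\gamma}$ is entirely symmetric. Using \ref{enu: alpha - translation} together with the bi-infinite fellow-travelling implied by \ref{enu: alpha - fellow travel}, fix $g_0 \in \group h$ with $\dist{g_0\gamma(t)}{\gamma(t - N)} \leq 3\alpha$ for every $t \in \R$, so that $\dist o{g_0 o} \geq N - 3\alpha$. I claim that $o$ and $\{gc, c_\gamma\}$ are $(M_0, r_1, -1)$-separated at $g_0$. Applying Lemma~\ref{res: wysiwyg shadow}~\ref{enu: wysiwyg shadow - simple gromov product} to the tail $\gamma|_{[-\ell, \ell]}$ of $(\gamma(\ell), \gamma(-\ell))$ at $z = \gamma(-M_0 - N)$ and $z' = \gamma(M_0 - N)$ (both in the admissible range thanks to $\ell > M_0 + N + 100\alpha$) gives $gc \in \mathcal O_{\gamma(M_0 - N)}(\gamma(-M_0 - N), 2\alpha)$. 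Direct computation from the Busemann cocycle formula yields $c_\gamma \in \mathcal O_{\gamma(M_0 - N)}(\gamma(-M_0 - N), 0)$, and an elementary triangle computation along $\gamma$ (using $N > M_0$) yields $o \in \mathcal O_{\gamma(-M_0 - N)}(\gamma(M_0 - N), 0)$. Each of these three containments transfers from $\gamma$ to $g_0\gamma$ via (\ref{eqn: inclusion shadows triangle inequality}) at a cost of at most $6\alpha$, well within the budget $r_1 = 10\alpha$. Proposition~\ref{res: separation for upper bound on distance} then delivers
\[
	\dist[L]{gc}{c_\gamma} \leq e^{-\dist o{g_0 o}} \leq e^{-(N-3\alpha)} < \eta.
\]
Running the same argument with $g_0' \in \group h$ satisfying $\dist{g_0'\gamma(t)}{\gamma(t + N)} \leq 3\alpha$ and orientation $\epsilon = +1$ bounds $\dist[L]{gc'}{c'_\gamma}$, completing the proof.
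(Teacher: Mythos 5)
Your proof is correct and follows essentially the same strategy as the paper: produce $g = u^{-1}$ via Proposition~\ref{res: separation triangle cocycles}, then feed the resulting separation into Proposition~\ref{res: separation for upper bound on distance} at a $\group h$-translate of $o$ far enough along $\gamma$. The only real difference is bookkeeping. The paper manages everything with a single parameter $T$ growing to infinity, deducing the needed shadow containment $gc' \in \mathcal O_o(\gamma(2T),2\alpha)$ directly from Remark~\ref{rem: separation triangle cocycles} (moving the viewpoint to $o$, which sits between $\gamma(-2T)$ and $\gamma(2T)$), whereas you introduce separate scales $M_0$, $N$, $\ell$ and invoke Lemma~\ref{res: wysiwyg shadow}~\ref{enu: wysiwyg shadow - simple gromov product} to slide both the viewpoint and the reference point along $\gamma$. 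Both routes are valid, but your constant hierarchy should explicitly include $N > M_0$ among the stated requirements on $N$ (you use it, and flag it parenthetically, but it is not implied by $N \geq -\log\eta + 10\alpha$ when $\eta$ is large); this is a trivial fix, e.g.\ take $N > \max\{-\log\eta + 10\alpha, M_0\}$.
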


\begin{proof}
	Fix $T > 2L + 504\alpha$.
	By \ref{enu: alpha - translation}, there is $n \in \Z$ such that 
	\begin{equation}
	\label{eqn: dense orbit geodesic}
		\dist{h^n\gamma(t)}{\gamma(t + T)} \leq \alpha.
	\end{equation}
	In particular, $\dist o{h^no} \geq T - \alpha$.
	According to \autoref{res: separation triangle cocycles}, there is $g \in G$ such that $c'(o,g^{-1}o) \geq M$ while $c$ and $c'$ are $(2T, 2\alpha, +1)$-separated at $g^{-1}$.
	In particular, $gc'$ and  $c'_\gamma$ both belong to $\mathcal O_o(x',2\alpha)$ where $x' = \gamma(2T)$.
	Combined with (\ref{eqn: dense orbit geodesic}) we get that $o$ and $\{gc',c'_\gamma\}$ are $(T, 4\alpha, \epsilon)$-separated at $h^n$.
	Hence by \autoref{res: separation for upper bound on distance}, 
	\begin{equation*}
		\dist[L]{gc'}{c'_\gamma} \leq e^{-\dist o{h^no}} \leq e^\alpha e^{-T}.
	\end{equation*}
	The same proof gives an upper estimate of $\dist[L]{gc}{c_\gamma}$.
	We conclude by noticing that $T$ can be chosen arbitrary large.
\end{proof}


\begin{lemm}
\label{res: small distance with three cocycles}
	Let $L > 200\alpha$.
	Let $c_1, c_2, c' \in \Lambda$ such that $c_i$ is not equivalent to $c'$.
	For every $\eta \in \R_+^*$, there is $M \in \R_+$ such that for every $g \in G$, if $c'(o,g^{-1}o) \geq M$, then $\dist[L]{gc_1}{gc_2} < \eta$.
\end{lemm}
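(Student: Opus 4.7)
The plan is to reduce this to the upper bound of \autoref{res: separation for upper bound on distance} by producing, for every $g \in G$ with $c'(o,g^{-1}o)$ large enough, a witness element $\tilde g = gu_0$ (with $u_0 \in G$ depending only on $c_1, c_2, c'$ and not on $g$) at which $o$ and $\{gc_1, gc_2\}$ are uniformly separated, with $\dist o{\tilde g o}$ tending to infinity together with $c'(o, g^{-1}o)$. I would treat the case $c' \in \Lambda_+$ in detail; the case $c' \in \Lambda_-$ follows by symmetry, using the reverse path $\bar\gamma$ and the analogue of \autoref{res: separation triangle cocycles} for $\Lambda_{h^{-1}}$.

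First, I would fix $\ell > 2L + 600\alpha$ and apply \autoref{res: separation triangle cocycles} with $\Lambda_0 = \{c_1, c_2\}$ (closed, and by hypothesis containing no cocycle equivalent to $c'$) to obtain $u_0 \in G$ such that $\{c_1, c_2\}$ and $c'$ are $2\alpha$-separated by $(u_0 y, u_0 y')$, where $y = \gamma(-\ell)$ and $y' = \gamma(\ell)$. The $G$-equivariance of Gromov products then gives
\begin{equation*}
	\{gc_1, gc_2\} \subset \mathcal O_{gu_0 y'}(gu_0 y, 2\alpha)
\end{equation*}
for every $g \in G$, so half of the desired separation at $\tilde g = gu_0$ comes for free. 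It remains to show that, uniformly in $g$ with $c'(o, g^{-1}o) \geq M$ for some threshold $M$ depending on the initial data, one has $o \in \mathcal O_{gu_0 y}(gu_0 y', 13\alpha)$.

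I would establish this uniform bound by a compactness argument (equivalently, by contradiction): suppose otherwise and extract a sequence $(g_n)$ with $c'(o, g_n^{-1}o) \to \infty$ but $\gro{u_0 y}{g_n^{-1}o}{u_0 y'} > 13\alpha$ for all $n$. Since $\bar X$ is compact, some subsequence of $(b_{g_n^{-1}o})$ converges to a cocycle $c^* \in \partial X$ which, by \autoref{res: convergence criterion - cocycle}, is equivalent to $c'$; as $c' \in \Lambda_{\rm ctg}$, \cite[Lemma~5.10]{Coulon:2022tu} bounds $\norm[\infty]{c^* - c'} \leq 20\alpha$. Continuity of the Gromov product on $\bar X \times X \times X$ (with the last two arguments in $X$), combined with (\ref{eqn: gromov product - lip}) and the inclusion $\gro{u_0 y}{c'}{u_0 y'} \leq 2\alpha$ coming from the separation, yields
\begin{equation*}
	\gro{u_0 y}{g_n^{-1}o}{u_0 y'} \longrightarrow \gro{u_0 y}{c^*}{u_0 y'} \leq \gro{u_0 y}{c'}{u_0 y'} + 10\alpha \leq 12\alpha
\end{equation*}
along the subsequence, contradicting the assumption.

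With this uniform $o$-side separation, $o$ and $\{gc_1, gc_2\}$ are $(\ell, 13\alpha, +1)$-separated at $gu_0$ whenever $c'(o, g^{-1}o) \geq M$. Since $\ell > 2L + 13\alpha + 500\alpha$, \autoref{res: separation for upper bound on distance} gives $\dist[L]{gc_1}{gc_2} \leq e^{-\dist o{gu_0 o}}$, and the triangle inequality combined with the $1$-Lipschitz property of $c'$ yields $\dist o{gu_0 o} = \dist{g^{-1}o}{u_0 o} \geq c'(o, g^{-1}o) - \dist o{u_0 o}$. Choosing $M$ large enough in terms of $-\log\eta$ and $\dist o{u_0 o}$ (and larger than the threshold from the compactness step) then forces $\dist[L]{gc_1}{gc_2} < \eta$. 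The main technical hurdle is the uniform $o$-side separation, where the compactness of $\bar X$ and the tight equivalence-class width bound in $\Lambda_{\rm ctg}$ (unavailable for arbitrary boundary points) are both essential.
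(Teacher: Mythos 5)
Your proof is correct, and it shares the overall scaffolding with the paper's argument: both begin by applying \autoref{res: separation triangle cocycles} to produce a single witness element $u_0 \in G$ at which $\{c_1,c_2\}$ and $c'$ are separated, and both close by feeding the resulting separation into \autoref{res: separation for upper bound on distance}, using the $1$-Lipschitz estimate $\dist o{gu_0 o} \geq c'(o, g^{-1}o) - \dist o{u_0 o}$ to absorb $\eta$. The middle step is where you diverge genuinely. The paper is direct and quantitative: it fixes $M$ by an explicit formula involving $c'(o,u_0 o)$ and the separation scale $T$, then shows via \autoref{res: proj cocycle contracting set} that the projection of $g^{-1}o$ on the restricted axis sits near $u_0\gamma(\epsilon T)$, and reads off the separation of $o$ from $\{gc_1,gc_2\}$ immediately. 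You instead run a compactness contradiction: if $g_n^{-1}o$ escaped the shadow $\mathcal O_{u_0 y}(u_0 y', 13\alpha)$ along a sequence with $c'(o,g_n^{-1}o)\to\infty$, any accumulation point $c^*\in\partial X$ would, by \autoref{res: convergence criterion - cocycle}, satisfy $\norm[\infty]{c^*-c'}\leq 20\alpha$, and then continuity of the Gromov product on $\bar X\times X\times X$ together with (\ref{eqn: gromov product - lip}) would drive $\gro{u_0 y}{g_n^{-1}o}{u_0 y'}$ below $12\alpha$, a contradiction. This buys a cleaner, softer argument at the cost of a non-explicit threshold $M_0$, whereas the paper's version yields an effective $M$. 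Two cosmetic remarks. The orientation label in "$(\ell,13\alpha,+1)$-separated" should be $\epsilon=-1$ given the two inclusions you wrote down (you place $o$ in the shadow of $u_0 y'$ and $\{gc_1,gc_2\}$ in the shadow of $u_0 y$, which is the $\epsilon=-1$ configuration); this is harmless since \autoref{res: separation for upper bound on distance} accepts either sign, and the paper's own proof performs the same silent swap of the roles of $o$ and $\{gc_1,gc_2\}$. Also, the $20\alpha$ width bound is already established inside the proof of \autoref{res: convergence criterion - prelim}, so the extra appeal to \cite[Proposition~5.10]{Coulon:2022tu} is redundant.
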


\begin{proof}
	Without loss of generality we can assume that $\eta \in (0,1)$.
	Fix $T > 2L + 524\alpha$.
	Recall that $\Lambda = \Lambda_- \cup \Lambda_+$.
	Let $\epsilon = -1$ (\resp $\epsilon = +1$)  if $c' \in \Lambda_-$ (\resp $c' \in \Lambda_+)$.
	According to \autoref{res: separation triangle cocycles}, there is $u \in G$ such that $\{c_1,c_2\}$ and $c'$ are $(T, 2\alpha, \epsilon)$-separated at $u$.
	Fix 
	\begin{equation*}
		M = - \ln \eta + c'(o,uo) + T + 10\alpha.
	\end{equation*}
	Let $g \in G$ such that $c'(o,g^{-1}o)\geq M$, that is $c'(uo,g^{-1}o) \geq  - \ln \eta + T +  10\alpha$.
	Denote by $p = u\gamma(\epsilon s)$ and $q = u\gamma(\epsilon t)$ be projection of $g^{-1}o$ and $c'$ on $u\gamma$ restricted to $\intval {-T}T$.
	We know by \autoref{res: bi-gradient line} that $t \geq T - 9\alpha$.
	We claim that $s \geq T - 13\alpha$.
	Indeed, if this is not the case then $\dist pq > 4\alpha$.
	Using \autoref{res: proj cocycle contracting set} we observe
	\begin{align*}
		c'(uo,g^{-1}o)
		\leq c'(uo,q) - c'(g^{-1}o,q)
		& \leq \dist {uo}q - \dist {g^{-1}o}q + 10\alpha\\
		& \leq \dist {uo}q + 10\alpha \\
		& \leq T + 10\alpha,
	\end{align*}
	which contradicts our assumption.
	Note that $\{c_1,c_2\}$  and $g^{-1}o$ are $2\alpha$-separated by $(y,p)$, where $y = u\gamma(-\epsilon T)$.
	Hence $\{c_1, c_2\}$ and $g^{-1}o$ are $(T- 22\alpha, 2\alpha, \epsilon)$-separated at $u$ (\autoref{res: bi-gradient line}).
	Or equivalently $\{gc_1, gc_2\}$ and $o$ are $(T - 22\alpha, 2\alpha, \epsilon)$-separated at $gu$.
	According to \autoref{res: separation for upper bound on distance} we get
	\begin{equation*}
		\dist[L]{gc_1}{gc_2}
		\leq e^{-\dist o{guo}}
	\end{equation*}
	However 
	\begin{equation*}
		\dist o{guo}
		\geq \dist {g^{-1}o}{uo}
		\geq c'(uo,g^{-1}o)
		> - \ln \eta,
	\end{equation*}
	whence the result.
\end{proof}


\begin{lemm}
\label{res: finite Gromov product from visual metric}
	Let $L > 200\alpha$.
	Let $b,b' \in \Lambda$ which are not equivalent.
	There are $r_1, r'_1, \beta \in \R_+^*$ with the following property.
	For every $c,c' \in \Lambda$ such that $\dist[L] bc < r_1$ and $\dist[L]{b'}{c'} < r'_1$, the point $o$ is $\beta$-close to any complete gradient line from $c$ to $c'$.
	Moreover, $\gro c{c'}o \leq \beta$.
\end{lemm}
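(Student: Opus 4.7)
The plan is to use the non-equivalence $b \not\sim b'$, via \autoref{res: separation triangle cocycles}, to produce a short separating geodesic segment for the pair $(b,b')$, and then to propagate this separation to any pair $(c,c')$ sufficiently close to $(b,b')$ in the visual pseudo-metric. Once $(c,c')$ is separated, \autoref{res: bi-gradient line} will deliver both conclusions of the lemma.

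First I will apply \autoref{res: separation triangle cocycles} (using its evident symmetric analog for $\Lambda_-$ in case neither $b$ nor $b'$ lies in $\Lambda_+$) with $\ell = 30\alpha$, chosen large enough so that the interior intervals in \autoref{res: bi-gradient line}~\ref{enu: bi-gradient line - neighborhood} and~\ref{enu: bi-gradient line - double gromov product} will contain the origin of the parametrization of $\gamma$ when $r$ is of order $\alpha$. This yields an element $u \in G$ and a sign $\epsilon \in \{\pm 1\}$ such that $\{b\}$ and $\{b'\}$ are $2\alpha$-separated by $(y,y') := (u\gamma(-\epsilon\ell), u\gamma(\epsilon\ell))$. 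I then set $D = \dist o{uo}$ and $\beta = D + 5\alpha$.

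The key step is the transfer of separation. I will choose $r_1, r_1'$ of size $e^{-D}$ up to an additive constant so that, whenever $\dist[L]{b}{c} < r_1$, the definition of $\distV[L]$ produces an element $g \in C_L(b) \cap C_L(c)$ with $\dist o{go}$ significantly larger than $D$. By \autoref{res: admissible sequence - global projection} such a $g$ equips $(o,go)$ with a long contracting tail, and by \autoref{res: cylinder small gromov product} both $b$ and $c$ lie in $\mathcal O_o(go, 4\alpha)$; we are thus in the hypotheses of \cite[Lemma~5.7]{Coulon:2022tu}, which yields $\norm[K]{b - c} \leq 20\alpha$ on the closed ball $K$ of radius $D + 30\alpha$ centered at $o$. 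Since $K$ contains $y$ and $y'$, the Lipschitz estimate~(\ref{eqn: gromov product - lip}) promotes the inequality $\gro{b}{y'}{y} \leq 2\alpha$ into $\gro{c}{y'}{y} \leq 12\alpha$; the same argument applied to $b', c'$ gives $\gro{c'}{y}{y'} \leq 12\alpha$. In other words $c$ and $c'$ are $12\alpha$-separated by $(y,y')$.

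Having verified the separation of $(c,c')$, I will apply \autoref{res: bi-gradient line} with $r = 12\alpha$ to the geodesic $u\gamma$ restricted to $\intval{-\epsilon\ell}{\epsilon\ell}$; the hypothesis $\dist y{y'} = 2\ell > 2r + 21\alpha$ is met. Part~\ref{enu: bi-gradient line - existence} produces a complete gradient arc from $c$ to $c'$; part~\ref{enu: bi-gradient line - neighborhood} ensures that $uo$ lies within $5\alpha$ of any such arc $\nu$, whence $d(o,\nu) \leq D + 5\alpha = \beta$; and part~\ref{enu: bi-gradient line - double gromov product} gives $\gro{c}{c'}{uo} \leq 4\alpha$. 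Combining the last estimate with the change-of-basepoint formula~(\ref{eqn: gromov product - conf}) and the bound $\abs{c(o,uo)} + \abs{c'(o,uo)} \leq 2D$ (since cocycles are $1$-Lipschitz) yields $\gro{c}{c'}{o} \leq 4\alpha + D \leq \beta$. The only mildly delicate point I anticipate is the bookkeeping of constants—making sure that $\ell$ is large enough for \autoref{res: bi-gradient line} to provide non-trivial interior intervals, and that the radius of $K$ is compatible with the length of the contracting tail of $(o,go)$ needed to invoke \cite[Lemma~5.7]{Coulon:2022tu}—but no conceptual obstacle is expected.
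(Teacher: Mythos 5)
Your argument is correct, but the mechanism for transferring the separation from $(b,b')$ to $(c,c')$ is genuinely different from the paper's. The paper applies \autoref{res: separation triangle cocycles} twice, building a separating element $g'$ for the pair $\bigl(\{b,o\}, b'\bigr)$ and a second element $g$ for $\bigl(b, \{o,b'\}\cup g'\gamma_0\bigr)$; it then shows that smallness of $\dist[L]{b}{c}$ forces $g\in C_L(b)\cap C_L(c)$, and establishes the separation of $(c,c')$ at $g'$ via an exact Gromov-product identity of the form $\gro c{y'}{x'} + \gro c{x'}m + \gro b{y'}m = \gro b{y'}{x'} + \gro b{x'}m + \gro c{y'}m$. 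You instead use a single separating element $u$, and the place where you invoke the cylinder structure is to extract, via the definition of $\distV[L]$, an element $g\in C_L(b)\cap C_L(c)$ with $\dist o{go}$ large enough to feed \cite[Lemma~5.7]{Coulon:2022tu} and obtain $\norm[K]{b-c}\le 20\alpha$ on a ball $K$ of radius $D+30\alpha$; the separation is then promoted from $b$ to $c$ (and $b'$ to $c'$) by the Lipschitz estimate~\eqref{eqn: gromov product - lip}. This is a cleaner route: it avoids the paper's nested separations and the Gromov-product bookkeeping, at the cost of depending on the precise quantitative form of \cite[Lemma~5.7]{Coulon:2022tu} (which you cite but do not spell out; your constant bookkeeping $\dist o{go}>R+17\alpha$ with $R = D+30\alpha$ and shadow radius $4\alpha$ is consistent with how the paper invokes that lemma elsewhere, e.g.\ in \autoref{res: metric quotient} and \autoref{res: metric top vs quotient top}). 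Both approaches then finish identically via \autoref{res: bi-gradient line}. Two small points to keep in mind: when $b'\in\Lambda_-\setminus\Lambda_+$ you must indeed use the $\Lambda_-$ analogue of \autoref{res: separation triangle cocycles}, and the roles of $y,y'$ may need to be swapped depending on which of $b,b'$ plays the role of the $\Lambda_h$-cocycle in that proposition; and you should make sure $\ell$ is chosen $\ge r+13\alpha$ so that $0$ lies in the interior interval of \autoref{res: bi-gradient line}~\ref{enu: bi-gradient line - neighborhood} — your choice $\ell=30\alpha$ with $r=12\alpha$ does satisfy this.
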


\begin{proof}
	Fix $M > 2L + 504\alpha$.
	According to \autoref{res: separation triangle cocycles}, there are $\epsilon' \in \{\pm 1\}$ and  $g' \in G$ such that $\{b, o\}$ and $b'$ are $(M, 2\alpha, \epsilon')$-separated at $g'$.
	For simplicity, we write $\gamma_0$ for the path $\gamma$ restricted to $\intval{-M}M$.
	Similarly there are $\epsilon \in \{\pm 1\}$ and  $g \in G$ such that $b$ and $\{o, b'\} \cup g'\gamma_0$ are $(M, 2\alpha, \epsilon)$-separated at $g$.
	We show as in the proof of \autoref{res: separation for upper bound on distance} that $g$ and $g'$ respectively belong to $C_L(c)$ and $C_L(c')$.
	Set
	\begin{equation*}
		x = g\gamma(-\epsilon L), \quad
		y = g\gamma(\epsilon L), \quad
		x' = g'\gamma(-\epsilon'L), 
		\quad \text{and} \quad 
		y' = g'\gamma(\epsilon'L).
	\end{equation*}
	In addition, let by $m = g\gamma(0)$ ans $m' = g'\gamma(0)$.
	Fix also $r_1,r'_1 \in \R_+^*$ such that 
	\begin{equation*}
		r_1 < e^{-\dist o{go}}
		\quad \text{and} \quad 
		r'_1 < e^{-\dist o{g'o}}.
	\end{equation*}
	Consider now $c,c' \in \Lambda$ such that $\dist[L] bc < r_1$ and $\dist[L]{b'}{c'} < r'_1$.
	We claim that $c$ and $c'$ are $(L,36\alpha,\epsilon')$-separated at $g'$.
	It follows from the definition of the visual metric that $g$ belongs to $C_L(b) \cap C_L(c)$.
	Hence $o$ and $\{b,c\}$ are $(L,36\alpha, \epsilon)$-separated at $g$ (\autoref{res: cylinder small gromov product}).
	Thus $\{b, c\}$ and $\{x', y'\}$ are $(L,36\alpha, \epsilon)$-separated at $g$.
	According to \autoref{res: bi-gradient line}, $\gro c{y'}m \leq 4\alpha$ and $\gro b{x'}m \leq 4\alpha$.
	Observe also that 
	\begin{equation*}
		\gro c{y'}{x'} + \gro c{x'}m + \gro b{y'}m = \gro b{y'}{x'} + \gro b{x'}m +  \gro c{y'}m,
	\end{equation*}
	(it suffices to unwrap the Gromov product).
	By construction $b$ and $b'$ are $(M, 2\alpha, \epsilon')$-separated at $g'$ so that $ \gro b{y'}{x'} \leq 2\alpha$.
	It follows then from our previous discussion that $\gro c{y'}{x'} \leq 10\alpha$.
	Similarly, one shows that $g'$ belongs to $C_L(c')$ and therefore $\gro {x'}{c'}{y'} \leq 36\alpha$ (\autoref{res: cylinder small gromov product}).
	This completes the proof of our claim.
	It now follows from \autoref{res: bi-gradient line} that $m$ lies in the $5\alpha$-neighborhood of any complete gradient line $\nu$ from $c$ to $c'$.
	In addition $\gro c{c'}{m'} \leq 4\alpha$.
	Consequently,
	\begin{align*}
		d(o, \nu) & \leq \dist o{m'} + d(m',\nu) \leq \dist o{m'} + 5\alpha, \\
		\gro c{c'}o  &\leq \gro c{c'}{m'} + \dist o{m'} \leq \dist o{m'} + 4\alpha.
	\end{align*}
	Note that the upper bounds do not depend on $c$ nor on $c'$, whence the result.
\end{proof}

The various $L$-visual metric are related as follows.

\begin{lemm}
\label{res: comparing L-visual metric}
	Let $\ell, L \in \R_+$ with $\ell > 200\alpha$ and $L > 2\ell + 600\alpha$.
	For every $c_1,c_2 \in \Lambda$ we have $\dist[\ell] {c_1}{c_2} \leq \dist[L]{c_1}{c_2}$.
\end{lemm}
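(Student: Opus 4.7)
The plan is to reduce the stated inequality to a common separation condition which is already known to bound both visual pseudo-distances. Since $\distV[\ell]$ takes values in $[0,1]$ and $\dist[L]{c_1}{c_2} = 0$ forces $c_1 \sim c_2$ by \autoref{res: metric quotient}, we may assume $R := r_L(c_1,c_2) \in (0,\infty)$ and $c_1 \not\sim c_2$. It then suffices to prove $r_\ell(c_1,c_2) \geq R$.

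By definition of $r_L$, pick $g \in C_L(c_1) \cap C_L(c_2)$ with $\dist o{go} = R$. For each $i \in \{1,2\}$, fix an infinite $L$-admissible sequence from $1$ to $[c_i]$ passing through $g$, and let $\epsilon_i \in \{\pm 1\}$ be the orientation of $g$ in that sequence. Since $R > 0$, the element $g$ sits at a nonzero index in each sequence, so \autoref{res: cylinder small gromov product} applies and yields that $o$ and $c_i$ are $(L, 36\alpha, \epsilon_i)$-separated at $g$.

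The crux of the argument, and the main obstacle, is to show $\epsilon_1 = \epsilon_2$. The idea is that the orientation of $g$ in any such admissible sequence is rigidly determined by the position of $o$ relative to the contracting geodesic $g\gamma$. Specifically, the $(L, 36\alpha, \epsilon_i)$-separation, combined with \autoref{res: wysiwyg shadow} applied to the tail of $(g\gamma(\epsilon_i L), g\gamma(-\epsilon_i L))$ given by the corresponding subsegment of $g\gamma$, forces every projection of $o$ on $g\gamma$ to lie within $43\alpha$ of the endpoint $g\gamma(-\epsilon_i L)$. Since projections of the same point on the $\alpha$-contracting geodesic $g\gamma$ form a set of bounded diameter (by \autoref{res: proj contracting set} applied with $x=x'=o$), the conditions obtained for $i = 1, 2$ can only be simultaneously consistent when $\epsilon_1 = \epsilon_2$; otherwise the triangle inequality would produce $2L \leq O(\alpha)$, incompatible with $L > 200\alpha$.

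Writing $\epsilon$ for this common orientation, we conclude that $o$ and $\{c_1,c_2\}$ are jointly $(L, 36\alpha, \epsilon)$-separated at $g$. Applying \autoref{res: separation for upper bound on distance} (with $\ell$ in the role of $L$ there, $L$ in the role of $M$, and $r = 36\alpha$) — whose hypothesis $L > 2\ell + 36\alpha + 500\alpha$ is ensured by $L > 2\ell + 600\alpha$ — yields
\begin{equation*}
	\dist[\ell]{c_1}{c_2} \leq e^{-\dist o{go}} = e^{-R} = \dist[L]{c_1}{c_2},
\end{equation*}
as desired. The geometric rigidity of the orientation is really the only new input; everything else is an invocation of results already established for separated configurations.
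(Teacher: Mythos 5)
Your proof is correct and follows the same route as the paper's: pick the element $g$ realizing $r_L(c_1,c_2)$, use \autoref{res: cylinder small gromov product} to get $(L,36\alpha,\epsilon)$-separation of $o$ and $\{c_1,c_2\}$ at $g$, and close with \autoref{res: separation for upper bound on distance}. You correctly identified and justified the one step the paper's proof leaves implicit — that the orientations $\epsilon_1,\epsilon_2$ agree — by observing that $\epsilon_1\neq\epsilon_2$ would force $2L\leq 72\alpha$ via the shared shadow condition, contradicting $L>200\alpha$.
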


\begin{proof}
	By definition of the visual metric, there is $g \in C_L(c_1) \cap C_L(c_2)$ such that 
	\begin{equation*}
		\dist[L] {c_1}{c_2} = e^{-\dist o{go}}.
	\end{equation*}
	In particular, there is $\epsilon \in \{\pm 1\}$ such that $o$ and $\{c_1, c_2\}$ are $(L, 36\alpha, \epsilon)$-separated at $g$ (\autoref{res: cylinder small gromov product}).
	It follows then from \autoref{res: separation for upper bound on distance} that 
	\begin{equation*}
		\dist[\ell] {c_1}{c_2} \leq e^{-\dist o{go}} \leq \dist[L] {c_1}{c_2}.  \qedhere
	\end{equation*}
\end{proof}


In general, $(\Lambda\qsim, \distV[L])$ may not be a complete metric space, even if $L$ is sufficiently large.
To bypass this difficulty, we combine all the $L$-visual metrics together as follows.

\begin{defi}
\label{def: visual pseudo metric}
	The \emph{visual pseudo-metric} between $c_1,c_2 \in \Lambda$ (seen from $o$) is 
	\begin{equation*}
		\dist[\infty] {c_1}{c_2} = \sum_{k > 200} 2^{-k} \dist[k\alpha] {c_1}{c_2}
	\end{equation*}
\end{defi}

\begin{rema}
\label{rem: comparing visual and L-visual metrics}
	It follows from \autoref{res: comparing L-visual metric}, that there is $\lambda \in \R_+^*$ such that for every integer $k >  200$ we have
	\begin{equation*}
		2^{-k} \dist[k\alpha] {c_1}{c_2} \leq  \dist[\infty] {c_1}{c_2} \leq \dist[k\alpha] {c_1}{c_2}  + \lambda2^{-k/2}.
	\end{equation*}
	Moreover, according to \autoref{res: metric quotient},  $\distV[\infty]$ induces a metric on $\Lambda \qsim$.
	Note that $\distV[\infty]$ may not be an ultra-metric though.
\end{rema}


\begin{lemm}
\label{res: complete metric space - prelim}
	Let $(c_n)$ be a sequence of cocycles in $\Lambda$.
	Assume that $(c_n)$ is a Cauchy sequence for the visual pseudo-metric $\distV[\infty]$.
	Any accumulation point $c^*$ of $(c_n)$ for the topology of $\partial X$ belongs to $\Lambda$.
	Moreover 
	\begin{equation*}
		\lim_{n \to \infty} \dist[\infty]{c_n}{c^*} = 0.
	\end{equation*}
\end{lemm}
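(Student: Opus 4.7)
The plan is to pick an arbitrary accumulation point $c^*$ of $(c_n)$ in the compact space $\bar X$, to identify $c^*$ with the endpoint of a suitable limiting admissible sequence in order to conclude $c^* \in \Lambda$, and finally to derive $\distV[\infty]$-convergence from the Cauchy hypothesis combined with the separation results of the previous subsection.

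First, since $\iota(X)$ is open in $\bar X$ (see the paragraph after \autoref{res: complete gradient arc}) and every $c_n$ belongs to $\partial X$, so does $c^*$. After passing to a subsequence, still denoted $(c_n)$, I may assume $c_n \to c^*$ in $\bar X$. By pigeonhole and a further extraction I may moreover assume that all $c_n$ lie in $\Lambda_+$, the case $\Lambda_-$ being fully symmetric.

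Next, fix $L > 200\alpha$. For each $n$, \autoref{res: separation triangle cocycles} (applied to $\Lambda_0 = \{o\}$ and $c' = c_n \in \Lambda_h$) produces an element $u_1^{(n)} \in G$ such that $o$ and $c_n$ are $2\alpha$-separated by $(u_1^{(n)}\gamma(-M_0), u_1^{(n)}\gamma(M_0))$ for a sufficiently large $M_0$; then \autoref{res: existence infinite admissible sequence - prelim} extends the pair $(1, u_1^{(n)})$ into an infinite $L$-admissible sequence $(u_i^{(n)})_{i \in \N}$ from $1$ to $[c_n]$ with every orientation $\epsilon_i^{(n)} = +1$ for $i \geq 1$. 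By \autoref{res: cylinder small gromov product - prelim}, $\dist o{u_i^{(n)}o}$ is controlled linearly in $i$, so properness of the action confines each $u_i^{(n)}$ to a finite subset of $G$ depending only on $i$. A standard diagonal extraction then produces elements $u_i^* \in G$, with $u_0^* = 1$, such that $u_i^{(n)} = u_i^*$ for every $n$ sufficiently large (depending on $i$). Since admissibility and the orientation at a given index are local conditions on triples, $(u_i^*)$ is itself an $L$-admissible sequence with all orientations $+1$. Tracing through the proof of \autoref{res: admissible sequence - global projection} with this orientation gives $u_i^* \in U_+(L - 18\alpha)$ for $i \geq 1$, while \autoref{res: cylinder small gromov product} yields $c_n \in \mathcal O_o(u_i^{(n)}o, 4\alpha)$ for each $i \geq 1$. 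Since $u_i^{(n)} = u_i^*$ for $n$ large and shadows are closed in $\bar X$, passing to the limit in $n$ gives $c^* \in \mathcal O_o(u_i^*o, 4\alpha)$ for every $i \geq 1$. Hence $c^* \in \Lambda_{U_+}(G, o, 4\alpha, L - 18\alpha)$. Since $L$ can be taken arbitrarily large,
\[
	c^* \in \bigcap_{L' \geq 0} \Lambda_{U_+}(G, o, 4\alpha, L') = \Lambda_{U_+}(G, o, 4\alpha) \subset \Lambda_+ \subset \Lambda.
\]

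Finally, the Cauchy property reduces $\distV[\infty]$-convergence to its verification along the extracted subsequence. Given an integer $k > 200$ and $\eta \in (0,1)$, pick $\ell > 2k\alpha + 600\alpha$ and apply \autoref{res: separation triangle cocycles} with $\Lambda_0 = \{o\}$ and $c' = c^*$: there exists $u \in G$ with $c^*(o, uo) \geq -\ln \eta$ such that $\{o\}$ and $c^*$ are $2\alpha$-separated by $(u\gamma(-\ell), u\gamma(\ell))$. The convergence $c_{n_k} \to c^*$ in $\bar X$ is uniform on compact cocycle arguments and the separation condition is closed, so for $k$ large $o$ and $c_{n_k}$ are also separated by the same pair with constant enlarged to, say, $3\alpha$. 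Thus $o$ and $\{c_{n_k}, c^*\}$ are $(\ell, 3\alpha, +1)$-separated at $u$, and \autoref{res: separation for upper bound on distance} yields $\dist[k\alpha]{c_{n_k}}{c^*} \leq e^{-\dist o{uo}} \leq \eta$. Hence $\dist[k\alpha]{c_{n_k}}{c^*} \to 0$ for every $k$, and \autoref{rem: comparing visual and L-visual metrics} concludes that $\dist[\infty]{c_{n_k}}{c^*} \to 0$. The main obstacle I anticipate is the careful tracking of orientations throughout the diagonal extraction, which is precisely what ensures $u_i^* \in U_+$ (rather than $U_-$) and hence places $c^*$ in $\Lambda_+$ rather than in some larger, a priori uncontrolled, portion of $\Lambda_U$.
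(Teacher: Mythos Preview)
There is a genuine gap in your first part. The diagonal extraction requires that for each fixed $i$ the set $\{u_i^{(n)} : n \in \N\}$ is finite, i.e.\ that $\dist o{u_i^{(n)}o}$ is bounded uniformly in $n$. You invoke \autoref{res: cylinder small gromov product - prelim} for this, but that lemma only furnishes the \emph{lower} bound $\dist o{u_i o} \geq i(L - 24\alpha)$; it says nothing about an upper bound. Nothing in the construction of admissible sequences via \autoref{res: existence infinite admissible sequence - prelim} and \autoref{res: separation triangle cocycles} controls the step size $\dist{u_{i-1}o}{u_i o}$ from above --- the latter proposition produces $u$ with $c'(o,uo) \geq M$ but gives no ceiling on $\dist o{uo}$. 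So the extraction cannot be carried out as stated.

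There is a more structural reason this approach cannot succeed: your argument for $c^* \in \Lambda$ never invokes the Cauchy hypothesis for $\distV[\infty]$. Were it correct, it would prove that any $\partial X$-accumulation point of a sequence in $\Lambda_+$ already lies in $\Lambda_+$, i.e.\ that $\Lambda_+$ is closed in $\partial X$ --- which is false in general. The paper's proof uses the Cauchy hypothesis in an essential way: for $M > 2L + 600\alpha$ the sequence is Cauchy for $\distV[M]$ (\autoref{res: comparing L-visual metric}), so given $T$ one finds $n$ with $\dist[M]{c_n}{c_m} < e^{-T}$ for all $m \geq n$. By the very definition of $\distV[M]$ this produces a \emph{single} element $g \in C_M(c_n) \cap C_M(c_m)$ with $\dist o{go} > T$, valid simultaneously for all $m \geq n$. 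Then $g \in U(M - 18\alpha) \subset U(L)$ and $c_m \in \mathcal O_o(go, 4\alpha)$ for all such $m$; passing to the limit yields $c^* \in \mathcal O_o(go, 4\alpha)$. Varying $T$ and $L$ places $c^*$ in $\Lambda_U(G,o,4\alpha) \subset \Lambda$. The same $g$, together with \autoref{res: cylinder small gromov product} and the closedness of separation, gives that $o$ and $\{c_m, c^*\}$ are $(M, 36\alpha, \epsilon)$-separated at $g$, whence $\dist[L]{c_m}{c^*} < e^{-T}$ by \autoref{res: separation for upper bound on distance}; the $\distV[\infty]$-convergence follows by dominated convergence. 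Your second part (once $c^* \in \Lambda$ is known) is in the right spirit, but the paper's route --- reusing the common cylinder element $g$ --- is both shorter and avoids the circularity of needing $c^* \in \Lambda_+$ to apply \autoref{res: separation triangle cocycles}.
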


\begin{proof}
	Let $c^*$ be an accumulation point of $(c_n)$ in $\partial X$.
	Let $L > 200\alpha$ and $M > 2L + 600\alpha$.
	Let $T \in \R_+$.
	In view of \autoref{res: comparing L-visual metric}, $(c_n)$ is also a Cauchy sequence for $\distV[M]$.
	Hence there is $n \in \N$, such that
	\begin{equation*}
		\dist[M] {c_n}{c_m} < e^{-T}, \quad \forall m \geq n.
	\end{equation*}
	By definition of the visual metric, there is an element $g \in G$ with $\dist o{go}> T$ such that for every $m \geq n$ we have $g \in C_M(c_n) \cap C_M(c_m)$.
	In particular, $g$ belongs to $U(M - 18\alpha) \subset U(L)$ and $\gro o{c_m}{go} \leq 4 \alpha$, for every $m \geq n$ (Lemmas~\ref{res: admissible sequence - global projection} and~\ref{res: cylinder small gromov product}).
	Passing to the limit, as $m$ tends to infinity, we get $c^* \in \mathcal O_o(go, 4\alpha)$.
	Note that the argument works for every $T \in \R_+$ and every $L > 120\alpha$.
	Hence $c^*$ belongs to $\Lambda_U(G, o, 4\alpha)$ hence to $\Lambda$.
	
	We now prove the metric convergence.
	We keep the same notations as above.
	According to \autoref{res: cylinder small gromov product}, there is $\epsilon \in \{\pm1\}$ such that for all integers $m, m' \geq n$, the point $o$ and $\{c_m,c_{m'}\}$ are $(M, 36\alpha, \epsilon)$-separated at $g$.
	Recall that separation is a closed property, hence $o$ and $\{c_m,c^*\}$ are $(M, 36\alpha, \epsilon)$-separated at $g$.
	It follows now from \autoref{res: separation for upper bound on distance} that
	\begin{equation*}
		\dist[L]{c_m}{c^*} \leq e^{-\dist o{go}} < e^{-T}, \quad \forall m \geq n.
	\end{equation*}
	This argument holds for every $T \in \R_+$. 
	Hence $(c_n)$ converges to $c^*$ for the metric $\distV[L]$.
	This fact holds for every $L > 200\alpha$.
	We conclude from the dominated convergence theorem that $\dist[\infty]{c_n}{c^*}$ converges to zero.
\end{proof}


\begin{prop}
\label{res: complete metric space}
	The quotient $(\Lambda \qsim, \distV[\infty])$ is a complete separable metric space, hence a Polish space.
\end{prop}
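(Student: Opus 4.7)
By \autoref{rem: comparing visual and L-visual metrics} together with \autoref{res: metric quotient}, the quantity $\distV[\infty]$ already descends to a well-defined metric on $\Lambda\qsim$, so the only remaining tasks are completeness and separability.

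For completeness, I would start with a Cauchy sequence $(c_n)$ in $(\Lambda, \distV[\infty])$. Since $\bar X$ is compact, $(c_n)$ admits an accumulation point $c^* \in \bar X$, and to apply \autoref{res: complete metric space - prelim} I must first rule out $c^* \in \iota(X)$. Replaying the argument in the proof of that lemma produces, for each $T \in \R_+$, an element $g \in G$ with $\dist o{go} > T$ and $c^* \in \mathcal O_o(go, 4\alpha)$. If $c^*$ were of the form $\iota(z)$ for some $z \in X$, the bound $\gro o{c^*}{go} \leq 4\alpha$ combined with the triangle inequality would force $\dist o{go} \leq \dist oz + 4\alpha$, contradicting arbitrarily large $T$. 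Hence $c^* \in \partial X$, and \autoref{res: complete metric space - prelim} immediately yields $c^* \in \Lambda$ and $\dist[\infty]{c_n}{c^*} \to 0$.

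For separability, my proposed countable dense subset is $\mathcal D = \{[gc_\gamma],\ [gc'_\gamma] : g \in G\}$. Fix $c \in \Lambda$ and $\eta > 0$; assume $c \in \Lambda_+$ (the case $c \in \Lambda_-$ being symmetric, swapping $c'_\gamma$ and $h$ for $c_\gamma$ and $h^{-1}$). First I would pick an integer $k > 200$ with $\lambda 2^{-k/2} < \eta/2$, then set $L' > 2k\alpha + 536\alpha$, and take an infinite $L'$-admissible sequence $(u_i)$ from $1$ to $[c]$ as constructed in \autoref{res: existence infinite admissible sequence - prelim}, which can be arranged so that $\epsilon_i = +1$ for all large $i$. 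For $N$ large enough that $\epsilon_N = +1$ and $\dist o{u_N o} > -\ln(\eta/2)$ (permissible by \autoref{res: cylinder small gromov product - prelim}), I would choose $n$ with $\dist o{h^n o} > L' + 10\alpha$ and form the modified sequence $(u_0, \ldots, u_N, u_N h^n, u_N h^{2n}, \ldots)$, whose endpoint is $[u_N c'_\gamma]$ since $h^{kn}o \to c'_\gamma$ in $\bar X$ and $u_N$ acts continuously. Both the original and modified sequences contain $u_N$ and are $L'$-admissible, so by \autoref{res: cylinder small gromov product} the point $o$ and each of $c$, $u_N c'_\gamma$ are $(L', 36\alpha, +1)$-separated at $u_N$. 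Applying \autoref{res: separation for upper bound on distance} with $M = L'$, $r = 36\alpha$, and $L = k\alpha$, and combining with \autoref{rem: comparing visual and L-visual metrics}, yields
\begin{equation*}
	\dist[\infty]{c}{u_N c'_\gamma} \leq e^{-\dist o{u_N o}} + \lambda 2^{-k/2} < \eta.
\end{equation*}

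The main obstacle is verifying that the modified sequence is indeed $L'$-admissible. For indices $i \geq N+1$, admissibility reduces by left translation to the fact that $(1, h^n, h^{2n})$ is $L'$-admissible with orientation $+1$, which follows from \ref{enu: alpha - fellow travel}--\ref{enu: alpha - translation} once $\dist o{h^no} > L' + 10\alpha$. At the junction index $i = N$, the required $(L', 2\alpha, +1)$-separation of $u_{N-1}o$ and $u_N h^n o$ at $u_N$ hinges on the orientation $\epsilon_N = +1$: this places $u_{N-1}o$ in the backward shadow along $u_N \gamma$ (inherited from the original admissibility), while $u_N h^n o$ lies close to $u_N \gamma(nT)$ and is therefore in the forward shadow for $n$ large. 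Once this technical check is in place, the remaining steps are a direct assembly of results already established.
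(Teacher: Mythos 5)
Your proof is correct overall, and the completeness part tracks the paper's argument closely; but for separability you take a genuinely different route, so let me compare.

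For completeness, the paper simply notes that $\partial X$ is closed in the compact space $\bar X$ (since $\iota(X)$ is open), so any Cauchy sequence in $\Lambda \subset \partial X$ automatically has an accumulation point in $\partial X$, and then \autoref{res: complete metric space - prelim} applies. You instead work with an accumulation point in $\bar X$ and rule out $c^* \in \iota(X)$ by hand, extracting from the prelim lemma's proof a sequence $g$ with $\dist o{go} > T$ and $c^* \in \mathcal O_o(go,4\alpha)$. This works (though the numerical bound you derive should be $\dist o{go} \leq \dist oz + 8\alpha$, not $4\alpha$, since $\gro o{\iota(z)}{go} \leq 4\alpha$ unwraps to $\dist o{go} + \dist {go}z - \dist oz \leq 8\alpha$); it is just a longer path to the same place.

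For separability, the paper's dense set is the countable family $\{c_{k,\epsilon,g}\}$ indexed by integers $k$, signs $\epsilon$, and elements $g$, where $c_{k,\epsilon,g}$ is any cocycle in $\Lambda$ that is $(M_k, 2\alpha, \epsilon)$-separated from $o$ at $g$; given $c \in \Lambda$, \autoref{res: separation triangle cocycles} produces a triple $(k,\epsilon,g)$ for which $c$ itself witnesses the separation, so both $c$ and $c_{k,\epsilon,g}$ lie in the same forward shadow and \autoref{res: separation for upper bound on distance} finishes. Your dense set $\mathcal D = \{[gc_\gamma], [gc'_\gamma] : g \in G\}$ is geometrically more appealing (just the orbit of the two endpoints of the fixed axis), but the price is the technical work of verifying that the spliced sequence $(u_0, \dots, u_N, u_Nh^n, u_Nh^{2n}, \dots)$ is $L'$-admissible, which you only sketch. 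Your sketch is sound: the tail reduces by left translation to $(1, h^n, h^{2n}, \dots)$ with $T_n > L'$, and at the junction index $N$ one can take $z' = u_N\gamma(T_n)$ (within $\alpha$, hence $10\alpha$, of $u_Nh^no$ by \ref{enu: alpha - translation}) in the forward shadow, keeping the original $z$. One small wording issue: $(h^{kn}o)$ need not converge in $\bar X$ — only its accumulation points are all equivalent to $c'_\gamma$ — but the endpoint of an admissible sequence is by definition the common equivalence class, so $[u_Nc'_\gamma]$ is still correct. In short, the paper's choice of dense set avoids the splicing argument entirely and is shorter; yours buys a more concrete dense set at the cost of a local-to-global check on the modified sequence.
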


\begin{proof}
	Recall that $\partial X$ is compact.
	The completeness of $(\Lambda \qsim, \distV[\infty])$ is thus a direct consequence of \autoref{res: complete metric space - prelim}.
	Let us prove now that this space is separable.
	For every $k \in \N$, let $M_k = (2k + 600)\alpha$.
	For every integer $k > 200$, $\epsilon \in \{\pm 1\}$, and  $g \in G$, we choose, if possible, a cocycle $c_{k,\epsilon, g} \in \Lambda$ such that $o$ and $c_{k,\epsilon, g} $ are $(M_k,2\alpha, \epsilon)$-separated at $g$.
	Denote by $Z$ the set of all cocycles obtained in this way.
	Since $G$ is countable, so is $Z$.
	We claim that the set $Z$ is dense in $(\Lambda \qsim, \distV[\infty])$.
	Denote by $\lambda$ the constant defined in \autoref{rem: comparing visual and L-visual metrics}.
	Consider $c \in \Lambda$ and $\eta \in (0, 1)$.
	Choose an integer $k > 200$ such that $\lambda 2^{-k/2} \leq \eta / 2$ and set $T = - \ln (\eta/2)$.
	Recall that $c \in \Lambda$.
	According to \autoref{res: separation triangle cocycles}, there is $g \in G$ with $\dist o{go} \geq T$ and $\epsilon \in \{\pm 1\}$ such that $o$ and $c$ are $(M_k, 2\alpha, \epsilon)$-separated at $g$.
	Hence $o$ and $\{c, c_{k,\epsilon, g}\}$ are $(M_k, 2\alpha, \epsilon)$-separated at $g$.
	It follows from \autoref{res: separation for upper bound on distance} that 
	\begin{equation*}
		\dist[k\alpha] c{c_{k,\epsilon, g}} \leq e^{-\dist o{go}} \leq \eta/2.
	\end{equation*}
	Combined with our choice of $k$, we get $\dist[\infty]c{c_{k,\epsilon, g}} \leq \eta$, whence the result.
\end{proof}


\begin{lemm}
\label{res: metric top vs quotient top - prelim}
	Let $L > 200\alpha$.
	Let $c \in \Lambda$.
	For every $\eta \in (0,1)$, there is an open neighborhood $U \subset \partial X$ of $c$ such that for every $b \in U \cap \Lambda$, we have $\dist[L] bc < \eta$.
\end{lemm}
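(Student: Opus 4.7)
The plan is to exploit \autoref{res: separation for upper bound on distance}, which asserts that whenever $o$ and $\{b, c\}$ are simultaneously $(M,r,\epsilon)$-separated at some $g \in G$ (with $M > 2L + r + 500\alpha$), one has $\dist[L]bc \leq e^{-\dist o{go}}$. Thus it suffices to find a single $g \in G$ which separates $c$ from $o$ with $\dist o{go}$ large enough that $e^{-\dist o{go}} < \eta$, and then realise the separation of $b$ from $o$ at the same $g$ as an open condition on $b$.

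First, I would fix parameters: choose $M = 2L + 504\alpha$ and $T > -\ln \eta$. Up to replacing $h$ by $h^{-1}$ (and hence $\gamma$ by $\bar\gamma$) we may assume $c \in \Lambda_+ = \Lambda_h$, so the orientation $\epsilon = +1$. Applying \autoref{res: separation triangle cocycles} to $c' = c$ and to the closed subset $\Lambda_0 = \{\iota(o)\}$ (which contains no cocycle equivalent to $c$, since $c \in \partial X$) with the parameter $\ell = M$, I obtain $g \in G$ such that $c(o,go) \geq T$ and $\iota(o), c$ are $2\alpha$-separated by $(gy, gy')$, where $y = \gamma(-M)$ and $y' = \gamma(M)$. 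Since cocycles in $\bar X$ are $1$-Lipschitz, $\dist o{go} \geq c(o,go) \geq T$.

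Now I would produce the neighborhood. Since $gy, gy' \in X$, the map $b \mapsto \gro b{gy'}{gy}$ is continuous on $\bar X$ (this was observed after the definition of the Gromov product). Hence
\begin{equation*}
	U = \set{b \in \bar X}{\gro b{gy'}{gy} < 3\alpha}
\end{equation*}
is open in $\bar X$ and contains $c$ (since $\gro c{gy'}{gy} \leq 2\alpha$). For any $b \in U \cap \Lambda$, both $b$ and $c$ belong to $\mathcal O_{gy}(gy', 3\alpha)$, while $\iota(o) \in \mathcal O_{gy'}(gy, 2\alpha) \subset \mathcal O_{gy'}(gy, 3\alpha)$, so $o$ and $\{b,c\}$ are $(M, 3\alpha, +1)$-separated at $g$. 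As $M > 2L + 3\alpha + 500\alpha$, \autoref{res: separation for upper bound on distance} yields $\dist[L]bc \leq e^{-\dist o{go}} \leq e^{-T} < \eta$.

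The only mild subtlety in this plan is the orientation issue: \autoref{res: separation triangle cocycles} is phrased for $c' \in \Lambda_h$, so for $c \in \Lambda_-$ we must apply it with the axis $\bar\gamma$ (equivalently to $h^{-1}$), which produces the separation with orientation $\epsilon = -1$; the rest of the argument is unchanged. Beyond that, everything reduces to routine parameter bookkeeping, the essential ingredients being the continuity of $\gro\cdot yx$ in the first variable when $x,y \in X$ and the summary conversion provided by \autoref{res: separation for upper bound on distance}.
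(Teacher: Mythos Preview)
Your approach is essentially identical to the paper's: invoke \autoref{res: separation triangle cocycles} to separate $o$ from $c$ at some $g$ with $\dist o{go}$ large, then define $U$ as an open sublevel set of a Gromov product and finish with \autoref{res: separation for upper bound on distance}. The only slip is in the indices of the Gromov product defining $U$: since $c \in \mathcal O_{gy}(gy', 2\alpha)$ means $\gro{gy}{c}{gy'} \leq 2\alpha$ (base point $gy'$), the neighborhood should be $U = \{b \in \bar X : \gro{gy}{b}{gy'} < 3\alpha\}$ rather than $\gro b{gy'}{gy} < 3\alpha$; with that correction your claim that $b \in U$ gives $b \in \mathcal O_{gy}(gy', 3\alpha)$ is valid and the proof goes through exactly as in the paper.
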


\begin{proof}
	Fix $M > 2L + 503\alpha$.
	Recall that $c \in \Lambda_- \cup \Lambda_+$.
	According to \autoref{res: separation triangle cocycles}, there is $\epsilon \in \{\pm 1\}$ such that $o$ and $c$ are $(M, 2\alpha, \epsilon)$-separated at some element $g \in G$ such that $\dist o{go} > - \ln \eta$.
	Let $x = g \gamma(-\epsilon M)$ and $x' = g \gamma(\epsilon M)$.
	The set 
	\begin{equation*}
		U = \set{b \in \partial X}{ \gro xb{x'} < 3\alpha}.
	\end{equation*}
	is an open neighborhood of $c$.
	By construction, $o$ and $U$ are $(M, 3\alpha, \epsilon)$-separated at $g$.
	It follows from \autoref{res: separation for upper bound on distance} that $U \cap \Lambda$ is contained in the ball of radius $\eta$ centered at $c$ with respect to the metric $\distV[L]$.
\end{proof}


\begin{prop}
\label{res: metric top vs quotient top}
	Let $L > 200\alpha$.
	The distances $\distV[\infty]$ and $\distV[L]$ defines the same topology on $\Lambda\qsim$.
	Moreover, it coincides with the quotient topology inherited from the subspace topology on $\Lambda \subset \partial X$.
\end{prop}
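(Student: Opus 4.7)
The plan is to prove that each of the pseudo-metrics $\distV[L]$ (for $L > 200\alpha$) and $\distV[\infty]$ induces the quotient topology on $\Lambda\qsim$; the three topologies then coincide.

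First, I would show that the $\distV[L]$-topology is contained in the quotient topology. This is essentially supplied by \autoref{res: metric top vs quotient top - prelim}: for any $c \in \Lambda$ and $\eta > 0$ it produces an open neighbourhood $U \subset \partial X$ of $c$ whose trace on $\Lambda$ is contained in the $\distV[L]$-ball of radius $\eta$ around $c$. Combined with the ultrametric inequality satisfied by $\distV[L]$, this implies that the preimage in $\Lambda$ of every open $\distV[L]$-ball in $\Lambda\qsim$ is open for the subspace topology inherited from $\partial X$, so every $\distV[L]$-open subset of $\Lambda\qsim$ is open in the quotient topology.

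For the reverse inclusion I would fix $c \in \Lambda$ and a saturated subset $W \subset \Lambda$ containing $c$ which is open in the subspace topology, and argue by contradiction. If no $\distV[L]$-ball around $c$ lies in $W$, there is a sequence $(b_n)$ in $\Lambda \setminus W$ with $\dist[L]{b_n}{c} \to 0$. By definition of $r_L$ we may choose $g_n \in C_L(b_n) \cap C_L(c)$ with $\dist o{g_no} \to \infty$. \autoref{res: admissible sequence - global projection} then says that $(o, g_no)$ admits an $(\alpha, L-18\alpha)$-contracting tail, while \autoref{res: cylinder small gromov product} places both $b_n$ and $c$ in $\mathcal O_o(g_no, 4\alpha)$. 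Arguing exactly as in the proof of \autoref{res: equivalent accumulation points} and invoking \cite[Lemma~5.7]{Coulon:2022tu}, we obtain $\norm[K]{b_n - c} \leq 20\alpha$ on every compact $K \subset X$, for $n$ large. Extracting a subsequence $b_{n_k} \to b^*$ in the compact space $\partial X$ and letting $k \to \infty$, we get $\norm[\infty]{b^* - c} \leq 20\alpha$, hence $b^* \sim c$; saturation of $W$ then forces $b^* \in W$. Openness of $W$ in the subspace topology yields an open $U \subset \partial X$ with $b^* \in U$ and $U \cap \Lambda \subset W$, and convergence $b_{n_k} \to b^*$ in $\partial X$ finally places $b_{n_k}$ in $U \cap \Lambda \subset W$ for $k$ large, contradicting $b_n \notin W$.

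Finally, I would handle $\distV[\infty]$ using \autoref{rem: comparing visual and L-visual metrics}, which gives
\begin{equation*}
	2^{-k} \dist[k\alpha]{b_n}{c} \leq \dist[\infty]{b_n}{c} \leq \dist[k\alpha]{b_n}{c} + \lambda 2^{-k/2}
\end{equation*}
for every integer $k > 200$. From this one reads that a sequence converges for $\distV[\infty]$ if and only if it converges for $\distV[k\alpha]$ for every such $k$, and since metric topologies are characterised by sequential convergence, combining this observation with the previous two paragraphs (applied with each $L = k\alpha$) shows that $\distV[\infty]$ generates the same topology as every $\distV[L]$, namely the quotient topology. The one delicate step in this scheme is the contradiction argument of the third paragraph, where the combinatorial convergence $\dist[L]{b_n}{c} \to 0$ must be translated into topological convergence in $\partial X$ modulo $\sim$; this is where the contracting-tail control of the cylinder witnesses $g_n$ is essential.
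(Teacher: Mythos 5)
Your proof is correct and follows essentially the same route as the paper's: \autoref{res: metric top vs quotient top - prelim} gives one inclusion, the cylinder/contracting-tail argument combined with \cite[Lemma~5.7]{Coulon:2022tu} and compactness of $\partial X$ gives the other, and \autoref{rem: comparing visual and L-visual metrics} transfers the conclusion to $\distV[\infty]$. The only organizational difference is that you establish $\distV[L] = \mathcal T$ directly for each $L$ and then match convergent sequences for $\distV[\infty]$, whereas the paper chains the three continuity statements $\mathcal T \to \distV[\infty] \to \distV[L] \to \mathcal T$; these are equivalent.
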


\begin{proof}
	For simplicity, let us denote by $\mathcal T$ the quotient topology on $\Lambda \qsim$.
	According to \autoref{res: metric top vs quotient top - prelim}, any open ball for $\distV[L]$ (with $L > 200\alpha$)  is open in $\mathcal T$.
	Hence the natural map $(\Lambda \qsim, \mathcal T) \to (\Lambda \qsim, \distV[L])$ is continuous.
	Let $(z_n)$ be a sequence of $\Lambda\qsim$ converging to $z \in \Lambda\qsim$ for $\mathcal T$.
	We get by dominated convergence theorem that $\dist[\infty]{z_n}z$ converges to zero.
	Hence identity map $(\Lambda \qsim, \mathcal T) \to (\Lambda \qsim, \distV[\infty])$ is continuous.
	
	By construction the map $(\Lambda \qsim, \distV[\infty]) \to (\Lambda \qsim, \distV[L])$ is Lipschitz.
	Hence it suffices to prove that $(\Lambda \qsim, \distV[L]) \to (\Lambda \qsim, \mathcal T)$ is continuous.
	Let $(c_n)$ be a sequence of points in $\Lambda$ and $c \in \Lambda$ such that $\dist[L]{c_n}c$ converges to zero.
	We denote by $c^*$ an accumulation point of $c$ for the topology on $\partial X$.
	We claim that $c^*$ is equivalent to $c$.
	Let $R \in \R_+$.
	There is $N \in \N$ such that for every $n \geq N$, we have 
	\begin{equation*}
		\dist[L]{c_n}c < e^{-(R + 17\alpha)}.
	\end{equation*}
	In particular, there is $g \in C_L(o,c_n) \cap C_L(o,c)$ such that $\dist o{go} > R+17\alpha$.
	Thus $(o,go)$ as an $(\alpha, L-18\alpha)$-contracting tail (\autoref{res: admissible sequence - global projection}) while $c_n$ and $c$ belong to $\mathcal O_o(go,4\alpha)$ (\autoref{res: cylinder small gromov product}).
	Consequently $\norm[K]{c_n - c} \leq 20\alpha$, where $K$ stands for the closed ball of radius $R$ centered at $o$, see for instance \cite[Lemma~5.7]{Coulon:2022tu}.
	This estimate holds for every $n \geq N$.
	Passing to the limit we get $\norm[K]{c^* -c} \leq 20\alpha$.
	This computation does not depend on $R$, hence $\norm[\infty]{c^* - c} \leq 20\alpha$, which completes our claim.
	Since $\Lambda$ is saturated, $c^*$ belongs to $\Lambda$.
	Thus $[c^*] = [c]$ is an accumulation point of $[c_n]$ for the quotient topology $\mathcal T$.
	The above argument has two consequences.
	\begin{enumerate}
		\item \label{enu: metric top vs quotient top - existence}
		Every subsequence of $[c_n]$ admits an accumulation point (for $\mathcal T$).
		\item \label{enu: metric top vs quotient top - uniqueness}
		Every accumulation points of $[c_n]$ (for $\mathcal T$) equals $[c]$.
	\end{enumerate}
	Hence $[c_n]$ converges to $[c]$ in $ (\Lambda \qsim, \mathcal T)$.
\end{proof}


%
\section{Geodesic flow}
\label{sec: geodesic flow}
%

%
\subsection{The spaces}
\label{sec: spaces geodesic flow}
%

We first define the spaces used to study the geodesic flow on $X$.
As previously, $X$ is a proper, geodesic, metric space and $G$ a group acting properly, by isometries on $X$ with a contracting element.
We always assume that $G$ is not virtually cyclic.

\paragraph{The space of geodesics.}
Let $\mathcal GX$ be the space of bi-infinite geodesics $\gamma \colon \R \to X$ parametrized by arc length.
It comes with a flow $(\phi_t)_{t \in \R}$ defined as follows: $\phi_t \gamma$ is the path $s \mapsto \gamma(s +t)$.
The action of $G$ on $X$ induces an action by translation of $G$ on $\mathcal GX$.
One checks that the action of $G$ commutes with the flow.
The \emph{flip involution} $\sigma \colon \mathcal G X \to \mathcal GX$ sends the geodesic $\gamma$, to the geodesic $\bar \gamma$ obtained by reversing the orientation, that is $\bar \gamma(s) = \gamma(-s)$, for every $s \in \R$.
The flip involution commutes with the action of $G$ and anti-commutes with the flow, that is $\sigma \circ \phi_t = \phi_{-t} \circ \sigma$, for every $t \in \R_+$.

\paragraph{The coarse unit tangent bundle.} 
Let us now define a coarse version of the unit tangent bundle of $X$.
We define the ``diagonal'' $\Delta$ of $\partial X$ as
\begin{equation*}
	\Delta
	= \set{(c,c') \in \partial X \times \partial X}{ \gro c{c'}o = \infty}.
\end{equation*}

\begin{defi}
\label{defi: double boundary}
	The \emph{double boundary} of $X$ is the set $\partial^2 X = (\partial X \times \partial X) \setminus\Delta$.
\end{defi}

\begin{rema}
	Note that the definition does not depend on the base point $o \in X$.
	Recall that the map $\partial X \times \partial X \to \R_+ \cup \{\infty\}$ sending $(c,c')$ to $\gro c{c'}o$ is measurable.
	Hence $\Delta$ and $\partial^2X$ are a Borel sets.
\end{rema}

We define on $\partial X \times \partial X$ a \emph{flip involution} $\sigma$ sending $(c,c')$ to $(c',c)$.
It preserves $\partial^2X$.

\begin{defi}
\label{defi: unit tangent bundle}
	The \emph{coarse unit tangent bundle} of $X$, denoted by $SX$, is the product $\partial^2X \times \R$.
\end{defi}

\begin{itemize}
	\item The \emph{geodesic flow} $(\phi_t)_{t \in \R}$ is the flow on $SX$ defined as follows: $\phi_t$ sends $(c, c', s)$ to $(c, c', s + t)$.
	\item The action of $G$ on $\partial X$ induces a diagonal action of $G$ on $\partial ^2 X$.
	This action extends to a \emph{group action} of $G$ on $SX$ as follows:
	given $g \in G$ and $v = (c, c', s)$ in $SX$ we let
	\begin{equation*}
		gv = \left(gc, gc', s + \tau(g,v)\right), \quad \text{where} \quad
		\tau(g,v) = \frac 12 \left[c'(g^{-1}o, o) - c(g^{-1}o,o)\right].
	\end{equation*}
	\item The \emph{flip involution} $\sigma \colon SX \to SX$ sends $(c, c', s)$ to $(c', c,- s)$.
\end{itemize}
One checks that the action of $G$ commutes with the geodesic flow and the flip.
The flip anti-commutes with the geodesic flow.
These actions are continuous.
The projection $SX \to \partial^2X$ sending $(c,c',s)$ to $(c,c')$ is by construction equivariant for the action of $G$ and the flip.

\paragraph{Toward a proper action.}
We now relate the coarse unit tangent bundle $SX$ to the more concrete space of geodesics $\mathcal GX$.
Recall that any bi-infinite geodesic $\gamma \colon \R \to X$, defines two Busemann cocycles $c_\gamma, c'_\gamma \in \partial X$.
We define a map $\mathcal GX \to SX$ by sending $\gamma$ to $v = (c_\gamma, c'_\gamma, s)$ where
\begin{equation*}
	s = \frac 12 \left[c'_\gamma(o, \gamma(0)) - c_\gamma(o,\gamma(0))\right].
\end{equation*}
It follows from the construction that the map $\mathcal GX \to SX$ is equivariant for the actions of the group $G$, the flow and the flip.

If $X$ is CAT(-1), then $\mathcal GX \to SX$ is actually a homeomorphism.
When $X$ is the universal cover of a geodesically complete negatively curved manifold, $\mathcal GX$ can be seen as the unit tangent bundle of $X$ (each unit vector defines a unique bi-infinite geodesic) and the map $\mathcal GX \to SX$ is known as the Hopf coordinates.
Our study is motivated by this analogy. 
In particular, we call \emph{vectors} the elements of $SX$.
Given a vector $v = (c, c', s)$ in $SX$, we call $c$ (\resp $c'$) the \emph{past} (\resp \emph{future}) of $v$.

\begin{rema}
\label{rem: not proper action}
If $X$ is Gromov-hyperbolic, the map $\mathcal GX \to SX$ is not necessarily onto (not every cocycle $c \in \partial X$ is a Busemann cocycle given by a geodesic ray).
However it admits a coarse inverse, see \cite{Bader:2017te,Coulon:2018va}.
In our settings there is a priori no such coarse inverse.
If $X$ is CAT(0), then  $\mathcal GX \to SX$ is not even coarsely injective.

Indeed consider the space $X = \R^2$ endowed with the euclidean metric, so that $\partial X$ is homeomorphic to $S^1$.
Fix $m \in \R$.
For every $s \in \R$, we write $\gamma_s \colon \R \to X$ for the geodesic parametrized by 
\begin{equation*}
\gamma_s(t) = \left( \frac 1{\sqrt{1 + m^2}}t -ms,  \frac m{\sqrt{1 + m^2}}t + s\right).
\end{equation*}
All these geodesics are parallel.
Moreover, their origins lie on a line orthogonal to $\gamma_0$.
Thus the image in $SX$ of the set  $\set{\gamma_s}{s \in \R}$ is reduced to a point.

This observation is the source of another difficulty.
Consider the group $G = \Z^2$ acting by integer translations on $X$.
Let $v_0$ be the image of $\gamma_0$ in $SX$.
If $m$ is irrational, then the closure of $Gv_0$ is $\set{\phi_t(v_0)}{t \in \R}$.
Consequently the action of $G$ on $SX$ is not proper.
\end{rema}

The above remark motivates the following definitions.
A subset $U \subset \bar X \times \bar X$ is \emph{squeezing} if there exists a compact subset $K \subset X$ such that for every pair $(c,c') \in U$, any complete gradient arc from $c$ to $c'$ intersects $K$.
The compact $K$ is called the \emph{bottleneck} of $U$.
According to \autoref{res: continuous extension gromov product}, if $U$ is an open squeezing set, then $\gro c{c'}o$ is finite for every $(c,c')\in U$.

\begin{defi}
\label{def: squeezing}
	Denote by $\partial_0^2X$ the set of pairs $(c,c') \in \partial X \times \partial X$ which admit a squeezing neighborhood in $\bar X\times \bar X$.
	The subspace $S_0X$ consists of all vectors $v  = (c, c', s)$ with $(c,c') \in \partial_0^2X$.
\end{defi}

Note that $\partial^2_0X$, and thus $S_0X$, is non-empty.
Consider indeed a contracting element $h \in G$.
Denote by $c$ (\resp $c'$) an accumulation point of $(h^no)$ as $n$ tends to $\infty$ (\resp $-\infty$).
It follows from  \autoref{res: bi-gradient line} that $(c,c')$ admits a squeezing neighborhood.
We will actually prove later that $S_0X$ has positive measure (\autoref{res: proper subset positive measure}).

According to \autoref{res: continuous extension gromov product} the map $SX \to \R_+ \cup \{\infty\}$ sending $(c,c',s)$ to $\gro c{c'}o$ is continuous when restricted to $S_0X$.
In addition, it follows from the definition that $S_0X$ is an open subset of $\partial X \times \partial X \times \R$.
In particular, $S_0X$ is a Polish space.
It is also invariant under the actions of $G$, the flip map and the flow.

\begin{prop}
\label{res: proper action S0X}
	The action of $G$ on $S_0X$ is properly discontinuous.
\end{prop}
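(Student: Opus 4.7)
The plan is to reduce proper discontinuity on $S_0X$ to the proper action of $G$ on $X$, by attaching to each vector $v = (c,c',s) \in S_0X$ the origin $\gamma(0)$ of any complete gradient arc $\gamma \colon \R \to X$ from $c$ to $c'$ whose parametrization is compatible with $s$, in the sense that $\tfrac{1}{2}[c'(o,\gamma(0)) - c(o,\gamma(0))] = s$. Such a $\gamma$ exists: since $(c,c') \in \partial_0^2X$ admits a squeezing neighborhood, \autoref{res: continuous extension gromov product} gives a complete gradient arc through some point of the bottleneck, and time-translating this arc by $a \in \R$ shifts the left-hand side by $-a$, so the $s$-compatibility can always be arranged.

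Given a compact $C \subset S_0X$, I would first cover it by finitely many open sets of the form $(U_i \times I_i) \cap S_0X$, where each $U_i$ is an open squeezing neighborhood in $\bar X \times \bar X$ with bottleneck $K_i \subset X$ and each $I_i \subset \R$ is a bounded open interval. Setting $K = \bigcup_i K_i$ and $M = \sup_{v \in C}|s|$, I get a single compact $K \subset X$ and a uniform bound $M$. The key estimate then reads: there is a compact $K' \subset X$ such that $\gamma(0) \in K'$ for every $v = (c,c',s) \in C$ and every complete gradient arc $\gamma$ realizing $v$. Indeed, by squeezing there is a time $t$ with $p := \gamma(t) \in K$; using that $\gamma$ (\resp $\bar \gamma$) is a gradient arc for $c'$ (\resp $c$) together with the cocycle identity, one gets $c'(o,\gamma(0)) = c'(o,p) - t$ and $c(o,\gamma(0)) = c(o,p) + t$, whence
\begin{equation*}
	t = \frac 12 \left[ c'(o,p) - c(o,p) \right] - s.
\end{equation*}
Since cocycles in $\bar X$ are $1$-Lipschitz, the first summand is bounded by $\max_{q \in K} d(o,q)$; combined with $|s| \leq M$, this bounds $|t|$ uniformly in $v$ and in the choice of $\gamma$. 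Hence $\gamma(0)$ stays within a bounded distance of $K$, i.e.\ in a compact set $K' \subset X$ by properness of $X$.

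I would conclude by verifying equivariance: for $g \in G$ and $\gamma$ realizing $v$, the translate $g\gamma$ is a complete gradient arc from $gc$ to $gc'$ (since $G$ acts by isometries), and unwrapping $(gc)(x,y) = c(g^{-1}x, g^{-1}y)$ via the cocycle identity at $g^{-1}o$ gives
\begin{equation*}
	\frac 12 \left[ (gc')(o, g\gamma(0)) - (gc)(o, g\gamma(0)) \right] = s + \tau(g,v),
\end{equation*}
so $g\gamma$ realizes $gv$. If now $g \in G$ satisfies $gC \cap C \neq \emptyset$, pick $v \in C$ with $gv \in C$ and a $\gamma$ realizing $v$: the key estimate applied to $v$ gives $\gamma(0) \in K'$, and applied to $gv$ (with the realization $g\gamma$) gives $g\gamma(0) \in K'$. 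Therefore $gK' \cap K' \neq \emptyset$, and properness of the $G$-action on $X$ leaves only finitely many such $g$. The only real subtlety beyond routine cocycle bookkeeping is the equivariant compatibility of the parametrization with the $s$-coordinate, precisely the computation displayed above.
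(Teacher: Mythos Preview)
Your proof is correct and follows essentially the same approach as the paper: both pin down a point on a complete gradient arc via the squeezing bottleneck, use the cocycle identities and the $1$-Lipschitz property to bound the relevant time parameter in terms of the $s$-coordinate, and then invoke the properness of $G \curvearrowright X$. Your packaging via an explicit $G$-equivariant ``realization'' $v \mapsto \gamma(0)$ is a slight reorganization of the paper's direct estimate of $\tau(g,w)$, but the underlying computation is the same.
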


\begin{proof}
	Let $v_1 = (c_1, c'_1, s_1)$ and $v_2 = (c_2,c'_2,s_2)$ be two vectors in $S_0X$.
	One can find a compact subset $K \subset X$ and an open neighborhood $U_i\subset \bar X \times \bar X$ of $(c_i, c'_i)$ such that for every $(c,c') \in U_i$, any geodesic complete gradient arc from $c$ to $c'$ crosses $K$.
	Without loss of generality, we can assume that $K$ contains the base point $o$.
	We write $d$ for its diameter.
	Let $\eta \in \R_+^*$.
	We denote by $V_\eta(v_i) \subset SX$ the open neighborhood of $v_i$ which consists of all vectors $w = (b,b',t)$ in $U_i \times \R$ such that $\abs{s_i - t} < \eta$.
	We claim that the set
	\begin{equation*}
		S = \set{g \in G}{gV_\eta(v_1) \cap V_\eta(v_2) \neq \emptyset}
	\end{equation*}
	is finite.
	Consider $g \in S$.
	By definition there is $w = (b,b',t)$ in $V_\eta(v_1) $ such that $gw \in V_\eta(v_2)$.
	It follows from the triangle inequality that $\abs{\tau(g,w)} < \abs{s_2-s_1} +  2\eta$.
	
	Let $\gamma$ be a complete gradient line from $b$ to $b'$.
	By definition of $U_1$ and $U_2$, we can find two points $p$ and $q$ along $\gamma$ such that $p \in K$ and $q \in g^{-1}K$.
	Recall that $o$ belongs to $K$ whose diameter is $d$.
	In particular $\abs{\dist qp - \dist {g^{-1}o}o} \leq 2d$.
	Moreover, $b$ and $b'$ are $1$-Lipschitz, thus we get
	\begin{equation*}
		\abs{ \tau(g,w)}
		\geq \abs{\frac 12 \left[ b'(g^{-1}o, o) - b(g^{-1}o, o) \right]}
		\geq \abs{\frac 12 \left[ b'(q, p) - b(q, p) \right]} - 2d
	\end{equation*}
	However $p$ and $q$ lie on a complete gradient line from $b'$ to $b$.
	Thus
	\begin{equation*}
		\abs{\frac 12 \left[ b'(q, p) - b(q, p) \right]} = \dist qp.
	\end{equation*}
	Recall that $\abs{\tau(g,w)} < \abs{s_2-s_1} +  2\eta$.
	Combining our observations, we get $\dist o{go} \leq  \abs{s_2-s_1} + 4d + 2\eta$.
	This shows that $S$ is contained in $B_G(R)$ where $R =   \abs{s_2-s_1}+ 4d + 2\eta$.
	Since the action of $G$ on $X$ is proper, the latter set is finite, which complete the proof of our claim.
	Given $\eta \in \R_+^*$, any compact subset of $S_0X$ can be covered by finitely many open subsets of the form $V_\eta(v)$, whence the result.
\end{proof}
\subsection{The Bowen-Margulis measure}
%

Given $x \in X$ and $c,c' \in \partial X$, we define the quantity
\begin{equation*}
	D_x(c,c') = e^{-\gro c{c'}x}.
\end{equation*}
If $X$ is CAT(-1), then $D_x$ is a visual metric on $\partial X$.
However we will not use this property here.
It follows from the definition that for every $c,c' \in \partial X$, for every $x,y \in X$, and $g \in G$, we have 
\begin{align}
	\label{eqn: conformal Bourdon distance}
	D_x(c,c') & = \displaystyle e^{-\frac 12 \left[c(x,y) + c'(x,y)\right]}D_y(c,c'), \\
	\label{eqn: equivariant Bourdon distance}
	D_{gx}(gc,gc') & = D_x(c,c').
\end{align}
The first identity is a consequence of (\ref{eqn: gromov product - conf}).
The second one follows from the fact that the action of $G$ on $X$ preserves Gromov products.

\paragraph{Bowen-Margulis current and measure.}
We endow $\partial X \times \partial X$ with the Borel $\sigma$-algebra which is also the product $\mathfrak B \otimes \mathfrak B$.
We endow $SX$ with the Borel $\sigma$-algebra, denoted by $\mathfrak L$, which is a also the product of $\mathfrak B \otimes \mathfrak B$ restricted to $\partial^2 X$ with the Borel $\sigma$-algebra of $\R$.
Both are $G$-invariant.

\medskip
We fix once and for all $\omega \in \R_+$ and a $G$-invariant, $\omega$-conformal density $\nu = (\nu_x)$ supported on $\partial X$.
Recall that $\omega \geq \omega_G$.
Moreover if $\nu_o$ gives positive measure to the radial limit set, then the action of $G$ on $X$ is divergent and $\omega = \omega_G$ (\autoref{res: quasi-conf + ergo}).

\begin{lemm}
\label{res: measure diagonal}
	Assume that $\nu_o$ gives positive measure to the radial limit set. Then $(\nu_o \otimes \nu_o)(\Delta) = 0$.
\end{lemm}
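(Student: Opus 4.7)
The plan is to reduce to the preferred radial limit set $\Lambda$ introduced in Section~\ref{res: reduced bdy standard} and to exploit the visual metric constructed there. Since $\nu_o$ gives positive measure to the radial limit set, \autoref{res: quasi-conf + ergo} places us in the divergent regime, with $\omega = \omega_G$ and $\nu_o$ supported on $\Lambda_{\rm ctg}(G)$, and additionally tells us that $\nu_o$ is non-atomic on the saturated $\sigma$-algebra $\mathfrak B^+$. Since $G$ is not virtually cyclic (the standing assumption in Section~\ref{sec: geodesic flow}), \autoref{res: standard measure space} sharpens this to $\nu_o(\Lambda) = 1$, so it suffices to bound $\nu_o \otimes \nu_o\bigl(\Delta \cap (\Lambda \times \Lambda)\bigr)$.

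The key step is to observe that in $\Lambda \times \Lambda$, the Gromov product can only blow up on the equivalence relation
\begin{equation*}
	R = \set{(b,b') \in \Lambda \times \Lambda}{b \sim b'}.
\end{equation*}
This is the contrapositive of \autoref{res: finite Gromov product from visual metric}: specializing that lemma to $c = b$ and $c' = b'$ yields $\gro b{b'}o \leq \beta < \infty$ whenever $b, b' \in \Lambda$ are not equivalent. Since $\norm[\infty]{b-b'}$ is Borel in $(b, b')$ (write the supremum over $X \times X$ as a countable supremum over a dense subset, using that cocycles are $1$-Lipschitz), the set $R = \bigcup_{N \in \N} \set{(b,b') \in \Lambda \times \Lambda}{\norm[\infty]{b-b'} \leq N}$ is Borel and $\Delta \cap (\Lambda \times \Lambda) \subseteq R$.

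Finally, Fubini reduces matters to showing $\nu_o([b]) = 0$ for every $b \in \Lambda$. But for $b \in \Lambda_{\rm ctg}(G)$, \cite[Proposition~5.10]{Coulon:2022tu} identifies $[b] = \set{c \in \partial X}{\norm[\infty]{b - c} \leq 20\alpha}$, which is closed in $\partial X$ (as $\norm[\infty]{\cdot}$ is lower semi-continuous) and visibly saturated, hence lies in $\mathfrak B^+$. Because $[b]$ consists of a single equivalence class, its only saturated subsets are $\emptyset$ and $[b]$ itself; a positive measure would therefore make $[b]$ an atom of $(\bar X, \mathfrak B^+, \nu_o)$, contradicting the non-atomicity recalled above. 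The only real difficulty in the argument is the geometric content of the inclusion $\Delta \cap (\Lambda \times \Lambda) \subseteq R$, and this has already been absorbed in the construction of the visual metric on $\Lambda\qsim$ in Section~\ref{res: reduced bdy standard}.
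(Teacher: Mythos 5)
Your proof is correct. You take a genuinely different route from the paper's, so let me compare.

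The paper runs Fubini immediately, reducing to showing $\nu_o(\Delta_c)=0$ for a.e.\ $c$; it then fixes $c$ in the contracting limit set $\Lambda_{\rm ctg}(G)$ and proves $\Delta_c\subseteq[c]$ by a direct geometric argument with contracting tails (using \autoref{res: wysiwyg shadow}, \autoref{res: proj two cocycle contracting set}, \autoref{res: proj cocycle contracting set}, and {\cite[Lemma~5.7]{Coulon:2022tu}}). Crucially, the paper's pointwise inclusion allows $c'$ to range over \emph{all} of $\partial X$, not just a good set. You instead restrict both coordinates to the preferred full-measure set $\Lambda$ from \autoref{res: standard measure space} and then invoke \autoref{res: finite Gromov product from visual metric} (applied with $c=b$, $c'=b'$, which is legal since $\dist[L]{b}{b}=0<r_1$) as a black box to deduce $\gro{b}{b'}o<\infty$ whenever $b,b'\in\Lambda$ are inequivalent. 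Both arguments then conclude by the same non-atomicity from \autoref{res: quasi-conf + ergo} and the observation that $[b]$ is a closed, saturated, indivisible set in $\mathfrak B^+$. The trade-off: yours is cleaner and outsources the geometry to the visual-metric machinery (which, since \autoref{res: measure diagonal} is stated after Section~\ref{res: reduced bdy standard}, incurs no circularity); the paper's is self-contained at the level of the basic shadow and projection lemmas and yields the slightly stronger inclusion $\Delta_c\subseteq[c]$ for every $c\in\Lambda_{\rm ctg}(G)$ without restricting $c'$. One small remark: your measurability argument for $R$ and your atom argument are both fine and essentially match the paper's final step, so the only genuine divergence is in how the inclusion $\Delta\cap(\Lambda\times\Lambda)\subseteq R$ is obtained.
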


\begin{proof}
	For every $c \in \partial X$, define the measurable set
	\begin{equation*}
		\Delta_c = \set{c' \in \partial X}{ \gro c{c'}o = \infty}.
	\end{equation*}
	It follows from Fubini theorem that 
	\begin{equation*}
		(\nu_o \otimes \nu_o)(\Delta) = \int \nu_o(\Delta_c) d\nu_o.
	\end{equation*}
	Hence it suffices to prove that $\nu_o(\Delta_c) = 0$, for $\nu_o$-almost every $c \in \partial X$.
	
	Since $\nu_o$ gives positive measure to the radial limit set, it actually gives full measure to the contracting limit set (\autoref{res: quasi-conf + ergo}).
	Let $c \in \Lambda_{\rm ctg}(G)$.
	We are going to prove that $\Delta_c$ is contained in the equivalence class $[c]$.
	According to  \cite[Proposition~5.9]{Coulon:2022tu} we can find $\alpha, r \in \R_+^*$ and $L > r + 32\alpha$ such that $c$ belongs to $\Lambda_{\mathcal T_\alpha}(G, o, r, L)$.
	In particular, there is a sequence of elements $(g_n)$ in $\mathcal T_\alpha(L)$ such that $(g_no)$ diverges to infinity and $c \in \mathcal O_o(g_no,r)$, for every $n \in \N$.
	
	Let $c' \in \Delta_c$.
	Let $n \in \N$.
	Denote by $\tau_n$ the $\alpha$-contracting tail associated to $g_n$.
	By definition of $\mathcal T_\alpha$, there is a projection $p_n$ of $o$ on $\tau_n$ such that $\dist {g_no}{p_n} \geq L$.
	Denote by $q_n$ and $q'_n$ respective projections of $c$ and $c'$ on $\tau_n$.
	According to \autoref{res: wysiwyg shadow}, $\dist {g_no}{q_n} \leq r + 7\alpha$.
	We claim that $\dist {g_no}{q'_n} \leq r + 14 \alpha$.
	Assume on the contrary that it is not the case.
	In particular $\dist {q_n}{q'_n} > 7\alpha$.
	It follows from \autoref{res: proj two cocycle contracting set} that $\gro c{c'}{q_n} \leq 5\alpha$. 
	In particular, $\gro c{c'}o < \infty$.
	This  contradicts our assumption and completes the proof of our claim.
	Note that $\dist {p_n}{q'_n} > 4\alpha$.
	According to \autoref{res: proj cocycle contracting set}, $q'_n$ is $5\alpha$-close to any complete gradient line from $o$ to $c'$, hence $\gro o{c'}{q'_n} \leq 5\alpha$.
	It follows from the triangle inequality that $\gro o{c'}{g_no} \leq r + 19\alpha$.
	
	We have proved that for every $n \in \N$, the cocycle $c'$ belongs to $\mathcal O_o(g_no, r+ 19\alpha)$, as does $c$.
	Since $\dist o{g_no}$ diverges to infinity, this forces that $\norm[\infty]{c - c'} <20\alpha$, see \cite[Lemma~5.7]{Coulon:2022tu}, as we announced.
	Recall that the measure $\nu_o$ restricted to $\mathfrak B^+$ has no atom (\autoref{res: quasi-conf + ergo}).
	Hence $\nu_o(\Delta_c) = \nu_o([c]) = 0$.
\end{proof}

\begin{defi}
\label{def: bm measure}
	The \emph{Bowen-Margulis current} (associated to $\nu$), which we denote by $\mu$, is the measure on $(\partial^2X, \mathfrak B \otimes \mathfrak B)$ defined by 
	\begin{equation*}
		\mu = \frac 1{D_o^{2\omega}} \nu_o \otimes \nu_o.
	\end{equation*}
	The \emph{Bowen-Margulis measure} is the measure $m$ on $(SX, \mathfrak L)$ defined by 
	\begin{equation*}
		m = \mu \otimes ds,
	\end{equation*}
	where $ds$ stands for the Lebesgue measure on the factor $\R$ of $SX$.
\end{defi}

\begin{lemm}
\label{res: bm current G invariant}
	The Bowen-Margulis current $\mu$ is $G$-invariant, for the diagonal action of $G$ on $(\partial^2X, \mathfrak B \otimes \mathfrak B)$.
	In particular, the measure $m$ on $(SX, \mathfrak L)$ is invariant under the action of $G$ as well as the flip map and the geodesic flow.
	Moreover $\mu$ and $m$ are $\sigma$-finite measures on $\mathfrak B \otimes \mathfrak B$ and $ \mathfrak L$ respectively.
\end{lemm}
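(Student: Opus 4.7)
The plan is to reduce everything to the $G$-invariance of the current $\mu$, from which the other claims follow almost formally. The main calculation, and essentially the only nontrivial one, is checking that the density $D_o^{-2\omega} = e^{2\omega \gro{\cdot}{\cdot}{o}}$ is exactly what is needed to absorb the two Radon--Nikodym factors produced by an element of $G$ acting on $\nu_o \otimes \nu_o$.

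More precisely, I would start by writing $\mu$ in the form $\int f\,d\mu = \int f(c,c') e^{2\omega \gro{c}{c'}{o}} d\nu_o(c) d\nu_o(c')$ and verifying, for a non-negative measurable $f$ on $\partial^2X$ and $g \in G$, that $\int f(gc,gc')\,d\mu(c,c') = \int f\,d\mu$. The key ingredients are: (i) $G$-invariance of $\nu$, which gives $g_*\nu_o = \nu_{go}$ (equivalently, after a change of variables $b = gc$, $\nu_o \otimes \nu_o$ becomes $\nu_{go} \otimes \nu_{go}$); (ii) $\omega$-conformality, which gives $d\nu_{go}(b) = e^{-\omega b(go,o)} d\nu_o(b)$; (iii) $G$-equivariance of the Gromov product, $\gro{g^{-1}b}{g^{-1}b'}{o} = \gro{b}{b'}{go}$; and (iv) the base-change formula (\ref{eqn: gromov product - conf}), which yields $2\omega\gro{b}{b'}{go} = \omega[b(go,o) + b'(go,o)] + 2\omega\gro{b}{b'}{o}$. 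Plugging (ii), (iii), (iv) into the change-of-variable expression, the two linear contributions in $\omega b(go,o) + \omega b'(go,o)$ cancel exactly, returning $\int f\,d\mu$.

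Once $\mu$ is $G$-invariant, all claims about $m = \mu \otimes ds$ follow quickly. The geodesic flow $\phi_t$ acts on $SX$ only by translation in the $\R$-coordinate, and $ds$ is translation-invariant, so $\phi_t$-invariance of $m$ is immediate. The flip $\sigma(c,c',s) = (c',c,-s)$ leaves $\mu$ unchanged because $D_o$ is symmetric in its two arguments, and $ds$ is invariant under $s \mapsto -s$; hence $m$ is flip-invariant. For the $G$-action on $SX$, note that $g \cdot (c,c',s) = (gc,gc', s + \tau(g,(c,c')))$ factors as the diagonal action on $\partial^2X$ followed by a translation of the $\R$-coordinate by a quantity $\tau(g,(c,c'))$ that depends only on $(c,c')$ and not on $s$; so for each fixed fiber over $\partial^2X$ the map is a translation (measure-preserving for $ds$), and fiberwise integration against the $G$-invariant current $\mu$ gives $G$-invariance of $m$.

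For $\sigma$-finiteness, I would use that $\partial^2 X = \{(c,c') : \gro{c}{c'}{o} < \infty\}$ and that $(c,c') \mapsto \gro{c}{c'}{o}$ is Borel (proved just before the statement of \autoref{res: continuous extension gromov product}). Set $A_n = \{(c,c') \in \partial^2X : \gro{c}{c'}{o} \leq n\}$, so that $\partial^2X = \bigcup_{n \in \N} A_n$. On $A_n$ the density $e^{2\omega\gro{c}{c'}{o}}$ is bounded by $e^{2\omega n}$, hence $\mu(A_n) \leq e^{2\omega n}\nu_o(\partial X)^2 < \infty$, which gives $\sigma$-finiteness of $\mu$. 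Then $SX = \bigcup_{n,k \in \N \times \Z} A_n \times [k, k+1]$ exhibits $m$ as $\sigma$-finite as well. The only subtle point here is the measurability of $A_n$, but this is guaranteed by the Borel property of the Gromov product already recorded in the paper; beyond that, the argument is essentially bookkeeping.
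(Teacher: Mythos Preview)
Your proof is correct and follows essentially the same approach as the paper. The only cosmetic difference is that the paper packages the $G$-invariance argument as a two-step observation---first that $D_x^{-2\omega}\,\nu_x\otimes\nu_x$ is independent of the base point $x$ (using (\ref{eqn: conformal Bourdon distance}) and conformality), then that $G$-equivariance of $D_x$ and $G$-invariance of $\nu$ give the conclusion---whereas you unfold this into a single explicit change-of-variables computation; for $\sigma$-finiteness the paper invokes that $D_o^{2\omega}$ is a positive function in $L^1(\partial^2X,\mu)$, which is exactly your exhaustion $A_n$ rephrased.
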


\begin{proof}
	On the one hand, the measure
	\begin{equation*}
		\frac 1{D_x^{2\omega}} \nu_x \otimes \nu_x.
	\end{equation*}
	does not depend on the point $x \in X$.
	This is indeed of a consequence of (\ref{eqn: conformal Bourdon distance}) together with the conformality of $\nu$.
	On the other hand, the density $\nu$ is $G$-invariant, while $D_x$ is $G$-equivariant, see (\ref{eqn: equivariant Bourdon distance}).
	Consequently $\mu$ is $G$-invariant.
	
	By construction the map $D_o^{2\omega} \colon \partial^2 X \to \R_+$ is positive on $SX$.
	In addition, it belongs to $L^1(\partial^2 X, \mathfrak B \otimes\mathfrak B, \mu)$.
	Thus $\mu$ is $\sigma$-finite.
	The Lebesgue measure on $\R$ is $\sigma$-finite, hence so is $m$.
\end{proof}

As we noticed in \cite{Coulon:2022tu} the ergodic properties of the action of $G$ on $(\partial X, \mathfrak B, \nu_o)$ are not satisfactory.
In order to recover a behavior inspired by negative curvature, one needs to restrict ourselves to $(\partial X, \mathfrak B^+, \nu_o)$ where $\mathfrak B^+$ is the reduced sub-$\sigma$-algebra of $\mathfrak B$.
A similar shift is needed for $SX$.
This is a bit subtle though.
A first attempt would be to consider the sub-$\sigma$-algebra $\mathfrak L^0$ defined as the product $\mathfrak B^+ \otimes \mathfrak B^+$ with the Borel $\sigma$-algebra of $\R$.
Unfortunately, the map $c \mapsto c(g^{-1}o,o)$ is not $\mathfrak B^+$-measurable, hence the action of $G$ does not preserve $\mathfrak L^0$.
It motivates the next definition.

\begin{defi}
\label{def: admissible sigma-alg}
	We say that a sub-$\sigma$-algebra $\mathfrak L^+ \subset \mathfrak L$ is \emph{admissible} if it is $G$-invariant and satisfies the following properties: 
	for every $B \in \mathfrak B^+ \otimes \mathfrak B^+$ the set $B \times \R$ belongs to $\mathfrak L^+$;
	for any flow invariant set $A \in \mathfrak L^+$ there is $B \in \mathfrak B^+ \otimes \mathfrak B^+$ such that the symmetric difference between $A$ and $B \times \R$ has $m$-measure zero.
\end{defi}

\begin{exam}
\label{exa: admissible alg}
	Let us give some examples of admissible sub-$\sigma$-algebras.
	\begin{enumerate}
		\item The easiest (but not very useful) option is to take for $\mathfrak L^+$ all sets of the form $B \times \R$, where $B$ runs over $\mathfrak B^+ \otimes \mathfrak B^+$.
		
		\item A second choice is to consider all the sets $A \in \mathfrak L$ such that $gA \in \mathfrak L^0$, for every $g \in G$. 
		
		\item \label{enu: admissible alg - div}
		The last example is probably the most useful one. 
		However it has the disadvantage that the construction already assumes that the Poincaré series of $G$ diverges at $\omega$.
		In order to avoid any ambiguity, we denote by $\nu^+ = (\nu^+_x)$ the restriction of the density $\nu$ to $\mathfrak B^+$.
		Recall that $\nu^+_x \ll \nu^+_y$ for every $x,y \in X$.
		This allows us to define a $\mathfrak B^+$-measurable map $\beta \colon G \times \partial X \to \R$ by 
		\begin{equation}
		\label{eqn: def rn cocycle}
			\beta(g,c) = - \frac 1 \omega \ln \frac{d\nu^+_{g^{-1}o}}{d\nu^+_o} (c).
		\end{equation}	
		It satisfies the following cocycle relation 
		\begin{equation}
		\label{eqn: cocycle relation rn}
			\beta(g_1, g_2 c) - \beta(g_1g_2,c) + \beta(g_2, c) = 0, \quad \forall g_1,g_2 \in G,  \ \forall c \in \partial X \ \nu^+\text{-a.e.}
		\end{equation}
		Observe that the map $G \times \partial X \to \R$ sending $c$ to $c(g^{-1}o, o)$ also satisfies the cocycle relation (\ref{eqn: cocycle relation rn}).
		Suppose now that $\nu$ gives full measure to the contracting limit set.
		It follows from  \cite[Proposition~5.10]{Coulon:2022tu}, that there is $C$ such that for every $g \in G$, for $\nu$-a.e. $c \in \partial X$, 
		\begin{equation*}
			\abs{\beta(g,c) - c(g^{-1}o, o)} \leq C.
		\end{equation*}
		However any bounded measurable cocycle is a coboundary.
		More precisely there is a $\mathfrak B$-measurable function $f \colon \partial X \to \R$ such that for every $g \in G$ and $\nu$-a.e. $c \in \partial X$, we have 
		\begin{equation*}
			\beta(g,c) = c(g^{-1}o, o) + f(gc) - f(c).
		\end{equation*}
		Consider the map 
		\begin{equation*}
			\begin{array}{rccc}
				\chi \colon & SX& \to & SX \\
				& (c,c', s) & \mapsto & \left(c,c', s + f(c) \right)
			\end{array}
		\end{equation*}
		It is a flow equivariant, measure preserving automorphism of $(SX, \mathfrak L, m)$.
		Observe that for $m$-a.e. $v = (c,c',s)$ in $SX$, for every $g \in G$ we have
		\begin{equation*}
			\chi^{-1} \circ g \circ \chi(v) = \left(gc,gc', s + \beta(g,c')\right).
		\end{equation*}
		Since $\beta$ is $\mathfrak B^+$-measurable, the pull back of $\mathfrak L^0$ by $\chi$, which we denote by $\mathfrak L^1$, is $G$-invariant, up to measure zero.
		We define $\mathfrak L^+$ as the collection of all sets $A \in \mathfrak L$ whose symmetric difference with some element of $\mathfrak L^1$ has $m$-measure zero. 
		By construction it is admissible. \qedhere
	\end{enumerate}
\end{exam}

We now fix for the rest of the article an admissible sub-$\sigma$-algebra $\mathfrak L^+ \subset \mathcal L$.
In practice, it will not really play a role before the very end of the proof of the Hopf-Tsuji-Sullivan dichotomy (see the proof of \autoref{res: hopf-tsuji-sullivan}).

\begin{nota}
	For every $x,x' \in X$, for every $r \in \R_+$, we let 
	\begin{equation}
	\label{eqn: def double shadow contracting element}
		A(x,x',r) = \mathcal O_{x'}(x,r) \times \mathcal O_x(x', r).
	\end{equation}
	In plain words, $A(x,x',r) \subset \bar X \times \bar X$ is the set of pairs $(c,c')$ such that $c$ and $c'$ are $r$-separated by $(x,x')$.
\end{nota}

\begin{lemm}
\label{res: measure product shadows}
	Let $h \in G$ be a contracting element.
	There is $r_0 \in \R_+$ with the following property.
	For every $r \geq r_0$, there is $N \in \N$ such that for every integer $n \geq N$, the following holds:
 	\begin{itemize}
		\item $A(h^{-n}o, h^no,r)$ is contained in an open squeezing subset;
		\item  $0 < \mu\left(A(h^{-n}o, h^no,r) \cap \partial^2 X\right) < \infty$.
	\end{itemize}
\end{lemm}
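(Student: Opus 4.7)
The plan is to use the contracting axis of $h$ to pin down every pair in $A(h^{-n}o, h^no, r)$ to a bounded neighborhood of $o$, which immediately yields both the squeezing property and a uniform two-sided bound on $D_o$. Fix a bi-infinite geodesic $\gamma$ at finite Hausdorff distance from $\group h o$, and pick $\alpha \in \R_+^*$ so that every subpath of $\gamma$ is $\alpha$-contracting and so that $o$ together with each $h^k o$ lies within $\alpha$ of $\gamma$. Set $r_0 = \max\{r_0^{\mathrm{SL}} + 14\alpha, 50\alpha\}$ where $r_0^{\mathrm{SL}}$ is the threshold from the Shadow Lemma (\autoref{res: shadow lemma}); for $r \geq r_0$, take $N$ large enough that $\dist{h^{-n}o}{h^no} > 2r + 100\alpha$ whenever $n \geq N$ and that $o$ lies well inside the middle portion of any geodesic joining $h^{-n}o$ to $h^no$.

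Given $n \geq N$ and $(c,c') \in A(h^{-n}o, h^no, r)$, \autoref{res: qc contracting set} forces any geodesic $\gamma_n$ from $h^{-n}o$ to $h^no$ to fellow-travel $\gamma$ within $5\alpha/2$, so in particular $\gamma_n$ passes within $O(\alpha)$ of $o$. \autoref{res: bi-gradient line} then produces a complete gradient arc $\nu$ from $c$ to $c'$ that stays in the $5\alpha$-neighborhood of the middle of $\gamma_n$ and satisfies $\gro c{c'}{\gamma_n(s)} \leq 4\alpha$ for $s$ ranging over an interval whose image is $O(\alpha)$-close to $o$. Hence $\nu$ crosses a closed ball around $o$ of radius depending only on $\alpha$, proving the squeezing property with a bottleneck independent of $n$; and (\ref{eqn: gromov product - lip}) yields $\gro c{c'}o \leq C$ for some constant $C = C(\alpha)$, so $A(h^{-n}o, h^no, r) \subset \partial^2 X$ and $e^{-C} \leq D_o$ on $A$. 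Finiteness follows at once: $\mu(A) \leq e^{2\omega C}(\nu_o \otimes \nu_o)(A) \leq e^{2\omega C}$.

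For positivity I would establish the $n$-uniform shadow comparison $\mathcal O_o(h^{\pm n}o, r - 14\alpha) \subset \mathcal O_{h^{\mp n}o}(h^{\pm n}o, r)$: starting with $c$ in the left-hand shadow, \autoref{res: wysiwyg shadow} places any projection $q$ of $c$ on $\gamma$ near $h^{\pm n}o$, and then \autoref{res: proj cocycle contracting set} applied with $x = h^{\mp n}o$ expands $c(h^{\mp n}o, q)$ and yields the desired Gromov-product bound after a short computation. Thus $A(h^{-n}o, h^no, r)$ contains $\mathcal O_o(h^{-n}o, r - 14\alpha) \times \mathcal O_o(h^no, r - 14\alpha)$; by the Shadow Lemma both factors have positive $\nu_o$-measure (since $r - 14\alpha \geq r_0^{\mathrm{SL}}$), and combined with $D_o^{-2\omega} \geq e^{-2\omega C}$ this gives $\mu(A) > 0$. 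The main hurdle is precisely this shadow comparison: the distant basepoints $h^{\pm n}o$ a priori inflate shadows in an $n$-dependent way, and only the contracting geometry of the axis compresses the error back to something of size $O(\alpha)$ uniformly in $n$; once this pinching is in place, the Shadow Lemma and the uniform bound on $\gro c{c'}o$ combine cleanly to yield both ends of the estimate.
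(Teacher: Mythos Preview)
Your argument is correct, and for the squeezing property and the finiteness of $\mu(A)$ it is essentially the same as the paper's (both go through \autoref{res: bi-gradient line} to force gradient arcs near the contracting segment and to bound $\gro c{c'}o$).

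For positivity, however, you take a different route from the paper. You prove the geometric inclusion $\mathcal O_o(h^{\pm n}o, r-14\alpha) \subset \mathcal O_{h^{\mp n}o}(h^{\pm n}o, r)$ via \autoref{res: wysiwyg shadow} and \autoref{res: proj cocycle contracting set}, and then apply the Shadow Lemma to the smaller shadows based at $o$. The paper instead exploits the conformality and $G$-invariance of $\nu$ directly: it writes
\[
\nu_o\big(\mathcal O_{h^no}(h^{-n}o,r)\big)
\geq e^{-\omega \dist o{h^no}}\,\nu_{h^no}\big(\mathcal O_{h^no}(h^{-n}o,r)\big)
= e^{-\omega \dist o{h^no}}\,\nu_o\big(\mathcal O_o(h^{-2n}o,r)\big),
\]
and then invokes the Shadow Lemma for the right-hand side. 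This sidesteps any shadow comparison and requires no buffer in $r$ (so the paper can take $r_0$ equal to the Shadow Lemma threshold). Your approach, by contrast, is purely geometric and never touches the conformality relation; it also yields an $n$-independent bottleneck as a byproduct, which is a bit more than the statement asks for. Incidentally, the lower bound $D_o^{-2\omega}\geq e^{-2\omega C}$ you invoke at the end is unnecessary: since $D_o\leq 1$ one always has $\mu\geq \nu_o\otimes\nu_o$ on $\partial^2X$.
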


\begin{proof}
	Let $\epsilon, r_0 \in \R_+^*$  be the parameters given by the Shadow Lemma (\autoref{res: shadow lemma}).
	Since $h$ is contracting, there is $\alpha \in \R_+$ such that for every $n \in \N$, any geodesic from $h^{-n}o$ to $h^no$ is $\alpha$-contracting.
		Choose $r \geq r_0$.
	We fix $N \in \N$ such that $\dist o{h^no} > 2r + 100\alpha$, for every $n \geq N$.
	Let $n \geq N$.
	According to \autoref{res: bi-gradient line},   $A(h^{-n}o, h^no,r)$ is contained in an open squeezing subset.
	If $K$ stands for the corresponding bottleneck, then for every $(c,c') \in A(h^{-n}o, h^no,r)$, there is $z \in K$ such that $\gro c{c'}z = 0$ (\autoref{res: continuous extension gromov product}).
	Thus, $\gro c{c'}o$ is uniformly bounded from above on $A(h^{-n}o, h^no,r) \cap \partial^2 X$, which therefore has finite $\mu$-measure.
	It remains to prove that $A(h^{-n}o, h^no,r)$ has positive measure.
	It follows from the Shadow Lemma that 
	\begin{equation*}
		\nu_o \left(\mathcal O_{h^no}(h^{-n}o,r) \right)
		\geq e^{-\omega \dist o{h^no}} \nu_{h^no} \left(\mathcal O_{h^no}(h^{-n}o,r) \right) \geq \epsilon e^{-3\omega \dist o{h^no}} > 0.
	\end{equation*}
	Similarly 
	\begin{equation*}
		\nu_o \left(\mathcal O_{h^{-n}o}(h^no,r) \right)
		\geq \epsilon e^{-3\omega \dist o{h^no}} > 0,
	\end{equation*}
	whence the result.
\end{proof}


\begin{lemm}
\label{res: proper subset positive measure}
	The set $S_0X$ has positive $m$-measure.
	If in addition, $\nu_o$ gives full measure to the radial limit set, then $S_0X$ contains a subset $B \in \mathfrak L^+$ with full $m$-measure wich is flow, group and flip invariant.
\end{lemm}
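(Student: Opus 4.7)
My plan for the first assertion is to exhibit an open squeezing subset of $\partial^2X$ of positive $\mu$-measure. First I would fix a contracting element $h \in G$; the Shadow Lemma \autoref{res: shadow lemma} gives an $r_0$ for which $\nu_o(\mathcal O_{h^{\pm n}o}(h^{\mp n}o, r_0)) > 0$ for every $n$, and by \autoref{res: measure product shadows}, for some $r_1 > r_0$ and $n$ large enough, the set $A(h^{-n}o, h^no, r_1)$ is squeezing. Continuity of the Gromov product on $\bar X$ ensures that the open sets $V_\pm = \{b : \gro{h^{\pm n}o}b{h^{\mp n}o} < r_1\}$ contain the corresponding closed shadows of radius $r_0$, so $V_- \times V_+ \subset A(h^{-n}o,h^no,r_1)$ is an open squeezing subset of $\partial_0^2 X$, with $\nu_o \otimes \nu_o(V_- \times V_+) > 0$. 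Since $\gro c{c'}o \geq 0$, the density $D_o^{-2\omega}$ is bounded below by $1$, hence $\mu(V_- \times V_+) > 0$ and $m(S_0 X) > 0$ (in fact infinite).

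Under the stronger hypothesis, I would set $B = A \times \R$ where
\[
A = (\Lambda \times \Lambda) \setminus \{(c,c') : c \sim c'\},
\]
with $\Lambda = \Lambda_- \cup \Lambda_+$ the saturated Borel subset from \autoref{res: standard measure space}, which has full $\nu_o$-measure by \autoref{res: preferred limit set full measure}. Three points must be checked: \textbf{(i)} $A \in \mathfrak B^+ \otimes \mathfrak B^+$, \textbf{(ii)} $A \subset \partial_0^2 X$, and \textbf{(iii)} $\mu(\partial^2 X \setminus A) = 0$. All the required invariances are then immediate: $\Lambda$ and $\sim$ are $G$-invariant, $A$ is symmetric in $(c,c')$, and $B$ contains every full $\R$-fiber.

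For \textbf{(i)}---the main obstacle---I would invoke \autoref{res: complete metric space} and \autoref{res: metric top vs quotient top}, which together assert that $(\Lambda\qsim, \distV[\infty])$ is a Polish space whose Borel $\sigma$-algebra coincides with the trace of $\mathfrak B^+$. Since a Polish space is second-countable, the diagonal in $(\Lambda\qsim)^2$ is Borel for the product Borel $\sigma$-algebra, which agrees with the product of Borel $\sigma$-algebras; pulling this back along the quotient map places $\{(c,c') : c \sim c'\} \cap (\Lambda\times \Lambda)$ in $\mathfrak B^+ \otimes \mathfrak B^+$, hence $A$ as well. For \textbf{(ii)}, given $(c,c') \in A$ I may assume $c' \in \Lambda_+$ (the case $c' \in \Lambda_-$ is symmetric after swapping $h \leftrightarrow h^{-1}$). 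Applying \autoref{res: separation triangle cocycles} with $\Lambda_0 = \{c\}$ and $\ell > 14\alpha$ produces $u \in G$ for which $c, c'$ are $2\alpha$-separated by $(u\gamma(-\ell), u\gamma(\ell))$; the open neighborhood
\[
\{\gro{u\gamma(\ell)}b{u\gamma(-\ell)} < 3\alpha\} \times \{\gro{u\gamma(-\ell)}{b'}{u\gamma(\ell)} < 3\alpha\}
\]
of $(c,c')$ is then squeezing by \autoref{res: bi-gradient line}, since $2\ell > 2\cdot 3\alpha + 21\alpha$.

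For \textbf{(iii)}, the complement $\partial^2 X \setminus A$ lies in $((\partial X \setminus \Lambda) \times \partial X) \cup (\partial X \times (\partial X \setminus \Lambda)) \cup \{(c,c') : c \sim c'\}$. The first two pieces have $\nu_o \otimes \nu_o$-measure zero because $\nu_o(\partial X \setminus \Lambda) = 0$; for the third I use Fubini together with the non-atomicity of $\nu_o$ on $\mathfrak B^+$ from \autoref{res: quasi-conf + ergo}, which forces $\nu_o([c]) = 0$ for every $c$. Since $D_o^{-2\omega}$ is finite on $\partial^2 X$, $\mu$ is absolutely continuous with respect to $\nu_o \otimes \nu_o$ there, so $\mu(\partial^2 X \setminus A) = 0$ and $B = A \times \R$ has full $m$-measure. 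The genuine difficulty of the proof is thus concentrated in step \textbf{(i)}, which relies essentially on the Polish metric structure on $\Lambda \qsim$ developed in \autoref{res: reduced bdy standard}.
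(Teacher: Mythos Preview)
Your proof is correct, and your set $A=(\Lambda\times\Lambda)\setminus\{c\sim c'\}$ coincides with the paper's $\Lambda^{(2)}=(\Lambda\times\Lambda)\cap\partial^2X$: on $\Lambda\times\Lambda\subset\Lambda_{\rm ctg}(G)\times\partial X$ one has $\gro c{c'}o=\infty$ if and only if $c\sim c'$ (this is exactly what the proof of \autoref{res: measure diagonal} establishes). The first part and points \textbf{(ii)}--\textbf{(iii)} match the paper's argument closely, the paper simply citing \autoref{res: measure product shadows}, \autoref{res: separation triangle cocycles}, \autoref{res: bi-gradient line} and \autoref{res: preferred limit set full measure} without spelling out the details you provide.

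The one genuine difference is your treatment of \textbf{(i)}, which you flag as ``the main obstacle'' and handle via the Polish structure on $\Lambda\qsim$. The paper avoids this entirely. Recall the standing convention (stated after \autoref{def: reduced horoboundary}) that $(\Omega_0,\mathfrak A)$ denotes the trace $\sigma$-algebra; thus $\mathfrak L^+$ consists of sets $C\cap SX$ with $C\in\mathfrak B^+\otimes\mathfrak B^+\otimes\mathrm{Borel}(\R)$. Taking $C=\Lambda\times\Lambda\times\R$, which is manifestly in $\mathfrak B^+\otimes\mathfrak B^+\otimes\mathrm{Borel}(\R)$ since $\Lambda$ is saturated Borel, one gets $B=C\cap SX$ in $\mathfrak L^+$ for free---there is no need to prove that the equivalence diagonal itself lies in $\mathfrak B^+\otimes\mathfrak B^+$. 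Your detour through \autoref{res: complete metric space} and \autoref{res: metric top vs quotient top} does establish a stronger fact, but it is not needed here; the Polish structure on $\Lambda\qsim$ is used later in the paper (for the density argument in \autoref{res: dense subspace for Hopf}), not for this lemma.
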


\begin{proof}
	Let $h \in G$ be a contracting element.
	According to  \autoref{res: measure product shadows}, there are $r, N \in \R_+$ such that for every integer $n \geq N$, the set $A(h^{-n}o, h^no,r)$ defined in (\ref{eqn: def double shadow contracting element}) is contained in $\partial^2_0X$ and has positive $\mu$-measure.
	Hence $S_0X$ has positive $m$-measure.
	
	Suppose now that $\nu_o$ gives full measure to the radial limit set.
	In particular, $\omega = \omega_G$ and the action of $G$ on $X$ is divergent.
	Denote by $\Lambda = \Lambda_h$ the associated radial limit set studied in \autoref{res: radial limit set contracting elt}.
	For simplicity we let $\Lambda^{(2)} = (\Lambda \times \Lambda) \cap \partial^2 X$.
	According to \autoref{res: preferred limit set full measure}, $\Lambda$ has full $\nu_o$-measure.
	Recall that $\Lambda$ is saturated.
	Hence $B = \Lambda^{(2)} \times \R$ is a subset in $\mathfrak L^+$ with full $m$-measure.
	By construction it is flow, group and flip invariant.
	Combining Propositions~\ref{res: separation triangle cocycles} and \ref{res: bi-gradient line} we observe that $B$ is contained in $S_0X$, whence the result.
\end{proof}


%
\subsection{Quotient space}
\label{sec: quotient space}
%

Denote by $\mathfrak L_G$, (\resp $\mathfrak L^+_G$) the sub-$\sigma$-algebra of $\mathfrak L$ (\resp $\mathfrak L^+$) which consists of all $G$-invariant subsets of $\mathfrak L$ (\resp $\mathfrak L^+$).
Note that $\mathfrak L^+_G \subset \mathfrak L_G$.
Define a $G$-invariant map $\xi \colon SX \to \intval 01$ as follows
\begin{equation}
\label{eqn: def stab map}
	\xi(v) = \card{G_v}^{-1}, \quad \forall v \in SX,
\end{equation}
where $G_v$ is the set of elements in $G$ fixing $v$.
As the group $G$ is countable and acts by homeomorphisms on $SX$, the map $\xi$ is $\mathfrak L_G$- measurable.
Since the action of $G$ on $S_0X$ is proper, $\xi$ does not vanish on $S_0X$.
For the same reason, every $G$-orbit of a vector $v \in S_0X$ is closed (for the induced topology on $S_0X$).
Thus we can find a measurable fondamental domain for the action of $G$ on $S_0X$, that is a set $D \in \mathfrak L$, contained in $S_0X$, whose intersection with any $G$-orbit in $S_0X$ contains exactly one point, see Bourbaki \cite[Chapitre~IX, §6.9, Théorème~5]{Bourbaki:1974wv}.
Note that in general $D$ does not belong to $\mathfrak L^+$.
We now endow $(SX, \mathfrak L_G)$ with a measure $m_G$ defined as
\begin{equation*}
	m_G =  \xi \mathbb 1_D m.
\end{equation*}
The computation shows that 
\begin{equation*}
	\mathbb 1_{S_0X} = \xi \sum_{g \in G} \mathbb 1_{gD}.
\end{equation*}
Together with the fact that $m$ is $G$-invariant, it implies that $m_G$ does not depend on the choice of $D$.
It follows also from this relation that 
\begin{equation*}
	m_G(GB) \leq m(B \cap S_0X) \leq m(B), \quad \forall B \in \mathfrak L.
\end{equation*}
Since $m$ is a $\sigma$-finite measure on $\mathfrak L$, this implies that $m_G$ is $\sigma$-finite on $\mathfrak L_G$ as well.
Recall that $G$ is countable.
Hence 
\begin{equation*}
	m_G(B) = 0  \iff  m(B\cap S_0X) = 0, \quad \forall B \in \mathfrak L_G.
\end{equation*}
In particular, $m_G$ is not the zero measure (\autoref{res: proper subset positive measure}).

One checks that if $\psi \colon SX \to SX$ is homeomorphism commuting with $G$ and preserving $S_0X$ as well as $m$, then it preserves also $m_G$.
In particular, $m_G$ is both flip and flow invariant.
We now think of the action of $(\phi_t)$ on $(SX, \mathfrak L^+_G, m_G)$ as the geodesic flow on the quotient $SX/G$.

\begin{rema}
\label{rem: mG supported on S0X}
	By construction $m_G$ is supported on $S_0X$.
	We have to keep in mind though that $S_0X$ does not a priori belong to $\mathfrak L^+_G$.
	However if the action of $G$ on $X$ is divergent, then $S_0X$ contains a flow/flip invariant subset $B \in \mathfrak L^+_G$ with full $m$-, hence $m_G$-measure (\autoref{res: proper subset positive measure}).
\end{rema}


%
\subsection{Conservativity of the geodesic flow}
%

\paragraph{Conservative system.}
We make here a little digression about conservative / dissipative systems.
Let $(\Omega, \mathfrak A, \mu)$ be a measure space.
In this paragraph, we denote by $(H, dh)$ either a discrete countable group endowed with the counting measure, or $\R$ endowed with the Lebesgue measure.
We assume that $H$ acts on $(\Omega, \mathfrak A)$ preserving the measure class of $\mu$.
In particular, the action is mesurable in the sense that the map $H \times \Omega \to \Omega$ sending $(h, \omega)$ to $h\omega$ is measurable.
A subset $W \subset \Omega$ is called \emph{wandering} if for $\mu$-almost every $\omega \in W$, we have
\begin{equation*}
	\int_H \mathbb 1_W \left(h \omega\right) dh < \infty.
\end{equation*}
The action of $H$ is \emph{conservative} if there is no measurable wandering set with positive measure.
In this case, for every  $A\in \mathfrak A$, for $\mu$-almost every $\omega \in A$,
\begin{equation*}
	\int_H \mathbb 1_A (h\omega) dh = \infty.
\end{equation*}
The action of $H$ is \emph{dissipative} if $\Omega$ is, up to measure zero, a countable union of wandering sets.

\begin{lemm}
\label{res: criterion conservativity}
	Assume that the action of $H$ preserves the measure $\mu$.
	If there is summable map $f \colon \Omega \to \R_+$ such that 
	\begin{equation*}
		\int_H f(h \omega)dh= \infty, \quad \mu\text{-a.e.},
	\end{equation*}
	then the action of $H$ is conservative.
\end{lemm}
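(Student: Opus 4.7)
The plan is to argue by contradiction: assume the action of $H$ is not conservative, so that there exists a measurable wandering set $W_0$ with $\mu(W_0) > 0$. My goal is to extract from $W_0$ a set $W$ whose occupation function
\begin{equation*}
	\tau_W(\omega) := \int_H \mathbb 1_W(h\omega)\, dh
\end{equation*}
is uniformly bounded on all of $\Omega$, and then reach a contradiction via Fubini against the $L^1$ bound on $f$.

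For the first step, since $W_0$ is wandering, $\tau_{W_0}(\omega) < \infty$ for $\mu$-a.e.\ $\omega \in W_0$, so countable additivity produces an integer $N \in \N$ such that $W := W_0 \cap \{\tau_{W_0} \leq N\}$ still has positive measure. As $W \subset W_0$, we have $\tau_W \leq \tau_{W_0} \leq N$ on $W$. The key observation is that $\tau_W$ is $H$-invariant, as follows directly from left-invariance of $dh$ (Lebesgue measure on $\R$, counting measure on a discrete group). Since $\tau_W$ vanishes outside the saturation $HW$, the bound $\tau_W \leq N$ propagates from $W$ to all of $\Omega$.

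For the second step, consider
\begin{equation*}
	I := \int_W \int_H f(h\omega)\, dh\, d\mu(\omega).
\end{equation*}
The hypothesis $\int_H f(h\omega)\, dh = \infty$ $\mu$-a.e., combined with $\mu(W) > 0$, gives $I = +\infty$. On the other hand, interchanging integrals (justified by $f \geq 0$), using $H$-invariance of $\mu$ to substitute $\omega \mapsto h^{-1}\omega$, and using that $h \mapsto h^{-1}$ preserves $dh$ (both $\R$ and countable discrete groups are unimodular), one rewrites
\begin{equation*}
	I = \int_\Omega f(\omega)\, \tau_W(\omega)\, d\mu(\omega) \leq N \int_\Omega f\, d\mu < \infty,
\end{equation*}
contradicting $I = +\infty$. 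Therefore the action of $H$ must be conservative. The only delicate point is the globalization $\tau_W \leq N$ on $\Omega$ from the pointwise bound on $W$; once $H$-invariance of $\tau_W$ is observed, everything else reduces to a routine Fubini computation against the summable function $f$.
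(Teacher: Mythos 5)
Your proof is correct, and it follows essentially the same route as the paper: a Fubini/unimodularity argument turning $\int_W \bigl(\int_H f\circ h\,dh\bigr)\,d\mu$ into $\int_\Omega f\,\tau_W\,d\mu$, then a contradiction against $\int f\,d\mu < \infty$. The only cosmetic difference is how you control the occupation integral of $W$: you truncate $W_0$ to a positive-measure subset $W$ on which $\tau_{W_0}$ (hence $\tau_W$) is bounded by a fixed $N$, and then propagate this bound over all of $\Omega$ using the $H$-invariance of $\tau_W$; the paper instead keeps the full wandering set $W$ and tests against the renormalized function $f_2 = c^{-1}\mathbb 1_W$ with $c = \max\{1,\tau_W\}$, using the $H$-invariance of $c$ to get $\int_H f_2(h\omega)\,dh \leq 1$ uniformly. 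Both devices invoke exactly the same invariance observation and produce the same conclusion, so the underlying idea is identical.
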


\begin{proof}
	Let $f_1, f_2 \colon \Omega \to \R_+$ be two funtions.
	Since $H$ is unimodular and preserves the measure $\mu$, we have
	\begin{equation*}
		\int_\Omega \left( \int_H f_1 \circ h  dh \right)  f_2 d\mu
		= \int _\Omega f_1 \left( \int_H  f_2 \circ h dh \right)d\mu.
	\end{equation*}
	Let $W \in \mathfrak A$ be a wandering set.
	We use the above relation with $f_1 = f$ and
	\begin{equation*}
		f_2(\omega) = \frac 1{c(\omega)} \mathbb 1_W(\omega)
		\quad \text{where} \quad 
		c(\omega) = \max \left\{ 1, \int_H \mathbb 1_W \left(h \omega\right) dh \right\}
	\end{equation*}
	Since $W$ is wandering $f_2(\omega) > 0$ for $\mu$-almost every $\omega \in W$.
	Observe also that $c \colon \Omega \to \R_+$ is $H$-invariant, thus 
	\begin{equation*}
		\int_H  f_2 (h\omega) dh \leq 1, \quad \forall \omega \in \Omega.
	\end{equation*}
	Consequently we get 
	\begin{equation*}
		\int_\Omega \left( \int_H f_1 \circ h  dh \right) f_2 d\mu
		\leq \int_\Omega f_1 d\mu 
		<\infty.
	\end{equation*}
	It follows from our assumption that $f_2$ vanishes $\mu$-a.e., hence $\mu(W) = 0$.
\end{proof}

If the measure $\mu$ is $\sigma$-finite, then $\Omega$ admits a \emph{Hopf decomposition}, that is a partition $\Omega = \Omega_C \sqcup \Omega_D$ which is $H$-invariant (up to measure zero) so that the action of $H$ on $\Omega_C$ (\resp $\Omega_D$) is conservative (\resp dissipative) \cite{Hopf:1937kk}.
This partition is unique up to measure zero.
The sets $\Omega_C$ and $\Omega_D$ are respectively called the \emph{conservative} and \emph{dissipative parts} of $\Omega$.

\paragraph{Application to the geodesic flow.}
We now come back to the study of the geodesic flow.

\begin{prop}
\label{res: conservativity geodesic flow}
	If $\nu_o$ gives full measure to the radial limit set, then
	\begin{enumerate}
		\item the diagonal action of $G$ on $(\partial X \times \partial X, \mathfrak B \otimes \mathfrak B, \nu_o \otimes \nu_o)$ is conservative, 
		\item
		the geodesic flow on $(SX, \mathfrak L_G, m_G)$ is conservative.
	\end{enumerate}
\end{prop}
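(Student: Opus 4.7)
The plan is to apply the divergence criterion \autoref{res: criterion conservativity} in both parts, with part~(2) reducing to part~(1) via a Fubini computation. The common input is this: by \autoref{res: quasi-conf + ergo}, the hypothesis on $\nu_o$ forces $\omega = \omega_G$ and divergence of $\mathcal P_G$; by \autoref{res: measure product shadows}, I can fix a contracting element $h$, an integer $N$ and a radius $r \geq r_0$ so that $V := A(h^{-N}o, h^N o, r) \cap \partial^2 X$ is squeezing with $0 < \mu(V) < \infty$. Then $f := \mathbb 1_V$ lies in $L^1(\partial^2 X, \nu_o \otimes \nu_o)$ with positive integral (finiteness since $\norm {\nu_o} = 1$, positivity from $\mu \ll \nu_o \otimes \nu_o$), while $F := \mathbb 1_{V \times \intval 01}$ lies in $L^1(SX, m)$.

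For part~(1), the full-measure set to work on consists of pairs $(c,c') \in \Lambda \times \Lambda_+$ with $c \not\sim c'$; this has full $\nu_o \otimes \nu_o$-measure by \autoref{res: preferred limit set full measure} and \autoref{res: measure diagonal}. For such a pair, I would invoke \autoref{res: separation triangle cocycles} with $\Lambda_0 = \{c\}$ (using $c' \in \Lambda_h = \Lambda_+$ and $c \not\sim c'$): for every $M \in \R_+$ and a value $\ell \in \R_+$ fixed once and for all, it produces $u \in G$ with $c'(o,uo) \geq M$ such that $c$ and $c'$ are $2\alpha$-separated by $(u\gamma(-\ell), u\gamma(\ell))$, equivalently $u^{-1}(c,c') \in A(\gamma(-\ell), \gamma(\ell), 2\alpha)$. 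Calibrating $\ell$ so that $\gamma(\pm\ell)$ stays within a uniformly bounded distance of $h^{\mp N}o$ (via properties \ref{enu: alpha - fellow travel} and \ref{enu: alpha - translation}) and invoking the monotonicity of shadows (\ref{eqn: inclusion shadows triangle inequality}), one then has $u^{-1}(c,c') \in V$, provided $r$ was chosen large enough at the outset to absorb the $2\alpha$ plus the bounded fellow-travel error. Letting $M \to \infty$ yields infinitely many distinct $g := u^{-1}$ with $g(c,c') \in V$, since $\dist o{go} \geq c'(o, uo) \geq M$. \autoref{res: criterion conservativity} then gives conservativity on $(\partial^2 X, \nu_o \otimes \nu_o)$, which extends to $\partial X \times \partial X$ by \autoref{res: measure diagonal}.

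For part~(2), I would apply \autoref{res: criterion conservativity} to the commuting action of $G \times \R$ on $(SX, m)$. For $v = (c,c',s) \in SX$, a direct Fubini computation yields
\[
\sum_{g \in G} \int_\R F(g \phi_t v)\, dt = \sum_{g \in G} \mathbb 1_V(gc, gc'),
\]
since the indicator of $\intval 01$ integrates to $1$ along the $\R$-coordinate. By part~(1) this sum is infinite for $m$-a.e.\ $v$, so $G \times \R$ acts conservatively on $(SX, m)$. Because $G$ acts properly discontinuously on $S_0X$ (\autoref{res: proper action S0X}) and $m_G$ is carried by a $G$-, flip- and flow-invariant subset of $S_0X$ belonging to $\mathfrak L_G^+$ (\autoref{res: proper subset positive measure}), this conservativity descends to the $\R$-action on the quotient, i.e., to the geodesic flow on $(SX, \mathfrak L_G, m_G)$.

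The main technical obstacle is the calibration carried out in the second paragraph: converting the separation datum produced by \autoref{res: separation triangle cocycles} (phrased in terms of points on $\gamma$) into honest membership in the shadow product $V$ anchored at iterates of $h$. This requires careful bookkeeping of the error constants coming from the Hausdorff closeness of $\gamma$ to the $\group h$-orbit and from the geometry of \autoref{res: bi-gradient line}, with $N$, $r$ and $\ell$ all fixed once and for all so that the bounded errors are absorbed.
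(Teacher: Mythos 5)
Your part~(1) is essentially the paper's argument: both use \autoref{res: measure product shadows} to produce a product shadow $V$ of finite positive measure, then \autoref{res: separation triangle cocycles} to show that $\sum_g \mathbb 1_V(gc,gc')=\infty$ on a full-measure set of pairs, and finally the criterion \autoref{res: criterion conservativity}. (One small point worth spelling out: the criterion requires the action to preserve the measure, so it must be applied to $\mu$ rather than $\nu_o\otimes\nu_o$; this is harmless because $D_o^{2\omega}$ is bounded above and below on the squeezing set $V$.)

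Part~(2) is where you diverge from the paper, and where there is a gap. You deduce conservativity of $G\times\R$ on $(SX,m)$ from the Fubini identity $\sum_g \int_\R F(g\phi_t v)\,dt = \sum_g \mathbb 1_V(gc,gc')$, and then assert that this ``descends'' to conservativity of the flow on $(SX,\mathfrak L_G,m_G)$. Two things are missing. First, \autoref{res: criterion conservativity} as stated only covers a discrete group or $\R$, not the product $G\times\R$; the proof of that lemma does extend, but this needs to be said. Second, and more seriously, the descent is not automatic and is nowhere proved: conservativity of a $G\times\R$-action on a cover does not formally imply conservativity of the induced $\R$-action on the $\sigma$-algebra of $G$-invariant sets. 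What is needed is the following argument, which does work but has to be carried out: if $W\in\mathfrak L_G$ were $\phi_t$-wandering with $m_G(W)>0$, set $W'=W\cap D$ where $D$ is the Borel fundamental domain; then $m(W')>0$, and for $m$-a.e.\ $v\in W'$ one has
\begin{equation*}
	\sum_{g\in G}\int_\R \mathbb 1_{W'}(g\phi_t v)\,dt
	= \int_\R \mathbb 1_W(\phi_t v)\Bigl(\sum_{g\in G}\mathbb 1_D(g\phi_t v)\Bigr)dt
	= \card{G_v}\int_\R \mathbb 1_W(\phi_t v)\,dt < \infty,
\end{equation*}
using $G$-invariance of $W$, the defining property of $D$, and the fact that the stabilizer $G_{\phi_t v}=G_v$ is independent of the $\R$-coordinate; this exhibits $W'$ as a wandering set for $G\times\R$, a contradiction. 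The paper avoids the descent entirely: inside the proof of the claim it parametrizes $g\phi_t(v)$ explicitly, uses the separation element $u$ to show that $u^{-1}\phi_t(v)\in B$ for $t$ in an interval of length $\beta$ starting at $c'(o,uo)$, and lets $c'(o,uo)\to\infty$ to obtain $\int_\R \mathbb 1_{GB}\circ\phi_t(v)\,dt=\infty$ directly. That direct computation of the flow integral is the content your Fubini shortcut skips, and it is exactly what makes the paper's proof self-contained.
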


\begin{rema}
	Recall that $\mathfrak L^+_G$ is a sub-$\sigma$-algebra of $\mathfrak L_G$.
	Hence if the geodesic flow on $(SX, \mathfrak L_G, m_G)$ is conservative, then so is the one on $(SX, \mathfrak L^+_G, m_G)$.
	We will not use this property though.
\end{rema}

\begin{proof}
	Let $h \in G$ be a contracting element.
	Denote by $\Lambda = \Lambda_h$ the corresponding limit set studied in \autoref{res: radial limit set contracting elt}.
	According to \autoref{res: measure product shadows} there is $r_0 \in \R_+^*$ with the following property: for every $r \geq r_0$, there is $N(r) \in \N$, such that for every integer $n \geq N(r)$, the set $A(h^{-n}o, h^no,r)$ is squeezing, contained in $\partial^2_0X$, and has positive, finite $\mu$-measure.
	
	In view of \autoref{res: separation triangle cocycles}, we can find $r \geq r_0$, such that the following holds: if $c \in \partial X$ and $c' \in \Lambda$ are two non-equivalent cocycles, then for every $T \in \R_+$, for every $n \in \N$,  there is $u \in G$ such that $c(o,uo) \geq T$ and $(c,c') \in uA(h^{-n}o, h^no,r)$.
	We now fix $n \geq N(r)$ and let
	\begin{equation*}
		A = A(h^{-n}o, h^no,r)	
	\end{equation*}
	Since $A(h^{-n}o, h^no,r)$ is squeezing, there is $\beta \in \R_+$ such that for every $(c,c') \in A$, we have $\gro c{c'}o \leq \beta$.
	Let $B = A \times \intval {-\beta}{\beta}$.
	By construction, $A$ belongs to $\mathfrak B \otimes \mathfrak B$ and $GB \in \mathfrak L_G$.
	Since $\mu(A)$ is finite, so are $m(B)$ and $m_G(GB)$.
	
	\begin{clai}
	\label{cla: conservativity prelim}
		Let $v = (c,c',s)$ be a vector in $SX$.
		If $c,c' \in \Lambda$, then
		\begin{equation*}
		\label{eqn: conservativity prelim}
			\sum_{g \in G} \mathbb 1_A(gc,gc') = \infty
			\quad \text{and} \quad
			\int_\R \mathbb 1_{G B} \circ \phi_t(v) dt = \infty.
		\end{equation*}
	\end{clai}

	The value of the above integral is flow-invariant, hence we can assume that $s = -\gro c{c'}o$.
	Thus, for every $g \in G$, we have $g\phi_t(v) = (gc,gc',t + \delta(g))$ where 
	\begin{equation*}
		\delta(g) = - \gro c{c'}o + \tau(g,v) = -c'(o,g^{-1}o) - \gro c{c'}{g^{-1}o}.
	\end{equation*}
	Let $T \in \R_+$.	
	According to our choice of $r$, there is $u \in G$ such that $c'(o, uo) \geq T$ and  $(c, c') \in u A$.
	In particular, $\gro c{c'}{uo} \leq \beta$.
	Hence, $u^{-1}\phi_t(v)$ belongs to $B$, whenever
	\begin{equation*}
		0 \leq t -  c'(o,uo) \leq \beta.
	\end{equation*}
	Consequently, 
	\begin{equation*}
		\mathbb 1_A(u^{-1}c, u^{-1}c') = 1 
		\quad \text{and} \quad 
		\int_{c'(o,uo)}^{c'(o,uo)+\beta} \mathbb 1_{G B} \circ \phi_t(v) dt  = \beta.
	\end{equation*}
	The construction holds for arbitrary $T$, hence $c'(o,uo)$  can be chosen arbitrarily large in the above identity.
	Consequently, 
	\begin{equation*}
		\sum_{g \in G} \mathbb 1_A(gc,gc') = \infty
			\quad \text{and} \quad
		\int_\R \mathbb 1_{G B} \circ \phi_t(v) dt = \infty.
	\end{equation*}
	This proves our claim.

	Assume now that $\nu_o$ gives full measure to the radial limit set.
	It forces that $\omega = \omega_G$.
	Moreover the action of $G$ on $X$ is divergent.
	Consequently, the set $\Lambda$ has full $\nu_0$-measure (\autoref{res: preferred limit set full measure}).
	According to previous claim,
	\begin{enumerate}
		\item \label{enu: conservativity prelim - bdy}
		$\displaystyle \sum_{g \in G} \mathbb 1_A(gc,gc') = \infty$, for $\mu$-a.e. $(c,c') \in \partial^2X$.
		\item $\displaystyle\int_\R \mathbb 1_{G B} \circ \phi_t(v) dt = \infty$, for $m_G$-a.e. $v \in SX$.
	\end{enumerate}
	It follows from the conservativity criterion proved in \autoref{res: criterion conservativity} that the flot $\phi_t$ on $(SX, \mathfrak L, m_G)$ and the action of $G$ on $(\partial^2X, \mathfrak B \otimes \mathfrak B, \mu)$ are conservative.
	Note that $\mu$ and $\nu_o \otimes \nu_o$ restricted to $\partial^2X$ are in the same measure class.
	Hence the action of $G$ on $(\partial^2X, \mathfrak B \otimes \mathfrak B, \nu_o \otimes \nu_o)$ is conservative as well.
	Recall that the ``diagonal'' $\Delta = (\partial X \times \partial X) \setminus \partial ^2 X$ has zero measure (\autoref{res: measure diagonal}).
	Therefore the action of $G$ on $(\partial X \times \partial X, \mathfrak B \otimes \mathfrak B, \nu_o \otimes \nu_o)$ is conservative.
\end{proof}


\begin{lemm}
\label{res: dissipativity prelim}
	Let $A$ be a squeezing subset of $\partial^2 X$ and $B = A \times I$ where $I \subset \R$ is a bounded interval.
	Let $v = (c,c',s)$ be a vector in $SX$.
	Assume that one of the following holds:
	\begin{equation*}
		\sum_{g \in G} \mathbb 1_A(gc,gc') = \infty 
		\quad \text{or} \quad
		\int_\R \mathbb 1_{GB}\circ \phi_t(v) = \infty.
	\end{equation*}
	Then $c$ or $c'$ belong to the radial limit set $\Lambda_{\rm rad}(G)$ of $G$.
\end{lemm}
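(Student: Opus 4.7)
The plan is to first reduce the two hypotheses to the counting statement that $(g_nc,g_nc')\in A$ for infinitely many distinct $g_n$, and then argue that the orbit points $g_n^{-1}o$ accumulate radially toward $c'$ (or $c$) along the family of gradient arcs provided by the squeezing condition.

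To unify the hypotheses, decompose $GB=\bigcup_g gB$ with $B=A\times I$; by a change of variables using that $I$ is bounded, as in the proof of \autoref{res: conservativity geodesic flow},
\[
\int_\R \mathbb 1_{GB}\circ\phi_t(v)\,dt\ \leq\ |I|\sum_{g\in G}\mathbb 1_A(g^{-1}c,g^{-1}c').
\]
So in either case I extract an infinite sequence $(g_n)$ of distinct elements of $G$ with $(g_nc,g_nc')\in A$, and by properness, up to a subsequence $\dist o{g_n^{-1}o}\to\infty$. Fix a bottleneck $K$ of $A$ and let $C_0=\dist oK+\diam K$: the squeezing condition produces a complete gradient arc $\gamma_n$ from $g_nc$ to $g_nc'$ meeting $K$ at some $z_n$, and translating by $g_n^{-1}$, the arc $\nu_n:=g_n^{-1}\gamma_n$ is a complete gradient arc from $c$ to $c'$ passing through $y_n:=g_n^{-1}z_n$, with $\dist{y_n}{g_n^{-1}o}\leq C_0$.

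Next I would parametrise $\nu_n$ by arc length with $c'$ sitting at $+\infty$, write $y_n=\nu_n(t_n)$, and, after extracting a subsequence and possibly exchanging $c\leftrightarrow c'$, assume $t_n\to+\infty$. The identity $c'(\nu_n(s),\nu_n(t))=t-s$ gives $c'(o,y_n)=c'(o,\nu_n(0))+t_n$, while the function $t\mapsto\dist o{\nu_n(t)}-t$ is $1$-Lipschitz, non-increasing, and tends to $c'(o,\nu_n(0))$ as $t\to+\infty$. Combined with the uniform bound on $\gro{g_nc}{g_nc'}o$ (equivalently on $\gro c{c'}{g_n^{-1}o}$) coming from \autoref{res: continuous extension gromov product} applied to $A$, this produces a constant $r\in\R_+$ independent of $n$ with $\dist o{y_n}-c'(o,y_n)\leq 2r$. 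Transferring through the $C_0$-estimate between $y_n$ and $g_n^{-1}o$ yields $\gro o{c'}{g_n^{-1}o}\leq r+C_0$, i.e.\ $c'\in\mathcal O_o(g_n^{-1}o,r+C_0)$; combined with $\dist o{g_n^{-1}o}\to\infty$ this places $c'\in\Lambda_{\rm rad}(G)$.

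The main obstacle is the uniform-in-$n$ estimate on $\dist o{y_n}-c'(o,y_n)$: for each fixed arc $\nu_n$ it follows from the intrinsic asymptotic behaviour of gradient rays, but across $n$ the arcs $\nu_n$ may drift. The estimate will have to exploit that each $\nu_n$ is forced by the squeezing set $A$ to pass through an orbit translate of the fixed bottleneck $K$, which ties the Busemann content of $o$ along the family $(\nu_n)$ to the globally finite quantity controlling Gromov products on $A$. This is precisely where the squeezing assumption, rather than mere finiteness of $\gro c{c'}o$, is essential.
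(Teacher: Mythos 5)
Your reduction of the integral hypothesis to the counting one via
\[
\int_\R\mathbb 1_{GB}\circ\phi_t(v)\,dt\leq|I|\sum_{g\in G}\mathbb 1_A(g^{-1}c,g^{-1}c')
\]
is correct, and so is the overall idea of using the bottleneck $K$ to locate points near $g_n^{-1}o$ on gradient arcs from $c$ to $c'$. But the difficulty you flag at the end is a genuine gap, and the bridging estimate you sketch does not close it. The monotonicity of $t\mapsto\dist o{\nu_n(t)}-t$ only yields the \emph{lower} bound $\dist o{y_n}-c'(o,y_n)\geq 0$, whereas placing $c'$ in $\Lambda_{\rm rad}(G)$ requires an \emph{upper} bound uniform in $n$. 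The bound on $\gro{g_nc}{g_nc'}o$ from \autoref{res: continuous extension gromov product} controls $\gro c{c'}{y_n}$, but that Gromov product vanishes along any complete gradient line between the two cocycles and carries no information about $\gro{c'}o{y_n}$, which is the quantity you actually need. Since each $\nu_n$ only passes through $g_n^{-1}K$ and these translates leave every compact set, nothing pins the arcs $\nu_n$ relative to $o$, and the argument stalls exactly where you say it does.

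The fix is to read the squeezing condition as the universal statement it is: \emph{every} complete gradient arc from $g_n^{-1}c$ to $g_n^{-1}c'$ meets $K$. Do not construct a fresh $\gamma_n$ for each $n$; instead fix once and for all a single complete gradient line $\nu$ from $c$ to $c'$. Such a $\nu$ exists because the hypothesis provides at least one $g$ with $(g^{-1}c,g^{-1}c')\in A$, and since $A$ is squeezing \autoref{res: continuous extension gromov product} produces a gradient line between those cocycles, which transports back by $g$. Now extract an infinite sequence $(g_n)$ of pairwise distinct elements with $(g_n^{-1}c,g_n^{-1}c')\in A$; the arc $g_n^{-1}\nu$ is a complete gradient arc from $g_n^{-1}c$ to $g_n^{-1}c'$, hence crosses $K$. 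Therefore the \emph{fixed} path $\nu$ crosses $g_nK$ at some $q_n=\nu(t_n)$. Assuming without loss of generality that $o\in K$, with $d=\diam K$, one has $\dist{q_n}{g_no}\leq d$ and $g_no\to\infty$, so $(t_n)$ diverges to $\pm\infty$ along a subsequence; say $t_n\to+\infty$ (otherwise swap $c$ and $c'$). Normalising the parametrisation by $c'(o,x)=0$ with $x=\nu(0)$, the gradient identity $c'(x,q_n)=t_n=\dist x{q_n}$ gives
\[
c'(o,g_no)\geq c'(x,q_n)-\dist{q_n}{g_no}\geq\dist o{g_no}-\dist ox-2d,
\]
hence $\gro o{c'}{g_no}\leq\tfrac12\left[\dist ox+2d\right]$, uniformly in $n$, which places $c'$ in $\Lambda_{\rm rad}(G)$. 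This is the uniform estimate your proposal was missing, obtained precisely by not allowing the arcs to drift.
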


\begin{proof}
	Without loss of generality, we can suppose that $o$ belongs to the bottleneck $K$ of $A$.
	We write $d$ for its diameter.
	Let $\nu \colon \R \to X$ be a complete gradient line from $c$ to $c'$.
	We choose our parametrization of $\nu$ such that $c'(o,x) = 0$, where $x = \nu(0)$.
	If
	\begin{equation*}
		\sum_{g \in G} \mathbb 1_A(gc,gc') = \infty 
		\quad \text{or} \quad
		\int_\R \mathbb 1_{GB}\circ \phi_t(v) = \infty,
	\end{equation*}
	then there is an infinite sequence $(g_n)$ of pairwise distinct elements of $G$ such that for every $n \in \N$, the path $\nu$ crosses $g_nK$, at the point $q_n = \nu(t_n)$ say.
	Since the action of $G$ on $X$ is proper, the sequence $(g_no)$ diverges to infinity.
	Recall that both $o$ and $g_n^{-1}q_n$ belong to $K$, whose diameter is bounded.
	Thus, up to passing to a subsequence, we can assume that $(t_n)$ diverges to $\pm \infty$.
	Assume that it diverges to $+ \infty$.
	Since $\nu$ is a gradient line for $c'$, and $c'$ is $1$-Lipschitz, we get
	\begin{align*}
		c'(o, g_no)
		\geq c'(x, q_n)  -\dist {q_n}{g_no}
		& \geq \dist x{q_n} -\dist {q_n}{g_no} \\
		& \geq \dist o{g_no} - [\dist ox + 2\dist {q_n}{g_no}] \\
		& \geq \dist o{g_no} - [\dist ox + 2d].
	\end{align*}
	In other words, there is $R \in \R_+$ such that $c' \in \mathcal O_o(g_no,R)$, for every sufficiently large $n \in \N$.
	This implies that $c'$ belongs to the radial limit set.
	If $(t_n)$ diverges to $- \infty$, we prove in the same way that $c$ lies in the radial limit set.
\end{proof}


\begin{prop}
\label{res: dissipativity geodesic flow}
	If $\nu_o$ gives zero measure to the radial limit set, then
	\begin{enumerate}
		\item the diagonal action of $G$ on $(\partial_0^2X, \mathfrak B \otimes \mathfrak B, \nu_o \otimes \nu_o)$ is dissipative, 
		\item
		the geodesic flow on $(SX, \mathfrak L_G, m_G)$ is dissipative.
	\end{enumerate}
\end{prop}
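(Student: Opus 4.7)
The plan is to exhibit, in each case, a countable cover by wandering sets, using \autoref{res: dissipativity prelim} as the workhorse. The key contrapositive reading is: for any squeezing $A \subset \partial^2 X$ and any bounded $I \subset \R$, if neither the past nor the future of $v = (c, c', s)$ lies in $\Lambda_{\rm rad}(G)$, then both $\sum_{g \in G} \mathbb 1_A(gc, gc')$ and $\int_\R \mathbb 1_{GB}\circ \phi_t(v)\,dt$ (with $B = A \times I$) are finite.

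First, set $E_0 = (\Lambda_{\rm rad}(G) \times \partial X) \cup (\partial X \times \Lambda_{\rm rad}(G))$. The hypothesis $\nu_o(\Lambda_{\rm rad}(G)) = 0$ gives $(\nu_o \otimes \nu_o)(E_0) = 0$; since $\mu$ is absolutely continuous with respect to $\nu_o \otimes \nu_o$, also $\mu(E_0 \cap \partial^2 X) = 0$, and with the usual convention $0 \cdot \infty = 0$ one obtains $m(E_0 \times \R) = 0$, hence $m_G(E_0 \times \R) = 0$. Second, since $\bar X$ is a compact metric space, $\bar X \times \bar X$ is second countable; a Lindelöf argument applied to the squeezing neighborhoods (one at each point of $\partial_0^2 X$, by definition of this set) extracts a countable family $(A_n)_{n \in \N}$ of open squeezing subsets of $\partial^2 X$ whose union contains $\partial_0^2 X$.

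For assertion (1), each $A_n$ is wandering for the $G$-action: by the contrapositive of \autoref{res: dissipativity prelim}, $\sum_g \mathbb 1_{A_n}(gc, gc') < \infty$ for every $(c, c') \in A_n \setminus E_0$, which accounts for $(\nu_o \otimes \nu_o)$-almost every point of $A_n$. Since $\partial_0^2 X \subset \bigcup_n A_n$, this gives the dissipative decomposition. For assertion (2), set $B_{n,k} = A_n \times \intval{-k}k$ and consider the $G$-invariant sets $GB_{n,k} \in \mathfrak L_G$. Applied to $v \in GB_{n,k}$ with $c, c' \notin \Lambda_{\rm rad}(G)$, the same lemma yields $\int_\R \mathbb 1_{GB_{n,k}}\circ \phi_t(v)\,dt < \infty$; since this condition excludes only an $m_G$-null subset of $GB_{n,k}$ by the first step, the set $GB_{n,k}$ is wandering for the flow. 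As $S_0X = \bigcup_{n,k} B_{n,k} \subset \bigcup_{n,k} GB_{n,k}$ and $m_G$ is supported on $S_0X$, this is the required decomposition.

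The work is essentially bookkeeping: the substantive input—that pairs outside $\Lambda_{\rm rad}(G)^2$ have finite return statistics to any squeezing box—has already been established by \autoref{res: dissipativity prelim}. The only points needing care are that the Lindelöf extraction produces subsets of $\partial^2 X$ (which follows because squeezing sets have finite Gromov product at $o$ by \autoref{res: continuous extension gromov product}), and that $E_0 \times \R$ is $m_G$-null despite $\R$ having infinite Lebesgue measure, which is the mildly delicate point requiring the $0 \cdot \infty = 0$ convention.
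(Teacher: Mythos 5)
Your proof is correct and follows essentially the same strategy as the paper: a second-countability (Lindelöf) argument to extract a countable cover by squeezing boxes, combined with \autoref{res: dissipativity prelim} and the nullity of $(\Lambda_{\rm rad}(G) \times \partial X) \cup (\partial X \times \Lambda_{\rm rad}(G))$. The only (cosmetic) difference is that you directly verify the wandering condition for each $GB_{n,k}$, whereas the paper applies Poincar\'e recurrence to the conservative part $\Omega_C$ of the Hopf decomposition and concludes $m_G(\Omega_C) = 0$; the two routes are equivalent under $\sigma$-finiteness of $m_G$.
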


\begin{proof}
	Let $\mathcal U$ be the collection of open squeezing subsets of $\bar X \times \bar X$.
	It follows from the definition that $\mathcal U$ covers $\partial^2_0X$.
	Recall that $\partial X$ is compact metrizable, hence second-countable.
	In particular, so is $\partial^2_0X$.
	Consequently, it is covered by a countable subset $\mathcal U_0 \subset \mathcal U$.
	We now write $\mathcal B_0$ for the collection of all subets $B = (U \cap \partial^2X) \times \intval s{s'}$ of $S_0X$ where $U \in \mathcal U_0$ and $s,s' \in \Q$.
	According to the above discussion, $\mathcal B_0$ is a countable collection covering $S_0X$.

	We now prove that the geodesic flow on $(SX, \mathfrak L_G, m_G)$ is dissipative. 
	Denote by $\Omega_C$ the conservative part of the geodesic flow.
	Let $B \in \mathcal B_0$.
	By recurrence, we know that for $m_G$-almost every vector $v = (c,c',s)$ in $GB \cap \Omega_C$ we have
	\begin{equation*}
		\int_\R \mathbb 1_{GB}\circ \phi(t) dt = \infty.
	\end{equation*}
	Hence for $m_G$-almost every vector $v = (c,c',s)$ in $GB \cap \Omega_C$ either $c \in \Lambda$ or $c' \in \Lambda$ (\autoref{res: dissipativity prelim}).
	However according to our assumption $\partial X \times \Lambda$ and $\Lambda \times \partial X$ have zero $(\nu_0 \otimes \nu_0)$-measure.
	Hence $m_G(GB \cap \Omega_C) = 0$.
	The sets $GB$ as above are countably many and cover $S_0X$.
	Moreover, $m_G$ is supported on $S_0X$.
	Consequently $\Omega_C$ has zero $m_G$-measure.
	The geodesic flow is therefore dissipative.	
	The same argument as above shows that the diagonal action of $G$ on $(\partial^2_0X, \mathfrak B \otimes \mathfrak B, \nu_o \otimes \nu_o)$ is dissipative.
\end{proof}

The next statement is a variation of the previous one for the saturated $\sigma$-algebras.
As we noticed earlier, if $\nu_o$ gives full measure to the radial limit, then the geodesic flow on $(SX, \mathfrak L_G,m_G)$ is conservative, hence so is the geodesic flow on  $(SX, \mathfrak L^+_G,m_G)$.
However \autoref{res: dissipativity geodesic flow} does not automatically provide the converse statement.
This is purpose of the next result.

\begin{prop}
\label{res: non conservative geodesic flow}
	If $\nu_o$ gives zero measure to the radial limit set, then 
	\begin{enumerate}
		\item \label{enu: non conservative geodesic flow - bdy} 
		the diagonal action of $G$ on $(\partial^2X, \mathfrak B^+ \otimes \mathfrak B^+, \nu_o \otimes \nu_o)$ is not conservative;
		\item \label{enu: non conservative geodesic flow - flow}
		the geodesic flow on $(SX, \mathfrak L^+_G,m_G)$ is not conservative. 
	\end{enumerate}		
\end{prop}

\begin{rema}
	Item \ref{enu: non conservative geodesic flow - flow} is given for the benefit of the reader.
	It will not be used in the proof of the Hopf-Tsuji-Sullivan dichotomy.
\end{rema}

\begin{proof}
	Let $h \in G$ be a contracting element.
	Combining Lemmas~\ref{res: measure product shadows} and \ref{res: saturation shadow} we can find $r_0, r_1 \in \R_+$ with $r_0 \leq r_1$ and $n \in \N$ with the following properties.
	\begin{itemize}
		\item The set $A_i = A(h^{-n}o, h^no, r_i)$ is squeezing with finite positive measure.
		\item The set 
		\begin{equation*}
			A^+ = \mathcal O^+_{h^no}(h^{-n}o, r_0) \times \mathcal O^+_{h^{-n}o}(h^no, r_0) 
		\end{equation*}
		belongs to $\mathfrak B^+ \otimes \mathfrak B^+$ and satisfies $A_0 \subset A^+ \subset A_1$.
	\end{itemize}
	Let $S_1X = GA^+ \times \R$.
	We consider now the restriction of the geodesic flow to the space $(S_1X, \mathfrak L^+_G, m_G)$.
	By construction $S_1X$ has positive $m_G$-measure.
	Moreover $m_G$ restricted to $(S_1X, \mathfrak L^+_G)$ is $\sigma$-finite.
	In particular it admits a Hopf decomposition.
	Denote by $\Omega_C$ its conservative part.
	
	Let $I$ be a compact interval and $B = A^+ \times I$.
	As a subset of $A_1$, the set $A^+$ is squeezing.
	Proceeding as in the proof of \autoref{res: dissipativity geodesic flow}, we prove that $m_G(GB \cap \Omega_C) = 0$.
	However $S_1X$ can be covered by countably many sets $GB$ as above.
	Therefore $m_G(\Omega_C) = 0$ and the geodesic flow on $(S_1X, \mathfrak L^+_G, m_G)$ is dissipative.
	In particular, $S_1X$ contains a wandering set $W \in \mathfrak L^+_G$ with positive measure.
	Observe that $W$ is also a wandering set for the geodesic flow on $(SX, \mathfrak L^+_G,m_G)$.
	Thus the latter is not conservative.
	
	Proceeding in the same way, we prove first that the diagonal action of $G$ on $(GA^+, \mathfrak B \otimes \mathfrak B^+, \nu_o \otimes \nu_o)$ is dissipative, and then that the diagonal action of $G$ on $(\partial^2X, \mathfrak B^+ \otimes \mathfrak B^+, \nu_o \otimes \nu_o)$ is not conservative.
\end{proof}


%
\subsection{Ergodicity of the geodesic flow}
\label{sec: ergodicity}
%

The remainder of this section is mainly dedicated to the proof of the following statement.

\begin{theo}
\label{res: ergodicity geodesic flow}
	Assume that the geodesic flow on $(SX, \mathfrak L_G,m_G)$ is conservative.
	Then the diagonal action of $G$ on $(\partial^2X, \mathfrak B^+ \otimes \mathfrak B^+, \mu)$ is ergodic.
\end{theo}

For this proof, we will only work with $(SX, \mathfrak L_G, m_G)$ and not $(SX, \mathfrak L^+_G, m_G)$.
In particular, we will apply the Hopf ergodic theorem in $L^1(SX, \mathfrak L_G, m_G)$.
We assume that the geodesic flow on $(SX, \mathfrak L_G,m_G)$ is conservative.
It follows from \autoref{res: dissipativity geodesic flow} that $\nu_o$ gives positive measure to the radial limit set.
Hence $\omega = \omega_G$ and the action of $G$ on $X$ is divergent (\autoref{res: quasi-conf + ergo}).
Following Bader and Furman \cite{Bader:2017te}, we define the following two operations.

\paragraph{From the boundary to the unit tangent bundle.}
We define a map
\begin{equation}
\label{eqn : ergo2 - tensor product}
	\begin{array}{ccc}
		L^1(\partial^2X, \mathfrak B \otimes \mathfrak B, \mu) \times L^1(\R,ds) & \to & L^1(SX,\mathfrak L, m) \\
		(f,\theta)                             & \mapsto & f_\theta
	\end{array}
\end{equation}
where 
\begin{equation*}
	f_\theta(v) = f(c,c') \theta(s), \quad \text{for every $v = (c,c',s)$ in $SX$}.
\end{equation*}
For every $f \in L^1(\partial^2X, \mathfrak B \otimes \mathfrak B, \mu)$ and $\theta \in L^1(\R,ds)$, we have
\begin{equation*}
	\int_{SX} f_\theta dm = \int_{\partial^2X} f d\mu \int_\R \theta(s) ds.
\end{equation*}
If $\theta$ is symmetric, then the operation $f \mapsto f_\theta$ is equivariant with respect to the actions induced by the flip involutions.

\paragraph{From the cover to the quotient.}
Given a \emph{non-negative} summable function $f \in L^1_+(SX,\mathfrak L, m)$, we define a $G$-invariant function
\begin{equation*}
\label{eqn: ergo2 - averaging function}
	\begin{array}{rccc}
		\hat f \colon & SX & \to & \R_+ \cup \{\infty\} \\
		 & v & \mapsto & \displaystyle \sum_{g \in G} f\left(gv\right).
	\end{array}
\end{equation*}
Recall that $\xi \colon SX \to \intval 01$ is the $G$-invariant map defined by (\ref{eqn: def stab map}) while $D \in \mathfrak L$ stands for a Borel fundamental domain for the action of $G$ on $S_0X$.
Moreover, we have the relation
\begin{equation*}
	\mathbb 1_{S_0X} = \xi \sum_{g \in G} \mathbb 1_{gD}.
\end{equation*}
Combined with the fact that $m$ is $G$-invariant, we get
\begin{equation}
	\label{eqn: ergo2 - averaging vs integration}
	\int_{SX} \hat   f d  m_G
	=  \int \xi \left(\sum_{g \in G} f\circ g\right) \mathbb 1_D dm
	= \int f\xi   \left(\sum_{g \in G}\mathbb 1_{gD} \right) dm
	= \int f \mathbb 1_{S_0X} dm.
\end{equation}
In particular, $\hat f\in L^1_+(SX,\mathfrak L_G, m_G)$.

We can now extend this operation to any function in $L^1(SX,\mathfrak L_G, m_G)$ (not just the non-negative ones).
Consider such a function $f$.
The above discussion applied to $f' = \abs f$ tells us that $\hat f'$ is summable, hence finite $m_G$-almost everywhere.
Thus the series
\begin{equation*}
	\sum_{g \in G} f\left(gv\right)
\end{equation*}
is absolutely convergent, $m_G$-almost everywhere, and can be used to define a function $\hat f$ as above.
Observe also that 
\begin{equation*}
	\abs{\hat f} \leq \widehat{\abs f}.
\end{equation*}
so that $\hat f$ belongs to $L^1(SX,\mathfrak L_G, m_G)$.
Recall that the action of $G$ commutes with the flip $\sigma$.
Hence the operation $f \mapsto \hat f$ commutes with the action induced by the flip.

\paragraph{A particular space of functions.}
For the remainder, we fix once and for all the function $\theta \colon \R \to \R$ we are going to work with, namely 
\begin{equation*}
	\theta(t) = e^{-2\abs t}.
\end{equation*}
For every $T_1, T_2 \in \R$ with $T_1 \leq T_2$ we define
\begin{equation*}
	\Theta_{T_1}^{T_2}(u) = \int_{T_1}^{T_2}\theta(t + u) dt.
\end{equation*}
This function is \og almost constant \fg on $[-T_2,-T_1]$ and decays exponentially outside this interval.
More precisely, we have the following useful estimates.
\begin{enumerate}
	\item For every $u \in \R$,
	      \begin{equation}
		      \label{eqn : ergo2 - tail Theta}
		      \Theta_{T_1}^{T_2}(u) \leq \frac 12\min \left\{ e^{2(T_2+u)}, e^{-2(T_1+u)} \right\}.
	      \end{equation}
	\item For every $u \in [-T_2,-T_1]$,
	      \begin{equation*}
		      \Theta_{T_1}^{T_2}(u) = 1 - \frac 12\left[ e^{2(T_1+u)} + e^{-2(T_2+u)}\right].
	      \end{equation*}
	      Consequently, for every $u, u' \in [-T_2, -T_1]$,
	      \begin{equation}
		      \label{eqn : ergo2 - belly Theta}
		      \abs{\Theta_{T_1}^{T_2}(u) - \Theta_{T_1}^{T_2}(u')}
		      \leq \left[ e^{\left(2T_1 + u+u'\right)} +e^{-\left(2T_2 + u+u'\right)} \right]\sinh \left(\abs{u - u'}\right).
	      \end{equation}
	      Heuristically, the difference on the left hand side almost vanishes whenever $-T_2 \ll u, u' \ll -T_1$.
\end{enumerate}
See \autoref{fig: graph Theta} for a sketch of the graph of $\Theta_{T_1}^{T_2}$.

\begin{figure}[htbp]
\centering
	\includegraphics[page=1, width = \linewidth]{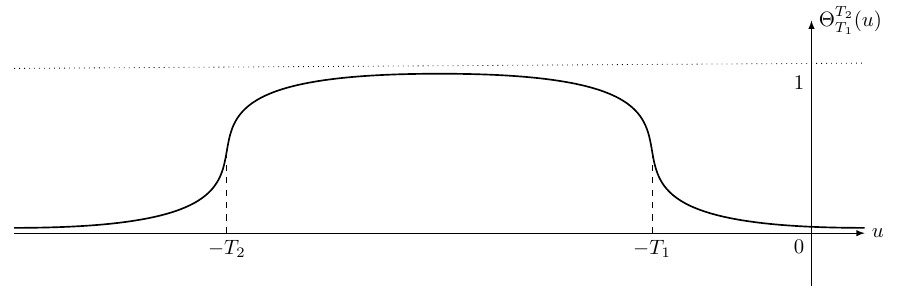}
	\caption{Graph of the map $\Theta_{T_1}^{T_2}$.}
	\label{fig: graph Theta}
\end{figure}

\medskip
Denote by $\Lambda$ the preferred limit set studied in \autoref{res: reduced bdy standard}.
The geodesic $\gamma \colon \R \to X$ and the parameter $\alpha$ are the same as the one fixed in this section.
Let $L > 200\alpha$.
We consider here the $L$-visual pseudo-metric $\distV[L]$ on $\Lambda$ introduced in \autoref{def: L-visual pseudo metric}.
Recall that, by construction, the pseudo-distance $\dist[L]c{c'}$ between two points $c,c' \in \Lambda$ only depends on their equivalence classes for $\sim$.
Hence balls in $(\Lambda, \distV[L])$ belong to $\mathfrak B^+$.

Let $b, b' \in \Lambda$ which are not equivalent.
We write $r_1,r_1', \beta$ for the parameters given by \autoref{res: finite Gromov product from visual metric} applied with $b$ and $b'$.
Let $(r,r') \in (0, r_1) \times (0, r'_1)$.
We consider the function $f \colon \partial^2X \to \R$ defined by $f = \mathbb 1_{A \times A'}$, where $A$ (\resp $A'$) is the open ball in $(\Lambda, \distV[L])$ of radius $r$ centered at $b$ (\resp of radius $r'$ centered at $b'$) seen as a subset of $\partial X$.
A function of this form is called \emph{elementary}.
We denote by $\mathcal E$ the collection of all elementary functions.
Note that $\mathcal E$ is invariant under the action induced by $G$ and the flip involution on $\partial^2X$.
We summarize in the next statement the properties of elementary functions.

\begin{lemm}
\label{res: product of balls in E}
	Let $f \in \mathcal E$.
	There is $\beta \in \R_+$ such that the following holds:
	\begin{enumerate}
		\item \label{enu: product of balls in F - gromov}
		For every $(c,c') \in \partial^2X$, if $f(c,c') \neq 0$, then $\gro c{c'}o \leq \beta$.
		\item \label{enu: product of balls in F - summable}
		The map $f$ belongs to $L^1(\partial^2X, \mathfrak B^+ \otimes \mathfrak B^+, \mu)$.
		\item \label{enu: product of balls in F - bounded}
		The map $\hat f_\theta$ is bounded.
		\item \label{enu: product of balls in F - locally cst}
		Let $c_1, c_2, c' \in \Lambda$ such that $(c_1,c')$ and $(c_2, c')$ belongs to $\partial^2X$.
		There is $T \in \R$, such that
		\begin{equation*}
			f(gc_1,gc') = f(gc_2,gc'), \quad \text{whenever} \quad 
			c'(o,g^{-1}o) \geq T.
		\end{equation*}
		\item \label{enu: product of balls in F - counting}
		For every $a \in \R_+$, there is $N\in \N$ such that for every $(c,c') \in \partial^2X$, for every $T \in \R_+$ the set
		\begin{equation*}
			\set{g \in G}{ f(gc,gc') \neq 0, \abs{c'(o, g^{-1}o) - T} \leq a}
		\end{equation*}
		contains at most $N$ elements.
	\end{enumerate}
\end{lemm}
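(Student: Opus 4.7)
The plan is to extract all five properties from the single uniform bound supplied by \autoref{res: finite Gromov product from visual metric}: the constants $r_1$, $r_1'$, $\beta$ in the definition of $f$ are precisely those furnished by that lemma, so $\gro c{c'}o \leq \beta$ for every $(c,c') \in A \times A'$, establishing \ref{enu: product of balls in F - gromov}. For \ref{enu: product of balls in F - summable}, observe that $\distV[L]$ is an ultra-metric on $\Lambda\qsim$ whose induced topology agrees with the quotient topology inherited from $\partial X$ (Propositions~\ref{res: metric quotient} and~\ref{res: metric top vs quotient top}); consequently the open balls $A$ and $A'$ are saturated open subsets of $\Lambda$, hence Borel subsets of $\partial X$ lying in $\mathfrak B^+$. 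Integrability then reads
\begin{equation*}
    \int f\, d\mu = \int_{A \times A'} e^{2\omega \gro c{c'}o}\, d(\nu_o \otimes \nu_o) \leq e^{2\omega \beta}\, \nu_o(A)\, \nu_o(A') < \infty.
\end{equation*}

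For \ref{enu: product of balls in F - locally cst}, the assumption $(c_i,c') \in \partial^2 X$ forces $c_i \not\sim c'$, so \autoref{res: small distance with three cocycles} produces a threshold $T$ with $\dist[L]{gc_1}{gc_2} < r$ whenever $c'(o, g^{-1}o) \geq T$. Since $f(gc_i, gc') = \mathbb 1_A(gc_i) \mathbb 1_{A'}(gc')$ and the second factor does not depend on $i$, it suffices to show $\mathbb 1_A(gc_1) = \mathbb 1_A(gc_2)$; but $\distV[L]$ being an ultra-metric, the open ball $A$ of radius $r$ around $b$ is simultaneously closed, and the isosceles property then forces $gc_1$ and $gc_2$ either both to lie in $A$ or both to lie outside. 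For the counting statement \ref{enu: product of balls in F - counting}, assume $f(gc,gc') \neq 0$: then $\gro c{c'}{g^{-1}o} = \gro{gc}{gc'}o \leq \beta$, so \autoref{res: existence bi-infinite gradient line} yields a complete gradient line $\nu$ from $c$ to $c'$ passing within distance $\beta$ of $g^{-1}o$. Since $c'$ is $1$-Lipschitz and satisfies $c'(\nu(s), \nu(t)) = t - s$, the condition $\abs{c'(o, g^{-1}o) - T} \leq a$ constrains $g^{-1}o$ to lie in a fixed ball whose radius depends only on $a$ and $\beta$; properness of the $G$-action then uniformly caps the number of such $g$ by an integer $N$ independent of $(c,c')$.

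The main technical point is \ref{enu: product of balls in F - bounded}. Expanding $\tau(g,v) = \frac 12 [c'(g^{-1}o, o) - c(g^{-1}o, o)]$ and using the cocycle relation~(\ref{eqn: gromov product - conf}), one computes
\begin{equation*}
    s + \tau(g,v) = s + \gro c{c'}o - \gro c{c'}{g^{-1}o} - c'(o, g^{-1}o).
\end{equation*}
For those $g$ contributing to $\hat f_\theta(v) = \sum_{g \in G} f(gc, gc')\, \theta(s + \tau(g,v))$, property \ref{enu: product of balls in F - gromov} bounds $\gro c{c'}{g^{-1}o}$ by $\beta$. Partitioning these $g$ by the integer part $n$ of $c'(o, g^{-1}o)$ and applying \ref{enu: product of balls in F - counting} with $a = 1$, at most $N$ elements contribute per slice, each giving a value of $\theta$ at a point within $\beta + 1$ of $s + \gro c{c'}o - n$. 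Exploiting the exponential decay $\theta(u) = e^{-2\abs u}$, the total is dominated by $N e^{2(\beta + 1)} \sum_{n \in \Z} e^{-2\abs{s + \gro c{c'}o - n}}$, a universal geometric series independent of $v$. The main obstacle is really the algebraic identity above relating $\tau(g,v)$ to the Gromov product at $g^{-1}o$; once in place, properness of the action and summability of $\theta$ combine mechanically to produce the uniform bound.
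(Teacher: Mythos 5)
Your proposal is essentially correct and follows the same overall strategy as the paper's proof, so let me flag only the small deviations. First, a citation slip in your argument for \ref{enu: product of balls in F - counting}: you invoke \autoref{res: existence bi-infinite gradient line}, but that lemma requires a point $z$ with $\gro c{c'}z = 0$ exactly, not merely $\gro c{c'}{g^{-1}o} \leq \beta$. What you actually need is \autoref{res: finite Gromov product from visual metric} applied to the pair $(gc,gc') \in A \times A'$: it says $o$ is $\beta$-close to any complete gradient line from $gc$ to $gc'$, and hence $g^{-1}o$ is $\beta$-close to any complete gradient line from $c$ to $c'$. (Existence of such a gradient line comes from the same lemma via its use of Propositions~\ref{res: separation triangle cocycles} and~\ref{res: bi-gradient line}; the paper simply fixes one at the start.) Second, your invocation of \ref{enu: product of balls in F - counting} with integer slices $n$ ranging over $\Z$ technically oversteps the statement, which restricts $T$ to $\R_+$; the restriction is inessential — the proof of that item covers any $T \in \R$ — but it is worth noting.

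Your treatment of \ref{enu: product of balls in F - bounded} is a genuine (if small) variant. The paper chooses a specific parametrization $\nu(0)=x$ with $\tfrac 12[c'(o,x) - c(o,x)] = s$, shows $s + \tau(g,v) = \tfrac 12[c'(g^{-1}o,x) - c(g^{-1}o,x)]$, and slices the contributing $g$'s by arc length along $\nu$. You instead derive the identity $s + \tau(g,v) = s + \gro c{c'}o - \gro c{c'}{g^{-1}o} - c'(o,g^{-1}o)$ directly from~(\ref{eqn: gromov product - conf}) (the computation checks out), bound $\gro c{c'}{g^{-1}o}$ by $\beta$ via \ref{enu: product of balls in F - gromov}, and slice by the integer part of $c'(o,g^{-1}o)$ using \ref{enu: product of balls in F - counting}. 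This is a bit more intrinsic — it sidesteps fixing a parametrization of a gradient line entirely — and it makes the logical dependence on item \ref{enu: product of balls in F - counting} explicit rather than repeating the underlying counting argument. Both give the same geometric series bound. The remaining parts (\ref{enu: product of balls in F - summable}, \ref{enu: product of balls in F - locally cst}, \ref{enu: product of balls in F - gromov}) coincide with the paper's.
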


\begin{proof}
	Let $b, b' \in \Lambda$ which are not equivalent.
	We write $r_1,r_1', \beta$ for the parameters given by \autoref{res: finite Gromov product from visual metric} applied with $b$ and $b'$.
	Let $(r,r') \in (0, r_1) \times (0, r'_1)$.
	Denote by $A$ (\resp $A'$) the open ball of $(\Lambda, \distV[L])$ of radius $r$ (\resp $r'$) and consider the elementary function $f = \mathbb 1_{A \times A'}$.
	It follows from \autoref{res: finite Gromov product from visual metric}, that the map $\partial^2X \to \R_+\cup \{\infty\}$ sending $(c,c')$ to $\gro c{c'}o$ is bounded above by $\beta$, when restricted to $A \times A'$, whence \ref{enu: product of balls in F - gromov}.
	Consequently $f$ belongs to $L^1(\partial^2X, \mathfrak B^+ \otimes \mathfrak B^+, \mu)$, which proves \ref{enu: product of balls in F - summable}.
	
	Let $c_1, c_2, c' \in \Lambda$ such that $(c_1,c')$ and $(c_2, c')$ belongs to $\partial^2X$.
	According to \autoref{res: small distance with three cocycles}, there is $T \in \R$ such that for every $g \in G$,
	\begin{equation*}
		\dist[L]{gc_1}{gc_2} < r \quad \text{provided} \quad c'(o,g^{-1}o) \geq T.
	\end{equation*}
	Consider such an element $g$.
	Recall that $\distV[L]$ is an ultra-metric. 
	Hence $gc_1 \in A$ if and only if $gc_2 \in A$.
	Thus $f(gc_1,gc') = f(gc_2,gc')$, whence \ref{enu: product of balls in F - locally cst}.
	
	We now prove \ref{enu: product of balls in F - counting}.
	Recall that the action of $G$ on $X$ is proper.
	Hence there is a map $N \colon \R_+ \to \N$, such that for every $\ell \in \R_+$ and $y \in X$, the set 
	\begin{equation*}
		F(y, \ell)  = \set{g \in G}{\dist y{go} \leq \ell}
	\end{equation*}
	contains at most $N(\ell)$ elements.
	Indeed, if $F(y, \ell)$ is non-empty, then it is contained in $F(go, 2\ell)$ for some $g \in G$.
	Let $(c,c') \in \partial^2 X$ and $T \in \R$.
	Up to replacing $(c,c', T)$ by $(uc,uc', T - c'(o, u^{-1}o))$, we can assume without loss of generality that $f(c,c') \neq 0$.
	Fix a complete gradient line $\nu \colon \R \to X$ from $c$ to $c'$ parametrized such that $c'(o, \nu(0)) = 0$.
	Consider an element $g \in G$ such that $f(gc,gc') \neq 0$, i.e. $(gc,gc') \in A \times A'$.
	It follows from \autoref{res: finite Gromov product from visual metric} that there is $y = \nu(t)$ such that $\dist y{g^{-1}o} \leq \beta$.
	According to our choice of parametrization, we have
	\begin{equation*}
		c'(o,g^{-1}o) - t
		=  c'(\nu(t), \nu(0)) +  c'(\nu(0), o) + c'(o,g^{-1}o)
		= c'(y, g^{-1}o).
	\end{equation*}
	As $c'$ is $1$-Lipschitz, we get $\abs{c'(o,g^{-1}o) - t} \leq \beta$.
	It follows from this discussion that for every $a, T \in \R$, the set
	\begin{equation*}
		\set{g \in G}{ f(gc,gc') \neq 0, \abs{c'(o,g^{-1}o) - T} \leq a} \subset F\left( \nu(T), a + 2\beta\right),
	\end{equation*}
	which completes the proof of \ref{enu: product of balls in F - counting}.

	We are left to prove \ref{enu: product of balls in F - bounded}, i.e. that $\hat f_\theta$ is bounded.
	Let $v = (c, c',s)$ be a vector of $SX$.
	Observe that if $(c,c') \notin \Lambda \times \Lambda$, then $\hat f_\theta(v) = 0$.
	Assume now that $(c,c') \in \Lambda \times \Lambda$.
	In particular, there exists a bi-infinite gradient line $\nu \colon \R \to X$ from $c$ to $c'$ (Propositions~\ref{res: separation triangle cocycles} and~\ref{res: bi-gradient line}).
	We choose the parametrization of $\nu$ such that 
	\begin{equation*}
		\frac 12 \left[ c'(o, x) - c(o, x)\right] = s, \quad \text{where} \quad x = \nu(0).
	\end{equation*}
	The computation yields,
	\begin{equation*}
		\hat f_\theta (v) = \sum_{g \in G} \mathbb 1_{A\times A'} (gc, gc') \theta\left(s + \tau(g,v)\right).
	\end{equation*}
	
	Consider an element $g \in G$ such that the corresponding term in the above sum does not vanish, i.e. $(gc,gc') \in A \times A'$.
	It follows from \autoref{res: finite Gromov product from visual metric} that there is $y = \nu(t)$ such that $\dist y{g^{-1}o} \leq \beta$.
	According to our choice of parametrization of $\nu$ we have
	\begin{equation*}
		s + \tau(g,v) = \frac 12 \left[c'(g^{-1}o,x) - c(g^{-1}o,x)\right].
	\end{equation*}
	Since $c$ and $c'$ are $1$-Lipschitz, this quantity differs from 
	\begin{equation*}
		\frac 12 \left[c'(y,x)  - c(y,x)\right] = -t = \pm \dist xy.
	\end{equation*}
	by at most $\beta$.
	Combined with the triangle inequality, it yields
	\begin{equation*}
		\abs{s + \tau(g,v)} \geq \dist x{g^{-1}o}  - 2\beta.
	\end{equation*}
	Coming back the computation of $\hat f_\theta$, the above discussion implies that
	\begin{equation*}
		\hat f_\theta (v) \leq e^{4\beta} \sum_{g \in E} e^{-2\dist x{go}}
	\end{equation*}
	where $E$ consists of all elements $g \in G$ such that $go$ is $\beta$-close to $\nu$.
	Hence 
	\begin{equation*}
		\hat f_\theta (v) 
		\leq e^{4\beta} \sum_{k \in \Z} \sum_{g \in F(\nu(k \beta), 2\beta)}  e^{-2\dist x{go}}
		\leq N(2\beta)e^{8\beta}  \sum_{k \in \Z} e^{-2\abs{k}\beta} 
		< \infty.
	\end{equation*}
	The above estimate does not depend on $v$, hence $\hat f_\theta$ is bounded.
\end{proof}

\begin{prop}
\label{res: dense subspace for Hopf}
	The linear space spanned by $\mathcal E$ is dense in $L^1(\partial^2X, \mathfrak B^+ \otimes \mathfrak B^+, \mu)$.
\end{prop}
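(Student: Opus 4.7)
The plan is to approximate $L^1$-functions by successively simpler objects, reducing to indicators of elementary rectangles which, by the structure of $(\Lambda\qsim, \distV[L])$ as a separable ultra-metric space, form a basis for the product topology on $\Lambda\qsim \times \Lambda\qsim$ away from the diagonal.

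First, I would use that in the conservativity setting of \autoref{res: ergodicity geodesic flow}, Propositions \ref{res: non conservative geodesic flow} and \ref{res: preferred limit set full measure} imply $\nu_o(\Lambda) = 1$. Standard density of simple functions in $L^1(\partial^2 X,\mathfrak B^+ \otimes \mathfrak B^+, \mu)$ combined with the definition of the product $\sigma$-algebra reduces the task to approximating $\mathbb 1_{A \times A'}$ for $A, A' \in \mathfrak B^+$ contained in $\Lambda$ with $\mu(A\times A') < \infty$. Moreover, the ``equivalence diagonal''
\begin{equation*}
S = \set{(c,c') \in \Lambda \times \Lambda}{c \sim c'}
\end{equation*}
has $\mu$-measure zero: via the identification of $\mathfrak B^+$-measurable subsets of $\Lambda$ with Borel subsets of $\Lambda\qsim$ (see \autoref{rem: standard measure space}), $S$ corresponds to the diagonal of $\Lambda\qsim \times \Lambda\qsim$, which carries no mass since the pushforward of $\nu_o\lvert_\Lambda$ to $\Lambda\qsim$ is non-atomic by \autoref{res: quasi-conf + ergo}. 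So it suffices to approximate indicators supported in $(\Lambda\times\Lambda)\setminus S$.

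The core of the argument is topological. By Propositions \ref{res: complete metric space} and \ref{res: metric top vs quotient top}, $(\Lambda\qsim, \distV[L])$ is a separable complete ultra-metric space. An elementary function $\mathbb 1_{A\times A'}\in\mathcal E$ corresponds precisely to a product of open balls $B(b,r) \times B(b',r')$ (viewed in $\partial X$ as pre-images of balls in $\Lambda\qsim$) with $b \not\sim b'$ and radii $r, r'$ smaller than the thresholds $r_1(b,b'), r'_1(b,b')$ provided by \autoref{res: finite Gromov product from visual metric}. Since for every non-equivalent pair $(b,b')$ arbitrarily small such radii are admissible, the images of these rectangles in $\Lambda\qsim \times \Lambda\qsim$ form a basis for the product topology on the complement of the diagonal. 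The ultra-metric property ensures that two such balls are always either disjoint or nested, so countable unions may be rearranged into disjoint ones.

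Finally, by second countability, any open subset of $(\Lambda\qsim \times \Lambda\qsim)\setminus\mathrm{diag}$ is a countable disjoint union of elementary basic rectangles. Radon regularity of the Borel measure $\mu$ on the Polish space $(\Lambda\qsim\times\Lambda\qsim)\setminus\mathrm{diag}$ (which is $\sigma$-finite there, since the obstruction $D_o^{-2\omega}$ is bounded on each such rectangle by \autoref{res: product of balls in E}\ref{enu: product of balls in F - gromov}) allows us to approximate a Borel set of finite measure from outside by open sets, hence by finite disjoint unions of elementary rectangles. This yields the desired $L^1$-approximation. The main subtlety is simply to keep careful track of the passage between the saturated $\sigma$-algebra $\mathfrak B^+ \otimes \mathfrak B^+$ on $\Lambda \times \Lambda$ and the usual Borel $\sigma$-algebra on its Polish quotient $\Lambda\qsim \times \Lambda\qsim$; once this dictionary is set up, the rest is routine measure-theoretic approximation.
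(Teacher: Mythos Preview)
Your proposal is correct and follows essentially the same route as the paper: reduce to indicators of rectangles $A\times A'$ with $A,A'\subset\Lambda$, pass to the Polish ultra-metric quotient $(\Lambda\qsim,\distV[L])$ via \autoref{rem: standard measure space}, and approximate by finite disjoint unions of small $\distV[L]$-balls. The only cosmetic difference is that the paper invokes a Vitali-type argument separately in each factor, whereas you work directly in the product and invoke outer regularity there; your explicit removal of the equivalence diagonal (measure zero by non-atomicity) and the observation that $\mu$ is locally finite off the diagonal make transparent a point the paper's terse ``whence the result'' leaves to the reader.
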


\begin{proof}
	Simple functions are dense in $L^1(\partial^2X, \mathfrak B^+ \otimes \mathfrak B^+, \mu)$.
	Thus we just have to prove any function of the form $\mathbb 1_B$ with $B \in \mathfrak B^+ \otimes \mathfrak B^+$ can be approximated by a linear combination of elements of $\mathcal E$, provided $\mu(B) < \infty$.
	However $\mathfrak B^+ \otimes \mathfrak B^+$ is a product $\sigma$-algebra, thus it suffices to consider subsets $B \in \mathfrak B^+ \otimes \mathfrak B^+$ of the form $B = A \times A'$ with $A,A' \in \mathfrak B^+$.
	Since $\Lambda$ has full $\nu_o$-measure, we can even assume that $A$ and $A'$ are contained in $\Lambda$.
	Recall that $\Lambda\qsim$ endowed with the Borel $\sigma$-algebra induced by $\distV[L]$ is isomorphic as measurable space to $(\Lambda, \mathfrak B^+)$.
	A Vitali type argument shows that any subset $A \in \mathfrak B^+$ can be approximated by a finite disjoint union of arbitrarily small balls for the distance $\distV[L]$, whence the result.
\end{proof}


\begin{lemm}
\label{res: existence positive and bounded function}
	There exists $\rho \in \mathcal E$, such that
	\begin{itemize}
		\item $\hat \rho_\theta$ is positive $m_G$-almost everywhere;
		\item the map $P \colon \partial^2X \to \R_+$ given by
		\begin{equation*}
			P(c,c') = \int_{\R}\hat \rho_\theta(c,c',t) \theta(t) dt,
		\end{equation*}
		is bounded and positive $\mu$-almost everywhere.
	\end{itemize}
\end{lemm}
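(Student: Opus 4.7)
My plan is to define $\rho$ as the indicator of a small product of balls centered at the two Busemann cocycles of the reference geodesic $\gamma$, and then leverage the orbital density statement of \autoref{res: dense orbit geodesic} to certify that the $G$-translates of this support cover $\mu$-a.e.\ pair.

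Concretely, I will take $b = c_\gamma$ and $b' = c'_\gamma$. Since $b$ (\resp $b'$) is equivalent to any accumulation point of $(h^{-n}o)_{n \in \N}$ (\resp $(h^n o)_{n \in \N}$), and since $\Lambda_\pm$ are saturated $G$-invariant subsets of $\partial X$, both cocycles lie in $\Lambda_- \cup \Lambda_+ = \Lambda$. A direct computation along $\gamma$ (namely $b(\gamma(s),o) - b'(\gamma(s),o)$ grows linearly with $s$) shows $b \not\sim b'$, so \autoref{res: finite Gromov product from visual metric} yields threshold radii $r_1, r'_1, \beta \in \R_+^*$. I will fix any $r \in (0, r_1)$ and $r' \in (0, r'_1)$, let $A$ (\resp $A'$) be the open $\dist[L]{\cdot}{\cdot}$-ball of radius $r$ (\resp $r'$) around $b$ (\resp $b'$) in $\Lambda$, and put $\rho = \mathbb{1}_{A \times A'}$. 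By construction $\rho \in \mathcal E$.

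Boundedness of $\hat\rho_\theta$ then follows immediately from \autoref{res: product of balls in E}~\ref{enu: product of balls in F - bounded}; and since $\theta \in L^1(\R)$, I obtain
\begin{equation*}
P(c,c') \leq \norm[\infty]{\hat\rho_\theta} \norm[1]{\theta} < \infty,
\end{equation*}
so $P$ is bounded. For positivity, I will combine three facts. First, applying \autoref{res: preferred limit set full measure} to both $h$ and $h^{-1}$ gives $\nu_o(\Lambda_+ \cap \Lambda_-) = 1$. Second, \autoref{res: measure diagonal} gives $(\nu_o \otimes \nu_o)(\Delta) = 0$, and any pair of equivalent cocycles has infinite Gromov product at $o$ and thus lies in $\Delta$. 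Third, combining these, $\mu$-a.e.\ pair $(c,c') \in \partial^2X$ satisfies $c,c' \in \Lambda_+ \subset \Lambda$ and $c \not\sim c'$. For any such pair, \autoref{res: dense orbit geodesic} applied with $\eta = \tfrac12\min(r,r')$ produces an element $g \in G$ with $\dist[L]{gc}{c_\gamma} < \eta < r$ and $\dist[L]{gc'}{c'_\gamma} < \eta < r'$, so that $(gc,gc') \in A \times A'$ and $\rho(gc,gc') = 1$. Since $\theta > 0$ everywhere on $\R$, this single nonvanishing term in the series defining $\hat\rho_\theta(c,c',s)$ forces $\hat\rho_\theta(c,c',s) > 0$ for every $s \in \R$, and integrating against $\theta$ yields $P(c,c') > 0$.

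The construction is essentially parameter-chasing once the correct ingredients are lined up: the key observation is that \autoref{res: dense orbit geodesic} is exactly the density statement needed to guarantee that a single elementary ball product is $\mu$-a.s.\ visited by the $G$-action, thereby removing the need for any ergodicity hypothesis at this stage. I do not expect a serious obstacle beyond the bookkeeping of verifying that $c_\gamma$ is a legitimate element of $\Lambda$ (so that it can serve as the center of a $\dist[L]{\cdot}{\cdot}$-ball) and that $c_\gamma \not\sim c'_\gamma$ (so that the thresholds $r_1, r'_1$ from \autoref{res: finite Gromov product from visual metric} are available).
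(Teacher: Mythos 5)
Your proposal is correct and follows essentially the same path as the paper's own proof: take $\rho = \mathbb{1}_{A\times A'}$ with $A, A'$ small $\distV[L]$-balls around $c_\gamma$ and $c'_\gamma$, deduce boundedness of $\hat\rho_\theta$ and $P$ from \autoref{res: product of balls in E}, and for a.e.\ positivity reduce to pairs with both entries in the full-measure set $\Lambda_+ \cap \Lambda_-$ (the paper uses $\Lambda\times\Lambda_+$, you use $\Lambda_+\times\Lambda_+$, both work) and invoke \autoref{res: dense orbit geodesic} to hit $A\times A'$. Two cosmetic remarks: the appeal to \autoref{res: measure diagonal} is superfluous here, since $\partial^2X$ is by definition disjoint from $\Delta$ so any pair in $\partial^2 X$ is automatically non-equivalent; and the claim that $c_\gamma,c'_\gamma \in \Lambda$ needs slightly more than saturation alone (saturation transfers membership from an equivalent cocycle to $c_\gamma$, so one should note, e.g., that $h^{\pm n} \in U_{\pm}(L)$ for $n$ large and the corresponding shadows capture $c_\gamma, c'_\gamma$), but this is genuinely routine bookkeeping that the paper itself suppresses.
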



\begin{proof}
	Recall that $\gamma \colon \R \to X$ is the path we used to study the preferred limit set $\Lambda$ while $c_\gamma$ and $c'_\gamma$ are the Busemann cocycles associated to $\gamma$ as in \autoref{exa: busemann cocycle}.
	Note that $c_\gamma$ and $c'_\gamma$ are not equivalent.
	We write $r_1,r_1',\beta$ for the parameters given by \autoref{res: finite Gromov product from visual metric} applied with $c_\gamma$ and $c'_\gamma$.
	Let $(r,r') \in (0, r_1) \times (0, r'_1)$.
	Let $A$ (\resp $A'$) be the ball in $(\Lambda, \distV[L])$ of radius $r$ (\resp $r'$) centered at $c_\gamma$ (\resp $c'_\gamma$) seen as a subset of $\partial X$.
	We choose $\rho = \mathbb 1_{A \times A'}$, which is elementary by construction.
	In particular $\hat \rho_\theta$ is bounded, hence so is $P$.

	We now prove that $\hat \rho_\theta$ and $P$ are positive almost everywhere.
	Recall that $\Lambda$ decomposes as a union $\Lambda = \Lambda_- \cup \Lambda_+$ where $\Lambda_-$ and $\Lambda_+$ are two $G$-invariant, saturated Borel sets with full $\nu_o$-measure (see the beginning of \autoref{sec: initial data}).
	Thus it suffices to show that $\hat \rho_\theta(c,c',s) > 0$ and $P(c,c') > 0$, whenever $(c,c') \in \Lambda \times \Lambda_+$.
	Consider a vector $v = (c,c',s)$ for some $(c,c') \in \Lambda \times \Lambda_+$ and $s \in \R$.
	According to \autoref{res: dense orbit geodesic}, there is $u \in G$ such that
	\begin{equation*}
		\max \left\{\dist[L] {uc}{c_\gamma} ,  \dist[L] {uc'}{c'_\gamma} \right\} < \min\{r,r'\}.
	\end{equation*}
	Hence $(uc, uc') \in A \times A'$.
	Consequently
	\begin{equation*}
		\hat \rho_\theta (v)
		\geq \rho_\theta(uv)
		\geq \theta(s+ \tau(u,v))
		> 0.
	\end{equation*}
	Moreover
	\begin{align*}
		P(c,c')
		& \geq \sum_{g \in G}  \mathbb 1_{A \times A'}(gc,gc')  \int_\R \theta\left(t + \tau(g,v)\right) \theta(t) dt \\
		& \geq  \int_\R \theta\left(t + \tau(u,v)\right) \theta(t) dt > 0. \qedhere
	\end{align*}
\end{proof}


\paragraph{Running the Hopf argument.}

\begin{prop}
\label{res: exploiting contraction}
	Let $f \in \mathcal E$.
	There is a flow/flip invariant subset $Y \in \mathfrak L_G$ with full $m_G$-measure, such that for all vectors $v_1, v_2 \in Y$ of the form $v_i = (c_i, c', s_i)$ (i.e. with the same futur) the map 
	\begin{equation*}
		\begin{array}{ccc}
			\R_+ & \to & \R_+ \\
			T & \mapsto & \displaystyle \int_0^T \left[ \hat f_\theta \circ \phi_t(v_1) -  \hat f_\theta \circ \phi_t(v_2) \right] dt
		\end{array}
	\end{equation*}
	is bounded.
\end{prop}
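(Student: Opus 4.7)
The plan is to run a Hopf-style cancellation on $\hat f_\theta \circ \phi_t$ along vectors sharing a future, using the ``visual'' local constancy of the elementary function $f$ from \autoref{res: product of balls in E} in place of the classical exponential convergence of strong stable leaves. First I would define $Y$ as the flow-, flip- and $G$-invariant subset of $SX$ consisting of $(c,c',s)$ with $c,c' \in \Lambda$ and $(c,c') \in \partial^2 X$, where $\Lambda$ is the reduced radial limit set of \autoref{res: reduced bdy standard}; conservativity forces the action to be divergent (\autoref{res: non conservative geodesic flow}), so $\Lambda$ has full $\nu_o$-measure by \autoref{res: preferred limit set full measure} and $Y$ has full $m_G$-measure. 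For $v_i = (c_i, c', s_i) \in Y$, Fubini (justified by the counting bound \autoref{res: product of balls in E}\ref{enu: product of balls in F - counting}) gives
\begin{equation*}
	I_T(v_i) := \int_0^T \hat f_\theta(\phi_t v_i)\, dt = \sum_{g \in G} f(gc_i, gc')\, \Theta_0^T\!\bigl(u_i(g)\bigr), \qquad u_i(g) := s_i + \tau(g, v_i).
\end{equation*}

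The key geometric observation is a uniform bound on $|u_1(g) - u_2(g)|$. I would fix, for each $i$, a complete gradient line $\nu_i \colon \R \to X$ from $c_i$ to $c'$, parametrized so that $s_i = \tfrac12[c'(o, \nu_i(0)) - c_i(o, \nu_i(0))]$. When $f(gc_i, gc') \neq 0$, applying \autoref{res: finite Gromov product from visual metric} to the translate $g\nu_i$ yields $t_i(g)$ with $\dist{g^{-1}o}{\nu_i(t_i(g))} \leq \beta$, and unwrapping the cocycles gives $|u_i(g) - t_i(g)| \leq \beta$. Whenever both $f(gc_1, gc') = f(gc_2, gc') = 1$, the identity $c'(\nu_1(t_1), \nu_2(t_2)) = t_2 - t_1 + c'(\nu_1(0), \nu_2(0))$, combined with the $1$-Lipschitz property of $c'$, forces
\begin{equation*}
	|t_1(g) - t_2(g)| \leq 2\beta + |c'(\nu_1(0), \nu_2(0))|,
\end{equation*}
so $|u_1(g) - u_2(g)| \leq M_0$ for a constant $M_0$ depending only on $v_1, v_2, f$. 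This substitutes for the missing asymptotic alignment of geodesics that share a future.

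I would then split $I_T(v_1) - I_T(v_2)$ using \autoref{res: product of balls in E}\ref{enu: product of balls in F - locally cst}, which provides $T_0 \in \R$ with $f(gc_1, gc') = f(gc_2, gc')$ for every $g$ such that $c'(o, g^{-1}o) \geq T_0$. The cocycle identity $c'(o, g^{-1}o) = -t_i(g) - c'(\nu_i(0), o) + O(\beta)$ translates this into the regime $u_i(g) \ll 0$. On the \emph{boundary} (where $c'(o, g^{-1}o) < T_0$, so $u_i$ is bounded below), the counting bound applied to unit windows of $c'(o, g^{-1}o)$ together with $\Theta_0^T(u) \leq \min\{1, \tfrac12 e^{-2u}\}$ from (\ref{eqn : ergo2 - tail Theta}) produces a $T$-independent bound --- a finite geometric series over $u \geq 0$ plus finitely many windows of bounded mass over a compact negative range. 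On the \emph{interior}, both $u_i$ are very negative and $|u_1 - u_2| \leq M_0$, so (\ref{eqn : ergo2 - belly Theta}) gives
\begin{equation*}
	|\Theta_0^T(u_1) - \Theta_0^T(u_2)| \leq \bigl[e^{u_1 + u_2} + e^{-2T - u_1 - u_2}\bigr] \sinh(M_0)
\end{equation*}
on the plateau $u_i \in [-T, 0]$, while (\ref{eqn : ergo2 - tail Theta}) directly controls the tails $u_i < -T$.

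The main obstacle is summing the cross term $e^{-2T - u_1 - u_2}$ over the interior: its factor $e^{-2T}$ would naively diverge, but the relevant values of $u_1$ range over a $T$-dependent interval of length $\sim T$. Parametrizing the interior $g$'s by $k \approx -u_1(g) \in [T_0, T]$, each unit window carries at most $N$ elements by \autoref{res: product of balls in E}\ref{enu: product of balls in F - counting}, and $e^{-2T + 2k}$ is a geometric series in $T - k$; summing yields a constant of the order of $\frac{N \sinh(M_0)}{1 - e^{-2}}$. The $e^{u_1 + u_2}$ term sums as a geometric series in $k \geq T_0$, and the tails $u_i < -T$ contribute similarly small amounts. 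Combining all pieces gives $|I_T(v_1) - I_T(v_2)| \leq C(v_1, v_2, f)$ uniformly in $T \in \R_+$, as required.
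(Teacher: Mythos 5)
Your proof is correct and takes essentially the same approach as the paper: same choice of $Y = \Lambda^{(2)}\times\R$, same reduction to the sum $\sum_g f(gc_i,gc')\,\Theta_0^T(\cdot)$, same three-regime split (tails controlled by (\ref{eqn : ergo2 - tail Theta}), center controlled by (\ref{eqn : ergo2 - belly Theta})), and same use of the counting and local-constancy properties of \autoref{res: product of balls in E} to close the geometric series. The only bookkeeping difference is that the paper first normalizes by replacing $v_i$ with $\phi_{t_i}(v_i)$ where $t_i = -s_i - \gro{c_i}{c'}o$, so that the arguments $\delta_i(g) = -c'(o,g^{-1}o) - \gro{c_i}{c'}{g^{-1}o}$ of $\Theta_0^T$ automatically satisfy $|\delta_1(g)-\delta_2(g)|\leq\beta$ (both Gromov products lie in $[0,\beta]$), whereas you keep $s_i$ unnormalized and extract the uniform bound $|u_1(g)-u_2(g)|\leq M_0$ from the gradient-line comparison — slightly more work but equivalent.
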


\begin{proof}
	Let $\Lambda^{(2)} = (\Lambda \times \Lambda) \cap \partial^2 X$.
	The set $Y = \Lambda^{(2)}  \times \R$ has full $m_G$-measure.
	It is invariant by $G$, the flow, and the flip.
	Consider now $v_1 = (c_1, c',s_1)$ and $v_2 = (c_2, c', s_2)$ in $Y$.
	According to \autoref{res: product of balls in E}, $\hat f_\theta$ is bounded.
	Consequently, for every $t_1, t_2 \in \R$, the map sending $T \in \R_+$ to 
	\begin{equation*}
		 \abs{
			 \int_0^T \left[ \hat f_\theta \circ \phi_t(v_1) -  \hat f_\theta \circ \phi_t(v_2) \right] dt
			 - 
			  \int_0^T \left[ \hat f_\theta \circ \phi_{t+t_1}(v_1) -  \hat f_\theta \circ \phi_{t+t_2}(v_2) \right] dt
		  }
	\end{equation*}
	is bounded.
	Therefore, we can advantageously replace $v_i$ by $w_i = \phi_{t_i}(v_i)$ for a suitable value of $t_i$.
	In practice, we take $t_i = -s_i - \gro {c_i}{c'}o$.
	For simplicity we let
	\begin{equation*}
		F_i(T) =  \int_0^T \hat f_\theta \circ \phi_t(w_i)dt, \quad \forall T \geq 0.
	\end{equation*} 
	The computation yields
	\begin{equation*}
		F_i(T) =  \sum_{g \in G} f(gc_i, gc') \Theta_0^T (\delta_i(g)),
		\quad \text{where} \quad
		\delta_i(g) = - c'(o,g^{-1}o) - \gro {c_i}{c'}{g^{-1}o}.
	\end{equation*}
	
	Before starting our estimations, we define some other auxiliary objets / quantities.
	It follows from \autoref{res: product of balls in E} that there is $\beta, T_0 \in \R_+^*$ such that for every $g \in G$, we have
	\begin{equation*}
		f(gc_1,gc') = f(gc_2,gc'),
		\quad \text{whenever} \quad 
		c'(o,g^{-1}o) \geq T_0.
	\end{equation*}
	Moreover $\gro b{b'}o \leq \beta$, as soon as $f(b,b') \neq 0$.
	We split the group $G$ as follows: for every $k \in \Z$ and $i \in \{1,2\}$, we write $G_{i,k}$ for the set of all $g \in G$, such that 
	\begin{itemize}
		\item $k \beta \leq c'(o, g^{-1}o) < (k+1)\beta$, and
		\item $f(gc_i, gc') \neq 0$.
	\end{itemize}
	According to \autoref{res: product of balls in E}, the cardinality of $G_{i,k}$ is bounded above by a number $N$ that does not depend on $k$ nor on $i$.
	In addition, we set $G_k = G_{1,k} \cup G_{2,k}$.
	
	\medskip
	We now decompose the work as follows.
	\begin{equation*}
		\abs{F_1(T) - F_2(T)}
		\leq \sum_{k \in \Z} S_k,
	\end{equation*}
	where
	\begin{equation*}
		S_k =  \sum_{g \in G_k}\abs{f(gc_1,gc')\Theta_0^T(\delta_1(g)) - f(gc_2,gc')\Theta_0^T(\delta_2(g))}.	
	\end{equation*}
	We are going to estimate each $S_k$ depending on the value of $k$.
	
	\begin{clai}[Estimate for the tails]
		For every integer $k \in \Z$,
		\begin{equation*}
			S_k \leq Ne^{4\beta} \min \left\{ e^{2(T-k \beta)}, e^{2k \beta} \right\}. 
		\end{equation*}
	\end{clai}
	
	Let $k \in \Z$.
	A (brutal) upper bound for $S_k$ using (\ref{eqn : ergo2 - tail Theta}) is 
	\begin{equation*}
		S_k 
		\leq \sum_{i \in \{1,2\}} \sum_{g\in G_{i,k}} \Theta_0^T(\delta_i(g))
		\leq \frac 12\sum_{i \in \{1,2\}} \sum_{g\in G_{i,k}} \min \left\{ e^{2T+2\delta_i(g)}, e^{-2\delta_i(g)} \right\}. 
	\end{equation*}
	Let $i \in \{1, 2\}$.
	Let  $g \in G_{i,k}$.
	Since $f(gc_i, gc') \neq 0$, we know from \autoref{res: product of balls in E} that $\gro{c_i}{c'}{g^{-1}o} \leq \beta$, hence
	\begin{equation*}
		 - (k+2)\beta < \delta_i(g) \leq -k\beta.
	\end{equation*}
	Recall also that $G_{i,k}$ contains at most $N$ elements.
	Our claim now follows from a direct computation.
	
	\begin{clai}[Estimate for the center]
		Let $k \in \Z$.
		If $T_0 \leq k \beta \leq T - 2\beta$, then
		\begin{equation*}
			S_k \leq Ne^{4\beta} \sinh(\beta) \left[ e^{-2k\beta} +e^{-2(T -k\beta)} \right]
		\end{equation*}
	\end{clai}
	
	Let $g \in G_k$.
	According to our hypothesis, $c'(o, g^{-1}o) \geq T_0$.
	Consequently $f(gc_1, gc') = f(gc_2,gc')$.
	In other words $G_{1,k} = G_{2,k} = G_k$.
	Thus,
	\begin{equation*}
		S_k =  \sum_{g \in G_k}\abs{\Theta_0^T(\delta_1(g)) - \Theta_0^T(\delta_2(g))}.	
	\end{equation*}
	Reasoning as in our previous claim, we observe that $\gro{c_i}{c'}{g^{-1}o} \leq \beta$.
	Consequently
	\begin{equation*}
		 -T \leq - (k+2)\beta < \delta_i(g) \leq -k\beta \leq - T_0 \leq 0.
	\end{equation*}
	Moreover $\abs{\delta_1(g) - \delta_2(g)} \leq \beta$.
	Recall that $G_k$ contains at most $N$ elements.
	The claim now follows from a direct computation using (\ref{eqn : ergo2 - belly Theta}).
	
	\medskip
	Set $C = Ne^{4\beta} \max \{ 1, \sinh \beta\}$.
	Combining our estimates and reindexing the sums, we get
	\begin{itemize}
		\item
		\begin{math}
			\displaystyle
			\sum_{k\beta < T_0} S_k  
			\leq C  \sum_{k\beta < T_0} e^{2k\beta}
			\leq C  \sum_{k > - T_0/ \beta} e^{-2k\beta},
		\end{math}
		
		\item
		\begin{math}
			\displaystyle
			\sum_{T_0 \leq k\beta \leq T - 2\beta} S_k 
			\leq C  \sum_{T_0 \leq k\beta \leq T - 2\beta}  \left(e^{-2k\beta} + e^{-2(T-k\beta)}\right)
			\leq 2 C\sum_{k \geq 0}e^{-2k\beta},
		\end{math}
		
		\item
		\begin{math}
			\displaystyle
			\sum_{k\beta > T-2\beta} S_k 
		 	\leq C \sum_{k\beta > T-2\beta}   e^{2(T-k\beta)} 
			\leq C \sum_{k \geq -2} e^{-2k\beta}.
		\end{math}
	\end{itemize}
	Observe that all these upper bounds are finite and do not depend on $T$.
	Hence the result is proven.
\end{proof}

We now fix a function $\rho$ as given by \autoref{res: existence positive and bounded function}.
Up to multiplying $\rho$ by a positive scalar, we may assume that 
\begin{equation*}
	\int_{SX} \hat \rho_\theta dm_G = \int_{S_0X} \rho_\theta dm = \int_{\partial^2X} \rho d\mu = 1.
\end{equation*}
As in \autoref{res: existence positive and bounded function}, we write $P \colon \partial^2X \to \R_+$ for the map defined by
\begin{equation*}
	P(c,c') = \int_{\R}\hat \rho_\theta(c,c',t) \theta(t) dt.
\end{equation*}
It is bounded and positive $\mu$-almost everywhere.

\begin{prop}
\label{res: hopf argument}
	For every function $f \in L^1(\partial^2X, \mathfrak B^+ \otimes \mathfrak B^+, \mu)$, for almost every vector $v \in (SX, \mathfrak L_G, m_G)$ we have
	\begin{equation*}
		\lim_{T \to \pm\infty} 
		\frac{\displaystyle\int_0^T \hat f_\theta \circ \phi_t(v)dt}{\displaystyle \int_0^T \hat \rho_\theta \circ \phi_t(v) dt} 
		= \int_{\partial^2X} fP d\mu
	\end{equation*}
\end{prop}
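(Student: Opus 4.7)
The proof follows the classical Hopf argument, adapted to the measurable horoboundary setting via the dense subspace $\mathcal E$ of elementary functions. The strategy has four main ingredients: density reduction, application of the Hopf ratio ergodic theorem, past/future independence via the contraction lemma and the flip involution, and identification of the constant.

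First, reduce to $f \in \mathcal E$. By \autoref{res: dense subspace for Hopf}, the linear span of $\mathcal E$ is dense in $L^1(\partial^2X, \mathfrak B^+ \otimes \mathfrak B^+, \mu)$. The Hopf maximal inequality (applicable by conservativity of the flow on $(SX, \mathfrak L_G^+, m_G)$) gives weak-$L^1$ control on the ratio-maximal function with constant proportional to $\|f\|_{L^1(\mu)} = \|\hat f_\theta\|_{L^1(m_G)}$. Coupled with the $L^1$-continuity of $f \mapsto \int fP\,d\mu$ (using $P \in L^\infty$ by \autoref{res: existence positive and bounded function}), a standard approximation argument reduces the claim to $f \in \mathcal E$. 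Then both $\hat f_\theta$ and $\hat \rho_\theta$ lie in $L^1(m_G)$, and by conservativity combined with $\hat \rho_\theta > 0$ $m_G$-a.e., the denominator $\int_0^T \hat \rho_\theta \circ \phi_t(v)\,dt$ diverges for $m_G$-a.e.\ $v$. The Hopf ratio ergodic theorem then produces a flow-invariant $\Phi_f \in L^\infty(SX, \mathfrak L_G^+, m_G)$ such that both forward and backward limits equal $\Phi_f(v)$ $m_G$-a.e.

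The crux is past/future independence. Combining \autoref{res: exploiting contraction} applied to both $f$ and $\rho$ with the divergence of the denominator, two vectors $v_1, v_2$ sharing the same future produce the same forward ratio limit, so $\Phi_f$ depends only on the future coordinate of $\partial^2X/G$. For the past, since $\theta$ is even one has $\hat f_\theta \circ \sigma = \widehat{f \circ \sigma}_\theta$; combined with $\sigma \circ \phi_t = \phi_{-t} \circ \sigma$ and the $\sigma$-invariance of $m_G$, applying the preceding reasoning to $f \circ \sigma$ yields that $\Phi_f$ also depends only on the past coordinate. Since $\mu$ and $\nu_o \otimes \nu_o$ are mutually absolutely continuous on $\partial^2 X$, a Fubini argument forces a measurable function on $\partial^2 X$ depending only on $c$ and only on $c'$ to be $\mu$-essentially constant; denote the common value by $c_0(f)$.

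To identify $c_0(f)$, observe the bilinear identity
\[
\int_{SX} \hat f_\theta\,\rho_\theta\,dm = \sum_{g \in G}\int f_\theta(gv)\rho_\theta(v)\,dm(v) = \int_{SX} f_\theta\,\hat\rho_\theta\,dm = \int_{\partial^2X} f P\,d\mu,
\]
obtained by $G$-invariance of $m$, Fubini on $m = \mu \otimes ds$, and the definition of $P$. The Hopf identity $\Phi_f \cdot E_{\mathcal I}[\hat\rho_\theta] = E_{\mathcal I}[\hat f_\theta]$ (with $\mathcal I$ the flow-invariant $\sigma$-algebra in $\mathfrak L_G^+$), paired against the test function $\rho_\theta$ and combined with self-adjointness of the projection, yields $\int \Phi_f \rho_\theta\,dm = \int \hat f_\theta \rho_\theta\,dm = \int fP\,d\mu$; with $\Phi_f \equiv c_0(f)$ and the normalization $\int \rho_\theta\,dm = 1$, this gives $c_0(f) = \int fP\,d\mu$. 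The main obstacle is precisely this identification step: one must carefully handle the three $\sigma$-algebras $\mathfrak L$, $\mathfrak L_G$, $\mathfrak L_G^+$ and the interplay between $G$-averaging and flow-averaging in an infinite-measure setting, while also ensuring that the Fubini argument of the previous step is carried out on a genuinely full-measure subset where both coordinate-independences hold simultaneously.
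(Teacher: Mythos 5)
Your scheme mirrors the paper's overall architecture --- density of the span of $\mathcal E$ (\autoref{res: dense subspace for Hopf}), the Hopf ratio ergodic theorem, past/future independence via \autoref{res: exploiting contraction} combined with the flip and a Fubini argument --- but the order differs: you reduce to $f \in \mathcal E$ at the outset via a maximal inequality, whereas the paper applies the ratio theorem to an arbitrary $f \in L^1(\mu)$, identifies the limit as a conditional expectation $f_\infty = \mathbb E_{\hat\rho_\theta m_G}(\hat f_\theta \mid \mathfrak I^+)$, and transfers from $\mathcal E$ to all of $L^1$ at the very end using the $L^1$-continuity of both $f \mapsto f_\infty$ and $f \mapsto \int fP\,d\mu$. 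The two orderings are both standard, though the paper's route avoids invoking an explicit maximal inequality for ratios in the infinite-measure setting.

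The place where your argument genuinely frays is the identification of the constant, and you rightly flag it as the main obstacle. The object $E_{\mathcal I}[\hat\rho_\theta]$ with respect to $m_G$ restricted to $\mathcal I$ is ill-defined: once the flow is ergodic (which is what one is trying to prove) and $m_G$ is infinite, $m_G\!\mid_\mathcal I$ takes only the values $0$ and $\infty$ and is not $\sigma$-finite. The Hopf ratio theorem must instead be read against the \emph{finite} measure $\hat\rho_\theta m_G$; the limit $\Phi_f$ is the $\mathcal I$-measurable Radon--Nikodym derivative characterized by $\int_A \Phi_f\,\hat\rho_\theta\,dm_G = \int_A \hat f_\theta\,dm_G$ for $A \in \mathcal I$. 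Your subsequent step --- pairing the asserted identity $\Phi_f\,E_{\mathcal I}[\hat\rho_\theta] = E_{\mathcal I}[\hat f_\theta]$ against the test function $\rho_\theta$ and invoking self-adjointness --- does not produce $\int \Phi_f\,\rho_\theta\,dm = \int \hat f_\theta\,\rho_\theta\,dm$: since $\rho_\theta$ is not $G$-invariant, the pairing $\int \cdot\,\rho_\theta\,dm$ first $G$-averages to $\int \widehat{\,\cdot\,}\,\hat\rho_\theta\,dm_G$, and working this through with the actual Hopf identity yields $\int \Phi_f\,\hat\rho_\theta\,dm_G = \int \hat f_\theta\,dm_G$, not $\int\hat f_\theta\,\hat\rho_\theta\,dm_G$. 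The paper's resolution bypasses the pairing altogether: it asserts that $f_\infty$ is the conditional expectation of $\hat f_\theta$ (not of the ratio $\hat f_\theta/\hat\rho_\theta$) with respect to $\hat\rho_\theta m_G$ and then integrates against the constant function $1$ to get $M = \int_{SX}\hat f_\theta\,\hat\rho_\theta\,dm_G = \int fP\,d\mu$. (Worth noting: reconciling that reading of the Hopf theorem with the $f=\rho$ sanity check --- where the ratio is identically $1$ but $\int\rho P\,d\mu = \int\hat\rho_\theta^2\,dm_G$ --- requires care; one should verify that the conventions really do give $\int fP\,d\mu$ rather than $\int f\,d\mu/\int\rho\,d\mu$. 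The downstream use in \autoref{res: ergodicity geodesic flow} only needs positivity of the constant, so the ergodicity conclusion is unaffected either way, but the exact formula deserves scrutiny.)
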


\begin{proof}
	On the one hand, $\hat f_\theta \colon SX \to \R_+$ belongs to $L^1(SX, \mathfrak L_G, m_G)$. 
	On the other hand,  the function $\hat \rho_\theta \in L^1(SX, \mathfrak L_G, m_G)$ is positive almost everywhere.
	Since the geodesic flow on $(SX, \mathfrak L_G, m_G)$ is conservative, the Hopf ergodic theorem \cite{Hopf:1937kk} tells us that for almost every vector $v \in (SX, \mathfrak L_G, m_G)$,
	\begin{equation}
	\label{eqn: hopf ergodic thm}
		\lim_{T \to \pm\infty} 
		\frac{\displaystyle\int_0^T \hat f_\theta \circ \phi_t(v)dt}{\displaystyle \int_0^T \hat \rho_\theta \circ \phi_t(v) dt} 
		= f_\infty(v),
		\quad \text{where} \quad
		f_\infty = \mathbb E_{\hat \rho_\theta m_G}\left( \hat f_\theta \middle| \mathfrak I\right)
	\end{equation}
	is the conditional expectation of $\hat f_\theta$ with respect to the sub-$\sigma$-algebra $\mathfrak I$ which consists of all flow-invariant subsets of $\mathfrak L_G$ and the probability measure $\hat \rho_\theta m_G$.
	
	Suppose now that $f$ belongs to $\mathcal E$.
	Denote by $Y \in \mathfrak L_G$ the flow/flip invariant subset given by \autoref{res: exploiting contraction}.
	Let $v_1 = (c_1, c', s_1)$ and $v_2 = (c_2, c', s_2)$ be two vectors in $Y$ with the same futur.
	It follows from \autoref{res: exploiting contraction} that $f_\infty(v_1) = f_\infty(v_2)$.
	In other words $f_\infty$ only depends on the future $m_G$-almost surely.
	Recall that the flow anti-commutes with the flip involution $\sigma$.
	Applying the same argument with $f \circ \sigma$, which belongs to $\mathcal E$ as well, we obtain that $f_\infty$ only depends on the past $m_G$-almost surely.
	As $\mu$ is equivalent to a product measure, one proves using Fubini's theorem that $f_\infty$ is constant $m_G$-almost everywhere.
	By definition of the conditional expectation, the almost sure value $M$ of $f_\infty$ satisfies
	\begin{equation*}
		M 
		= \int_{SX} f_\infty \hat \rho_\theta dm_G
		= \int_{SX} \hat f_\theta \hat \rho_\theta dm_G.
	\end{equation*}
	Since $\hat \rho_\theta$ is $G$-invariant, the previous inequality yields
	\begin{equation*}
		M 
		= \int_{SX} \hat f_\theta \hat \rho_\theta dm_G
		= \int_{SX} \widehat{ f_\theta \hat \rho_\theta} dm_G
		= \int_{S_0X} f_\theta \hat \rho_\theta dm
		= \int_{\partial^2 X} f P d\mu.
	\end{equation*}
	Since $P$ is bounded, both maps
	\begin{equation*}
		f \mapsto \mathbb E_{\hat \rho_\theta m_G}\left( \hat f_\theta \middle| \mathfrak I\right)
		\quad \text{and} \quad
		f \mapsto \int_{\partial^2 X} f P d\mu
	\end{equation*}
	are countinuous linear maps on $L^1(\partial^2X, \mathfrak B^+ \otimes \mathfrak B^+, \mu)$.
	The above discussion shows that they agree on $\mathcal E$.
	According to \autoref{res: dense subspace for Hopf}, the linear space spanned by $\mathcal E$ is dense in $L^1(\partial^2X, \mathfrak B^+ \otimes \mathfrak B^+, \mu)$.
	Hence the two above functions agree on $L^1(\partial^2X, \mathfrak B^+ \otimes \mathfrak B^+, \mu)$, which completes the proof.
\end{proof}

Although we followed the classical Hopf argument, we have not quite proved yet that the geodesic flow is ergodic.
Indeed our strategy applies only to functions of the form $\hat f_\theta$ for some $f \in L^1(\partial^2X, \mathfrak B^+ \otimes \mathfrak B^+, \mu)$.
Nevertheless this is enough to show that the diagonal action of $G$ on the double boundary is ergodic.
We first need an auxiliary lemma, telling us that the measure $\mu$ restricted to $\mathfrak B^+ \otimes \mathfrak B^+$ is $\sigma$-finite.

\begin{lemm}
\label{res: double boundary sigma finite}
	Consider the map $\psi \colon \partial^2 X \to \R^*_+$ given by
	\begin{equation*}
		\psi(c,c') = \sup e^{-2\omega\gro b{b'}o}.
	\end{equation*}
	where the supremum runs over all pairs $(b,b') \in \partial^2X$ such that $b \sim c$, and $b' \sim c'$.
	Then $\psi \in L^1(\partial^2 X, \mathfrak B^+ \otimes \mathfrak B^+, \mu)$.
\end{lemm}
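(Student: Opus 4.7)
The plan is to exploit that, under the standing hypothesis that the geodesic flow is conservative, the measure $\mu$ concentrates on $(\Lambda \times \Lambda) \cap \partial^2 X$, where $\Lambda = \Lambda_- \cup \Lambda_+$ is the limit set introduced in \autoref{sec: initial data}, and that on $\Lambda$ the equivalence classes of cocycles have uniformly bounded $\norm[\infty]$-diameter. Indeed, conservativity forces $\omega = \omega_G$ and divergence of the action (by \autoref{res: non conservative geodesic flow} together with \autoref{res: quasi-conf + ergo}), whence $\nu_o(\Lambda) = 1$ by \autoref{res: preferred limit set full measure}. Since $\Lambda \subseteq \Lambda_{\mathcal T_\alpha}(G, o, r_0, L)$ for any $L \in \R_+$ by \autoref{res: preferred limit set - fixed shadow width}, the structure result \cite[Proposition~5.10]{Coulon:2022tu}, already invoked in the proof of \autoref{res: equivalent accumulation points}, will give that for every $c \in \Lambda$,
\begin{equation*}
    [c] = \set{b \in \partial X}{\norm[\infty]{b - c} \leq 20\alpha}.
\end{equation*}

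Granted this uniform bound, the integrability estimate is a short computation. For $b \sim c$ and $b' \sim c'$ with $(c,c') \in (\Lambda \times \Lambda) \cap \partial^2 X$ and any $z \in X$, one has $\abs{b(o,z) + b'(o,z) - c(o,z) - c'(o,z)} \leq 40\alpha$. Taking the supremum over $z$ in the definition (\ref{eqn: extension Gromov product}) of the Gromov product yields $\abs{\gro b{b'}o - \gro c{c'}o} \leq 20\alpha$, and so
\begin{equation*}
    \psi(c,c') \leq e^{40\omega\alpha} \, e^{-2\omega \gro c{c'}o}
\end{equation*}
on $(\Lambda \times \Lambda) \cap \partial^2 X$. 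Substituting $d\mu = e^{2\omega \gro c{c'}o} \, d\nu_o \otimes d\nu_o$ the exponentials cancel and
\begin{equation*}
    \int \psi \, d\mu \leq e^{40\omega\alpha} \, \nu_o(\partial X)^2 = e^{40\omega\alpha} < \infty.
\end{equation*}

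For the measurability half of the claim, note that $\psi$ is constant on $\sim\otimes\sim$-equivalence classes by construction, so Borel measurability automatically upgrades to $\mathfrak B^+ \otimes \mathfrak B^+$-measurability. I will write $\psi = \sup_{n \in \N} \psi_n$ with
\begin{equation*}
    \psi_n(c,c') = \sup \set{e^{-2\omega \gro b{b'}o}}{(b,b') \in \partial X \times \partial X, \ \norm[\infty]{b-c} \leq n, \ \norm[\infty]{b'-c'} \leq n}.
\end{equation*}
The constraint set is closed in the compact space $(\partial X \times \partial X)^2$ since $(b,c) \mapsto \norm[\infty]{b-c}$ is lower semicontinuous on $\bar X \times \bar X$ as a supremum of continuous functions, and $e^{-2\omega \gro b{b'}o}$ is upper semicontinuous as noted in \autoref{sec: gromov product}; a standard argument---the projection of a closed subset along a compact factor is closed---then shows each $\psi_n$ is upper semicontinuous, hence Borel, so $\psi$ is Borel as a countable supremum. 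The only mildly delicate point is isolating a \emph{uniform} $\norm[\infty]$-bound on equivalence classes, which is precisely why the set $\Lambda$ built in \autoref{sec: initial data}---rather than the full radial or contracting limit set---was tailored for this purpose.
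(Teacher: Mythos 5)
Your proof follows exactly the same route as the paper: conservativity of the flow forces divergence, so $\nu_o(\Lambda) = 1$; on $\Lambda \times \Lambda$ one invokes the $20\alpha$-bound on equivalence classes from \cite[Proposition~5.10]{Coulon:2022tu} to get $\psi \leq e^{40\omega\alpha}\, D_o^{2\omega}$, which integrates against $\mu = D_o^{-2\omega}\,\nu_o\otimes\nu_o$ to give $e^{40\omega\alpha}$. The only caveat is that the step \og Borel and constant on $\sim\otimes\sim$-classes $\Rightarrow$ $\mathfrak B^+\otimes\mathfrak B^+$-measurable \fg is not a general principle and really rests on the standard Borel structure of $(\Lambda\qsim, \mathfrak C)$ from \autoref{rem: standard measure space}, but the paper itself asserts this measurability without elaboration.
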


\begin{proof}
	By construction this is a $\mathfrak B^+ \otimes \mathfrak B^+$-measurable function.
	Recall that $\Lambda$ has full $\nu_o$-measure.
	Moreover for every $b,c \in \Lambda$ in the same equivalence class, we have $\norm[\infty]{b-c} \leq 20\alpha$. 
	See for instance \cite[Proposition~5.10]{Coulon:2022tu}.
	Consequently, $\mu$-almost everywhere, we have
	\begin{equation*}
		\psi(c,c') \leq e^{40\omega \alpha} e^{-2\omega\gro c{c'}o} \leq e^{40\omega \alpha} D^{2\omega}_o(c,c').
	\end{equation*}
	Hence $\psi$ is summable.
\end{proof}

\begin{proof}[Proof of \autoref{res: ergodicity geodesic flow}]
	Let $B  \in \mathfrak B^+ \otimes \mathfrak B^+$ be a $G$-invariant subset of $\partial^2 X$ with positive $\mu$-measure.
	Fix a positive function $\psi \in L^1(\partial^2X, \mathfrak B^+ \otimes \mathfrak B^+, \mu)$ as given by \autoref{res: double boundary sigma finite}.
	Applying \autoref{res: hopf argument} with $f = \mathbb 1_B \psi$, there is a subset $A \subset \partial^2X$ in $\mathfrak B \otimes \mathfrak B$ with full $\mu$-measure, such that for every $(c,c') \in A$, for every sufficiently large $T$ we have
	\begin{equation*}
		\int_0^T \hat f_\theta \circ \phi_t(v)dt > 0,
		\quad \text{where} \quad
		v = (c,c',0),
	\end{equation*}
	(note that $A$ is not necessarily saturated).
	Hence for every $(c,c') \in A$, there is $g \in G$, such that $f(gc,gc') >0$, that is $(c,c') \in g^{-1}B$.
	Since $B$ is $G$-invariant, $A \subset B$, thus $B$ has full $\mu$-measure.
\end{proof}

We complete this section with the following statement that will be useful to prove the converse of \autoref{res: ergodicity geodesic flow}.
In particular, we do not assume anymore that the geodesic flow on $(SX, \mathfrak L_G, m_G)$ is conservative.

\begin{prop}
\label{res: ergodicity implies no atom}
	Assume that the diagonal action of $G$ on $(\partial^2_0 X, \mathfrak B^+ \otimes \mathfrak B^+, \nu_o \otimes \nu_o)$ is ergodic, then $(\partial X, \mathfrak B^+, \nu_o)$ has no atom.
\end{prop}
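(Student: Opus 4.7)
The plan is to argue by contradiction: assume $A\in\mathfrak B^+$ is an atom of $\nu_o$, and produce a $G$-invariant subset of $\partial^2X$ whose ergodic saturation under the group action cannot possibly cover $\partial^2X$, contradicting the hypothesis. Fix a contracting element $h\in G$ (which exists by assumption) and an axis $Y$ of $h$. By \autoref{res: subatom with bded proj}, there is a subatom $B\subset A\setminus\partial^+Y$ whose projection $\pi_Y(B)$ has diameter bounded by some constant $D$. Since $B$ differs from $A$ by a null set, $\stab B=\stab A$, which by \autoref{res: no contracting stabilizing atom} contains no contracting element. \autoref{res: proj orbit subgroup without contracting} then tells us that for every $x\in X$ the orbit $\stab B\cdot x$ has bounded projection on $Y$, with a diameter controlled by a constant independent of $x$.

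Next, I would use $h$-translates of $B$ to construct a large $G$-invariant subset of $\partial^2X$. For $|n|$ large enough the projections $\pi_Y(B)$ and $\pi_Y(h^n B)$ lie at distance $>7\alpha$ on $Y$, because $h^n$ acts on $Y$ as a near-translation by $n\,\dist o{ho}$ while $\pi_Y(B)$ and $\pi_Y(h^n B)$ are each of bounded diameter. Hence \autoref{res: proj two cocycle contracting set} provides, for every $(c,c')\in B\times h^nB$, a complete gradient arc from $c$ to $c'$; in particular $\gro{c}{c'}o<\infty$, so $B\times h^nB\subset\partial^2X$. Since $\nu_o(B)>0$ and $\nu_o(h^nB)>0$, the set $W_n:=G\cdot(B\times h^nB)$ lies in $\mathfrak B^+\otimes\mathfrak B^+$, is $G$-invariant, and has positive $(\nu_o\otimes\nu_o)$-measure in $\partial^2X$. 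By the ergodicity assumption, $W_n$ has full $(\nu_o\otimes\nu_o)$-measure in $\partial^2X$.

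The contradiction will come from comparing $W_n$ with a second atom pair. Choose $m$ with both $|m|$ and $|m-n|$ very large (for instance $m=2n$). The same argument shows $B\times h^m B\subset\partial^2X$ has positive measure, so by fullness of $W_n$ it must meet $W_n$ in positive measure. A pair $(c,c')\in(B\times h^mB)\cap W_n$ of positive measure forces atoms $gB=B$ and $gh^nB=h^mB$ (modulo null), that is, $g\in\stab B\cap h^m\stab B\,h^{-n}$. Writing such a $g$ as $h^m u h^{-n}$ with $u\in\stab B$, property (H4) implies that $h^k$ preserves $Y$ up to bounded Hausdorff distance for every $k\in\Z$, so $\pi_Y$ approximately commutes with $h^k$ within a bounded error not accumulating with $|k|$. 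Applying this together with the bounded projection of $\stab B\cdot h^{-n}o$ onto $Y$ (near the point $h^{-n}o$, i.e.\ near position $-n$), one computes
\[
  \pi_Y(go) \;=\; \pi_Y\!\left(h^m u h^{-n}o\right) \;\approx\; h^m\,\pi_Y\!\left(uh^{-n}o\right),
\]
so $\pi_Y(go)$ lies near position $m-n$ on $Y$. Since $g\in\stab B$, however, $\pi_Y(go)$ must lie in a bounded neighbourhood of $o$, a contradiction once $|m-n|\,\dist o{ho}$ exceeds the sum of the two bounded-projection constants. Consequently $(B\times h^mB)\cap W_n$ is null, contradicting $W_n$'s full measure in $\partial^2X$ and the positivity of $(\nu_o\otimes\nu_o)(B\times h^mB)$.

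The main obstacle is the projection estimate carried out in the last paragraph: I need that the discrepancy between $\pi_Y\circ h^k$ and $h^k\circ\pi_Y$ stays bounded uniformly in $k$, which requires invoking the $E(h)$-invariance of $Y$ up to Hausdorff error $\alpha$ (property (H4)), and then combining it cleanly with the uniform bounded projection supplied by \autoref{res: proj orbit subgroup without contracting} applied to the orbits $\stab B\cdot o$ and $\stab B\cdot h^{-n}o$. Once this estimate is established, the algebraic emptiness of $\stab B\cap h^m\stab B\,h^{-n}$ for $|m-n|$ large is automatic and needs no case analysis on the structure of $\stab B\cap E(h)$.
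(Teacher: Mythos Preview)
Your overall strategy matches the paper's, but there is a genuine gap at the projection estimate. The claim that \autoref{res: proj orbit subgroup without contracting} gives a diameter bound on $\pi_Y(\stab B\cdot x)$ \emph{independent of $x$} is not what that proposition says, and is in fact false. If $u_0\in\stab B\setminus E(h)$, then by \ref{enu: alpha - diam proj} the set $\pi_Y(u_0Y)$ has diameter at most $\alpha$, so $\pi_Y(u_0h^{-n}o)$ stays within $O(\alpha)$ of $\pi_Y(u_0o)\in S_0$ for every $n$; meanwhile $\pi_Y(1\cdot h^{-n}o)$ sits near position $T_{-n}$ on $Y$. Thus $\diam\pi_Y(\stab B\cdot h^{-n}o)$ grows linearly in $|n|$, and your assertion that $\pi_Y(uh^{-n}o)$ lies ``near position $-n$'' (with error uniform in $n$) fails precisely when $u\notin E(h)$. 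In that case the displacement you compute is $\approx T_m$, not $\approx T_m-T_n$, so the contradiction requires $|m|$ large rather than $|m-n|$ large; and when $u\in F:=\stab B\cap E(h)$ you need the separate finiteness argument ($h^m\in Fh^nF^{-1}$) that you explicitly sought to avoid.

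The paper circumvents this by strengthening the choice of $B$: it intersects subatoms $B_h$ over all $h\in G$ so that \emph{every} translate $hB$ has projection on $Y$ of diameter at most a fixed $D$. It then applies \autoref{res: proj orbit subgroup without contracting} to a single cocycle $c_0\in g_1B$ to bound $\pi_Y(\stab B\cdot c_0)$, and the uniform translate bound propagates this to all of $Z=\bigcup_{h\in\stab B}hg_1B$. With $\pi_Y(Z)$ bounded, one chooses $g_2\in\langle h\rangle$ with $\pi_Y(g_2B)$ disjoint from $\pi_Y(Z)$; the ergodicity argument then yields $h'\in\stab B$ with $h'g_1B\cap g_2B\neq\emptyset$, which is impossible since $h'g_1B\subset Z$. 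Your argument can be repaired along these lines, or by carrying out the case split on $u\in E(h)$ versus $u\notin E(h)$, but not by the uniformity you invoke.
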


\begin{proof}
	Assume on the contrary that $(\partial X, \mathfrak B^+, \nu_o)$ has an atom, say $A$.
	Let $h$ be contracting element and $Y$ an axis of $h$.
	Since $\nu$ is $G$-invariant, $gA$ is an atom for every $g \in G$.
	According to \autoref{res: subatom with bded proj}, there is $d \in \R_+$ such that for every $g \in G$, one can find an atom $B_g \subset A$ such that $gB_g \cap \partial^+Y$ is empty and the projection of $gB_g$ on $Y$ has diameter at most $d$.
	We let 
	\begin{equation*}
		B = \bigcap_{g \in G} B_g.
	\end{equation*}
	Since $G$ is countable, it is an atom.
	Moreover for every $g \in G$, the projection of $gB$ on $Y$ has diameter at most $d$.
	In particular, we can find $h_1 \in \group h$ such that the projections of $B$ and $h_1B$ on $Y$ are sufficiently far so that $B \times h_1B$ is contained in $\partial^2_0X$ (see \autoref{res: bi-gradient line}).
	According to \autoref{res: no contracting stabilizing atom}, $\stab B$ does not contain a contracting element.
	It follows from \autoref{res: proj orbit subgroup without contracting} that the projection on $Y$ of 
	\begin{equation*}
		Z = \bigcup_{g \in \stab B} gh_1B
	\end{equation*}
	is bounded.
	As previously, we can find $h_2 \in \group h$ such that $\pi_Y(Z)$ and $\pi_Y(h_2B)$ are disjoint while $B \times h_2B$ is contained in $\partial^2_0X$.
	
	As $B$ is an atom both $B \times h_1B$ and $B \times h_2 B$ have positive measure.
	By ergodicity $G(B \times h_1B)$ has full measure.
	Since $G$ is countable, there is $g \in G$ such that $g(B \times h_1B) \cap (B \times h_2B)$ has positive measure.
	In particular $\nu_o(B \cap gB) > 0$ while $gh_1 B \cap h_2B$ is non-empty.
	The former says that $g$ belongs to $\stab B$, while the latter implies the $Z \cap h_2B$ is non-empty.
	This contradicts our choice of $h_2$ and completes the proof.
\end{proof}


%
\subsection{The Hopf-Tsuji-Sullivan dichotomy}
%

We can now summarize the previous study in a single statement.

\begin{theo}[Hopf-Tsuji-Sullivan theorem]
\label{res: hopf-tsuji-sullivan}
	Let $X$ be a proper, geodesic, metric space.
	Fix a base point $o \in X$.
	Let $G$ be a group acting properly, by isometries on $X$.
	We assume that $G$ is not virtually cyclic and contains a contracting element.
	Let $\omega \in \R_+$.
	Let $\nu  = (\nu_x)$ be a $G$-invariant, $\omega$-conformal density supported on $\partial X$.
	Let $m_G$ be the associated Bowen-Margulis measure on $(SX, \mathfrak L_G)$.
	Let $\mathfrak L^+$ be an admissible sub-$\sigma$-algebra of $\mathfrak L$.
	Then one of the following two cases holds.
	Moreover, for each one, all the stated properties are equivalent.
	\begin{itemize}

		\item\emph{\bfseries Convergent case.}
		\begin{enumerate}[label=(c\arabic{*}), ref=(c\arabic{*})]
			\item \label{eqn: hopf-tsuji-sullivan - conv - def}
			The Poincaré series $\mathcal P_G(s)$ converges at $s = \omega$.
			\item \label{eqn: hopf-tsuji-sullivan - conv - radial}
			The measure $\nu_o$ gives zero measure to the radial limit set.
			\item \label{eqn: hopf-tsuji-sullivan - conv - bdy}
			The diagonal action of $G$ on $(\partial_0^2 X, \mathfrak B \otimes \mathfrak B, \nu_o \otimes \nu_o)$ is dissipative.
			\item \label{eqn: hopf-tsuji-sullivan - conv - dissipative}
			The geodesic flow on $(SX, \mathfrak L_G, m_G)$ is dissipative.
		\end{enumerate}
		
		\item\emph{\bfseries Divergent case.}
		\begin{enumerate}[label=(d\arabic{*}), ref=(d\arabic{*})]
			\item \label{eqn: hopf-tsuji-sullivan - div - def}
			The Poincaré series $\mathcal P_G(s)$ diverges at $s = \omega$, hence $\omega = \omega_G$.
			\item \label{eqn: hopf-tsuji-sullivan - div - radial}
			The measure $\nu_o$ gives full measure to the contracting limit set, hence to the radial limit set.
			\item \label{eqn: hopf-tsuji-sullivan - div - bdy}
			The diagonal action of $G$ on $(\partial X \times \partial X, \mathfrak B \otimes \mathfrak B, \nu_o \otimes \nu_o)$ is conservative.
			\item \label{eqn: hopf-tsuji-sullivan - div - conservative}
			The geodesic flow on $(SX, \mathfrak L_G, m_G)$ is conservative.
			\item \label{eqn: hopf-tsuji-sullivan - div - double ergo}
			The diagonal action of $G$ on $(\partial X \times \partial X, \mathfrak B^+ \otimes \mathfrak B^+, \nu_o \otimes \nu_o)$ is ergodic.
			\item \label{eqn: hopf-tsuji-sullivan - div - ergo flow}
			The geodesic flow on $(SX, \mathfrak L^+_G, m_G)$ is ergodic.
		\end{enumerate}
		
	\end{itemize}
\end{theo}

\begin{rema}
\label{rem: admissible algebra - matin theo}
	Before proving the statement, let us comment on the role of the admissible sub-$\sigma$-algebra $\mathfrak L^+$ (see \autoref{def: admissible sigma-alg}).
	Note first that $\mathfrak L^+$ only appears (indirectly) in the ergodicity of the geodesic flow on $(SX, \mathfrak L^+_G, m_G)$.
	As we observed in \autoref{exa: admissible alg}, there are many possible choices for $\mathfrak L^+$.
	In practice, this choice should be guided by the implication the reader is interested in.
	Indeed the smaller $\mathfrak L^+$ is, the more likely the geodesic flow on $(SX, \mathfrak L^+_G, m_G)$ is ergodic.
	Therefore, if one tries to use \ref{eqn: hopf-tsuji-sullivan - div - ergo flow} $\Rightarrow$ \ref{eqn: hopf-tsuji-sullivan - div - def}, choosing a ``small'' sub-$\sigma$-algebra $\mathfrak L^+$ will make it easier to check that \ref{eqn: hopf-tsuji-sullivan - div - ergo flow} holds true.
	That being said, the converse direction -- namely \ref{eqn: hopf-tsuji-sullivan - div - def} $\Rightarrow$ \ref{eqn: hopf-tsuji-sullivan - div - ergo flow} -- is probably the more useful.
	In this situation, since the action of $G$ is divergent, one can make use of \autoref{exa: admissible alg}\ref{enu: admissible alg - div} that provides a  sub-$\sigma$-algebra $\mathfrak L^+$ carrying more informations.
\end{rema}

\begin{proof}
The equivalences \ref{eqn: hopf-tsuji-sullivan - div - def} $\Leftrightarrow$ \ref{eqn: hopf-tsuji-sullivan - div - radial} and \ref{eqn: hopf-tsuji-sullivan - conv - def} $\Leftrightarrow$ \ref{eqn: hopf-tsuji-sullivan - conv - radial} are proven in \cite{Coulon:2022tu}.

\paragraph{Divergent case.}
We can endow $(SX, \mathfrak L, m)$ with an action of $G \times \R$, where the action $\R$ corresponds to the geodesic flow.
Let us state a couple of additional properties.
\begin{enumerate}[label=(d\arabic{*}), ref=(d\arabic{*})]
	\setcounter{enumi}{6}
	\item \label{eqn: hopf-tsuji-sullivan - div - bdy saturated}
	The diagonal action of $G$ on $(\partial^2 X, \mathfrak B^+ \otimes \mathfrak B^+, \nu_o \otimes \nu_o)$ is conservative.
	\item \label{eqn: hopf-tsuji-sullivan - div - double ergo minus diag}
	The diagonal action of $G$ on $(\partial^2X, \mathfrak B^+ \otimes \mathfrak B^+, \nu_o \otimes \nu_o)$ is ergodic.
\end{enumerate}
The proof now follows the plan below.

\begin{center}
	\begin{tikzpicture}
		\matrix (m) [matrix of math nodes, row sep=0.5em, column sep=2.5em, text height=1.5ex, text depth=0.25ex] 
		{ 
				&& \mathrm{ \ref{eqn: hopf-tsuji-sullivan - div - bdy}}&& \\
				\mathrm{\ref{eqn: hopf-tsuji-sullivan - div - def}}  & \mathrm{\ref{eqn: hopf-tsuji-sullivan - div - radial}}  && \mathrm{\ref{eqn: hopf-tsuji-sullivan - div - bdy saturated}} & \\
			&&&&	 \mathrm{\ref{eqn: hopf-tsuji-sullivan - div - double ergo}} \\
				&\mathrm{ \ref{eqn: hopf-tsuji-sullivan - div - conservative}} && \mathrm{\ref{eqn: hopf-tsuji-sullivan - div - double ergo minus diag}} &\\	
				& &&& \mathrm{\ref{eqn: hopf-tsuji-sullivan - div - ergo flow}} \\		
		}; 
		\draw[ {Straight Barb[length=4pt,width=5pt]}-{Straight Barb[length=4pt,width=5pt]}, double] (m-2-1) -- (m-2-2) ;
		\draw[ {Straight Barb[length=4pt,width=5pt]}-{Straight Barb[length=4pt,width=5pt]}, double] (m-4-4) -- (m-5-5) ;
		\draw[ {Straight Barb[length=4pt,width=5pt]}-{Straight Barb[length=4pt,width=5pt]}, double] (m-4-4) -- (m-3-5) ;
		
		\draw[-{Straight Barb[length=4pt,width=5pt]}, double] (m-2-2) -- (m-1-3) ;
		\draw[-{Straight Barb[length=4pt,width=5pt]}, double] (m-1-3) -- (m-2-4) ;
		\draw[-{Straight Barb[length=4pt,width=5pt]}, double] (m-2-4) -- (m-2-2) ;
		\draw[-{Straight Barb[length=4pt,width=5pt]}, double] (m-2-2) -- (m-4-2) ;
		\draw[-{Straight Barb[length=4pt,width=5pt]}, double] (m-4-2) -- (m-4-4) ;
		\draw[-{Straight Barb[length=4pt,width=5pt]}, double] (m-4-4) -- (m-2-4) ;
	\end{tikzpicture}
\end{center}

The implications \ref{eqn: hopf-tsuji-sullivan - div - bdy} $\Rightarrow$ \ref{eqn: hopf-tsuji-sullivan - div - bdy saturated} and \ref{eqn: hopf-tsuji-sullivan - div - double ergo} $\Rightarrow$ \ref{eqn: hopf-tsuji-sullivan - div - double ergo minus diag} are straightforward.
\autoref{res: conservativity geodesic flow} shows \ref{eqn: hopf-tsuji-sullivan - div - radial} $\Rightarrow$ \ref{eqn: hopf-tsuji-sullivan - div - bdy} and \ref{eqn: hopf-tsuji-sullivan - div - radial} $\Rightarrow$ \ref{eqn: hopf-tsuji-sullivan - div - conservative}, while \autoref{res: non conservative geodesic flow}\ref{enu: non conservative geodesic flow - bdy}  gives
\ref{eqn: hopf-tsuji-sullivan - div - bdy saturated} $\Rightarrow$ \ref{eqn: hopf-tsuji-sullivan - div - radial}.
Recall that $\mu$ and $\nu_o \otimes \nu_o$ restricted to $\partial^2X$ belong to the same measure class, hence the implication \ref{eqn: hopf-tsuji-sullivan - div - conservative} $\Rightarrow$  \ref{eqn: hopf-tsuji-sullivan - div - double ergo minus diag} follows from \autoref{res: ergodicity geodesic flow}.

We now prove \ref{eqn: hopf-tsuji-sullivan - div - double ergo minus diag} $\Rightarrow$ \ref{eqn: hopf-tsuji-sullivan - div - bdy saturated}.
Suppose that the diagonal action of $G$ on $(\partial^2 X,  \mathfrak B^+ \otimes \mathfrak B^+, \nu_o \otimes \nu_o)$ is ergodic.
Hence so is the one on $(\partial^2_0 X, \mathfrak B^+ \otimes \mathfrak B^+, \nu_o \otimes \nu_o)$.
It follows from \autoref{res: ergodicity implies no atom} that $(\partial X, \mathfrak B^+, \nu_o)$ has no atom.
Hence neither has $(\partial^2X, \mathfrak B^+ \otimes \mathfrak B^+, \nu_o \otimes \nu_o)$.
However since $G$ is countable, it implies that the action of $G$ on $(\partial^2X, \mathfrak B^+ \otimes \mathfrak B^+, \nu_o \otimes \nu_o)$ is conservative.

Let us focus on \ref{eqn: hopf-tsuji-sullivan - div - double ergo minus diag} $\Rightarrow$  \ref{eqn: hopf-tsuji-sullivan - div - double ergo}.
Suppose that the diagonal action of $G$ on $(\partial^2 X,  \mathfrak B^+ \otimes \mathfrak B^+, \nu_o \otimes \nu_o)$ is ergodic.
In view of the previous study we know that $\nu$ gives full measure to the contracting limit set, hence to the radial limit set.
It follows that the diagonal $\Delta = (\partial X \times \partial X) \setminus \partial^2 X$ has zero $(\nu_o \otimes \nu_o)$-measure (\autoref{res: measure diagonal}).
Hence the action of $G$ on $(\partial X \times \partial X,  \mathfrak B^+ \otimes \mathfrak B^+, \nu_o \otimes \nu_o)$ is ergodic as well.

Recall that $\mathfrak L^+$ stands for an admissible sub-$\sigma$-algebra.
Unwrapping the definitions, one observes that \ref{eqn: hopf-tsuji-sullivan - div - ergo flow} and \ref{eqn: hopf-tsuji-sullivan - div - double ergo minus diag} are equivalent to the ergodicity of the action of $G \times \R$ on $(SX, \mathfrak L^+, m)$.
This proves \ref{eqn: hopf-tsuji-sullivan - div - ergo flow} $\Leftrightarrow$ \ref{eqn: hopf-tsuji-sullivan - div - double ergo minus diag}.

\paragraph{Convergent case.}
\autoref{res: dissipativity geodesic flow} gives \ref{eqn: hopf-tsuji-sullivan - conv - radial} $\Rightarrow$ \ref{eqn: hopf-tsuji-sullivan - conv - bdy} and \ref{eqn: hopf-tsuji-sullivan - conv - radial} $\Rightarrow$ \ref{eqn: hopf-tsuji-sullivan - conv - dissipative}.
The converse implications were already proved during the divergent case.
\end{proof}


\bigskip
\noindent
\emph{R\'emi Coulon} \\
Université Bourgogne Europe, CNRS \\
IMB - UMR 5584 \\
F-21000 Dijon, France\\
\\
CRM, CNRS\\
IRL 3457 \\
Montréal (Québec) H3C 3J7, Canada\\
 \\
\texttt{remi.coulon@cnrs.fr} \\
\texttt{http://rcoulon.perso.math.cnrs.fr}


\begin{thebibliography}{10}

\bibitem{Algom-Kfir:2011st}
Y.~{Algom-Kfir}.
\newblock Strongly contracting geodesics in {{Outer Space}}.
\newblock {\em Geometry \& Topology}, 15(4):2181--2233, Nov. 2011.

\bibitem{Arzhantseva:2015cl}
G.~Arzhantseva, C.~H. Cashen, and J.~Tao.
\newblock {Growth tight actions}.
\newblock {\em Pacific Journal of Mathematics}, 278(1):1--49, Sept. 2015.

\bibitem{Arzhantseva:2020aa}
G.~N. Arzhantseva and C.~H. Cashen.
\newblock Cogrowth for group actions with strongly contracting elements.
\newblock {\em Ergodic Theory and Dynamical Systems}, 40(7):1738--1754, 2020.

\bibitem{Bader:2017te}
U.~Bader and A.~Furman.
\newblock {Some ergodic properties of metrics on hyperbolic groups}.
\newblock arXiv:1707.02020, July 2017.

\bibitem{Bestvina:2015tv}
M.~Bestvina, K.~Bromberg, and K.~Fujiwara.
\newblock Constructing group actions on quasi-trees and applications to mapping
  class groups.
\newblock {\em Publ. Math., Inst. Hautes {\'E}tud. Sci.}, 122:1--64, 2015.


\bibitem{Bestvina:2009hh}
M.~Bestvina and K.~Fujiwara.
\newblock {A characterization of higher rank symmetric spaces via bounded
  cohomology}.
\newblock {\em Geometric and Functional Analysis}, 19(1):11--40, 2009.

\bibitem{Blayac:2025pa}
P.-L. Blayac.
\newblock Patterson--{{Sullivan}} densities in convex projective geometry.
\newblock {\em Commentarii Mathematici Helvetici}, Feb. 2025.

\bibitem{Bourbaki:1974wv}
N.~Bourbaki.
\newblock {\em \'{E}l\'{e}ments de math\'{e}matique. {T}opologie
  g\'{e}n\'{e}rale. {C}hapitres 5 \`a 10}.
\newblock Hermann, Paris, 1974.

\bibitem{Burger:2021aa}
M.~Burger, O.~Landesberg, M.~Lee, and H.~Oh.
\newblock The hopf-tsuji-sullivan dichotomy in higher rank and applications to
  anosov subgroups, 2021.

\bibitem{Burger:1996kc}
M.~Burger and S.~Mozes.
\newblock {CAT(-1)-spaces, divergence groups and their commensurators}.
\newblock {\em Journal of the American Mathematical Society}, 9(1):57--93,
  1996.
  
\bibitem{Coornaert:1993uv}
M.~Coornaert, 
\newblock Mesures de Patterson-Sullivan sur le bord d'un espace hyperbolique au sens de Gromov, 
\newblock {\em Pacific Journal of Mathematics} 159(2):241--270, 1993.

\bibitem{Coornaert:1994aa}
M.~Coornaert and A.~Papadopoulos.
\newblock Une dichotomie de {H}opf pour les flots g\'{e}od\'{e}siques associ\'{e}s aux groupes discrets d'isom\'{e}tries des arbres.
\newblock {\em Trans. Amer. Math. Soc.}, 343(2):883--898, 1994.

\bibitem{Coulon:2022tu}
R.~Coulon.
\newblock Patterson--{{Sullivan}} theory for groups with a strongly contracting element.
\newblock {\em Ergodic Theory and Dynamical Systems}, pages 1--56, Mar. 2024.

\bibitem{Coulon:2018va}
R.~Coulon, R.~Dougall, B.~Schapira, and S.~Tapie.
\newblock {Twisted Patterson-Sullivan measures and applications to amenability
  and coverings}.
\newblock arXiv:1809.10881, to appear in Memoirs of the AMS., Sept. 2018.

\bibitem{Eskin:2019uw}
A.~Eskin, M.~Mirzakhani, and K.~Rafi.
\newblock Counting closed geodesics in strata.
\newblock {\em Invent. Math.}, 215(2):535--607, 2019.


\bibitem{Gekhtman:2013dy}
I.~Gekhtman.
\newblock Dynamics of {{Convex Cocompact Subgroups}} of {{Mapping Class
  Groups}}, 
\newblock arXiv:1204.1741, Oct. 2013.


\bibitem{Gerasimov:2016uc}
V.~Gerasimov and L.~Potyagailo.
\newblock Quasiconvexity in relatively hyperbolic groups.
\newblock {\em J. Reine Angew. Math.}, 710:95--135, 2016.

\bibitem{Gromov:1987tk}
M.~Gromov.
\newblock {Hyperbolic groups}.
\newblock In {\em Essays in group theory}, pages 75--263. Springer, New York,
  New York, 1987.

\bibitem{Hopf:1937kk}
E.~Hopf.
\newblock {\em {Ergodentheorie}}.
\newblock Springer, Berlin, 1937.

\bibitem{Islam:2025ra}
M.~Islam.
\newblock Rank-one {{Hilbert}} geometries.
\newblock {\em Geometry \& Topology}, 29(3):1171--1235, May 2025.

\bibitem{Kaimanovich:1994wz}
V.~A. Kaimanovich.
\newblock {Ergodicity of harmonic invariant measures for the geodesic flow on
  hyperbolic spaces}.
\newblock {\em Journal f{\"u}r die Reine und Angewandte Mathematik. [Crelle's
  Journal]}, 455:57--103, 1994.

\bibitem{Knieper:1998vj}
G.~Knieper.
\newblock The uniqueness of the measure of maximal entropy for geodesic flows
  on rank 1 manifolds.
\newblock {\em Ann. Math. (2)}, 148(1):291--314, 1998.

\bibitem{Legaspi:2024gr}
X.~Legaspi.
\newblock Growth of quasi-convex subgroups in groups with a constricting
  element.
\newblock {\em Groups, Geometry, and Dynamics}, 18(4):1469--1505, May 2024.

\bibitem{Link:2018ue}
G.~Link.
\newblock Hopf-{T}suji-{S}ullivan dichotomy for quotients of {H}adamard spaces
  with a rank one isometry.
\newblock {\em Discrete Contin. Dyn. Syst.}, 38(11):5577--5613, 2018.

\bibitem{Link:2016dc}
G.~Link and J.-C. Picaud.
\newblock {Ergodic geometry for non-elementary rank one manifolds}.
\newblock {\em Discrete and Continuous Dynamical Systems. Series A},
  36(11):6257--6284, 2016.

\bibitem{Masur:1982aa}
H.~Masur.
\newblock Interval exchange transformations and measured foliations.
\newblock {\em Ann. of Math. (2)}, 115(1):169--200, 1982.

\bibitem{Minsky:1996wx}
Y.~N. Minsky.
\newblock Quasi-projections in {Teichm{\"u}ller} space.
\newblock {\em J. Reine Angew. Math.}, 473:121--136, 1996.

\bibitem{Patterson:1976hp}
S.~J. Patterson.
\newblock {The limit set of a Fuchsian group}.
\newblock {\em Acta Mathematica}, 136(3-4):241--273, 1976.

\bibitem{Ricks:2017aa}
R.~Ricks.
\newblock Flat strips, bowen--margulis measures, and mixing of the geodesic
  flow for rank one cat(0) spaces.
\newblock {\em Ergodic Theory and Dynamical Systems}, 37(3):939--970, 2017.

\bibitem{Roblin:2003vz}
T.~Roblin.
\newblock {Ergodicit\'e et \'equidistribution en courbure n\'egative}.
\newblock {\em M\'emoires de la Soci\'et\'e Math\'ematique de France. Nouvelle
  S\'erie}, 95:vi--96, 2003.

\bibitem{Schapira:2021ti}
B.~Schapira and S.~Tapie.
\newblock Regularity of entropy, geodesic currents and entropy at infinity.
\newblock {\em Ann. Sci. {\'E}c. Norm. Sup{\'e}r. (4)}, 54(1):1--68, 2021.

\bibitem{Sisto:2012um}
A.~Sisto.
\newblock On metric relative hyperbolicity.
\newblock arXiv, 2012.

\bibitem{Sisto:2018uc}
A.~Sisto.
\newblock Contracting elements and random walks.
\newblock {\em J. Reine Angew. Math.}, 742:79--114, 2018.

\bibitem{Sullivan:1979vs}
D.~Sullivan.
\newblock {The density at infinity of a discrete group of hyperbolic motions}.
\newblock {\em Publications Math{\'e}matiques. Institut de Hautes {\'E}tudes
  Scientifiques}, (50):171--202, 1979.

\bibitem{Veech:1982aa}
W.~A. Veech.
\newblock Gauss measures for transformations on the space of interval exchange
  maps.
\newblock {\em Ann. Math. (2)}, 115:201--242, 1982.

\bibitem{Yang:2014aa}
W.~Yang.
\newblock Growth tightness for groups with contracting elements.
\newblock {\em Mathematical Proceedings of the Cambridge Philosophical
  Society}, 157(2):297--319, 2014.

\bibitem{Yang:2019wa}
W.~Yang.
\newblock Statistically convex-cocompact actions of groups with contracting
  elements.
\newblock {\em Int. Math. Res. Not. IMRN}, (23):7259--7323, 2019.

\bibitem{Yang:2020ub}
W.~Yang.
\newblock Genericity of contracting elements in groups.
\newblock {\em Mathematische Annalen}, 376(3-4):823--861, 2020.

\bibitem{Yang:2022aa}
W.~Yang.
\newblock Conformal dynamics at infinity for groups with contracting elements.
\newblock arXiv 2208.04861, 2022.

\end{thebibliography}
\end{document}